\DeclareRobustCommand\widecheck[1]{{\mathpalette\@widecheck{#1}}}
\def\@widecheck#1#2{%
   \setbox\z@\hbox{\m@th$#1#2$}%
   \setbox\tw@\hbox{\m@th$#1%
      \widehat{%
         \vrule\@width\z@\@height\ht\z@
         \vrule\@height\z@\@width\wd\z@}$}%
   \dp\tw@-\ht\z@
   \@tempdima\ht\z@ \advance\@tempdima2\ht\tw@ \divide\@tempdima\thr@@
   \setbox\tw@\hbox{%
      \raise\@tempdima\hbox{\scalebox{1}[-1]{\lower\@tempdima\box\tw@}}}%
   {\ooalign{\box\tw@ \cr \box\z@}}}
\newtheorem{theorem}{Theorem} [section]
\newtheorem{lemma}[theorem]{Lemma}
\newtheorem{proposition}[theorem]{Proposition}
\newtheorem{remark}[theorem]{Remark}
\newtheorem{definition}[theorem]{Definition}
\DeclareMathOperator*{\supp}{supp}
\newcommand{\noi}{\noindent}
\newcommand{\Z}{\mathbb{Z}}
\newcommand{\R}{\mathbb{R}}
\newcommand{\C}{\mathbb{C}}
\newcommand{\T}{\mathbb{T}}
\newcommand{\EE}{\mathcal E}
\newcommand{\al}{\alpha}
\newcommand{\dl}{\delta}
\newcommand{\eps}{\varepsilon}
\newcommand{\ld}{\lambda}
\newcommand{\s}{\sigma}
\newcommand{\FF}{\mathcal F} 
\newcommand{\ft}{\widehat}
\newcommand{\wt}{\widetilde}
\newcommand{\cj}{\overline}
\newcommand{\dx}{\partial_x}
\newcommand{\dt}{\frac{d}{dt}}
\newcommand{\LRA}{\Longrightarrow}
\newcommand{\jb}[1]
{\langle #1 \rangle}
\numberwithin{equation}{section}
\begin{document}

\title[Invariant weighted Wiener measure and a.s. GWP for  DNLS ]{
Invariant weighted Wiener measures and  almost sure global well-posedness  for the periodic derivative NLS.}
\author[Nahmod]{Andrea R. Nahmod$^1$}
\address{$^1$ Radcliffe Institute for Advanced Study\\Harvard University\\Byerly Hall, 8 Garden Street\\
Cambridge, MA 02138 and 
Department of Mathematics \\ University of Massachusetts\\  710 N. Pleasant Street, Amherst MA 01003}
\email{nahmod@math.umass.edu}
\thanks{$^1$ The first author is funded in part by NSF DMS 0803160 and a 2009-2010 Radcliffe Institute for Advanced Study Fellowship.}

\author[Oh]{Tadahiro Oh$^2$}
\address{$^2$ Department of Mathematics\\ 40 St. George Street, Toronto, Ontario, Canada M5S 2E4} \email{oh@math.toronto.edu}
\author[Rey-Bellet]{Luc Rey-Bellet$^3$}
\address{$^3$ Department of Mathematics \\
University of Massachusetts\\ 710 N. Pleasant Street, Amherst MA 01003 }
\email{luc@math.umass.edu}
\thanks{$^3$ The third author is funded in part by NSF DMS 0605058}
\author[Staffilani]{Gigliola Staffilani$^4$}
\address{$^4$  Radcliffe Institute for Advanced Study\\
Harvard University\\
Byerly Hall, 8 Garden Street\\
Cambridge, MA 02138
and 
Department of Mathematics\\
Massachusetts Institute of Technology\\ 77 Massachusetts Avenue,  Cambridge, MA 02139}
\email{gigliola@math.mit.edu}
\thanks{$^4$ The fourth author is funded in part by NSF
DMS 0602678 and a 2009-2010 Radcliffe Institute for Advance Study Fellowship.}
\date{}
\begin{abstract}
In this paper we construct an invariant weighted Wiener measure associated to the periodic derivative nonlinear Schr\"odinger equation in one dimension and establish global well-posedness for data living in its support.  In particular almost surely for data in a Fourier-Lebesgue space ${\mathcal F}L^{s,r}(\T)$  with  $s \ge \frac{1}{2}$, $2 < r < 4$,  $(s-1)r <-1$ and scaling like $H^{\frac{1}{2}-\epsilon}(\T),$ for small  $\epsilon >0$.  We also show the invariance of this measure.
\end{abstract}
\maketitle

\section{Introduction}

In the past few years, methods such as those by J. Bourgain (high-low method; e.g. 
\cite{B3, B4}) on the one hand  and by J. Colliander, M. Keel, G. Staffilani, H. Takaoka and 
T. Tao (I-method or method of {\it almost conservation laws} e.g. 
\cite{CKSTT1, CKSTT2,CKSTT3})  on the other, have been 
applied to study the global in time existence of dispersive equations at regularities 
which are right below or in between those corresponding to conserved quantities. 
As it turns out however, for many dispersive equations and systems there still remains a gap between the local in time results 
and those that could be globally achieved. 
In those cases,  it seems natural to return to  one of Bourgain's early approaches for periodic dispersive equations (NLS, KdV, mKdV, Zakharov system) \cite{B1, B2, B3, B5, B6, B7} 
where global in time existence was studied in the almost sure sense via the existence and
invariance of the associated  Gibbs measure (cf. Lebowitz, Rose and Speer's and  
Zhidkov's works  \cite{LRS} \cite{Z}).  More recently this approach has been used for example
by N. Tzvetkov \cite{Tzv1, Tzv2} for subquintic radial nonlinear wave equation on the disc, 
N. Burq and N. Tzvetkov  \cite{BT1, BT3} for subcubic and subquartic radial 
nonlinear wave equations on 3d ball, N. Burq, L. Thomann, and N. Tzvetkov \cite{BTT} for the 
nonlinear Schr\"odinger equation with harmonic potential,  and by T. Oh 
\cite{Oh1, Oh2, Oh3, Oh4} for the periodic KdV-type coupled systems, 
Schr\"odinger-Benjamin-Ono system and white noise for the KdV equation.

Failure to show global existence by Bourgain's high-low method or the I-method might 
come from certain `exceptional'  initial data set, and the virtue of the Gibbs 
measure is that it does not see that exceptional set.  At the same time, the invariance 
of the Gibbs measure, just like the usual conserved quantities, can be used to control 
the growth in time of those solutions in its support and extend the local in time solutions 
to global ones almost surely. The difficulty in this approach lies in the actual construction of 
the associated Gibbs measure and in showing both its invariance under the flow and 
the almost sure global well-posedness, since, on the one hand, we need invariance to 
show global well-posedness and on the other hand we need globally defined flow to 
discuss invariance.

Our goal in this paper is to construct an invariant weighted Wiener measure associated to 
the periodic derivative nonlinear Schr\"odinger equation DNLS in \eqref{DNLS} in one
dimension and establish global well-posedness for data living in its support.  In particular
almost surely for data in a Fourier-Lebesgue space ${\mathcal F}L^{s,r}$ defined in
\eqref{fourierlebesgue} below (c.f. \cite{Hormander, Gr, Christ, GH}) and scaling like
$H^{\frac{1}{2}-\epsilon}(\T),$ for small  $\epsilon>0$.  The motivation for this paper stems 
from the fact that by scaling DNLS should be well posed for data in $H^\sigma, \, \sigma \ge 0$
but the results so far obtained are much weaker. 

Local well-posedness is known  for $\sigma \ge 1/2$ for the nonperiodic \cite{Tak1} 
and periodic \cite{Herr} cases while 
global well-posednes is known for $\sigma \ge 1/2$ for the nonperiodic case ($\sigma >1/2$
in \cite{CKSTT2} and $\sigma\ge 1/2$ in \cite {MWX}) and for $\sigma > 1/2$ in 
the periodic case \cite{YYW}. 
Furthermore, in the non periodic case the Cauchy initial value problem for DNLS is ill-posed 
for data in $H^{\sigma}(\R),  \sigma< \frac{1}{2}$   \cite{Tak1}  \cite{BiaLin}, a strong indication
that ill-posedness should  also be expected in the periodic case on that range. Gr\"unrock 
and Herr \cite{GH} have
recently established local well posedness for the periodic DNLS in Fourier-Lebesgue spaces 
${\mathcal F}L^{s,r},$ which for appropriate choices of $(s,r)$ scale like $H^{\sigma}(\T)$ for 
any $\sigma>\frac{1}{4}$. Their result is the starting point of this work (cf. Section 2 for a 
more detailed discussion).

The measure we construct is based on the energy functional rather than the Hamiltonian.
Hence we simply refer to it as weighted Wiener measure rather than Gibbs measure since 
the name `Gibbs measure' has traditionally been reserved for those weighted Wiener 
measures constructed using the Hamiltonian.  By invariance of a measure $\mu$ we mean 
that  if  $\Phi(t)$ denote the flow map associated to our nonlinear equation then $\Phi(t)$ is 
defined for all $t \in \R$, $\mu$ almost surely and for all $f \in L^1(\mu)$ and all $t\in \R$, 
$$\int f( \Phi(t) (\phi)) \,  \mu(d\phi) \, =\, \int f(\phi)   \mu(d\phi).$$ 

In general terms our aim is to construct a well defined measure $\mu$ so that local well posedness of the periodic DNLS holds in some space $\mathcal B$ containing the 
support of $\mu$. 
Then we show almost sure global well posedness as well as the invariance of $\mu$ via 
a combination of the methods of  Bourgain and  Zhidkov \cite{Z} in the context of NLS, KdV,
mKdV.  In implementing this scheme however  we need to overcome two main obstacles due 
to the need to gauge the equation to show local well posedness (eg. \cite{Tak1, Herr}) and 
to construct an invariant measure.  The symplectic form associated to the periodic 
gauged derivative nonlinear Schr\"odinger equation GDNLS in \eqref{GDNLS} does not 
commute with Fourier modes truncation and so the truncated finite-dimensional
systems are not necessarily Hamiltonian. 
The first (mild) obstacle is to show the conservation of the Lebesgue measure associated to 
the finite dimensional approximation to the periodic gauged  derivative nonlinear 
Schr\"odinger equation FGDNLS, defined in \eqref{FGDNLS} by hand, rather than 
by using the Hamiltonian structure.  The second obstacle  is much more serious and is 
at the heart of this work. The energy ${\EE}$ defined in \eqref{eu} associated 
to the gauged periodic DNLS{\footnote{We emphasize $\EE$ is not the Hamiltonian of the gauged DNLS.}} which we prove to be conserved in time, ceases to be so when computed 
on solutions of the finite dimensional approximation equation; that is $\frac{d}{dt}\EE(v^N) 
\neq 0, \mbox{ when }  v^N$ is a solution to the finite dimensional gauged DNLS (see
\eqref{EE3}).  In other words the finite dimensional weighted Wiener measure is not invariant
any longer  and unlike Zhidkov's work \cite{Z} on KdV we do not have a priori knowledge of
global well posedness. We  show however that it is {\it almost} invariant in the sense that we
can control the growth in time of $\EE(v^N) (t)$.  This idea is reminiscent of the $I$-method. 
However, while in the $I$-method one needs to estimate the variation of the energy of solutions
to the infinite dimensional equation at time $t$ smoothly projected onto frequencies of size up 
to $N$; here one needs to control the variation of the energy $\EE$ of the solution $v^{N}$ 
to the finite dimensional approximation equation FGDNLS.   We note that the loss in energy 
conservation for solutions to the finite dimensional equation is principally due to the manner
one chooses to approximate the infinite dimensional gauged equation by using Fourier
projections onto the first $N$th frequencies. In \cite{B1} Bourgain describes an alternative
approach that relies on using a discrete system of ODE which seems to preserve the
conservation of energy. This approach however entails a number of other difficulties, for one
needs to replace the circle $\T$ by the cyclic group $\Z_N$ and carry out the analysis on cyclic
groups. We choose not to follow this path here.

We expect the ungauged invariant Wiener measure associated to DNLS \eqref{DNLS} 
we obtain in Section 7 to be absolutely continuous with respect to the  weighted 
Wiener measure constructed by Thomann and Tzvetkov \cite{TTzv}. This question is 
addressed in a forthcoming paper \cite{NRBSS}. 

\medskip

The paper is organized as follows. In Section 2 we present some general background, 
notation and results on the derivative nonlinear Schr\"odinger equation in one dimension.  
In Section 3  we discuss  FGDNLS.  In Section 4 we overcome the first two obstacles mentioned
above. Namely we prove the invariance of the Lebesgue measure associated to FGDNLS and
devote the rest of the section to prove our energy growth estimate Theorem 
\ref{Energy Growth Estimate}. In Section 5 we carry out the construction of the weighted 
Wiener measure associated to the GDNLS. In Section 6 we prove the almost sure global 
well-posedness result for the GDNLS and the invariance of the measure constructed in 
section 5. Finally in Section 7 we  translate back our results to the 
ungauged DNLS equation.

\medskip

{\bf Acknowledgment.}  Andrea R. Nahmod and Gigliola Staffilani would like to warmly thank 
the Radcliffe Institute for Advanced Study at Harvard University for its wonderful hospitality
while part of this work was being carried out. They also thank their fellow Fellows for the 
stimulating environment they created.

\medskip

{\bf Notation.}  Whenever we write $a+$ for $a \in \R$ we mean $a+\varepsilon$ for some $\varepsilon >0$; similarly for $a-$. In addition, we write $A \lesssim B$ to mean there exist some absolute constant $C>0$ such that $ A \leq C B$. 

\medskip

\section{The Derivative NLS Equation in one dimension}

The initial value problem for DNLS takes the form:

\begin{equation} \label{DNLS}
\begin{cases} u_t  \, - i \,  u_{xx}  \, = \, \lambda (|u|^2 u)_x \\
u\big|_{t = 0} = u_0, 
\end{cases}
\end{equation}
where either $(x, t) \in \mathbb{R}\times (-T, T)$ or $ (x, t) \in \mathbb{T}\times (-T, T) $  and  $\lambda$ is  real. In this paper we will take $\lambda=1$ for convenience. 
 DNLS is a Hamiltonian PDE whose flow conserves also mass and energy; i.e. the following are conserved quantities of time\footnote{In fact, DNLS is completely integrable.} (c.f. \cite{KaupNew, HayOz2, Herr}) :
 
 \begin{eqnarray*}
 \mbox{Mass:}  \qquad {M(u)(t)} &=& \int  |u(x,t)|^2 \, dx.   \\
\mbox{ Energy:}  \qquad {E(u)(t)} &=& \int | u_x|^2 \, dx + \frac{3}{2}  {\rm Im} \int  u^2 {\cj {u}\, \cj{u_x}} \, dx + \frac{1}{2} \int |u|^6 \, dx. \\
\mbox{ Hamiltonian:} \qquad {H(u)(t)} &=& {\rm Im} \int u {\cj u}_x \, dx  + \frac{1}{2} \int  |u|^4 \, dx.  
\end{eqnarray*}
DNLS was introduced as a model for the propagation of circularly polarized Alfv\'en
waves in a magnetized plasma with a constant magnetic field 
(cf. Sulem-Sulem \cite{SS}).
The equation is scale invariant for data in $L^2$; {\it i.e.} if $u(x,t)$ is a solution then 
$u_a(x,t) = a^{\alpha} u(a x, a^2 t)$ is also a solution if and only if $\alpha= \frac{1}{2}$. Thus {\it a priori} one expects some form of existence and uniqueness 
results for \eqref{DNLS} for data in $H^{\sigma}, \sigma \geq 0$.  Many results are known for 
the Cauchy problem with smooth data, including data in $H^1$,  such as those by  
M. Tsutsumi and I. Fukuda \cite{TF}, N.Hayashi \cite{Hay}, N. Hayashi and T. Ozawa 
\cite{HayOz1, HayOz2} and T. Ozawa \cite{Oz} and others (cf. references therein). 
   
In looking for solutions to \eqref{DNLS} we face a derivative loss arising from the nonlinear 
term $ (|u|^2 u)_x \, = \,  u^2\,  \overline{u}_x \, +\,  2\, { |u|^2 \, u_x } $  and hence for 
low regularity data the key is to somehow make up for this loss.

For the non-periodic case  ($x \in \R$)  Takaoka \cite{Tak1} proved sharp local 
well-posedness (LWP) in $H^{\frac{1}{2}}(\R)$ relying on the gauge transformation 
used by Hayashi and Ozawa \cite{HayOz1,HayOz2}  and the so-called Fourier restriction 
norm method.  Then,  Colliander-Keel-Staffilani-Takaoka and Tao \cite{CKSTT1, CKSTT2}
established global well-posedness (GWP) in $H^{\sigma}(\R)$, $\sigma>\frac{1}{2}$ of 
small $L^2$ norm using the so-called I-method on the gauge equivalent equation 
(see also \cite{Tak2}).  Here, small in $L^2$ just means less than an appropriate 
constant ~ $\sqrt{ \frac{2 \pi}{\lambda}}$ which forces the associated `energy' to be positive 
via Gagliardo-Nirenberg inequality. This result was recently improved by Miao, Wu and Xu 
to $\sigma \ge 1/2$.  The Cauchy initial value problem for DNLS is ill-posed 
for data in $H^{\sigma}$  and $\sigma < \frac{1}{2}$ (data map fails to be $C^3$ or 
uniformly $C^0$    \cite{Tak1}  \cite{BiaLin}.)  In \cite{Gr} A. Gr\"unrock proved that the 
non-periodic DNLS is locally well posed in the Fourier-Lebesgue spaces  
${\mathcal F}L^{s,r}(\R)$ which for appropriate choices of $(s,r)$ scale like $H^{\sigma}(\R)$ 
for any $\sigma>0$ (c.f. \eqref{fourierlebesgue} below.) 

In the periodic setting,  S. Herr  \cite{Herr}  showed that the Cauchy problem associated 
to periodic  DNLS  is locally well-posed for initial data $u(0) \in H^{\sigma} (\T)$, if 
$\sigma \geq \frac{1}{2}$ in the sense of local existence, uniqueness and continuity of the 
flow map.    Herr's proof is based on an adaptation to the periodic setting of the 
gauge transformation introduced by Hayashi \cite{Hay} Hayashi and Ozawa 
\cite{HayOz1, HayOz2} on $\R$, in conjunction with sharp multilinear estimates for the 
gauged equivalent equation in periodic Fourier restriction norm spaces $X^{s,b}$ that 
yield local well posedness for the gauged equation. Moreover, by use of conservation laws, 
the problem is also shown to be globally well-posed for $\sigma \geq 1$ and data which 
is small in $L^2$-as in \cite{CKSTT1, CKSTT2}- \cite{Herr}.   More recently,  Win 
\cite{YYW} applied the I-method to prove GWP in $H^{\sigma}(\T)$ for $\sigma > \frac{1}{2}$.

 A. Gr\"unrock and S. Herr  \cite{GH} showed that the Cauchy problem associated to (DNLS)  
 is locally well-posed for initial data 
$u_0 \in {\mathcal F} L^{s, r}(\T)$ with  $2 < r < 4$ and  $s \geq \frac{1}{2}$ where
\begin{equation}\label{fourierlebesgue} \Vert u_0 \Vert_{ {\mathcal F} L^{s, r}(\T) } \, :=   \Vert \, \langle n  \rangle^s \, \widehat u_0\, \Vert_{\ell^r_n(\Z)} .  \end{equation}

These spaces scale like the Sobolev $H^{\sigma}(\T)$ ones where $\sigma =  s + \frac{1}{r}- \frac{1}{2} $. Their proof is based on Herr's adapted periodic gauge transformation and new multilinear estimates for the gauged equivalent equation in an appropriate variant  of Fourier restriction norm spaces $X_{r,q}^{s,b}$ introduced by Gr\"unrock-Herr \cite{GH}\footnote{Note that in our notation the indices $(r,q)$ are the dual of the corresponding ones in  Gr\"unrock-Herr \cite{GH}}. 

For $s, b \in \R$, $  r, q \geq 1$ we define the space $X^{s,b}_{r,q}$ as the completion of the Schwartz space $ {\mathcal S}( \T \times \R)$ with respect to the norm

$$\Vert u\Vert_{X^{s,b}_{r,q}} := \Vert \langle n \rangle^s \,  \langle \tau + n^2 \rangle^b {\widehat u}(n, \tau) \Vert_{\ell^r_n L^q_{\tau}} $$ where first we take the $ L^q_{\tau}$ norm and then the ${\ell^r_n}$ one.  We also define the space
$$
\|u \|_{X^{s, b}_{r,q; -}}  := \| \jb{n}^s \jb{\tau-n^2}^b \ft{u}(n, \tau) \|_{l^r_n L^q_\tau},
$$
and note that  $u\in {X^{s, b}_{r,q}}$ if and only if ${\cj u} \in {X^{s, b}_{r, q;-}}$.

For $\delta>0$ fixed, we define the restriction space $X^{s, b}_{r,q}(\delta)$ of all $v=u \left |_{[-\delta,\delta]} \right.$ for some $u \in X^{s, b}_{r,q}$ with norm
\begin{equation}\label{localspace}
\|v\|_{X^{s, b}_{r,q}(\delta)}:=\inf \{\|u\|_{X^{s, b}_{r,q}} \, \, : \, \, u \in X^{s, b}_{r,q} \, \, \mbox{ and }\, \, v=u \left |_{[-\delta,\delta]}\right.\}.
\end{equation}
When we take $q=2$ we simply write $X^{s,b}_{r,2} \, = \, X^{s,b}_{r}$. Note $X^{s,b}_{2,2} \, = \, X^{s,b}$. Later we will also use the space
\begin{equation}\label{zspace}
Z^{s}_r(\delta)\, := \, X^{s,\frac{1}{2}}_{r,2} (\delta)\cap X^{s,0}_{r,1}(\delta).
\end{equation}

Some simple embeddings are as follows.   For $s, b_1, b_2 \in \R$, $r \ge 1$ and $b_1 >  b_2+ \frac{1}{2}$ 
$$ X^{s,b_1}_{r, 2}   \subset X^{s,b_2}_{r, 1} \qquad \text{ and } \qquad  X^{s,0}_{r, 1} \subset C(\R, {\mathcal F} L^{s,r}) $$
which follow by Cauchy-Schwarz with respect to the $L^1_{\tau}$ norm and by ${\mathcal F}^{-1} L^1\subset L^{\infty} $ respectively. In particular
$$Z^{s}_r(\delta)  \subset C([-\delta, \delta], {\mathcal F} L^{s,r}). $$

We finally recall the following estimate\footnote{This is a trilinear refinement of Bourgain's $L^6(\T)$ Strichartz estimate \cite{B8}.} heavily used in the proof of Theorem \ref{Energy Growth Estimate} below.

\begin{lemma}[Lemma 5.1 \cite{GH}] Let $\frac{1}{3}<b<\frac{1}{2}$ and $s> 3\left(\frac{1}{2}-b\right)$. 
Then 
$$
 \Vert u v {\overline w} \Vert_{L^2_{xt}} \lesssim \Vert  u \Vert_{X^{s,b}}  \Vert  v \Vert_{X^{s, b}}   \Vert  w \Vert_{X^{0, b}}. 
$$

In particular if $b=\frac{1}{2}-$, then 
 \begin{equation}\label{iii}
 \Vert u v {\overline w} \Vert_{L^2_{xt}} \lesssim \Vert  u \Vert_{X^{\epsilon, \frac{1}{2}-}}  \Vert  v \Vert_{X^{\epsilon, \frac{1}{2}-}}   \Vert  w \Vert_{X^{0, \frac{1}{2}-}},
 \end{equation} 
 for small $\epsilon>0$;  while when $b=\frac{1}{3}+$
  \begin{equation}\label{iv}
 \Vert u v {\overline w} \Vert_{L^2_{xt}} \lesssim \Vert  u \Vert_{X^{ \frac{1}{2}-, \frac{1}{3}+}}  \Vert  v \Vert_{X^{\frac{1}{2}-, \frac{1}{3}+}}   \Vert  w \Vert_{X^{0, \frac{1}{3}+}}.
 \end{equation} 

\end{lemma}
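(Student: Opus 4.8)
The plan is to prove this trilinear bound by the Fourier restriction (dyadic modulation) method, viewing it as the dual of a quadrilinear form and exploiting the algebraic resonance relation encoded in the weights $\langle \tau + n^2\rangle$; this is precisely the trilinear sharpening of Bourgain's $L^6(\T)$ bound referred to in the footnote.

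First I would dualize: since $\|F\|_{L^2_{xt}} = \sup_{\|h\|_{L^2}\le 1}\big|\iint F\,\overline h\,dx\,dt\big|$, it suffices to bound the quadrilinear form $\iint u\,v\,\overline w\,\overline h\,dx\,dt$ by $\|u\|_{X^{s,b}}\|v\|_{X^{s,b}}\|w\|_{X^{0,b}}\|h\|_{L^2}$. By Plancherel this becomes a sum over $(n_j,\tau_j)$, $j=1,\dots,4$, constrained to $n_1+n_2=n_3+n_4$ and $\tau_1+\tau_2=\tau_3+\tau_4$. Writing $\sigma_j:=\tau_j+n_j^2$ for the modulation of each factor, the decisive identity on this set is
\begin{equation*}
\sigma_1+\sigma_2-\sigma_3-\sigma_4=n_1^2+n_2^2-n_3^2-n_4^2=2(n_1-n_3)(n_3-n_2),
\end{equation*}
so that the largest modulation always controls the product of the two frequency gaps, $\max_j\langle\sigma_j\rangle\gtrsim|n_1-n_3|\,|n_3-n_2|$. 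This is the dispersive gain that will be traded for the $b$-weights.

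Next I would decompose each factor into dyadic modulation blocks $\langle\sigma_j\rangle\sim L_j$ and normalize, absorbing the weights $\langle n\rangle^s\langle\sigma\rangle^b$ into $L^2$-normalized functions, so as to reduce to a counting estimate for each fixed configuration $(L_1,L_2,L_3,L_4)$. The engine is Bourgain's periodic $L^6$ Strichartz inequality and the lattice count behind it: for the Schr\"odinger flow the number of admissible splittings of a frequency--modulation datum is $\lesssim_{\eps} N^{\eps}$ by the divisor bound. Concretely I would pair the four factors into two bilinear pieces, apply an $L^6$-type estimate to each block, and use the resonance identity above to convert one power of the largest modulation into the frequency gaps; the residual combinatorial loss is then a divisor function of $(n_1-n_3)(n_3-n_2)$.

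The two hypotheses enter in the final summation over the dyadic parameters: the series produced by the three modulation weights $\langle\sigma\rangle^{-b}$, together with the underlying Strichartz exponent, is summable precisely in the range $b>\tfrac{1}{3}$ (the endpoint being genuinely critical), while the $N^{\eps}$ combinatorial loss must be absorbed by the two spatial weights $\langle n_1\rangle^s\langle n_2\rangle^s$, which forces $s>3(\tfrac{1}{2}-b)$; the third slot carries no spatial weight, explaining why $w$ may be measured in $X^{0,b}$. Specializing recovers \eqref{iii} at $b=\tfrac{1}{2}-,\ s=\eps$ and \eqref{iv} at $b=\tfrac{1}{3}+,\ s=\tfrac{1}{2}-$. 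The \emph{main obstacle} is the near-diagonal, nearly resonant regime in which $|n_1-n_3|$ and $|n_3-n_2|$ are both small: there the identity yields essentially no dispersive gain, $\max_j L_j$ may be $O(1)$, and the largest modulation may even sit on the unweighted dual factor $h$, so the gain cannot be read off directly from the $\langle\sigma\rangle^{-b}$ weights. Closing the estimate in this regime purely through the divisor/counting bound, while never spending more than $3(\tfrac{1}{2}-b)$ derivatives, is the delicate balance on which the whole argument rests.
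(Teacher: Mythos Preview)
The paper does not actually prove this lemma: it is quoted verbatim from Gr\"unrock--Herr \cite[Lemma 5.1]{GH}, with only the footnoted remark that it is ``a trilinear refinement of Bourgain's $L^6(\T)$ Strichartz estimate.'' There is therefore no proof in the paper to compare your proposal against.

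Your outline is the standard and correct strategy for this class of estimates, and is in substance the argument of \cite{GH}: dualize to a quadrilinear form, pass to the Fourier side, use the resonance identity $\sigma_1+\sigma_2-\sigma_3-\sigma_4 = 2(n_1-n_3)(n_1-n_4)$ on the constraint set $n_1+n_2=n_3+n_4$, $\tau_1+\tau_2=\tau_3+\tau_4$ to gain a factor of the largest modulation, and close by a dyadic modulation decomposition together with the divisor--bound counting that underlies Bourgain's periodic $L^6$ estimate. Your identification of the near--resonant regime (both frequency gaps small, largest modulation possibly sitting on the unweighted dual factor) as the bottleneck, and of the thresholds $b>\tfrac13$ and $s>3(\tfrac12-b)$ as arising respectively from the modulation summation and from absorbing the $N^\eps$ divisor loss, is accurate. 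The sketch is not a complete proof---the actual case analysis and the precise bilinear block estimates still have to be written out---but the architecture is right and matches the cited source.
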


\bigskip

\subsection{The Periodic Gauged Derivative NLS Equation}

We first  recall S. Herr's gauge transformation. 
For $f\in L^2(\T)$, let $${G}(f)(x):= \exp(-i {J}(f)) \, f(x)$$ where 
\begin{equation}\label{expgauge} 
J(f)(x):=  \frac{1}{2 \pi} \int_0^{2\pi} \int_{\theta}^x |f(y)|^2 - \frac{1}{2 \pi} \Vert f \Vert^2_{L^2(\T)} \, dy \, d\theta. 
\end{equation} 
Note $G(f)$ is $2\pi$-periodic since its integrand has zero mean value. Then for  $u \in C([-T, T]; L^2(\T))$ and  $m(u):=\frac{1}{2 \pi} \int_{\T} |u(x,0)|^2 dx $ the adapted periodic gauge is defined as\footnote{Recall $m(u)(t)$ is conserved under the flow of \eqref{DNLS}.}
 $$ \mathcal G(u)(t,x) : = {G}(u(t))(x - 2 \,t \,m(u)).$$  Note the $L^2$ norm of $ \mathcal G(u)(t,x)$ is still conserved since the torus is invariant under translation.

\noi We have that $$ \mathcal G :  C([-T, T]; H^{\sigma}(\T)) \to   C([-T, T]; H^{\sigma}(\T))$$ is a homeomorphism for any $\sigma \geq 0$ and locally bi-Lipschitz on subsets of   
$C([-T, T]; H^{\sigma}(\T))$ with prescribed $\Vert u(0)\Vert_{L^2}$(\cite{Herr}).  Moreover the same is true if we replace $H^{\sigma}(\T)$ by $\FF L^{s,r}$ with $s > \frac{1}{2} -\frac{1}{r}$ when $2< r <\infty$ and $s \ge 0$ when $r=2$ (\cite{GH}).

Then if $u$ is a solution to DNLS \eqref{DNLS} and $v := \mathcal G(u)$ we have that $v$ solves the gauged DNLS equation (GDNLS):

\begin{equation}\label{GDNLS} v_t - i v_{xx} = - v^2 {\cj v}_x + \frac{i}{2} |v|^4 v  - i  \psi(v) v - i m(v) |v|^2 v \end{equation}
with initial data $v(0)= \mathcal G(u(0))$, where 
\begin{eqnarray} 
 m(v)(t)&:= & \frac{1}{2 \pi} \int_{\T} |v(x,t)|^2 dx  \qquad \qquad \mbox{ and } \label{defm} \\
\psi(v)(t)&:=& -\frac{1}{\pi} \int_{\T}  {\rm Im}  ( v {\cj v}_x ) \, dx \, + \frac{1}{4 \pi} \int_{\T} | v|^4 dx - m(v)^2  \label{defpsi}.
\end{eqnarray}
Note that $m(v)$ is  conserved in time; more precisely $ m(v)(t) = \frac{1}{2 \pi} \int_{\T} |v(x,0)|^2 dx =m(u)$  and that both $m(v)$ and $\psi(v)$ are real. 

 The initial value problem associated to \eqref{GDNLS}  with data in $\FF L^{s,r}(\T)$ is locally well-posed  in $Z^{s}_r(\delta), \, 2<r<4, \, s\geq \frac{1}{2}$, for some $\delta>0$. This was proved in Theorem 7.2 of \cite{GH}.

\begin{remark}\rm  Local  well-posedness for (GDNLS) \eqref{GDNLS}  implies local existence,
uniqueness and continuity of the flow map for DNLS \eqref{DNLS} \cite{Herr, GH}.  One cannot
however carry back  to solutions to DNLS all the auxiliary estimates coming from the local well
posedness result for GDNLS. 
\end{remark}

Now we  show how the energy $E(u)$ and $H(u)$ transform under the gauge. Let $u$ be 
the solution to  (DNLS) \eqref{DNLS} and define 
$$
w = e^{-iJ(u)} u.
$$
Then $w$ solves   (GDNLS) \eqref{GDNLS}  with the extra $ m(w) w_x$ term in the linear 
part of the equation \cite{Herr}.   So  the gauge transform is, properly speaking the 
transformation $w = e^{-iJ(u)} u$ followed by the transformation
$$
v(x,t) = w(t, x- 2m(w) t)
$$
But all the terms involved in the conserved quantities we considered are invariant under 
this second  transformation  $w\to v$ (the torus is invariant under translation).  We also 
notice that $m(u)=m(w)=m(v)$, hence below we will
be simply using $m$ for this quantity.

Since 
$$
u = e^{iJ(w)} w 
$$
we have
$$
u_x = e^{iJ(w)}( w_x +  i J(w)_x w) 
$$
with $J(w)_x =  |w|^2 - m$. 

We have 
\begin{eqnarray}
H(u)  &=& {\rm Im} \int_{\T} u {\cj u}_x \, dx  + \frac{1}{2} \int_{\T} |u|^4 \, dx.  \nonumber \\
 &=& {\rm Im} \int_{\T} w \left( {\cj w}_x - i J(w)_x \cj{w} \right) \, dx  + \frac{1}{2} \int_{\T} |w|^4 \, dx.  \nonumber \\
&=&  {\rm Im} \int_{\T} w  {\cj w}_x   - \frac{1}{2} \int_{\T} |w|^4 \, dx  + 2\pi m^2=:\mathscr H(w) \label{oo4}
\end{eqnarray}

In addition we have 
 \begin{eqnarray}
u_x \cj{u_x}  \,&=&\,   \left( {w_x} + i J(w)_x w \right)    \left( \cj{w_x} - i J(w)_x \cj{w} \right)  \nonumber  \\
                       \,&=&\,  w_x \cj{w_x}   +i J(w)_x (w \cj{w_x} - \cj{w} w_x )   + J(w)_x^2 |w|^2 \nonumber \\   
                       \,&=&\,  w_x \cj{w_x}   -2 {\rm Im}  J(w)_x  w \cj{w_x}    + \left( |w|^6 - 2m |w|^4 +  m^2\,|w|^2\right)   \nonumber \\                        
                        \,&=&\,  w_x \cj{w_x}   -2  {\rm Im} \, w^2  \cj{w} \cj{w_x}   +2m \, {\rm Im} \,w \cj{w_x}   + \left( |w|^6 - 2m |w|^4 +  m^2\,|w|^2\right)                      
                           \label{oo2}
                       \end{eqnarray}
By the same calculations we also have
\begin{equation}\label{oo3}
u^2\cj{u} \cj{u_x} =w^2\cj{w} \cj{w_x} - i|w|^6 +i m\, |w|^4.
\end{equation}                      

We now recall that
\begin{equation}\label{oo1}
E(u)(t) = \int | u_x|^2 \, dx + \frac{3}{2}  {\rm Im} \int  u^2 {\cj {u}\, \cj{u_x}} \, dx + \frac{1}{2} \int |u|^6 \, dx,                  
\end{equation}    
 hence by using  \eqref{oo1}, \eqref{oo2},  \eqref{oo3}  we find  
$$
E(u)=  \int w_x \cj{w_x} \, dx  - \frac{1}{2} {\rm Im} \int w^2 \cj{w} \cj{w_x} \,dx + 2m\, {\rm Im}\, \int w \cj{w_x} \, dx   -\frac{1}{2}m\,  \int |w|^4 \, dx  + 
2\pi m^3.
$$
If we define
\begin{equation}\label{ew}
\mathscr E(w):=  \int_{\T} | w_x|^2 \, dx - \frac{1}{2} {\rm Im} \int_{\T} w^2 {\cj {w} \, \cj{w_x}} \, dx + \frac{1}{4 \pi}  \biggl(\int_{\T} |w(t)|^2 \, dx \biggr)\biggl(\int_{\T} |w(t)|^4 \, dx\biggr),
\end{equation}
then $E(u)$ can be rewritten as
\begin{equation}\label{eu}
E(u) = \mathscr E(w) + 2m\, \mathscr H(w) - 2\pi \, m^3=:\EE(w).
\end{equation}

\begin{remark}\label{conservations}\rm
We observe that $H(u)(t)= \mathscr H(w)(t)$ and $\frac{d}{dt}H(u)(t)=0$ since $H$ is the Hamiltonian for (DNLS) \eqref{DNLS}, hence it follows that 
$\frac{d}{dt}\mathscr H(w)(t)=0$. On the other hand, we also know that $\frac{d}{dt}E(u)(t)=0$, hence $\frac{d}{dt}\mathscr E(w)(t)=0$.
By the translation invariance of integration over $\T$, we have that \eqref{eu} holds with $v$ in place of $w$ and 
$$\frac{d}{dt}\mathscr H(v)(t) = 0 = \frac{d}{dt}\mathscr E(v)(t).$$
\end{remark}

\section{Finite dimensional approximation of (GDNLS)}

We denote by $P_N f =  \sum_{|n|\leq N}\widehat{f}(n)e^{ i n x}$  the finite 
dimensional projection onto the first $2N+1$ modes and $P_N^{\perp}:= I - P_N$. Then 
the  finite dimensional approximation (FGDNLS) is: 
\begin{equation} \label{FGDNLS} v^N_t  = i v^N_{xx}  - P_N ((v^N)^2  \cj{v^N_x})+ \frac{i}{2}\, P_N (|v^N|^4 v^N) - i \psi(v^N)(t) v^N   - i m(v^N) P_N (|v^N|^2 v^N)  \end{equation}
with initial data 
\begin{equation}v_0^N=P_N \,v_0, \label{fid}\end{equation} where $m$ and $\psi$ are as defined in \eqref{defm} and \eqref{defpsi} respectively.

\medskip

\begin{lemma} \label{alsoconserved} We have that  
$$\frac{d}{dt}m(v^N) (t) := \frac{d}{dt}\frac{1}{2\pi} \int_{\T} |v^N(x,t)|^2 dx=0.$$
\end{lemma}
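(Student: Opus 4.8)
The plan is to differentiate under the integral and substitute the equation \eqref{FGDNLS}, then check term by term that the real part vanishes. Writing $v=v^N$ for brevity, the starting point is
$$\dt m(v^N)(t) = \frac{1}{2\pi}\,{\rm Re}\int_{\T} v^N_t\,\cj{v^N}\,dx.$$
Before computing, I would record the structural fact that drives the whole argument: $v^N(t)$ stays band-limited for all $t$. Indeed, the linear propagator $e^{it\partial_{xx}}$ preserves Fourier support, while every nonlinear term on the right-hand side of \eqref{FGDNLS} carries a projection $P_N$, so the entire flow remains in the range of $P_N$ as soon as $v_0^N=P_N v_0$ does. Consequently $P_N v^N=v^N$, and since $P_N$ is self-adjoint on $L^2(\T)$ one may harmlessly drop each $P_N$ when pairing a nonlinear term against $\cj{v^N}$: for any $g$,
$$\int_{\T} P_N(g)\,\cj{v^N}\,dx = \int_{\T} g\,\cj{P_N v^N}\,dx = \int_{\T} g\,\cj{v^N}\,dx.$$
This reduction is exactly what lets the computation succeed despite the loss of Hamiltonian structure after truncation.

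With the projections removed I would substitute $v^N_t = i v^N_{xx} - (v^N)^2\cj{v^N_x} + \frac{i}{2}|v^N|^4 v^N - i\psi(v^N)v^N - i\,m(v^N)|v^N|^2 v^N$ and treat the five terms separately. The linear term gives $\int_{\T} i v^N_{xx}\,\cj{v^N}\,dx = -i\int_{\T}|v^N_x|^2\,dx$ after integrating by parts, which is purely imaginary. The three zeroth-order nonlinear terms pair with $\cj{v^N}$ to produce $\frac{i}{2}\int_{\T}|v^N|^6\,dx$, $-i\,\psi(v^N)\int_{\T}|v^N|^2\,dx$, and $-i\,m(v^N)\int_{\T}|v^N|^4\,dx$ respectively; since $\psi(v^N)$ and $m(v^N)$ are real (as noted after \eqref{defpsi}), each of these is $i$ times a real quantity and hence purely imaginary. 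Taking ${\rm Re}$ therefore annihilates all four of these contributions.

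The only term demanding a genuine cancellation is the quadratic derivative term $-(v^N)^2\cj{v^N_x}$, which pairs to $-\int_{\T}|v^N|^2 v^N\,\cj{v^N_x}\,dx$. Here I would use that $\frac{d}{dx}|v^N|^4 = 2|v^N|^2\bigl(v^N_x\,\cj{v^N} + v^N\,\cj{v^N_x}\bigr)$ integrates to zero on $\T$ by periodicity, which forces
$$\int_{\T}|v^N|^2 v^N\,\cj{v^N_x}\,dx = -\int_{\T}|v^N|^2\,\cj{v^N}\,v^N_x\,dx = -\cj{\int_{\T}|v^N|^2 v^N\,\cj{v^N_x}\,dx}.$$
Thus this integral too is purely imaginary, its real part vanishes, and collecting all five terms yields $\dt m(v^N)(t)=0$. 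I expect the main (indeed the only) subtlety to be the bookkeeping around the projections: the essential point, and the place where the argument would break without the band-limitedness of $v^N$, is the self-adjoint removal of $P_N$; once that is in place every remaining term is either manifestly $i$ times a real number or is shown purely imaginary by the single periodicity identity above.
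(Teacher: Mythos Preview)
Your proof is correct and follows essentially the same route as the paper's: remove the projections via self-adjointness of $P_N$ together with $P_N v^N=v^N$, observe that the linear and the three zeroth-order nonlinear terms are purely imaginary, and handle the derivative term by the identity $\int_{\T}\partial_x|v^N|^4\,dx=0$. The only slip is a harmless missing factor of $2$ in your starting formula (one should have $\dt m(v^N)=\frac{1}{\pi}\,{\rm Re}\int_{\T} v^N_t\,\cj{v^N}\,dx$), which of course does not affect the conclusion that the quantity vanishes.
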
 
\begin{proof}
Indeed for simplicity let us momentarily denote by $w:=v^N$ a solution to \eqref{FGDNLS}; note $P_N w =w$. Then
using that for any $F$,  $\int P_N (F(v^N))   \cj{v^N}  dx = \int  F(v^N)  P_N \cj{v^N} dx = \int   F(v^N)  \cj{v^N} dx$ we obtain
 \begin{align*}
&\dt \big(2\pi m(w)\big) = 2 \text{Re} \int w_t \cj{w} \, dx \\
& = 2 \text{Re} \bigg( - i \int |w_{x}|^2 - \int P_N(w^2 \cj{w}_x) \cj{w}
+ \frac{i}{2} \int P_N(|w|^4 w ) \cj{w} \, - \bigg.\\
&\bigg. \qquad \qquad  -\,  i \psi(w) (t) \int |w|^2 - i m(w)(t) \int P_N(|w|^2 w) \cj{w} \bigg)\\
&=  2 \text{Re}    \bigg(     - \int (w^2 \cj{w}_x) \cj{w}
+ \frac{i}{2} \int |w|^6 - i \psi(w) \int| w|^2 - i m(w) \int |w|^4 \bigg)     \\
& = - \int w^2 \cj{w}\cj{w}_x - \int w w_x \cj{w}^2 = -\frac{1}{2} \int \dx (|w|^4) = 0.
\end{align*}
\end{proof}

\begin{theorem}[Local well-posedness] \label{lwp} Let $2<r<4$ and $s\geq \frac{1}{2}$.  
Then for every 
\begin{equation} v^N_0 \in {B_R} :=\{ v^N_0 \in \FF L^{s,r}(\T)\, /\,  \|v^N_0\|_{\FF L^{s,r}(\T)}<R\}
\end{equation} 
and $\delta \lesssim R^{-\gamma}$, for some $ \gamma>0$, there exists a unique solution 
\begin{equation}v^N\in Z^{s}_{r}(\delta)\subset C([-\delta,\delta]; \FF L^{s,r}(\T))\end{equation}
of \eqref{FGDNLS} and \eqref{fid}. Moreover the map 
$$\left(B_R, \|\cdot\|_{\FF L^{s,r}(\T)}\right) \longrightarrow C([-\delta,\delta]; \FF L^{s,r}(\T)) : \, \, \, v^N_0 \rightarrow  v^N$$
is real analytic. 
\end{theorem}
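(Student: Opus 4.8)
The plan is to solve \eqref{FGDNLS}--\eqref{fid} by a contraction mapping argument in the ball of $Z^s_r(\delta)$, treating the equation in Duhamel form and exploiting the fact that the required multilinear estimates are precisely those already established for the full equation (GDNLS) in \cite{GH}, now with the harmless projections $P_N$ inserted. Writing $S(t):=e^{it\partial_x^2}$ for the linear Schr\"odinger propagator and collecting the nonlinear terms of \eqref{FGDNLS} into
$$\mathcal N_N(v):= -\,P_N\big(v^2 \cj v_x\big) + \tfrac{i}{2}\,P_N\big(|v|^4 v\big) - i\,\psi(v)\,v - i\,m(v)\,P_N\big(|v|^2 v\big),$$
the sought solution is the fixed point of
$$\Gamma_{v_0^N}(v)(t):= S(t)\,v_0^N + \int_0^t S(t-t')\,\mathcal N_N(v)(t')\,dt',\qquad t\in[-\delta,\delta].$$
First I would record the three ingredients of the $Z^s_r(\delta)$ calculus: the homogeneous linear estimate $\|S(t)v_0^N\|_{Z^s_r(\delta)}\lesssim \|v_0^N\|_{\FF L^{s,r}}\le \|v_0\|_{\FF L^{s,r}}$; the inhomogeneous (Duhamel) estimate sending a suitable dual norm of $\mathcal N_N(v)$ into $Z^s_r(\delta)$ with a gain $\delta^{\theta}$, $\theta>0$, coming from the time localization; and the nonlinear estimates for each of the trilinear, quintilinear, and scalar-coefficient terms. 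These nonlinear estimates are exactly those proved for (GDNLS) in Theorem 7.2 of \cite{GH}, the genuinely delicate one being the derivative-loss term $v^2\cj v_x$, controlled through the $X^{s,b}_{r,q}$ spaces and the trilinear refinement of Bourgain's $L^6$ Strichartz estimate quoted above; the scalars $m(v)$ and $\psi(v)$ are controlled in $t$ by the $Z^s_r(\delta)$ norm in the same way.

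The only new point relative to \cite{GH} is the presence of $P_N$. Since $P_N$ is the Fourier multiplier with symbol $\mathbf 1_{|n|\le N}$, independent of the dual time variable $\tau$, it acts as a bounded operator of norm $\le 1$ on each $X^{s,b}_{r,q}$ and on $\FF L^{s,r}$, with bound independent of $N$. Hence every estimate used in the \cite{GH} contraction survives verbatim after inserting $P_N$, with the same constants. I would therefore obtain constants $C$ and $\gamma>0$, independent of $N$, so that $\Gamma_{v_0^N}$ maps the ball $\{\,\|v\|_{Z^s_r(\delta)}\le 2CR\,\}$ into itself and is a contraction there whenever $\delta\lesssim R^{-\gamma}$, with the power $\gamma$ dictated by the highest-degree (quintic) term; this yields a unique fixed point $v^N\in Z^s_r(\delta)$, and the embedding $Z^s_r(\delta)\subset C([-\delta,\delta];\FF L^{s,r})$ recorded earlier places $v^N$ in the stated space. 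Uniqueness in the full restriction space, not merely in the ball, then follows by the standard continuity-in-$\delta$ bootstrap.

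For real analyticity of $v_0^N\mapsto v^N$, I would note that $\mathcal N_N$ is a finite sum of terms, each multilinear (polynomial of degree at most five) in $(v,\cj v)$ and in the scalars $m(v),\psi(v)$, so $\Gamma_{v_0^N}$ is a real-analytic map of $v$ and an affine map of $v_0^N$ on $Z^s_r(\delta)$. Applying the analytic implicit function theorem to $(v_0^N,v)\mapsto v-\Gamma_{v_0^N}(v)$ at the fixed point — whose partial $v$-derivative $I-\partial_v\Gamma_{v_0^N}$ is invertible by Neumann series precisely because $\Gamma$ is a contraction — shows the solution map is real analytic; equivalently, iterating $\Gamma$ expresses $v^N$ as a power series in $v_0^N$ convergent in $Z^s_r(\delta)$ by the contraction bounds.

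The main obstacle is conceptual rather than computational. Equation \eqref{FGDNLS} is an ODE on the $(2N+1)$-dimensional space of trigonometric polynomials of degree $\le N$, so a bare Cauchy--Lipschitz argument already gives existence, but on a time interval that degenerates as $N\to\infty$ (the Lipschitz constant of $v\mapsto v^2\cj v_x$ grows like $N$). The entire point of Theorem~\ref{lwp} is to produce $\delta$ and the solution bounds \emph{uniformly in $N$}, and this is exactly what the uniform boundedness of $P_N$ on the Bourgain-type spaces, together with the $N$-free multilinear estimates of \cite{GH}, delivers. Thus the crux is verifying that nothing in the \cite{GH} scheme depends on $N$ once the projections are inserted.
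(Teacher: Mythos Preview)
Your proposal is correct and takes essentially the same approach as the paper: both reduce the result to Theorem~7.2 of \cite{GH} by observing that the projection $P_N$ is a Fourier multiplier bounded uniformly in $N$ on the relevant spaces (you phrase this as boundedness on $X^{s,b}_{r,q}$ and $\FF L^{s,r}$; the paper says $P_N$ is bounded on $L^p$ and commutes with $D^s$), so all multilinear estimates transfer with $N$-independent constants. Your added remarks on real analyticity via the analytic implicit function theorem and on why the uniform-in-$N$ lifespan is the real content of the theorem are accurate elaborations of what the paper leaves implicit; the only point the paper mentions that you do not is that \cite{GH} is stated at $s=\tfrac12$ and one needs a persistence-of-regularity step to reach general $s\ge\tfrac12$.
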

\begin{proof} The proof follows the argument in \cite{GH}, Theorem 7.2 since $P_N$ acts on a multilinear nonlinearity and it is a bounded operator in 
$L^p, 1<p<\infty$ and commutes with $D^s$.  Also, although the proof in \cite{GH} is presented for $s=\frac{1}{2}$, a simple argument of persistence of regularity
gives the result for any $s\geq \frac{1}{2}$.
\end{proof}

The following lemma gives control on how close the finite dimensional approximations
are to the solution of \eqref{GDNLS}.  Our proof is a variation of Bourgain's Lemma 2.27 in \cite{B1} (see also \cite{BT1}).

\begin{lemma}[Approximation lemma]\label{approximation} Let  $v_0\in  \FF L^{s,r}(\T), s > \frac{1}{2},  \, 2< r <4$ be such that  $\|v_0\|_{\FF L^{s,r}(\T)}<A$, for some $A>0$, and let $N$ be a large integer. Assume the solution 
$v^N$ of \eqref{FGDNLS} with initial data $v_0^N(x) := P_N(v_0)$ satisfies the bound
\begin{equation}\label{bound}
\|v^N(t)\|_{\FF L^{s,r}(\T)}\leq A,  \mbox{ for all } t\in [-T,T],
\end{equation}
for some given $T>0$.
Then the IVP (GDNLS)  \eqref{GDNLS} with initial data $v_0$ is well-posed on $[-T,T]$ and  there exists $C_0, C_1>0$, such that its solution $v(t)$ satisfies the following estimate:
\begin{equation}\label{decay}
\|v(t)-v^N(t)\|_{\FF L^{s_1,r}(\T)}\lesssim \exp[C_0(1+A)^{C_1}T]N^{s_1-s},
\end{equation}
for all $t\in[-T,T], \frac{1}{2} \le s_1<s$.
\end{lemma}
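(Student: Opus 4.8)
The plan is to compare $v$ and $v^N$ on short time intervals of a fixed length $\delta$ coming from the local theory and to iterate the resulting per-step estimate roughly $T/\delta$ times, so that the errors accumulate into the factor $\exp[C_0(1+A)^{C_1}T]$ in \eqref{decay}. Throughout I write the nonlinearity of \eqref{GDNLS} as $\mathcal N(v) := -v^2{\cj v}_x + \frac{i}{2}|v|^4 v - i\psi(v)v - i m(v)|v|^2 v$, so that \eqref{FGDNLS} reads $v^N_t - i v^N_{xx} = P_N\mathcal N(v^N)$ (the scalars $\psi,m$ commute with $P_N$). Subtracting the two equations, the difference $w := v - v^N$ solves
$$ w_t - i w_{xx} = \big[\mathcal N(v) - \mathcal N(v^N)\big] + P_N^\perp \mathcal N(v^N), \qquad w(0) = P_N^\perp v_0, $$
which splits the forcing into a multilinear difference and a purely high-frequency error.

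First I would treat the multilinear difference. Since each summand of $\mathcal N$ is multilinear (the worst being the derivative term $v^2{\cj v}_x$, together with the scalar coefficients $m,\psi$, which contribute analogous quadratic and quartic differences), the expression $\mathcal N(v)-\mathcal N(v^N)$ telescopes into a sum of terms each carrying exactly one factor of $w$ or $w_x$. Feeding these into the multilinear estimates that underlie Theorem \ref{lwp} in the spaces \eqref{zspace}, together with the time-localization gain available in the restriction norms $X^{s,b}_{r,q}(\delta)$, and using $\|v\|_{Z^s_r(\delta)},\|v^N\|_{Z^s_r(\delta)} \lesssim A$ and $s_1 \le s$, I expect a bound of size $\delta^\theta(1+A)^{C_1}\|w\|_{Z^{s_1}_r(\delta)}$ for some $\theta>0$. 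The crucial term is the error $P_N^\perp\mathcal N(v^N)$: here $v^N$ is localized to $|n|\le N$ while $\mathcal N(v^N)$ has output frequencies only up to a fixed multiple of $N$, so $P_N^\perp$ restricts the output to $|n|>N$, and for any $g$ supported there the inequality $\jb{n}^{s_1-s}\le N^{s_1-s}$ gives $\|g\|_{\FF L^{s_1,r}} \le N^{s_1-s}\|g\|_{\FF L^{s,r}}$. Estimating $\mathcal N(v^N)$ at the top regularity $s$ by the same multilinear bounds then produces the gain $(1+A)^{C_1}N^{s_1-s}$.

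Combining the Duhamel estimate in $Z^{s_1}_r(\delta)$ with the two bounds above, on a subinterval $[t_j,t_{j+1}]$ with $t_j=j\delta$ I obtain
$$ \|w\|_{Z^{s_1}_r(\delta)} \lesssim \|w(t_j)\|_{\FF L^{s_1,r}} + \delta^\theta(1+A)^{C_1}\|w\|_{Z^{s_1}_r(\delta)} + (1+A)^{C_1}N^{s_1-s}. $$
Choosing $\delta\sim(1+A)^{-\gamma}$ small enough to absorb the middle term into the left-hand side, and using the embedding $Z^{s_1}_r(\delta)\subset C([-\delta,\delta];\FF L^{s_1,r})$, this yields a linear recursion $M_{j+1}\le a\,M_j + b$ with $a\ge 1$ and $b\lesssim(1+A)^{C_1}N^{s_1-s}$, where $M_j:=\|w(t_j)\|_{\FF L^{s_1,r}}$. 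Starting from $M_0=\|P_N^\perp v_0\|_{\FF L^{s_1,r}}\le N^{s_1-s}\|v_0\|_{\FF L^{s,r}}\lesssim A\,N^{s_1-s}$ and unwinding over the $j\sim T/\delta\sim T(1+A)^\gamma$ steps gives $\sup_{[0,T]}M_j \lesssim a^{T/\delta}(M_0+b)$, which is exactly of the form $\exp[C_0(1+A)^{C_1}T]N^{s_1-s}$, i.e. \eqref{decay}.

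The main obstacle lies in two points. First, the error $P_N^\perp\mathcal N(v^N)$ must be controlled in the correct source norm dual to \eqref{zspace} while still extracting the full $N^{s_1-s}$ gain, despite the derivative falling on one factor in $v^2{\cj v}_x$; this is precisely where the trilinear estimates \eqref{iii} and \eqref{iv} are used. Second, to run the iteration I must know a priori that $v$ exists on each $[t_j,t_{j+1}]$ with $\delta$ bounded uniformly from below. I would take $s_1=\frac12$, so that the local existence time from Theorem \ref{lwp} is governed by the lower norm $\|\cdot\|_{\FF L^{1/2,r}}$, with persistence of regularity propagating the $\FF L^{s,r}$ regularity on the same interval. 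At each step, $\|v(t_j)\|_{\FF L^{s_1,r}}\le\|v^N(t_j)\|_{\FF L^{s_1,r}}+M_j\le A+\text{(error)}\le 2A$ holds once $N$ is large (the hypothesis that $N$ be large), keeping $\delta$ uniform and allowing the induction to reach $T$; this simultaneously establishes the asserted well-posedness of \eqref{GDNLS} on $[-T,T]$.
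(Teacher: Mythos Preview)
Your proposal is correct and follows essentially the same approach as the paper: write the equation for $w=v-v^N$, split the source into a multilinear difference plus a high-frequency error, use the local multilinear estimates to absorb the difference term for $\delta\sim(1+A)^{-\gamma}$, extract $N^{s_1-s}$ from the high-frequency piece, and iterate $\sim T/\delta$ times to produce the exponential prefactor.

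The one notable variation is in the splitting itself. You write
\[
\mathcal N(v)-P_N\mathcal N(v^N)=\big[\mathcal N(v)-\mathcal N(v^N)\big]+P_N^\perp\mathcal N(v^N),
\]
so that the high-frequency error lives on $v^N$; the paper instead writes
\[
F(v)-P_NF(v^N)=P_N\big[F(v)-F(v^N)\big]+(1-P_N)F(v),
\]
putting the high-frequency error on $v$. Both are legitimate; your choice has the mild advantage that the error term is controlled directly by the hypothesis $\|v^N\|_{\FF L^{s,r}}\le A$, whereas the paper's version requires the a priori bound $\|v\|_{Z^s_r(\delta)}\lesssim A$ from the local theory. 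Your explicit discussion of why $v$ continues to exist on each subinterval (by controlling $\|v(t_j)\|_{\FF L^{1/2,r}}$ via $\|v^N(t_j)\|+M_j$ and taking $N$ large) is also more detailed than the paper's terse ``by iteration \eqref{decay} follows.''
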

\begin{proof}
We first observe that from the local well-posedness theory (\cite{GH} and  Theorem \ref{lwp}),  (GDNLS)  \eqref{GDNLS} with initial data $v_0$ and  (FGDNLS) \eqref{FGDNLS} with initial data $v_0^N$ are both well-posed 
in $[-\delta,\delta], \delta\sim (1+A)^{-\gamma}$.  Let $w:=v-v^N$, then $w$ satisfies the equation 
\begin{equation}\label{diff}
 w_t-iw_{xx}=F(v)-P_NF(v^N)=P_N[F(v)-F(v^N)]+(1-P_N)F(v),
\end{equation}
where $F(\cdot)$ is the nonlinearity of \eqref{GDNLS}.  By the Duhamel principle we have
$$w(t)=S(t)[v_0-v_0^N]+\int_0^t S(t-t')(P_N[F(v)-F(v^N)](t')+(1-P_N)F(v)(t'))\,dt' ,$$
where $S(t)=e^{it \Delta}$, and  from the proof of Theorem 7.2 in \cite{GH} we have the 
bound 
\begin{eqnarray}
\|w\|_{Z^{s_1}_{r}(\delta)}&\lesssim &\|v_0-v_0^N\|_{\FF L^{s_1,r}(\T)}+\delta^\gamma(1+\|v^N\|_{Z^{s_1}_{r}(\delta)}+\|v\|_{Z^{s_1}_{r}(\delta)})^4\|w\|_{Z^{s_1}_{r}(\delta)}\notag\\
&+&\left\| (1-P_N)\int_0^tS(t-t')F(v)(t')\,dt' \right\|_{Z^{s_1}_{r}(\delta)}\notag\\
&\lesssim &AN^{s_1-s}+\delta^\gamma(1+\|v^N\|_{Z^{s_1}_{r}(\delta)}+\|v\|_{Z^{s_1}_{r}(\delta)})^4\|w\|_{Z^{s_1}_{r}(\delta)}+N^{s_1-s}\delta^\gamma(1+\|v\|_{Z^{s}_{r}(\delta)})^5.\label{laststep}
\end{eqnarray}
By choosing a smaller $\delta$ if necessary we obtain from \eqref{laststep} 
$$
\|w\|_{Z^{s_1}_{r}(\delta)}\leq C AN^{s_1-s}+\frac{1}{2}\|w\|_{Z^{s_1}_{r}(\delta)},
$$
for some absolute constant $C>0$, from where 
\begin{equation}\label{loc}
\|v(t)-v^N(t)\|_{\FF L^{s_1,r}(\T)}\leq 2 C AN^{s_1-s}, \, \, \mbox{ for all } t\in [-\delta,\delta]
\end{equation}
and by  iteration \eqref{decay} follows.
\end{proof}

\section{Analysis of the Finite Dimensional Equation (FGDNLS)}

Recall that equation (DNLS)  is Hamiltonian and hence its gauge equivalent formulation 
should stay Hamiltonian  (change of coordinates).  However, the gauge transformation is not 
a `canonical map'  and the symplectic form in the new  coordinates depends on $v$; 
that is we lose the simple expression the symplectic form (namely $\partial_x$) had in the 
original coordinates.  Two problems arise from the lack of commutativity between the gauged
skew-selfadjoint form $J$ and $P_N$:
\smallskip

\noi
(1) The conservation of Lebesgue measure associated to (FGDNLS) is not obvious as before. We must prove that this is indeed the case; see Subsection \ref{lebesgue} below.
\smallskip

and more seriously:

\smallskip
\noi
(2) We lose the conservation of the energy $\EE(v^N)$ for the finite dimensional 
approximations; that is $ \dfrac{d \EE(v^N)}{dt} \ \neq \, 0.$  In particular we lose the invariance
of $\mu_N$,  the associated finite dimensional weighted Wiener  measure. However we have
an  estimate controlling its growth, namely Theorem \ref{Energy Growth Estimate} below.

\subsection{Invariance of the Lebesgue measure}\label{lebesgue}
If we rewrite (FGDNLS) \eqref{FGDNLS} as a system of complex ODE's for the Fourier
coefficients $c_k \equiv \widehat{v^N}(k)$ we obtain a set of $2N+1$ complex equations 
of the form 
$\frac{d}{dt} c_k =F_k( \{ c_j, \bar{c}_j \})$, 
or equivalently $4N+2$ equations $\frac{d}{dt} a_k = {\rm Re F}_k( \{ c_j, \bar{c}_j \})$, 
and $\frac{d}{dt} b_k = {\rm Im F}_k( \{ c_j, \bar{c}_j \})$
for the real functions $a_k={\rm Re}F_k $ and $b_k={\rm Im} F_k $.   

To show that this set of equations preserves volume we need to verify that the divergence of 
the vector field vanishes, i.e.,  
$$ \sum_{k}  \frac{\partial  {\rm Re } F_k } { \partial a_k }   + \frac{\partial  {\rm Im } F_k } { \partial b_k } \,=\,0.$$
This is easily shown to be equivalent to 
$$ \sum_{k}  \frac{\partial  F_k } { \partial c_k }   + \frac{\partial  \bar{F}_k } { \partial \bar{c}_k } \,=\, 0.$$

\noindent
And indeed we have 

\begin{lemma} \label{invlebesgue}The Lebesgue measure $\prod_{|j|\le N} da_j db_j$ is invariant under the flow of the system of ODE's (\ref{ode}). 
\end{lemma}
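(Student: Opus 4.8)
The plan is to verify the divergence condition already reduced to above, namely $\sum_{|k|\le N}\big(\partial_{c_k}F_k+\partial_{\cj{c}_k}\cj F_k\big)=0$, after which Liouville's theorem gives invariance of $\prod_{|j|\le N}da_j\,db_j$ under the flow. First I would fix the differentiation rules in the Wirtinger calculus: writing $v^N=\sum_{|k|\le N}c_k e^{ikx}$ and treating $\{c_k,\cj{c}_k\}$ as independent, one has $\partial_{c_k}v^N=e^{ikx}$, $\partial_{c_k}(v^N_x)=ike^{ikx}$ and $\partial_{c_k}\cj{v^N}=\partial_{c_k}\cj{v^N_x}=0$. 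Since each $F_k$ is the Fourier coefficient $F_k=\frac1{2\pi}\int_\T\mathcal N\,e^{-ikx}\,dx$ of the nonlinearity $\mathcal N$ in \eqref{FGDNLS}, the chain rule turns the divergence into integrals over $\T$.

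The linear term contributes $-ik^2c_k$ to $F_k$, so $\partial_{c_k}F_k+\partial_{\cj{c}_k}\cj F_k=-ik^2+ik^2=0$ for each $k$. For the nonlinear local terms, which involve $v^N,\cj{v^N}$ and $\cj{v^N_x}$ but no factor $v^N_x$, I would use that $\partial_{v_x}\mathcal N=0$ and that $\partial_{c_k}v^N=e^{ikx}$ cancels the factor $e^{-ikx}$, so that
$$
\frac{\partial F_k}{\partial c_k}=\frac1{2\pi}\int_\T\frac{\partial\mathcal N}{\partial v}\,dx ,
$$
independent of $k$. Summing over the $2N+1$ modes and adding the conjugate gives the contribution $\frac{2N+1}{2\pi}\int_\T\big(\partial_v\mathcal N+\partial_{\cj v}\cj{\mathcal N}\big)\,dx=\frac{2N+1}{\pi}\,{\rm Re}\int_\T\partial_v\mathcal N\,dx$. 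This vanishes for each such term: for $\frac{i}{2}|v|^4v$ and for $-im\,|v|^2v$ (with $m$ held fixed) the derivative $\partial_v\mathcal N$ is purely imaginary, while for $-v^2\cj{v}_x$ one computes $\partial_v\mathcal N+\partial_{\cj v}\cj{\mathcal N}=-2\big(v\cj{v}_x+\cj v\,v_x\big)=-2\,\partial_x|v|^2$, which integrates to zero over $\T$.

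The genuine obstacle is the pair of nonlocal prefactors $\psi(v^N)$ and $m(v^N)$ from \eqref{defpsi}--\eqref{defm}: they depend on every mode and so are themselves differentiated by $\partial_{c_k}$, which is precisely the non-Hamiltonian feature of the truncated system flagged above. For $-i\psi\,v^N$ one has $F_k=-i\psi\,c_k$; after the mode-by-mode cancellation of the frozen-$\psi$ piece $-i\psi$ against its conjugate, the leftover is $-i\sum_{k}(\partial_{c_k}\psi)c_k+i\sum_{k}(\partial_{\cj{c}_k}\psi)\cj{c_k}=2\,{\rm Im}\sum_{|k|\le N}(\partial_{c_k}\psi)c_k$, using that $\psi$ is real. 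It therefore suffices to check that $S:=\sum_{|k|\le N}(\partial_{c_k}\psi)c_k$ is real; but $\sum_k c_k\partial_{c_k}$ is the Euler field, which multiplies each monomial by its degree in the holomorphic variables $(c_j)$, and applied to the three manifestly real terms of \eqref{defpsi} (of holomorphic degrees $1,2,2$) it returns a real linear combination of them, so $S\in\R$. The term $-im\,P_N(|v|^2v)$ generates in the same manner the leftover $-i\sum_{k}(\partial_{c_k}m)\,\widehat{|v|^2v}(k)$, and a Plancherel/Dirichlet-kernel computation, using $P_N\cj{v^N}=\cj{v^N}$, evaluates $\sum_{|k|\le N}(\partial_{c_k}m)\,\widehat{|v|^2v}(k)=\frac1{2\pi}\int_\T|v|^4\,dx\in\R$, so this leftover is purely imaginary and again cancels its conjugate. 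Collecting all contributions, the divergence vanishes and the measure is preserved. The whole difficulty is thus concentrated in the bookkeeping for $\psi$ and $m$; once one observes that each of their contributions is forced to be real by homogeneity, cancellation against the complex conjugate is automatic.
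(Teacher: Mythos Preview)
Your argument is correct; the verification of the divergence condition goes through as you describe. The treatment of the local terms via the chain rule, the Euler-field argument for the $\psi$-contribution, and the Plancherel reduction for the $m$-contribution are all sound.

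Your approach differs from the paper's in organization. The paper writes the ODE \eqref{ode} explicitly in Fourier coordinates, splits $F_k$ into seven pieces $F_k^{(1)},\ldots,F_k^{(7)}$ (the linear term, the cubic derivative term, the quintic term, the three summands of $\psi$ separately, and the $m$-term), and for each computes $\partial_{c_k}F_k^{(j)}$ and $\partial_{\bar c_k}\bar F_k^{(j)}$ by hand as explicit sums over frequency indices, checking cancellation mode by mode or after summation in $k$. Your route is more structural: you keep the local nonlinearities in physical space and use the chain rule to collapse $\partial_{c_k}F_k$ to a $k$-independent integral $\frac{1}{2\pi}\int_\T\partial_v\mathcal N\,dx$, then dispatch each case by a one-line reality or total-derivative observation; for the nonlocal coefficients you replace the paper's three separate Fourier computations for $\psi$ by the single remark that $\sum_k c_k\partial_{c_k}$ acting on a real function homogeneous of degree $d$ in the holomorphic variables returns $d$ times that real function. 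This is cleaner and explains \emph{why} the cancellations occur rather than just exhibiting them. The paper's computation, on the other hand, is entirely self-contained in elementary Fourier arithmetic and requires no Wirtinger-calculus bookkeeping, which makes it easier to check line by line.
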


\begin{proof} 
%
The  (FGDNLS) \eqref{FGDNLS} as a system of complex ODE's for the Fourier coefficients
$c_k$ takes the form 
\begin{eqnarray}
\frac{d}{dt} c_k  \,&=&\,  -i k^2 c_k  +  i \sum_{n_1, n_2, n_3} n_3 c_{n_1} c_{n_2} \bar{c}_{n_3} \delta_{n_1+n_2-n_3 -k}   \nonumber  \\
&& + \frac{i}{2} \sum_{n_1, n_2, n_3,n_4,n_5} c_{n_1} c_{n_2} c_{n_3} \bar{c}_{n_4} \bar{c}_{n_5} \delta_{n_1+n_2+n_3 -n_4-n_5-k}   \nonumber \\
&& - i \psi( \{{c_j}, \bar{c}_j\})  c_k  - i m( \{{c_j}, \bar{c}_j\})   \sum_{n_1, n_2, n_3} c_{n_1} c_{n_2} \bar{c}_{n_3} \delta_{n_1+n_2-n_3 -k}  \label{ode}
\end{eqnarray}
with $m(\{{c_j}, \bar{c}_j\}) \,=\, \sum_{j} |c_j|^2$ and 
\begin{equation}
\psi(\{{c_j}, \bar{c}_j\}) \,=\,  -2 \sum_{k} k |c_k|^2  + \frac{1}{2}  \sum_{n_1, n_2, n_3, n_4} c_{n_1} c_{n_2} \bar{ c}_{n_3} \bar{c}_{n_4} \delta_{n_1+n_2-n_3 -n_4}  - \left( \sum_{j} |c_j|^2 \right)^2. 
\end{equation}
To show that this set equation preserve volume we need to verify 
\begin{equation}
 \sum_{k}  \frac{\partial  F_k } { \partial c_k }   + \frac{\partial  \bar{F}_k } { \partial \bar{c}_k } \,=\, 0.
\end{equation}
The vector field $F_k$ consists of several terms which we analyze separately.

\vspace{2mm}

\noindent  {\bf (1)} $F^{(1)}_k = -i  k^2 c_k$.  Then 
$\frac{\partial  F^{(1)}_k } { \partial c_k }   + \frac{\partial  \bar{F}^{(1)}_k } { \partial \bar{c}_k } = -ik^2 + i k^2 =0$. 

\vspace{2mm}

\noindent {\bf (2)} $F^{(2)}_k =  i \sum_{n_1, n_2, n_3} n_3 c_{n_1} c_{n_2} \bar{c}_{n_3} \delta_{n_1+n_2-n_3 -k} $. To differentiate we consider the terms with $n_1=k$ and $n_2=k$
and obtain 
\begin{equation}
\frac{ \partial F^{(2)}_k}{\partial c_k} =  i 2\pi \sum_{ n_2, n_3} n_3  c_{n_2} \bar{c}_{n_3} \delta_{n_2-n_3} +    i 2\pi \sum_{ n_1, n_3} n_3  c_{n_1} \bar{c}_{n_3} \delta_{n_1-n_3}  = i 4 \pi \sum_{n}n |c_n|^2     
 \end{equation}
and similarly  
\begin{equation} \frac{ \partial \bar{F}^{(2)}_k}{\partial \bar{c}_k} = -i4\pi \sum_{n} n |c_n|^2
\end{equation} and thus all the contributions of this term to the divergence disappear. 

\vspace{2mm}

\noindent {\bf (3)} $F^{(3)}_k = \frac{i}{2} \sum_{n_1, n_2, n_3,n_4,n_5} c_{n_1} c_{n_2} c_{n_3} \bar{c}_{n_4} \bar{c}_{n_5} \delta_{n_1+n_2+n_3 -n_4-n_5-k}$. This term is treated similarly as (2)
and is left to the reader. 

\vspace{2mm}

\noindent{\bf (4)} $F^{(4)}_k   = 2i ( \sum_{j} j |c_j|^2 ) c_k$. We have 
\begin{equation}
\frac{ \partial F^{(4)}_k}{\partial c_k}  \,=\,  2i k |c_k|^2  +  2i \sum_{j} j |c_j|^2
\end{equation}
and
\begin{equation} 
\frac{ \partial \bar{F}^{(4)}_k}{\partial \bar{c}_k}  \,=\,  -2i k |c_k|^2  - 2i \sum_{j} j |c_j|^2
\end{equation}
and so these terms do not contribute to the divergence.  

\vspace{2mm}

\noindent{\bf (5)} $F^{(5)}_k   = i ( \sum_{j}  |c_j|^2 )^2 c_k$.  We have 
\begin{equation}
\frac{ \partial F^{(5)}_k}{\partial c_k}  \,=\,  2i  (\sum_{j} |c_j|^2) |c_k|^2  +  i ( \sum_{j}  |c_j|^2)^2 
\end{equation}
and again we have $\frac{ \partial F^{(5)}_k}{\partial c_k}  + \frac{ \partial \bar{F}^{(5)}_k}{\partial \bar{c}_k}=0$.

\vspace{2mm}

\noindent{\bf (6)}   $F^{(6)}_k  =  -\frac{i}{2} \sum_{n_1, n_2, n_3, n_4} c_{n_1} c_{n_2} \bar{ c}_{n_3} \bar{c}_{n_4} \delta_{n_1+n_2-n_3 -n_4}  c_k $. We have  
\begin{eqnarray}
\frac{ \partial F^{(6)}_k}{\partial c_k} \,&=&\,   -\frac{i}{2} \sum_{n_1, n_2, n_3, n_4} c_{n_1} c_{n_2} \bar{ c}_{n_3} \bar{c}_{n_4} \delta_{n_1+n_2-n_3 -n_4}  \nonumber \\
&& - i  \sum_{n_2, n_3, n_4} c_k c_{n_2}\bar{ c}_{n_3} \bar{c}_{n_4} \delta_{k+n_2-n_3 -n_4} \label{l1}
\end{eqnarray} 
and 
\begin{eqnarray}
\frac{ \partial \bar{F}^{(6)}_k}{\partial \bar{c}_k} \,&=&\,  + \frac{i}{2} \sum_{n_1, n_2, n_3, n_4} \bar{c}_{n_1} \bar{c}_{n_2} c_{n_3} c_{n_4} \delta_{n_1+n_2-n_3 -n_4} \nonumber \\
&& +i  \sum_{n_2, n_3, n_4} \bar{c}_k \bar{c}_{n_2} c_{n_3} c_{n_4} \delta_{k+n_2-n_3 -n_4}.
\label{l2}
\end{eqnarray}
The first terms in  (\ref{l1}) and (\ref{l2}) cancel for each $k$.  By summing the second  terms in  
(\ref{l1}) and (\ref{l2}) over $k$,  we see that they do not contribute to the divergence.

\noindent{\bf (7)}   $F^{(7)}_k  =  -i  \sum_{j} |c_j|^2   \sum_{n_1, n_2, n_3} c_{n_1} c_{n_2} \bar{c}_{n_3} \delta_{n_1+n_2-n_3 -k} $. We have 
\begin{eqnarray}
\frac{ \partial F^{(7)}_k}{\partial c_k}&=&  -i \sum_{n_1, n_2, n_3} c_{n_1} c_{n_2} \bar{c}_{n_3}  \bar{c}_k \delta_{n_1+n_2-n_3 -k} \, - \, 2i (\sum_j|c_j|^2)^2   
\end{eqnarray} 
and 
\begin{eqnarray}
\frac{ \partial \bar{F}^{(7)}_k}{\partial \bar{c}_k}&=&  i \sum_{n_1, n_2, n_3} \bar{c}_{n_1} \bar{c}_{n_2} c_{n_3}  c_k \delta_{n_1+n_2-n_3 -k} \, + \, 2i (\sum_j|c_j|^2)^2.
\end{eqnarray}
The second terms add to $0$ for each $k$ while the first terms cancel if we sum over all $k$. 

\end{proof}

\subsection{Energy growth estimate}\label{energyestimate}

\begin{theorem} \label{Energy Growth Estimate} Let $v^N(t)$ be a solution to (FGDNLS) \eqref{FGDNLS} in $[-\delta,\delta],$ and let $K>0$ be such that 
$\|v^N\|_{X_3^{\frac{2}{3}-, \frac{1}{2}}(\delta)} \leq K.$ Then there exists $\beta>0$ such that 
\begin{equation} \label{growthestimate}
|\EE( v^N(\dl)) - \EE(v^N(0))| = \Big|\int_0^\dl \frac{d}{dt} \EE(v^N)(t)dt\Big|
\lesssim  C(\dl) N^{-\beta}\max(K^6,K^8). \end{equation}
\noi
\end{theorem}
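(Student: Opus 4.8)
The plan is to reduce the growth of $\EE(v^N)$ to a finite sum of multilinear space--time integrals, each carrying the projection $P_N^\perp := I-P_N$, and then to extract a power $N^{-\beta}$ from that projection while closing the resulting estimates in the control norm $X_3^{\frac23-,\frac12}(\delta)$.

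First I would compute $\frac{d}{dt}\EE(v^N)$. Since $m(v^N)$ is conserved (Lemma \ref{alsoconserved}) and $\EE=\mathscr E+2m\mathscr H-2\pi m^3$, it suffices to differentiate $\mathscr E(v^N)$ and $\mathscr H(v^N)$. Writing the right-hand side of \eqref{FGDNLS} as $iv^N_{xx}+P_N F(v^N)$, where $F$ is the full (GDNLS) nonlinearity of \eqref{GDNLS}, and recalling from Remark \ref{conservations} that $\frac{d}{dt}\mathscr E$ and $\frac{d}{dt}\mathscr H$ vanish identically on the genuine flow $w_t=iw_{xx}+F(w)$, the two computations differ only through the projection. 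Schematically, for $\mathscr G\in\{\mathscr E,\mathscr H\}$ a short computation gives
$$
\frac{d}{dt}\mathscr G(v^N)=-2\,\mathrm{Re}\int_{\T}\overline{\,P_N^\perp\!\left(\tfrac{\delta\mathscr G}{\delta\bar w}(v^N)\right)}\,F(v^N)\,dx,
$$
using that $P_N^\perp$ is self-adjoint and commutes with complex conjugation. The crucial gain is that the two-derivative part $-w_{xx}$ of $\delta\mathscr E/\delta\bar w$ and the one-derivative part of $\delta\mathscr H/\delta\bar w$ are supported on $|n|\le N$ and are therefore annihilated by $P_N^\perp$; only the genuinely nonlinear cubic and quintic pieces of the variational derivatives survive. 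Thus $\frac{d}{dt}\EE(v^N)$ is a finite sum of sextilinear and octilinear integrals in $v^N,\overline{v^N}$ carrying at most one spatial derivative per factor, each with a $P_N^\perp$; after using the conservation of $m$ to absorb the purely scalar ($\psi$- and $m$-) contributions, the surviving terms organize into contributions of degree six and eight, matching the $\max(K^6,K^8)$ in \eqref{growthestimate}.

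Next I would integrate over $[0,\delta]$ and estimate each term. The model term is
$$
\int_0^\delta\!\!\int_{\T}\overline{P_N^\perp\big(f_1 f_2\bar f_3\big)}\,\big(g_1 g_2\bar g_3\big)\,dx\,dt,\qquad f_i,g_j\in\{v^N,\overline{v^N}\},
$$
with one factor carrying $\partial_x$. I would split it by Cauchy--Schwarz into two $L^2_{xt}$ norms of trilinear products and apply the trilinear refinement of Bourgain's $L^6$ estimate, \eqref{iii} and \eqref{iv}, to each; this is exactly what lets the single derivative be absorbed by the $\langle\tau\mp n^2\rangle^{b}$-weights of the $X^{s,b}$ spaces rather than by spatial regularity we do not possess. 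The decay is produced by the frequency gap forced by $P_N^\perp$: every factor is localized to $|n|\le N$ while the output of $P_N^\perp$ has frequency $>N$, so the largest frequency in the projected cluster is $\gtrsim N$, and---because the spatial integration forces the total frequency to vanish---a comparably large frequency must occur in the companion cluster as well. Writing $\langle n\rangle^{a}=\langle n\rangle^{a-(\frac23-)}\langle n\rangle^{\frac23-}$ on such a factor and using $\langle n\rangle^{a-(\frac23-)}\lesssim N^{a-(\frac23-)}$ converts the excess regularity carried by the control norm into a negative power of $N$.

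The main obstacle will be the bookkeeping that makes this last step net positive. Two competing powers of $N$ appear: the gain from $P_N^\perp$, and losses incurred in passing between the $\ell^3_n$-based control norm $X_3^{\frac23-,\frac12}$ and the $\ell^2_n$-based estimates \eqref{iii}--\eqref{iv}, which cost a power of $N$ per factor via H\"older over the at most $2N+1$ active frequencies, reflecting the mismatch between the $H^1$-scaling of $\mathscr E$ and the $H^{\frac12}$-scaling of the control norm. The heart of the argument is to distribute the derivative and the frequency weights among the factors---placing the derivative on a low-frequency factor whenever the resonance geometry allows, and otherwise trading modulation for regularity through \eqref{iv}---so that the $P_N^\perp$ gain strictly dominates, leaving a genuine $N^{-\beta}$ for some small $\beta>0$. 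Finally, reducing the $b=\frac12$ control norm to the $b=\frac12-$ or $b=\frac13+$ norms required by \eqref{iii}--\eqref{iv} and passing from global to restricted $X_3^{\frac23-,\frac12}(\delta)$ norms produces the factor $C(\delta)$; summing the finitely many terms and recalling $\|v^N\|_{X_3^{\frac23-,\frac12}(\delta)}\le K$ then yields \eqref{growthestimate}.
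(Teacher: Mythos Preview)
Your outline has a genuine gap at the most critical point: you assert that the surviving terms carry ``at most one spatial derivative per factor'' and then take as model a sextilinear integral ``with one factor carrying $\partial_x$''. In fact the worst term carries \emph{two} derivatives. The cubic piece of $\delta\mathscr E/\delta\bar w$ coming from $-\tfrac12\,\mathrm{Im}\int w^2\bar w\,\bar w_x$ already contains one $\partial_x$, and the nonlinearity $F(v^N)$ contributes another through $-v^2\bar v_x$; their pairing produces exactly the term
\[
I_1\;=\;\int_0^\delta\!\!\int_{\T} v^N\,\overline{v^N v^N_x}\;P_N^\perp\big((v^N)^2\,\overline{v^N_x}\big)\,dx\,dt,
\]
which is the first line of \eqref{EE3} and is singled out in the paper as the hardest contribution precisely because it has two derivatives. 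Your proposed mechanism (Cauchy--Schwarz into two trilinear $L^2_{xt}$ norms, then \eqref{iii}--\eqref{iv}, then trade the excess $\tfrac23-$ regularity on a high-frequency factor for $N^{-\beta}$) does not close for $I_1$: in the configuration where the two differentiated factors sit at frequency $\sim N$ while the remaining four sit at $O(1)$, the derivatives cost $N^2$ but only two factors can donate $N^{-1/2+}$ each, leaving a positive power of $N$.

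The paper's proof of \eqref{growthestimate} is essentially a long treatment of $I_1$; the other terms in \eqref{EE3} (with at most one derivative, degree six or eight) are comparatively routine and are dispatched in Lemma~\ref{rest}. For $I_1$ the argument uses three additional ingredients you do not mention: (i) the resonance identities \eqref{AAlgebra1}--\eqref{AAlgebra3}, which in many frequency configurations force some modulation $\sigma_j\gtrsim N^{1+}$ or $N^2$, allowing one to put $\sigma_j^{1/2-}w_{N_j}$ in $L^2_{xt}$ and gain a genuine negative power of $N$; (ii) a ``support condition'' observation that when all but one frequency in a triple are $\le N^\theta$, the remaining high-frequency factor is supported on an interval of length $O(N^\theta)$, improving the $\ell^3\!\to\!\ell^2$ H\"older loss; and (iii) Lemma~\ref{lemmacloseparab}, a refined near-parabola estimate that handles the regime where the relevant modulation is small. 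These are combined in a case analysis over the relative sizes of $N_1,\dots,N_6$ (Types I--IV with many subcases). Your sketch captures the right variational setup and the role of $P_N^\perp$, but the single-derivative model term and the one-line extraction of $N^{-\beta}$ miss the actual difficulty.
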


\begin{remark}\rm \label{fixchoice}
 It is possible that  the estimate \eqref{growthestimate}  may still hold for a different choice 
 of  $X_r^{s, \frac{1}{2}}(\delta)$ norm, with $s\geq \frac{1}{2}, \, 2<r<4$ so that  local 
 well-posedness holds.  On the other hand the  pair $(s, r)$ should also be such that 
 $(s-1)\cdot r < -1$ in order for $\mathcal F L^{s,r}$ to  contain the support of the Wiener
 measure  (c.f. Section 5).  Our choice of  $s = \frac{2}{3} -$ and $r=3$  allows us to prove 
 \eqref{growthestimate} while satisfying the conditions for local well-posedness and the 
 support of the measure. Note that  $\mathcal F L^{\frac{2}{3}-, 3}$ scales like $H^{\frac{1}{2}-}$. 
\end{remark}

\subsection{Preparation for the proof of Theorem \ref{Energy Growth Estimate}  }

Let $v^N$ denote the solution of (FGDNLS) \eqref{FGDNLS} which we rewrite as 

\[v^N_t = \mathcal{L} v^N + {P}_N^\perp( (v^N)^2 \cj{v^N_x})
- \frac{i}{2}{P}_N^\perp (|v^N|^4 v^N) + i  m(v^N){P}_N^\perp(|v^N|^2 v^N),\]
where
\begin{equation}\label{L}\mathcal{L} v^N := i v^N_{xx} - (v^N)^2  \cj{v^N_x} + \frac{i}{2} |v^N|^4 v^N
- i \psi(v^N) v^N - i m(v^N) |v^N|^2 v^N.\end{equation}
We first observe that  from \eqref{eu} 
and Lemma \ref{alsoconserved} we have 
\begin{equation}\label{dtE}
\frac{d}{dt}\EE(v^N)= \frac{d}{dt}\mathscr E(v^N)+2m_N \frac{d}{dt}\mathscr H(v^N),
\end{equation}
where $m_N:=m(v^N)$.
\begin{lemma}\label{dtformulas} With the above notations we have
\begin{align}  \label{dt E3}
\dt \mathscr{E}(v^N) (t)= 
& - 2\text{Im} \int v^N\cj{v^Nv^N_x} {P}_N^\perp( (v^N)^2 \cj{v^N_x})\,dx
+ \text{Re} \int v^N\cj{v^Nv^N_x} {P}_N^\perp (|v^N|^4 v^N) \,dx\notag  \\
& - 2 m_N \text{Re} \int v^N\cj{v^Nv^N_x} {P}_N^\perp(|v^N|^2 v^N)\,dx
+ 2 m_N \text{Re} \int v^N \cj{v^N}^2 {P}_N^\perp( (v^N)^2 \cj{v^N_x})\,dx \\
& + m_N  \text{Im} \int v^N \cj{v^N}^2 {P}_N^\perp (|v^N|^4 v^N)\,dx
- 2 m_N ^2 \text{Im} \int v^N \cj{v^N}^2 {P}_N^\perp(|v^N|^2 v^N)\,dx, \notag
\end{align}
\begin{align}\label{dt H3}
\dt \mathscr H(v^N) (t)=&-2\text{Re}\int_\T(\cj{v^N})^2v^N{P}_N^\perp( (v^N)^2 \cj{v^N_x})\, dx+\text{Im} \int v^N (\cj{v^N})^2 {P}_N^\perp (|v^N|^4 v^N)\,dx\notag\\
&- 2 m_N  \text{Im} \int v^N (\cj{v^N})^2 {P}_N^\perp(|v^N|^2 v^N)\,dx, 
\end{align}
and 
\begin{align}\label{EE3}
\dt \EE(v^N) (t)=&- 2\text{Im} \int v^N\cj{v^Nv^N_x} {P}_N^\perp( (v^N)^2 \cj{v^N_x})\,dx
+ \text{Re} \int v^N\cj{v^Nv^N_x} {P}_N^\perp (|v^N|^4 v^N) \,dx \notag  \\
& - 2 m_N \text{Re} \int v^N\cj{v^Nv^N_x} {P}_N^\perp(|v^N|^2 v^N)\,dx
-2 m_N \text{Re}\int_\T(\cj{v^N})^2v^N{P}_N^\perp( (v^N)^2 \cj{v^N_x})\, dx \\
&+3m_N\text{Im} \int v^N (\cj{v^N})^2 {P}_N^\perp (|v^N|^4 v^N)\,dx\
-6 m_N  \text{Im} \int v^N (\cj{v^N})^2 {P}_N^\perp(|v^N|^2 v^N)\,dx. \notag
\end{align}

\end{lemma}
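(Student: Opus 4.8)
The plan is to exploit the fact that $\mathscr E$ and $\mathscr H$ are \emph{exactly} conserved along the infinite-dimensional flow \eqref{GDNLS} (Remark \ref{conservations}) while $m_N$ is conserved along the finite-dimensional flow (Lemma \ref{alsoconserved}), and then to read off the three time derivatives purely from the ``defect'' introduced by the truncation $P_N$. First I would record the decomposition already displayed in \eqref{L}, namely
$$v^N_t = \mathcal L v^N + {P}_N^\perp\big((v^N)^2\cj{v^N_x}\big) - \tfrac{i}{2}{P}_N^\perp\big(|v^N|^4 v^N\big) + i\,m_N{P}_N^\perp\big(|v^N|^2 v^N\big),$$
and abbreviate the last three terms by $\mathcal R_N$, so that $v^N_t = \mathcal L v^N + \mathcal R_N$, where $\mathcal L$ is the \emph{full} (GDNLS) nonlinearity.

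Next I would differentiate by the chain rule, writing $\dt\mathscr E(v^N)=\jb{\nabla\mathscr E(v^N),v^N_t}$ for the real $L^2$ gradient $\nabla\mathscr E$ normalized by $\dt\mathscr E(w)=\operatorname{Re}\int \cj{\nabla\mathscr E(w)}\,w_t\,dx$, and similarly for $\mathscr H$. The key observation is that conservation of $\mathscr E$ and $\mathscr H$ under \eqref{GDNLS} is equivalent to the \emph{pointwise} algebraic identities $\jb{\nabla\mathscr E(u),\mathcal L u}=0$ and $\jb{\nabla\mathscr H(u),\mathcal L u}=0$ valid for every smooth $u$ (a conservation law holds along all solutions, hence along arbitrary data at a fixed time, and the integrand is a pointwise-in-time expression). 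By linearity of the pairing in the second slot the computation collapses to
$$\dt\mathscr E(v^N)=\jb{\nabla\mathscr E(v^N),\mathcal R_N}, \qquad \dt\mathscr H(v^N)=\jb{\nabla\mathscr H(v^N),\mathcal R_N},$$
so not a single term of $\mathcal L$ needs to be treated directly.

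The remaining step is to evaluate these two pairings. Since each summand of $\mathcal R_N$ has the form ${P}_N^\perp(\,\cdot\,)$ and $P_N^\perp$ is self-adjoint, I would move the projection onto the gradient, $\jb{\nabla\mathscr E(v^N),{P}_N^\perp(g)}=\jb{{P}_N^\perp\nabla\mathscr E(v^N),g}$, so that only the genuinely \emph{nonlinear} part of the gradient survives. Indeed $v^N=P_N v^N$ is band-limited, so any piece of the gradient that is linear in a single factor of $v^N$ (and its derivatives) is again band-limited and is annihilated by $P_N^\perp$: for $\mathscr E$ this removes both the contribution of $\int|w_x|^2$ (gradient $-2v^N_{xx}$) and the mass-factor part of $\tfrac{1}{4\pi}(\int|w|^2)(\int|w|^4)$ (gradient $\propto v^N$), while for $\mathscr H$ it removes $\operatorname{Im}\int w\cj w_x$ and $2\pi m^2$. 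What remains for $\mathscr E$ is the cubic multiplier $v^N\cj{v^N}\cj{v^N_x}$ coming from $-\tfrac12\operatorname{Im}\int w^2\cj w\,\cj{w}_x$, together with the quintic multiplier $\propto m_N\,v^N\cj{v^N}^2$ obtained by differentiating the $\int|w|^4$ factor only (here one uses $\dt m_N=0$ to freeze the $\int|w|^2$ factor). Pairing these two multipliers against the three summands of $\mathcal R_N$ produces exactly the six terms of \eqref{dt E3}; the only arithmetic is tracking the factors $\pm i,\ \pm\tfrac12,\ m_N$ and converting $\operatorname{Re}(i\,\cdot)$ into $\operatorname{Im}$. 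The identical scheme applied to $\mathscr H$, where only the multiplier $v^N\cj{v^N}^2$ from $-\tfrac12\int|w|^4$ survives, yields \eqref{dt H3}. Finally \eqref{EE3} follows by substituting \eqref{dt E3} and \eqref{dt H3} into \eqref{dtE}, $\dt\EE(v^N)=\dt\mathscr E(v^N)+2m_N\,\dt\mathscr H(v^N)$, and collecting the two families of integrals.

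I expect the main obstacle to be conceptual rather than computational: one must justify that the $\mathcal L$-contribution vanishes \emph{identically} (not merely on genuine GDNLS solutions), and one must verify that precisely the highest-order and mass-type pieces of the functional gradients are band-limited, so that self-adjointness of $P_N^\perp$ eliminates them and isolates the cubic and quintic multipliers. Once this frequency-localization observation is secured, the rest is the routine but sign-sensitive bookkeeping of the cubic and quintic functional derivatives and their real/imaginary structure.
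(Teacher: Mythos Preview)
Your proposal is correct and follows essentially the same route as the paper's proof. The paper writes out the time derivative of $\mathscr E$ explicitly as in \eqref{dt E2}, observes that the $\mathcal L v^N$ contribution vanishes by conservation of $\mathscr E$ along the infinite-dimensional flow, and then uses orthogonality to kill the $\cj{v^N_{xx}}$ pairing with the $P_N^\perp$ defect terms; your abstract functional-gradient language and the self-adjointness of $P_N^\perp$ accomplish exactly these two steps, and the rest is the same bookkeeping.
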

\begin{proof}
From \eqref{ew}  and integration by parts we have that 
\begin{equation}\label{dt E2} 
\dt \mathscr E(v^N)(t) = -2\text{Re} \int  v^N_t \cj{v^N_{xx}}\,dx
- 2\text{Im} \int v^Nv^N_t \cj{v^Nv^N_x}\,dx
+ 2 m_N\text{Re} \int v^N v^N_t \cj{v^N}^2\,dx.
\end{equation}

\noi
Due to the energy conservation for the (GDNLS)  (infinite system), one can see that the contribution  in \eqref{dt E2}
from $\mathcal{L} v^N$ defined in \eqref{L} is zero. On the other hand by orthogonality we also have

\noi
\[-2\text{Re} \int \cj{v^N_{xx}} \Big({P}_N^\perp( (v^N)^2 \cj{v^N_x})
- \frac{i}{2}{P}_N^\perp (|v^N|^4 v^N) + i  m(v^N){P}_N^\perp(|v^N|^2 v^N)\Big) \,dx= 0.\]
Hence \eqref{dt E3} follows.  By a similar argument we obtain \eqref{dt H3} as well. The lemma follows by substituting \eqref{dt E3} and \eqref{dt H3} into \eqref{dtE}.

\end{proof}

 \begin{remark}\rm To establish Theorem \ref{Energy Growth Estimate} we need to estimate the terms in  \eqref{EE3}. In doing so we will ignore absolute constants and weather we are looking at the real or imaginary parts of the terms.
 \end{remark}
 The first term  in \eqref{EE3} gives a contribution to  \eqref{growthestimate} which is 
 essentially:  
 \begin{equation}\label{firstterm}
 I_1=\int_0^\delta \int v^N\cj{v^Nv^N_x} {P}_N^\perp( (v^N)^2 \cj{v^N_x})\,dx \, dt.
 \end{equation}
 This term is the hardest to control since it has two derivatives, so we will treat this one first. We start by discussing 
 how to absorb   the rough time cut-off.  Assume $\phi$ is any function in $X^{\frac{2}{3}-,\frac{1}{2}}_3$ such that 
 \begin{equation}\label{phi}
 \phi|_{[-\delta, \delta]} \, = v^N.
 \end{equation}
Then we write 
$$I_1= \int_{\mathbb{T} \times \mathbb{R}} \, \, \chi_{[0, \delta]}(t)\, \, {P}_N^\perp ((v^N)^2 \dx\cj{v^N})  \,  v^N \cj{v^N v^N_x}  dx dt $$
$$ = \int_{\mathbb{T} \times \mathbb{R}} \, \, \, \, {P}_N^\perp ( (\chi_{[0, \delta]} \phi^N)^2 \, \chi_{[0, \delta]} \cj{\phi^N_x} ) \,  \,  \chi_{[0, \delta]} \phi^N\, \chi_{[0, \delta]} \cj{\phi^N}\,  \chi_{[0, \delta]} \cj{ \phi^N_x} dx dt  $$ and by denoting 
\begin{equation}\label{w}
w := \chi_{[0, \delta]} \phi, \qquad  \qquad   w=P_N(w),
\end{equation}
 we will in fact   show that 
 
\begin{align} \label{wtermfirst} |I_1|=&\left | \int_{\mathbb{T} \times \mathbb{R}} \, \, \, \, {P}_N^\perp ( (w)^2 \dx\cj{w} )  \,  w
 \cj{w w_x} dx dt\right | \leq  \, C(\delta) N^{-\beta}  \Vert w\Vert_{X^{\frac{2}{3}-, \frac{1}{2}-}_{3} }^6.   
\end{align}
To go back to $v^N$ we use the following lemma:
\begin{lemma}[Time-Cutoff] \label{time-cutoff} Let  $b <  b_1 <1/2$. Then the exists $C'(\delta) >0$ such that  
$$\| w \|_{X^{\frac{2}{3}-,b}_3}  \leq C'(\delta)\, \| \phi  \|_{X^{\frac{2}{3}-,b_1}_3} \leq C'(\delta)\, \| v^N  \|_{X^{\frac{2}{3}-,\frac{1}{2}}_3(\delta)} $$ 
where $w, \, \phi$  and $v^N$ are as above.
\end{lemma}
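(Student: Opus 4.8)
The plan is to strip off the spatial structure, reduce to a purely temporal sharp--cutoff estimate, and close that one with the Gagliardo characterization of $H^b$ together with a fractional Hardy inequality. First, since multiplication in $t$ by $\chi_{[0,\delta]}$ touches neither the spatial frequency $n$, nor the weight $\jb{n}^s$, nor the $\ell^3_n$ summation, it suffices to produce a bound uniform in $n$ and then take $\ell^3_n$. For fixed $n$ let $\phi_n(t):=\ft\phi(n,t)$ be the $n$-th spatial Fourier coefficient and set $u_n(t):=e^{in^2t}\phi_n(t)$. Conjugation by the free evolution flattens the modulation weight: by the substitution $\tau\mapsto\tau-n^2$ one checks $\big\|\jb{\tau+n^2}^{b}\ft\phi(n,\tau)\big\|_{L^2_\tau}=\|u_n\|_{H^b_t(\R)}$, and since $e^{in^2t}$ and $\chi_{[0,\delta]}$ are both multiplication operators in $t$, the same identity holds with $\phi$ replaced by $w=\chi_{[0,\delta]}\phi$ and $u_n$ by $\chi_{[0,\delta]}u_n$. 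Consequently, after multiplying by $\jb{n}^s$ and taking $\ell^3_n$, the first inequality reduces to the scalar bound
\[ \big\|\chi_{[0,\delta]}\,u\big\|_{H^{b}_t(\R)}\ \lesssim\ C'(\delta)\,\|u\|_{H^{b_1}_t(\R)}, \qquad b<b_1<\tfrac12, \]
with implicit constant uniform in $u$ and $n$.

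For this scalar bound I would invoke the Gagliardo characterization valid for $0\le b<\tfrac12$,
\[ \|f\|_{H^b_t}^2\ \sim\ \|f\|_{L^2_t}^2+\iint_{\R^2}\frac{|f(t)-f(t')|^2}{|t-t'|^{1+2b}}\,dt\,dt', \]
applied to $f=\chi_{[0,\delta]}u$. The $L^2$ term is trivial. In the double integral I split $|(\chi_{[0,\delta]}u)(t)-(\chi_{[0,\delta]}u)(t')|\le \chi_{[0,\delta]}(t)\,|u(t)-u(t')|+|\chi_{[0,\delta]}(t)-\chi_{[0,\delta]}(t')|\,|u(t')|$. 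The first contribution is dominated by the $H^b$ and hence, by monotonicity, by the $H^{b_1}$ seminorm of $u$. The second term is the crux: the factor $|\chi_{[0,\delta]}(t)-\chi_{[0,\delta]}(t')|$ is supported where exactly one of $t,t'$ lies in $[0,\delta]$, so integrating the singular kernel in the outer variable yields $\int_0^\delta |u(t')|^2\,\mathrm{dist}(t',\{0,\delta\})^{-2b}\,dt'$, which is finite and controlled by $\|u\|_{\dot H^{b}_t}^2\le\|u\|_{H^{b_1}_t}^2$ precisely by the one--dimensional fractional Hardy inequality, whose validity requires $b<\tfrac12$. The dependence of the constant on $\delta$ is then recovered by tracking the dilation $t\mapsto t/\delta$, which is not scale invariant on the inhomogeneous space $H^b_t$.

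The second inequality is essentially bookkeeping. Since $b_1<\tfrac12$ one has the pointwise weight comparison $\jb{\tau+n^2}^{b_1}\le\jb{\tau+n^2}^{1/2}$, whence $\|\phi\|_{X^{\frac23-,b_1}_3}\le\|\phi\|_{X^{\frac23-,\frac12}_3}$; choosing $\phi$ to nearly realize the infimum in the restriction norm \eqref{localspace} gives $\|\phi\|_{X^{\frac23-,\frac12}_3}\le 2\,\|v^N\|_{X^{\frac23-,\frac12}_3(\delta)}$, and this same near--optimal extension is the one used in the first inequality. I expect the only genuine difficulty to be the endpoint analysis of the scalar estimate: because the sharp cutoff $\chi_{[0,\delta]}$ fails to lie in $H^{1/2}$, the jump--discontinuity term is borderline and is tamed only by staying strictly below the threshold, i.e. by the hypothesis $b<b_1<\tfrac12$ and the accompanying Hardy inequality.
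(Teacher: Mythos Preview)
Your argument is correct and takes a genuinely different route from the paper's proof. Both proofs begin with the same reduction: strip the spatial weight $\jb{n}^s$ and the $\ell^3_n$ summation, conjugate by $e^{in^2 t}$, and reduce to the scalar estimate $\|\chi_{[0,\delta]}u\|_{H^b_t}\lesssim\|u\|_{H^{b_1}_t}$. From there the paths diverge. The paper works on the Fourier side: it writes $\widehat{\chi_{[0,\delta]}u}$ as a convolution, splits the weight via $\tau+n^2=(\tau-\tau_1)+(\tau_1+n^2)$, and closes each piece by Young's inequality together with Cauchy--Schwarz, the key input being $\chi_{[0,\delta]}\in H^{\beta}$ for $\beta<\tfrac12$. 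This convolution argument requires $\widehat{\chi}/\jb{\tau}^{\varepsilon}\in L^1$, which forces the $\varepsilon$-loss $b_1=b+\varepsilon$. Your approach stays on the physical side: the Gagliardo seminorm plus the fractional Hardy inequality (valid exactly for $b<\tfrac12$) shows directly that $\chi_{[0,\delta]}$ is a bounded multiplier on $H^b$, and in fact yields the sharper endpoint $\|\chi_{[0,\delta]}u\|_{H^b}\lesssim\|u\|_{H^b}$; the passage to $b_1>b$ is then only used to match the lemma's statement. One small omission: in the ``jump'' contribution you only wrote the piece with $t'\in[0,\delta]$; the symmetric piece $t'\notin[0,\delta]$, $t\in[0,\delta]$ produces an analogous weighted integral $\int_{\R\setminus[0,\delta]}|u(t')|^2\,\mathrm{dist}(t',[0,\delta])^{-2b}\,dt'$, which is handled by the same Hardy inequality centered at $0$ and at $\delta$. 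The second inequality of the lemma is handled identically in both proofs, by monotonicity in $b$ and the definition of the restriction norm.
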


\begin{proof}  
Since the  regularity in $x$ does not play any role, without any loss of generality we ignore the power $s=\frac{2}{3}-$. Then, 
\begin{eqnarray}\label{conv}
\| w\|_{X^{0,b}_3} &=& \biggl( \sum_{n} \biggl( \int  \, | \widehat{ \chi_{[0, \delta]} \phi} (n, \tau)|^2 \langle \tau+n^2 \rangle^{2b} \, d\tau \biggr)^{\frac{3}{2}}\biggr)^{\frac{1}{3}} \notag \\
&=& \biggl( \sum_{n} \biggl( \int  \, |  \int_{\tau_1}\, \widehat{\chi_{[0, \delta]}}(\tau- \tau_1) \, \widehat{\phi} (n, \tau_1)\, d \tau_1 |^2 \, \langle \tau+n^2 \rangle^{2b} \, d\tau \biggr)^{\frac{3}{2}}\biggr)^{\frac{1}{3}}.  
\end{eqnarray}
Writing $ \tau + n^2 =  (\tau -\tau_1) + (\tau_1 + n^2)$ we bound \eqref{conv} by 
\begin{eqnarray}
&& \lesssim  \biggl( \sum_{n} \biggl( \int  \, |  \int_{\tau_1}\, \widehat{\chi_{[0, \delta]}}(\tau- \tau_1)  \,  \langle \tau - \tau_1 \rangle^{b}\, \widehat{\phi} (n, \tau_1)\, d \tau_1 |^2 \, \, d\tau \biggr)^{\frac{3}{2}}\biggr)^{\frac{1}{3}}  \label{conv2}\\
&&+ \,  \biggl( \sum_{n} \biggl( \int  \, |  \int_{\tau_1}\, \widehat{\chi_{[0, \delta]}}(\tau- \tau_1) \, \widehat{\phi} (n, \tau_1)\,  \langle \tau_1 +n^2 \rangle^{b} \, d \tau_1 |^2 \, d\tau \biggr)^{\frac{3}{2}} \biggr)^{\frac{1}{3}}. \label{conv3} 
\end{eqnarray}
We treat the first sum \eqref{conv2},  the second one \eqref{conv3} being similar. If $\langle \tau - \tau_1\rangle < \langle \tau_1 + n^2 \rangle$ then by Young's inequality \eqref{conv2} can be bounded by
\begin{equation*} 
\lesssim  \| \frac{ \widehat{\chi_{[0, \delta]}}(\tau)}{\langle \tau\rangle^{\varepsilon}} \|_{L^1} \left\|  \| \widehat{\phi}(\tau, n) \langle \tau +n^2 \rangle^{b+ \varepsilon} \|_{L^2} \right\|_{\ell^3} \, 
\lesssim \, \|\chi \|_{H^{\beta}}\,  \| \phi \|_{X^{0, b_1}_3} 
 \end{equation*}  
by Cauchy-Schwarz on the $\widehat{\chi}$ term provided $ \beta+\varepsilon > \frac{1}{2},  \beta < \frac{1}{2}$  and where  $b_1:=\, b+\varepsilon < \frac{1}{2}$. 

On the other hand if $\langle \tau - \tau_1\rangle \geq \langle \tau_1 +n^2 \rangle$, then again by Young's inequality \eqref{conv2} can be bounded by  
\begin{equation*} 
\lesssim  \|  \widehat{\chi_{[0, \delta]}(\tau)} \langle \tau\rangle^{b+ \varepsilon} \|_{L^2} \left\|  \| \widehat{\phi}(\tau, n) \langle \tau + n^2 \rangle^{-\varepsilon} \|_{L^1} \right\|_{\ell^3} \, 
\lesssim \, \|\chi \|_{H^{b+\varepsilon}}\,  \| \phi \|_{X^{0, b_1}_3} 
 \end{equation*}  
by Cauchy-Schwarz on the $\widehat \phi$ term provided $ b_1+\varepsilon > \frac{1}{2},  b_1 < \frac{1}{2}$. 
Finally by taking infimum and using the definition of $X^{0,\frac{1}{2}}_3(\delta)$ a bound in terms of $ \Vert v^N\Vert_{X^{0,\frac{1}{2}}_3(\delta)}$ follows.

\end{proof}

\subsection{Proof of Theorem \ref{Energy Growth Estimate}  }
 Returning to \eqref{wtermfirst}  we write 
\begin{align} 
 I_1&=  \int_{\mathbb{T} \times \R} 
  {P}_N^\perp (w^2 \dx\cj{w})  \,  w\,\cj{w w_x}  dx dt \notag \\
&= \int_\tau \sum_{|n| >N} \widehat{(w^2 \cj{w_x})}(n, \tau) \cj{\widehat{(\cj{w} w w_x)}}(n, \tau) d\tau  \notag \\
&= \int \sum_{|n| >N} 
\bigg(\int_{\tau = \tau_1 + \tau_2 - \tau_3} \sum_{n = n_1 + n_2 - n_3, \,|n_j| \leq N } 
\ft{w}(n_1, \tau_1)\ft{w}(n_2, \tau_2)(-i n_3)\cj{\ft{w}}(n_3, \tau_3) d\tau_1 d\tau_2 \bigg) \notag\\
& \times \bigg(\int_{ -\tau = \tau_4 - \tau_5 - \tau_6} \sum_{-n = n_4 - n_5 - n_6, \,|n_j| \leq N } 
\ft{w}(n_4, \tau_4)\cj{\ft{w}}(n_5, \tau_5)(-i n_6)\cj{\ft{w}}(n_6, \tau_6) d\tau_4 d\tau_5 \bigg) d\tau
\notag
\end{align}

\noi

\begin{align*}
= &\int \sum_{N< |n| \leq 3 N} 
\bigg(\int_{\tau = \tau_1 + \tau_2 + \tau_3} \sum_{n = n_1 + n_2 + n_3, \,|n_j| \leq N } 
\ft{w}(n_1, \tau_1)\ft{w}(n_2, \tau_2)(i n_3)\ft{\cj{w}}(n_3, \tau_3) d\tau_1 d\tau_2  \bigg) \\
& \times \bigg(\int_{ -\tau = \tau_4 + \tau_5 + \tau_6} \sum_{-n = n_4 + n_5 + n_6, \,|n_j| \leq N } 
\ft{w}(n_4, \tau_4)\ft{\cj{w}}(n_5, \tau_5)(i n_6)\ft{\cj{w}}(n_6, \tau_6) d\tau_4 d\tau_5 \bigg) d\tau \\
= &\int \sum_{N< |n| \leq 3 N} 
\bigg(\int_{\tau = \tau_1 + \tau_2 + \tau_3} \sum_{n = n_1 + n_2 + n_3, \,|n_j| \leq N } 
\ft{w_1}(n_1, \tau_1)\ft{w_2}(n_2, \tau_2)(i n_3)\ft{\cj{w_3}}(n_3, \tau_3) d\tau_1 d\tau_2\bigg) \\
& \times \bigg(\int_{ -\tau = \tau_4 + \tau_5 + \tau_6} \sum_{-n = n_4 + n_5 + n_6, \,|n_j| \leq N } 
\ft{w_4}(n_4, \tau_4)\ft{\cj{w_5}}(n_5, \tau_5)(i n_6)\ft{\cj{w_6}}(n_6, \tau_6) d\tau_4 d\tau_5 \bigg) d\tau, 
\end{align*}

\noi
where $w_1 = w_2 = w_4= w$ and $\cj{w_3} = \cj{w_5} = \cj{w_6} = \cj{w}$.

\begin{remark} \rm In what follows we always think of $N_j, N$ as dyadic; more precisely $N_j:= 2^{K_j}, \, N:= 2^{K}$ where $ K_j < K$ since 
$n_j \in \mathbb{Z}$.  By a slight abuse of notation we then denote by 
$N_j$ both $|n_j|$ and the dyadic interval $[2^{K_j}, \, 2^{K_j+1})$ $|n_j|$ belongs to when $n_j \neq 0$.  Moreover we denote by 
$w_{N_j}$ the function such that $\widehat{w_{N_j}}(n_j) = \chi_{\{|n_j| \sim N_j \}}  \widehat{w_{j}}(n_j)$

\end{remark}

From the expression  above we then  have,
\begin{equation} \label{Size} |n_j|  \leq  N, \qquad   N \le |n| \leq  3N, \qquad n= n_1 +n_2 +n_3, \quad \text{and} \quad -n = n_4 + n_5 + n_6,
\end{equation}
\begin{equation}  \label{RelativeSize}N \sim \max(N_1, N_2, N_3)  \sim \max(N_4, N_5, N_6),\end{equation}
\begin{equation} \label{Algebra1}
\tau + n^2 - (\tau_1 +n_1^2) - (\tau_2 +n_2^2) -(\tau_3 -n_3^2) = 2(n-n_1)(n-n_2) \end{equation}
and
\begin{equation}
\label{Algebra2}
\tau + n^2 + (\tau_4 +n_4^2) + (\tau_5 - n_5^2) +(\tau_6 - n_6^2) = 2(n+n_5)(n+n_6). 
\end{equation}

\medskip

So if we let  ${\tilde \s_j} := {\tau_j \pm n^2_j}$ and \, $\s_j:=  \jb{\tau_j \pm n^2_j}$  
we have by subtracting  \eqref{Algebra1}  from   \eqref{Algebra2}  

\begin{equation} \label{AAlgebra1}
 \sum_{j = 1}^6 {\tilde \s_j}  = - 2 \,( \, n\, (n_1 + n_2 + n_5 + n_6) - n_1 n_2 + n_5 n_6 \, ). 
\end{equation}

This in turn can also be rewritten using $n_1+n_2+n_3+n_4+n_5+n_6=0$ or $n= n_1 +n_2 +n_3$ and $-n = n_4 + n_5 + n_6$ as:
\medskip 
\begin{equation} \label{AAlgebra2}
 \sum_{j = 1}^6  {\tilde\s_j}  =  2 (\,  n\, ( n_3 + n_4)  + n_1 n_2 - n_5 n_6 \,). 
\end{equation}
In addition, since $\tau_1+\tau_2+\tau_3+\tau_4+\tau_5+\tau_6=0$, adding and subtracting $n_j^2, \, j=1,\dots,6$ in the appropriate fashion, we obtain:
\begin{equation} \label{AAlgebra3}
 \sum_{j = 1}^6 {\tilde \s_j} =  (n_3^2 + n_5^2 + n_6^2) - ( n_1^2 + n_2^2 + n_4^2).  
\end{equation}

\medskip 

Hence we need to estimate 

\begin{align}\label{MainExp} 
|I_1|=&\biggl| \sum_{N_i \leq N; \, i=1, \dots 6} \int_{\mathbb{R}} \int_{\mathbb{T}} \, \, \,  P_N^{\perp} \,\biggl(  w_{N_1} \, w_{N_2} \, \dx \cj{w_{N_3}}\, \biggr)\,  w_{N_4} \, \cj{w_{N_5}} \, \dx \cj{w_{N_6}} \, dx dt  \biggr| \quad \\ 
&=\, \biggl|\sum_{N_i \leq N; \, i=1, \dots 6}  \sum_{|n| \geq N} \int_{\tau}  \biggl( \int_{\tau= \tau_1+\tau_2+\tau_3} \, \sum_{n= n_1+n_2+n_3} \, \ft{w_{N_1}}  \ft{w_{N_2}} \,(i n_3)\,  \ft{\cj{w_{N_3}}} \, \, d \tau_1 d\tau_2\biggr) \quad \times \notag \\
&\qquad \quad \qquad \qquad \biggl( \int_{-\tau= \tau_4+\tau_5+\tau_6} \, \sum_{-n= n_4+n_5+n_6} \, \ft{w_{N_4}}  \ft{\cj{w_{N_5}}} \,(i n_6)\,  \ft{\cj{w_{N_6}}} \, \, d\tau_4 d\tau_5 \biggr)   \, d\tau  \biggr|  \quad \notag \\
&\leq \, \sum_{N\leq |n| \leq 3N} \, \sum_{N_i \leq N; \, i=1, \dots 6} \, \int_{\tau}  \biggl( \int_{\tau= \tau_1+\tau_2+\tau_3} \, \sum_{n= n_1+n_2+n_3} \, |\ft{w_{N_1}}| |\ft{w_{N_2}}| \, |n_3| \,  |\ft{\cj{w_{N_3}}}| \, \, d \tau_1 d\tau_2 \biggr) \quad \times \label{start} \\
&\qquad \quad \qquad \qquad \biggl( \int_{-\tau= \tau_4+\tau_5+\tau_6} \, \sum_{-n= n_4+n_5+n_6} \, |\ft{w_{N_4}}|  \,|\ft{\cj{w_{N_5}}}| \, |n_6|\,  |\ft{\cj{w_{N_6}}}|  \, \, d\tau_4 d\tau_5 \biggr)   \, d\tau.  \notag 
\end{align}

\bigskip 

\begin{remark} \rm This expression \eqref{start}  will be our point of departure in beginning our estimate.  In what follows we will  abuse notation and write $w_{N_j}$ for $ \widecheck {|\ft{w_{N_j}}|} $ and   $\cj{w_{N_k}}$ for  $\widecheck{|\ft{\cj{w_{N_k}}}|} $ since at the end we will estimate all functions in the $X^{s,b}_r$ norms which depend solely on the absolute value of the Fourier transform.

\end{remark}

We start by laying out all possible cases and organizing them according to the sizes of the two derivative terms. 

\medskip

{\bf Types:} 

\begin{enumerate}

\item [I.]   \quad $N_3\sim N, \, N_6 \sim N$   
\medskip

 \item[II.] \quad $N_3 \sim N$ and $N_6 \ll N$ 
\medskip
\item[III.]  \quad $N_6 \sim N$ and $N_3 \ll N$  

 \medskip

\item[IV.]  \quad  $N_3\ll N; \,  N_6 \ll  N$

\end{enumerate}

\medskip

Now we subdivide into all subcases in each situation and group them according to how many low frequencies ( ie. $N_j \ll N$) we have overall taking into account \eqref{RelativeSize}. 

\medskip 

{\bf All Cases for each type:} 

\medskip

\begin{enumerate}

\item [IA.]   \, $N_3\sim N, \, N_6 \sim N$   and  $4$ lows:   $N_1, N_2, N_4, N_5 \ll N$

\medskip

\item [IB.]  \, $N_3\sim N, \, N_6 \sim N$   and  $3$ lows

\quad (i) \, \, $N_1, N_2, N_4 \ll N$ and $N_5 \sim N$

\smallskip

\quad (ii)  \, \, $N_1, N_2, N_5 \ll N$ and $N_4 \sim N$

\smallskip

\quad  (iii)  \, \, $N_1, N_4, N_5 \ll N$ and $N_2 \sim N$

\smallskip

\quad  (iv)  \, \, $N_2, N_4, N_5 \ll N$ and $N_1 \sim N$

\medskip

\item [IC.]  \quad $N_3\sim N, \, N_6 \sim N$   and  $2$ lows

\quad (i) \, \, $N_1, N_2 \ll N$ and $N_4, N_5 \sim N$

\smallskip

\quad (ii)  \, \, $N_1, N_4 \ll N$ and $N_2, N_5 \sim N$

\smallskip 

\quad (iii) \, \, $N_1, N_5 \ll N$ and $N_2, N_4 \sim N$

\smallskip

\quad (iv)  \, \, $N_2, N_4 \ll N$ and $N_1, N_5 \sim N$

\smallskip

\quad (v)  \, \, $N_2, N_5 \ll N$ and $N_1, N_4 \sim N$

\smallskip

\quad (vi)  \, \, $N_4, N_5\ll N$ and $ N_1, N_2 \sim N$

\medskip 

\item [ID.]  \quad $N_3\sim N, \, N_6 \sim N$   and  $1$ low

\quad (i) \, \, $N_1\ll N$ and $N_2, N_4, N_5 \sim N$

\smallskip

\quad (ii) \, \, $N_2 \ll N$ and $N_1, N_4, N_5 \sim N$

\smallskip

\quad (iii) \, \, $N_4 \ll N$ and $N_1, N_2, N_5 \sim N$

\smallskip

\quad (iv) \, \, $N_5 \ll N$ and $N_1, N_2, N_4 \sim N$

\medskip

\item [IE.]  \quad $N_3\sim N, \, N_6 \sim N$   and   $N_1, N_2, N_4, N_5 \sim N$

\bigskip

\item[IIA.]   $N_3 \sim N$ and $N_6 \ll N$ and $3$ lows

\smallskip

\quad (i) \, \, $N_1, N_2, N_4 \ll N$ and $N_5 \sim N$

\smallskip

\quad (ii) \, \, $N_1, N_2, N_5 \ll N$ and $N_4 \sim N$

\medskip

\item[IIB.]   $N_3 \sim N$ and $N_6 \ll N$ and $2$ lows

\smallskip

\quad (i) \, \,  $N_1, N_2 \ll N$  and $N_4, N_5 \sim N$

\smallskip

\quad (ii) \, \,   $N_1, N_4 \ll N$  and $N_2, N_5 \sim N$

\smallskip

\quad (iii) \, \,  $N_1, N_5 \ll N$  and $N_2, N_4 \sim N$
\smallskip

\quad (iv) \, \, $N_2, N_4 \ll N$  and $N_1, N_5 \sim N$
\smallskip

\quad (v) \, \,  $N_2, N_5 \ll N$  and $N_1, N_4 \sim N$

\medskip

\item[IIC.]   $N_3 \sim N$ and $N_6 \ll N$ and $1$ low

\smallskip

\quad (i) \, \, $N_1 \ll N$  and $N_2, N_4, N_5 \sim N$

\smallskip

\quad (ii) \, \,  $N_2 \ll N$  and $N_1, N_4, N_5 \sim N$
\smallskip

\quad (iii) \, \, $N_4 \ll N$  and $N_1, N_2, N_5 \sim N$
\smallskip

\quad (iv) \, \, $N_5 \ll N$  and $N_1, N_2, N_4 \sim N$

\medskip
\item[IID.]   $N_3 \sim N$ and $N_6 \ll N$ and $N_1, N_2, N_4, N_5 \sim N$

\bigskip

\item[IIIA.]   $N_6 \sim N$ and $N_3 \ll  N$ and $3$ lows

\smallskip

\quad (i) \, \, $N_2, N_4, N_5 \ll N$ and $N_1 \sim N$

\smallskip

\quad (ii) \, \, $N_1, N_4, N_5 \ll N$ and $N_2 \sim N$

\medskip

\item[IIIB.]   $N_6 \sim N$ and $N_3 \ll N$ and $2$ lows

\smallskip

\quad (i) \, \,  $N_4, N_5 \ll N$  and $N_1, N_2 \sim N$

\smallskip

\quad (ii) \, \,   $N_1, N_4 \ll N$  and $N_2, N_5 \sim N$

\smallskip

\quad (iii) \, \,  $N_1, N_5 \ll N$  and $N_2, N_4 \sim N$
\smallskip

\quad (iv) \, \, $N_2, N_4 \ll N$  and $N_1, N_5 \sim N$
\smallskip

\quad (v) \, \,  $N_2, N_5 \ll N$  and $N_1, N_4 \sim N$

\medskip

\item[IIIC.]   $N_6 \sim N$ and $N_3 \ll N$ and $1$ low

\smallskip

\quad (i) \, \, $N_1 \ll N$  and $N_2, N_4, N_5 \sim N$

\smallskip

\quad (ii) \, \,  $N_2 \ll N$  and $N_1, N_4, N_5 \sim N$
\smallskip

\quad (iii) \, \, $N_4 \ll N$  and $N_1, N_2, N_5 \sim N$
\smallskip

\quad (iv) \, \, $N_5 \ll N$  and $N_1, N_2, N_4 \sim N$

\medskip
\item[IIID.]   $N_6 \sim N$ and $N_3 \ll N$ and  $N_1, N_2, N_4, N_5 \sim N$

\bigskip

\item[IVA.]   $N_3\ll N, N_6 \ll N$ and $2$ lows

\smallskip

\quad (i) \, \, $N_1, N_4 \ll N$ and $N_2, N_5 \sim N$

\smallskip

\quad (ii) \, \,  $N_1, N_5 \ll N$ and $N_2, N_4 \sim N$

\smallskip

\quad (iii) \, \,  $N_2, N_4 \ll N$ and $N_1, N_5 \sim N$

\smallskip

\quad (iv) \, \,  $N_2, N_5 \ll N$ and $N_1, N_4 \sim N$

\medskip

\item[IVB.]   $N_3\ll N, N_6 \ll N$ and $1$ low

\quad (i) \, \, $N_1\ll N$ and $N_2, N_4,  N_5 \sim N$

\smallskip

\quad (ii) \, \,   $N_2\ll N$ and $N_1, N_4,  N_5 \sim N$

\smallskip

\quad (iii) \, \,  $N_4\ll N$ and $N_1, N_4,  N_5 \sim N$
\smallskip

\quad (iv) \, \,  $N_5 \ll N$ and $N_1,N_2,  N_4 \sim N$

\medskip 

\item[IVC.]   $N_3\ll N, N_6 \ll N$ and $N_1, N_2, N_4, N_5 \sim N$

\end{enumerate}

\bigskip

 In what follows we will use the following estimates repeatedly:

\begin{lemma}\label{lemma 0}
Let $w_{N_i}$ be as above. Then 
\begin{eqnarray}
\label{i} \| w_{N_i} \|_{X^{0+,\frac{1}{2}-}}\, &\leq& \, N_i^{- \frac{1}{2}+} \,  \| w_{N_i} \|_{X^{\frac{2}{3}-, \frac{1}{2}-}_3}  \\
\label{v} \| w_{N_i} \|_{X^{\frac{1}{2}-,\frac{1}{3}+}}\, &\leq& \,  \,  \| w_{N_i} \|_{X^{\frac{2}{3}-, \frac{1}{2}-}_3}.  
\end{eqnarray}
We also have that
\begin{equation}\label{vi}\| w_{N_i} \|_{L^{8}_{xt}} \, \leq \,    \| w_{N_i} \|_{X^{\frac{13}{24}+, \frac{3}{8}+}_3}. \end{equation}
If we assume that $\sigma_i\lesssim N^{\gamma}$,  for any $\gamma>0$, then
\begin{equation}\label{ii}\| w_{N_i} \|_{L^{\infty}_{xt}} \, \leq \, N^{0+} \,  \| w_{N_i} \|_{X^{\frac{2}{3}-, \frac{1}{2}-}_3}. \end{equation}
\end{lemma}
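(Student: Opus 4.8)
The plan is to prove the four estimates of Lemma~\ref{lemma 0} one at a time, each reducing to an interpolation or embedding between $X^{s,b}_r$ spaces at different values of $(s,b,r)$ combined with the fact that $w_{N_i}$ is frequency-localized to $|n_i|\sim N_i$. The key organizing principle is that on the support of $\ft{w_{N_i}}$ we may freely insert or remove powers of $\jb{n_i}\sim N_i$ at the cost of a corresponding power of $N_i$; this converts differences in the spatial-regularity index $s$ into explicit powers of $N_i$, while differences in $r$ are handled by the nesting $\ell^{r}_n\hookrightarrow\ell^{r'}_n$ for $r\le r'$ (Minkowski/H\"older in the discrete variable, using finitely many $n$) and differences in the time-regularity index $b$ by the one-dimensional Sobolev embeddings in $\tau$.

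For \eqref{i} I would start from $\|w_{N_i}\|_{X^{0+,\frac12-}} = \|\jb{n}^{0+}\jb{\tau\pm n^2}^{\frac12-}\ft{w_{N_i}}\|_{\ell^2_n L^2_\tau}$ and insert $\jb{n}^{\frac23-}\jb{n}^{-(\frac23-)}$; since $\jb{n}\sim N_i$ on the support, pulling out $N_i^{-\frac12+}$ (after absorbing the $0+$ into the $\frac23-$) leaves $\|\jb{n}^{\frac23-}\jb{\tau\pm n^2}^{\frac12-}\ft{w_{N_i}}\|_{\ell^2_n L^2_\tau}$, and the passage from $\ell^2_n$ to the $\ell^3_n$ norm defining $X^{\frac23-,\frac12-}_3$ goes through because $\ell^2_n\hookrightarrow\ell^3_n$. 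Estimate \eqref{v} is the cleanest: the left side has a \emph{smaller} time index ($\frac13+ < \frac12-$) and a smaller spatial index ($\frac12- < \frac23-$) but an $\ell^2_n$ rather than $\ell^3_n$ norm, so after dropping the surplus time regularity one again only needs $\ell^2_n\hookrightarrow\ell^3_n$ on the finitely supported frequency, giving the clean constant with no power of $N_i$.

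For \eqref{vi} and \eqref{ii} the mechanism shifts from pure embedding to Strichartz/Sobolev control of the physical-space norms $L^8_{xt}$ and $L^\infty_{xt}$. For \eqref{vi} I would realize $L^8_{xt}$ as an endpoint obtained by interpolating the basic $X^{0,b}$-to-$L^p$ Strichartz inequalities for the Schr\"odinger group on $\T\times\R$, choosing the spatial index $\frac{13}{24}+$ and time index $\frac38+$ exactly so that the Hausdorff--Young/Sobolev exponents close up at $8$; the $\ell^3_n$ norm in the target space $X^{\frac{13}{24}+,\frac38+}_3$ is the natural dual exponent for the $L^8$ count and is what makes the numbers balance. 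For \eqref{ii} the embedding $\mathcal F^{-1}L^1\subset L^\infty$ is the driver: under the hypothesis $\sigma_i\lesssim N^\gamma$ the modulation weight is harmless, so bounding $\|w_{N_i}\|_{L^\infty_{xt}}$ reduces to summing $|\ft{w_{N_i}}|$ over the $O(N_i)$ frequencies and integrating in $\tau$, which by Cauchy--Schwarz in both variables produces $N^{0+}$ times the $X^{\frac23-,\frac12-}_3$ norm, the $N^{0+}$ coming from the logarithmically many dyadic scales and the finite frequency count.

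I expect the main obstacle to be \eqref{vi}: unlike the other three, it is not a soft embedding but requires invoking a genuine Strichartz estimate at a non-obvious exponent pair and then verifying that the specific indices $\frac{13}{24}+$ and $\frac38+$ are precisely those forced by interpolating the $L^6$ trilinear bound \eqref{iii}--\eqref{iv} (or Bourgain's $L^6(\T)$ estimate \cite{B8}) up to $L^8$. Getting the interpolation exponents to land exactly on these fractions, rather than merely a nearby admissible pair, is the delicate bookkeeping; the remaining estimates \eqref{i}, \eqref{v}, \eqref{ii} are then routine applications of frequency localization together with the Bernstein-type gain in $N_i$ and the discrete nesting $\ell^2_n\hookrightarrow\ell^3_n$.
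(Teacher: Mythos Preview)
Your overall strategy matches the paper's---frequency localization plus H\"older---but there is a genuine confusion in your treatment of \eqref{i} and \eqref{v}. You invoke the embedding $\ell^2_n\hookrightarrow\ell^3_n$, but that inequality reads $\|\cdot\|_{\ell^3}\le\|\cdot\|_{\ell^2}$, which is the \emph{wrong direction}: you must bound the $\ell^2_n$ norm appearing on the left by the $\ell^3_n$ norm on the right. The correct step is H\"older on the finite frequency support $\{|n|\sim N_i\}$, giving $\|a\|_{\ell^2_n}\le N_i^{1/6}\|a\|_{\ell^3_n}$. This $N_i^{1/6}$ loss is essential to your own bookkeeping: in \eqref{i} the localization $\jb{n}^{0+}\sim N_i^{-(\frac23-)}\jb{n}^{\frac23-}$ produces a prefactor $N_i^{-\frac23+}$, not the $N_i^{-\frac12+}$ you claim; the stated power only appears after multiplying by the H\"older loss $N_i^{1/6}$. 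Likewise in \eqref{v} the surplus $N_i^{-\frac16}$ from lowering the spatial index exactly cancels this $N_i^{1/6}$, which is why no power of $N_i$ survives. The paper's phrase ``frequency localization and H\"older's inequality'' refers precisely to this H\"older step, not to the sequence-space inclusion.

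For \eqref{vi} you are making the argument harder than it is. No Strichartz input or interpolation with Bourgain's $L^6$ estimate is needed; the specific exponents $\tfrac{13}{24}+$ and $\tfrac{3}{8}+$ fall out of Hausdorff--Young plus H\"older in each variable. In space, $\|w\|_{L^8_x}\le\|\ft w\|_{\ell^{8/7}_n}\le\|\jb{n}^{-s}\|_{\ell^{q}_n}\|\jb{n}^s\ft w\|_{\ell^3_n}$ with $\tfrac{7}{8}=\tfrac{1}{q}+\tfrac{1}{3}$ forces $s>\tfrac{1}{q}=\tfrac{13}{24}$. Then Minkowski (legitimate since $8>3$) swaps $L^8_t\ell^3_n$ for $\ell^3_n L^8_t$, and for each fixed $n$ Hausdorff--Young in $t$ followed by H\"older against $\jb{\tau+n^2}^{-b}$ with $\tfrac{7}{8}=\tfrac{1}{q}+\tfrac{1}{2}$ forces $b>\tfrac{1}{q}=\tfrac{3}{8}$. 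This is a soft embedding computation, entirely parallel to \eqref{i}--\eqref{v}; the paper does not spell it out, but it requires nothing beyond what you already use for the other three estimates.
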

\begin{proof}
The estimates \eqref{i} and  \eqref{v} are  a consequence of frequency localization and H\"older's inequality. The estimate \eqref{ii} is a consequence of Sobolev embedding together with the assumption  that 
$\sigma_i\leq N^\gamma$.
\end{proof}

\begin{lemma}\label{lemmacloseparab}
Let  $0<\beta<2$, $\rho \geq 0$ and $\delta >0$. Let $M>0$ and $w_M$ be such that $\supp w_M(\cdot, x)\subset [-\delta,\delta], \, x \in \T$. Then if we define 
$$ \widehat{J_\beta w_M}(\tau,n):=\chi_{\{|n|\sim M\}}\chi_{\{|\tau+n^2|\leq M^\beta\}}|\widehat {w_M}(\tau,n)|,$$ we have 
\begin{equation}\label{closeparab}
\|J_\beta w_M\|_{X^{0,\rho}}\lesssim C_{\delta} \,A(\beta,M)^{\frac{1}{6}}M^{\rho\beta+}\|w_M\|_{X^{0,\frac{1}{6}}_3},
\end{equation}
where $A(M, \beta)$ defined below is bounded by  $1 + M^{\beta -1}$. 
\end{lemma}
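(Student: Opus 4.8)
The plan is to pass from the weighted $\ell^2_n$ norm on the left to the unweighted (outer) $\ell^3_n$ norm on the right in three moves: extract the modulation weight, use a lattice point count on the parabola to gain $A(\beta,M)^{1/6}$, and finally reorder the resulting mixed norm using the time support of $w_M$.

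First I would remove the modulation weight. On the support of $\ft{J_\beta w_M}$ we have $|n|\sim M$ and $\jb{\tau+n^2}\lesssim \jb{M^\beta}\lesssim M^{\beta+}$, so $\jb{\tau+n^2}^\rho \lesssim M^{\rho\beta+}$ comes out of the $X^{0,\rho}=X^{0,\rho}_{2,2}$ norm; this reduces the claim to
\[
\| \chi_{\mathrm{slab}}\,\ft{w_M}\|_{\ell^2_n L^2_\tau}\lesssim C_\delta\, A(\beta,M)^{1/6}\,\|w_M\|_{X^{0,1/6}_3},
\]
where $\chi_{\mathrm{slab}}$ is the characteristic function of $\{|n|\sim M,\ |\tau+n^2|\le M^\beta\}$. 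Next I would invoke the geometric input that defines $A(\beta,M)$: for fixed $\tau$, the set $S_\tau=\{n:|n|\sim M,\ |\tau+n^2|\le M^\beta\}$ has cardinality $\lesssim 1+M^{\beta-1}=:A(\beta,M)$, because $n\mapsto n^2$ has derivative of size $\sim M$, so an interval of length $2M^\beta$ in the $n^2$ variable meets $\lesssim 1+M^\beta/M$ integers. Writing $\ell^2_n L^2_\tau=L^2_\tau\ell^2_n$ and applying H\"older on $S_\tau$ with exponents $(3/2,3)$ turns the inner sum into an $\ell^3_n$ sum and extracts $A(\beta,M)^{1/3}$, giving
\[
\| \chi_{\mathrm{slab}}\,\ft{w_M}\|_{\ell^2_n L^2_\tau}\lesssim A(\beta,M)^{1/6}\,\| \chi_{\mathrm{slab}}\,\ft{w_M}\|_{L^2_\tau \ell^3_n}.
\]

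The hard part will be reconciling the order of the mixed norm. The target $\|w_M\|_{X^{0,1/6}_3}$ is taken as $\ell^3_n L^2_\tau$ (the $L^2_\tau$ norm first), whereas the bound just obtained is in the order $L^2_\tau\ell^3_n$. Since $2<3$, Minkowski's inequality only yields $\|\cdot\|_{\ell^3_n L^2_\tau}\le\|\cdot\|_{L^2_\tau\ell^3_n}$, i.e. in the \emph{wrong} direction, and in fact profiles supported on disjoint modulation intervals show that $\|\cdot\|_{L^2_\tau\ell^3_n}$ can exceed $\|\cdot\|_{\ell^3_n L^2_\tau}$ by a power of $M$. The resolution is that such profiles are incompatible with $\supp_t w_M\subset[-\delta,\delta]$: by Paley--Wiener this time support forces $\ft{w_M}(\cdot,n)=\ft{\eta_\delta}\ast_\tau\ft{w_M}(\cdot,n)$ for a fixed bump $\eta_\delta\equiv1$ on $[-\delta,\delta]$, so every modulation slice is regularized at scale $\delta^{-1}$ by one and the same kernel. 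Exploiting this common smoothing---splitting $\jb{\tau+n^2}\le\jb{\tau-\tau_1}+\jb{\tau_1+n^2}$ and moving the weight either onto $\ft{\eta_\delta}$ or onto the profile, exactly as in the Time-Cutoff Lemma~\ref{time-cutoff}---reorders the mixed norm at the cost of the constant $C_\delta$ (a Sobolev norm of $\eta_\delta$) together with the weight $\jb{\tau+n^2}^{1/6}$ already present on the right, so that $\| \chi_{\mathrm{slab}}\,\ft{w_M}\|_{L^2_\tau \ell^3_n}\lesssim C_\delta\,\|w_M\|_{X^{0,1/6}_3}$. Chaining the three displayed bounds gives the lemma; if one prefers to make the last step cleanly convergent, I would first dyadically decompose in the modulation $\jb{\tau+n^2}\sim L$ with $1\le L\lesssim M^\beta$, carry out the count with $A_L=1+L/M\le A(\beta,M)$ on each piece, and sum in $L$ using the favorable factor $L^{-1/3}$ coming from the weight, absorbing the logarithm into the $+$ in $M^{\rho\beta+}$.
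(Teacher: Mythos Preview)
Your first two moves---extracting $M^{\rho\beta+}$ and applying H\"older in $n$ over the slab to gain $A(\beta,M)^{1/6}$ and land in $L^2_\tau\ell^3_n$---match the paper exactly, and you correctly isolate the reordering $L^2_\tau\ell^3_n\to\ell^3_nL^2_\tau$ as the crux. The gap is in your proposed resolution. Writing $w_M=\eta_\delta w_M$ and pulling the $\ell^3_n$-norm inside the convolution via Minkowski gives
\[
\|\ft{w_M}(\tau,\cdot)\|_{\ell^3_n}\le\int|\ft{\eta_\delta}(\tau-\tau_1)|\,\|\ft{w_M}(\tau_1,\cdot)\|_{\ell^3_n}\,d\tau_1,
\]
after which Young returns $L^2_\tau\ell^3_n$ on the right---circular. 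The weight-splitting trick from Lemma~\ref{time-cutoff} cannot break this loop: the weight $\jb{\tau_1+n^2}^{1/6}$ you want to insert depends on $n$, but once the $\ell^3_n$ norm has been taken the $n$-dependence is no longer accessible. Your dyadic-in-$L$ alternative freezes the weight to a constant on each piece but still does nothing about the norm order.

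The paper's route through this step is different and uses the slab cutoff itself rather than a generic bump. It passes to physical time and writes the slab projection as convolution of $\mathcal{F}_x w_M(t,\cdot)(n)$ with the explicit kernel $2e^{-itn^2}\sin(M^\beta t)/t$. The decisive feature is that, modulo the unimodular phase, the sinc factor $\sin(M^\beta t)/t$ is \emph{independent of $n$}; Minkowski in $n$ therefore factors it out cleanly, leaving a scalar convolution in $t$. One then applies Young with $\|\sin(M^\beta\cdot)/\cdot\|_{L^{1+}}\lesssim M^{0+}$, H\"older on the compact $t'$-support to pass $L^{2-}_{t'}\to L^3_{t'}$ (this is where $C_\delta$ enters), Fubini to swap $\ell^3_nL^3_t=L^3_t\ell^3_n$, and finally Sobolev $H^{1/6}_t\hookrightarrow L^3_t$ for each fixed $n$; only at this last step does the $\ell^3_nL^2_\tau$ order emerge. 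The missing ingredient in your sketch is precisely this $n$-independent sinc kernel coming from the inverse Fourier transform of the slab.
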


\begin{proof} We write 

\begin{eqnarray} \label{meas}
\| J_{\beta} w_M \|_{X^{0, \rho}}^2 \, &=&\, \sum_{|n| \sim M} \, \int_{|\tau +n^2| \leq M^{\beta}}  \, |\widehat {w_M}(\tau,n)|^2 \langle \tau + n^2 \rangle^{2 \rho} \, d\tau \notag \\
&\leq& M^{2 \rho \beta} \, \int_{\tau} \biggl( \sum_{ |n| \sim M,  \, |\tau +n^2| \leq M^{\beta} }  |\widehat {w_M}(\tau,n)|^2 \biggr) \, d \tau \notag \\
&\leq & M^{2 \rho \beta}  \int_{\tau}  \biggr[ \sum_{ |n| \sim M,  \, |\tau + n^2| \leq M^{\beta} }  |\widehat {w_M}(\tau,n)|^3 \biggr]^{\frac{2}{3}} \, | S(\tau, M, \beta)|^{\frac{1}{3}} \, d\tau, 
\end{eqnarray}
where 

\begin{equation} S(\tau, M, \beta): = \{ n \in \Z \, :\,  |n| \sim M \, \text{ and }\, |\tau +n^2| \leq M^{\beta} \}.  \end{equation} and $|S|$ represents the counting measure of the set. 

We will show below that 

\begin{equation} \label{claim} \quad  A(M, \beta) := \sup_{\tau} \, |  S(\tau, M, \beta) | \le 1 + M^{\beta -1} \end{equation} 

Hence  \eqref{meas} is less than or equal to 
\begin{eqnarray*}
& &A(M,\beta)^{\frac{1}{3}}M^{2\rho\beta}\int_\tau\biggl[    \sum_{n}   \chi_{\{|n| \sim M\}}(n),  \, \chi_{\{|\tau + n^2| \leq M^{\beta}\}}(\tau,n)|\widehat {w_M}(\tau,n)|^3        \biggr]^{\frac{2}{3}}\,d\tau\\
&=&A(M,\beta)^{\frac{1}{3}}M^{2\rho\beta}\int_\tau\left\|  \biggl\{  \chi_{\{|\tau + n^2| \leq M^{\beta}\}}(\tau,n)\widehat {w_M}(\tau,n) \biggr\}_n   \right\|^2_{\ell^3(|n|\sim M)}\, d\tau\\
&\sim& 
A(M,\beta)^{\frac{1}{3}}M^{2\rho\beta}\int_t\left\|  \mathcal F^{-1}_\tau\biggl(\biggl\{  \chi_{\{|\tau+n^2| \leq M^{\beta}\}}(\tau,n)\widehat {w_M}(\tau,n) \biggr\}_n \biggr)(t) \right\|^2_{\ell^3(|n|\sim M)}\, dt\\
&=&A(M,\beta)^{\frac{1}{3}}M^{2\rho\beta}\int_t\left\| \biggl\{ \mathcal F^{-1}_\tau\biggl( \chi_{\{|\tau+ n^2| \leq M^{\beta}\}}(\tau,n)\biggr) *    \mathcal F^{-1}_\tau\biggl(\widehat {w_M}(\tau,n)  \biggr)\biggr\}_n(t) \right\|^2_{\ell^3(|n|\sim M)}\, dt.
\end{eqnarray*}
Note that $\mathcal F^{-1}_\tau\biggl(\widehat {w_M}(\cdot,n)\biggr)(t)$ is still supported on $[-\delta,\delta]$ for all $n$ and 
\begin{equation}\label{calc1}
\mathcal F^{-1}_\tau\biggl( \chi_{\{|\tau+ n^2| \leq M^{\beta}\}}(\cdot,n)\biggr)(t)= 2e^{-itn^2}\, \frac{\sin(M^\beta t)}{t}.
\end{equation}
We then continue the chain of inequalities from the expression above with 
\begin{eqnarray*} 
&=&A(M,\beta)^{\frac{1}{3}}M^{2\rho\beta}\int_t \left\| \int_\R \chi_{[-\delta,\delta]}(t')   \mathcal F_n (w_M(t',\cdot))(n)  e^{-i(t-t')n^2}\, \frac{\sin(M^\beta (t-t'))}{t-t'}
dt' \right\|^2_{\ell^3(|n|\sim M)}\, dt\nonumber\\
&\leq&A(M,\beta)^{\frac{1}{3}}M^{2\rho\beta}\int_\R \biggl[ \int_\R \chi_{[-\delta,\delta]}(t') \| \mathcal F_n (w_M(t',\cdot))(n) \|_{\ell^3(|n|\sim M)} \left| \frac{\sin(M^\beta (t-t'))}{t-t'}\right|
dt'  \biggr]^2\, dt.
  \end{eqnarray*}
Let $p=2-$ and $q=1+$, then we compute
\begin{eqnarray} 
\left\| \frac{\sin(M^\beta (t-t'))}{t-t'} \right\|_{L^q_t} \, &=&\, M^{\beta} \biggl( \int_{\R} \, \left|\frac{\sin(M^\beta t)}{t M^{\beta}} \right |^q\, dt \biggr)^{\frac{1}{q}}\nonumber \\
&=& M^{\beta} M^{-\frac{\beta}{q}} \,  \biggl( \int_{\R} \, \left | \frac{\sin(r)}{r}\right |^q \, dr \biggr)^{\frac{1}{q}} \lesssim M^{0+}.\label{calc2}
\end{eqnarray}
On the other hand for $\frac{1}{\gamma}=\frac{1}{p}-\frac{1}{3}$
\begin{eqnarray}
\left\|\chi_{[-\delta,\delta]}(\cdot) \| \mathcal F_n (w_M(t,\cdot))(n) \|_{\ell^3(|n|\sim M)} \right\|^2_{L^p_t}
&\lesssim &\delta^\frac{2}{\gamma}\left\|\left\| \mathcal F_n (w_M(t,\cdot))(n)\right\|_{\ell^3(|n|\sim M)} \right\|^2_{L^3_t}\nonumber\\
&\lesssim& \delta^\frac{2}{\gamma}\left\|\left\| e^{itn^2}\mathcal F_n (w_M(t,\cdot))(n)\right\|_{\ell^3(|n|\sim M)} \right\|^2_{L^3_t}\nonumber\\
&=&\delta^\frac{2}{\gamma}\left\|\left\| e^{itn^2}\mathcal F_n (w_M(t,\cdot))(n)\right\|_{L^3_t}\right\|^2_{\ell^3(|n|\sim M)}\nonumber\\
&\lesssim& \delta^\frac{2}{\gamma}\left\|\left\| e^{itn^2}\mathcal F_n (w_M(t,\cdot))(n)\right\|_{H^\frac{1}{6}_t}\right\|^2_{\ell^3(|n|\sim M)}\nonumber\\
&=&\delta^\frac{2}{\gamma}\left\|w_M\right\|^2_{X^{0,\frac{1}{6}}_3},\label{calc3}
\end{eqnarray}
where we used the Sobolev theorem and the definition of  $X^{s,b}_r$. Finally 
by Young's inequality, \eqref{calc2} and  \eqref{calc3} we have the desired estimate. 

It remains to show \eqref{claim}. We use an argument similar to \cite{DPST}. For fixed $\tau$ let $S:=S(\tau, M, \beta)\ne \emptyset$, then there exists $n_0\in S$ and hence
\begin{equation}|S|\leq 1+|\{l \in \Z /  |\, n_0+ l \,  |\sim M, \, |\tau+(n_0+l)^2|\leq M^\beta\}|\leq 1+|\{   l\in \Z \, /\, |l|\leq M, \, |2n_0l+l^2|\lesssim M^\beta    \}|.\label{set}
\end{equation}

\begin{equation} 
 |2n_0l+l^2| = | (l + n_0)^2 - n_0^2 | \lesssim M^\beta \quad   \text{ if and only if }   \quad -CM^{\beta} + n_0^2 \leq  (l + n_0)^2 \leq n_0^2 + C M^{\beta}
\end{equation}
Hence we need  $| \, l \, | \leq M$ to satisfy  
\begin{eqnarray*} 
&&- \sqrt{ n_0^2 + C M^{\beta} } \, \leq \, (l + n_0 ) \, \leq  \, \sqrt{ n_0^2 + C M^{\beta}},  \\
&& (l + n_0)\,  \geq \, \sqrt{ n_0^2 - C M^{\beta} } \quad  \text{ or } \quad 
 ( l + n_0 ) \, \leq \, - \sqrt{n_0^2 - C M^{\beta}}.
 \end{eqnarray*} In other words we need to know the size of  
 $$[ - \sqrt{n_0^2 + C M^{\beta}}, \, - \sqrt{ n_0^2 - CM^{\beta}}] \, \cup \, [ \sqrt{n_0^2 - C M^{\beta}}, \,  \sqrt{ n_0^2 + CM^{\beta}}]$$ which is of the order of $\frac{M^{\beta}}{|n_0|}$. Hence since $|n_0| \sim M$ we have that 
\begin{equation} 
|S| \leq 1 + M^{\beta-1} 
\end{equation}
which implies \eqref{claim} by taking $\sup_{\tau}$.

\end{proof}

In what follows we are under the assumption that $\sigma_j\lesssim N^7$ for all $j=1, \dots, 6$. Towards the end of the proof  we remove this assumption.
We begin by treating all cases with at least two high frequencies in the non derivative terms.  All cases in [IC],  [ID] [IE] [IIB] [IIC] [IID] [IIIB] [IIIC] [IIID] [IVA] [IVB] [IVC]
follow from the following lemma applied with the exponent $\sigma$ appearing below set equal to $0$.
\vskip .1in

\begin{lemma}\label{2nonderivatives} 

 Assume there are ${i, j} \in \{1,2,4,5 \}$ such that $N_i \ge N^{1- \sigma}$ for $ 0 \leq  \sigma < \frac{1}{6}$  and $N_j \sim N$ then 
\eqref{start} can be estimated by $N^{-\frac{1}{12}+ \frac{\sigma}{2} }\, \prod_{i=1}^6 \, \Vert w_i\Vert_{X^{s,b}_3}$.
\end{lemma}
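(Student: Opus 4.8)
The plan is to read \eqref{start} as a spacetime pairing of two trilinear ``halves,'' estimate it by Cauchy--Schwarz together with the trilinear Strichartz inequality \eqref{iii}, and then recover a small surplus decay by a modulation analysis. First I would discard the projection $P_N^\perp$ (it can only lower an $L^2_{xt}$ norm) and factor the integrand of \eqref{start} as the product of $w_{N_1}w_{N_2}\,\partial_x\cj{w_{N_3}}$ and $w_{N_4}\cj{w_{N_5}}\,\partial_x\cj{w_{N_6}}$. Cauchy--Schwarz in $(x,t)$ then bounds \eqref{start} by the product of the two $L^2_{xt}$ norms. Because \eqref{iii} has a single low-regularity ($X^{0,\frac12-}$) slot, each half retains exactly one derivative factor, which I place there, the two non-derivative factors going into the two $X^{\epsilon,\frac12-}$ slots.

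Next I would run \eqref{iii} on each half and convert every resulting factor to the working norm by Lemma~\ref{lemma 0}: by \eqref{i} each non-derivative factor $w_{N_i}$ contributes a gain $N_i^{-\frac12+}$, while the two derivative factors cost $N_3^{\frac12+}$ and $N_6^{\frac12+}$ (a full derivative $N_3,N_6\le N$ by \eqref{Size}, against the gain $N_3^{-\frac12+},N_6^{-\frac12+}$). The hypothesis turns two of the non-derivative gains into genuine decay, $N_i^{-\frac12+}\le N^{-\frac{1-\sigma}2+}$ and $N_j^{-\frac12+}\le N^{-\frac12+}$, while the remaining two factors cost at most $N^{0+}$. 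Collecting powers leaves a \emph{borderline} bound of size $N^{\frac\sigma2+}\prod_{i=1}^6\|w_i\|_{X^{\frac23-,\frac12-}_3}$, the worst case being when both derivative frequencies are $\sim N$ (Type~I) and the two distinguished high frequencies are the only large non-derivative ones.

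The crux is then to gain the extra factor $N^{-1/12}$ that upgrades this borderline estimate to the claimed $N^{-\frac1{12}+\frac\sigma2}\prod_{i=1}^6\|w_i\|_{X^{\frac23-,\frac12-}_3}$; it is exactly this margin that forces $\sigma<\tfrac16$, so that $\tfrac\sigma2-\tfrac1{12}<0$. I would extract it from the modulation resonance identity \eqref{AAlgebra3} by fixing a threshold $N^\beta$: when some factor carries modulation $\gtrsim N^\beta$ and its trilinear partners are low, one trades the surplus modulation weight against \eqref{iv} (gaining a power $N^{-\beta/6+}$, the difference $\tfrac12-\tfrac13$ producing the exponent $\tfrac16$), whereas when every modulation is $\lesssim N^\beta$ all six factors are trapped in $\{|\tau\pm n^2|\lesssim N^\beta\}$ and the near-paraboloid Lemma~\ref{lemmacloseparab} applies with $M\sim N$, its counting factor $A(N,\beta)^{1/6}$ and $\ell^3$-gain supplying the missing decay. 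Balancing $\beta$ yields the surplus $N^{-1/12}=N^{-\frac12\cdot\frac16}$.

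I expect the main obstacle to be the balanced configuration in which, besides the two derivative frequencies $\sim N$, each half also carries one high and one low non-derivative frequency (e.g.\ case IC\,(ii)): there the two halves are individually saturated at $N^{0}$, and no per-half trade via \eqref{iv} helps, since every high-frequency partner forfeits its $N^{-1/2}$ spatial gain under \eqref{v}. The resolution must use the \emph{joint} six-fold relation \eqref{AAlgebra3}, which off a near-resonant set forces a single modulation $\gtrsim N^2$, rather than treating the two trilinear halves independently; controlling the complementary near-resonant set through Lemma~\ref{lemmacloseparab} and balancing it against this high-modulation gain is the delicate quantitative step that pins down both the exponent $-\tfrac1{12}$ and the range $\sigma<\tfrac16$.
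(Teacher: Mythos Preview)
Your opening moves match the paper: Cauchy--Schwarz into two trilinear halves, \eqref{iii}, then \eqref{i}; this correctly produces the borderline $N^{\sigma/2+}$. The gap is in how you recover the missing $N^{-1/12}$.

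The paper's modulation argument is structurally simpler than what you sketch. It splits only on $\sigma_3$, the modulation of the single derivative factor $\cj{w_{N_3}}$, against the threshold $N_3^\beta$, and it never invokes the six-fold identity \eqref{AAlgebra3} in this lemma. When $\sigma_3\le N_3^\beta$ the paper keeps your two-halves structure but applies Lemma~\ref{lemmacloseparab} to $w_{N_3}$ alone, obtaining $\|w_{N_3}\|_{X^{\epsilon,\frac12-}}\lesssim N_3^{\beta/2}\|w_{N_3}\|_{X_3^{0,\frac16}}$ and hence a net factor $N^{-\frac16+\frac\beta2+\frac\sigma2}$. When $\sigma_3\ge N_3^\beta$ the paper \emph{abandons} the symmetric trilinear split: it places $\sigma_3^{\frac12-}\cj{w_{N_3}}$ by itself in $L^2_{xt}$ (the prefactor $\sigma_3^{-\frac12+}\le N_3^{-\beta/2}$ is the gain), groups $\cj{w_{N_6}}$ together with the two \emph{large} non-derivative factors $w_{N_i},w_{N_j}$ in a single trilinear $L^2_{xt}$ block via \eqref{iii} (thus preserving their $N^{-\frac12}$ and $N^{-\frac{1-\sigma}{2}}$ gains from \eqref{i}), and sends the two remaining small factors to $L^\infty_{xt}$ via \eqref{ii}. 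This yields $N^{-\frac\beta2+\frac\sigma2}$; balancing $-\frac16+\frac\beta2=-\frac\beta2$ gives $\beta=\tfrac16$ and the exponent $-\tfrac1{12}+\tfrac\sigma2$.

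Your proposed high-modulation route through \eqref{iv} does not close, for precisely the reason you yourself flag as an obstacle: switching a trilinear half from \eqref{iii}$+$\eqref{i} to \eqref{iv}$+$\eqref{v} forfeits the $N_i^{-1/2}$, $N_j^{-1/2}$ spatial decay for whichever large non-derivative factors lie in that half, so the $N^{-\beta/6}$ you gain is swamped. The paper's asymmetric regrouping is exactly what circumvents this: both large non-derivative factors are deliberately placed with the \emph{other} derivative factor $\cj{w_{N_6}}$, while $\cj{w_{N_3}}$ sits alone. The resonance identities \eqref{AAlgebra1}--\eqref{AAlgebra3} are not needed here; they enter only in the later cases [IA], [IB], [IIA], [IIIA], where fewer than two non-derivative frequencies are large and the hypotheses of the present lemma fail.
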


\begin{proof}  By Plancherel we have that \eqref{start} is less than or equal to
\begin{equation} \label{space} \sum_{N_j \sim N;  N_i \ge  N^{1-\sigma}; \, N_k \leq N,\, 1\le k \leq 6 }  \int_{\mathbb{R}} \, \int_{\mathbb{T}} \,\,   N_3\, N_6 \, \, \, w_{N_1} \, w_{N_2}  \, \cj{w_{N_3}} \, w_{N_4}\,   \cj{ w_{N_5}}\,  \cj{w_{N_6}}\, \, \, dx \, dt. \end{equation} 

Let $0< \beta < 1$ to be determined below. Assume 

\begin{equation} \label{sigma3} \sigma_3 \leq N_3^{\beta}.\end{equation} By Cauchy-Schwarz's inequality,  grouping 
the first three functions in \eqref{space} in $L^2_{xt}$  and the last three in $L^2_{xt}$ and using \eqref{iii}  we have that  \eqref{space} is less than or equal to
\begin{equation} \label{space2} \sum_{ N_j \sim N;  N_i \ge  N^{1-\sigma}; \, N_k \leq N  }\, \, \,  N_3\, N_6   \prod_{i=1}^6 \Vert w_{N_i} \Vert_{X^{\epsilon, \frac{1}{2}-} }. \end{equation} 

Note now that by  \eqref{sigma3}   $ w_{N_3}$ is equal to  $J_{\beta} w_{N_3} $  as defined in Lemma \ref{lemmacloseparab} above. Then we have

\begin{equation} \label{lemma1}  \Vert  w_{N_3} \Vert_{X^{\epsilon, \frac{1}{2}-} } \leq  C_{\delta}\,  N_3^{\frac{1}{2} \beta+} \, \Vert  w_{N_3} \Vert_{X_3^{0, \frac{1}{6}+} }. \end{equation}

Hence by \eqref{i}, \eqref{lemma1} we have that \eqref{space2} is less than or equal to

\begin{equation}   \sum_{   N_j \sim N;  N_i \ge  N^{1-\sigma}; \, N_k \leq N  }\, \, \,  N_3\, N_6    N_1^{-\frac{1}{2} +} N_2^{-\frac{1}{2} +}N_3^{\frac{1}{2} \beta+ } N_3^{-\frac{2}{3}} N_4^{-\frac{1}{2} +} N_5^{-\frac{1}{2} +} N_6^{-\frac{1}{2}+} \, \biggl( \prod_{i=1}^6\,  \Vert w_{N_i} \Vert_{X^{\frac{2}{3}-, \frac{1}{2}-}_3}\biggr).  \end{equation}

\begin{equation} \lesssim  \sum_{ N_j \sim N;  N_i \ge  N^{1-\sigma}; \, N_k \leq N}\, \, \,  N_3^{\frac{1}{3}+ \frac{\beta}{2}+} \, N^{\frac{1}{2}+}    N^{-1 + \frac{\sigma}{2}} \, \biggl( \prod_{i=1}^6\,  \Vert w_{N_i} \Vert_{X^{\frac{2}{3}-, \frac{1}{2}-}_3} \biggr).  \end{equation}

From here we apply H\"older's inequality  with $r=3, r'=\frac{3}{2}$  to sum in $N_j, N_i, N_k$  (multiply and divide by $N_j^{-\epsilon}$ with a loss of $N^{\epsilon}$ for each term). For example, 
\begin{eqnarray}\label{holder}
\sum_{N_j \leq N}   \Vert w_{N_j} \Vert_{X^{s, b}_3} &=&  \sum_{N_j \leq N} \left\| \Vert \langle n_j\rangle^s \langle \tau+n_j^2 \rangle^b \, {\widehat w_{N_j}}(\tau, n_j)  \Vert_{L^2_{\tau}}  \right\|_{\ell^3}. 
\end{eqnarray}
Set $Y_{N_j}(n_j): = \Vert \langle n_j\rangle^s \langle \tau- n_j^2 \rangle^b \, {\widehat w_{N_j}}(\tau, n_j)  \Vert_{L^2_{\tau}},$ then the expression in  \eqref{holder} equals
\begin{eqnarray}\label{holder1}
 \sum_{N_j \leq N}  N_j^{\varepsilon} N_j^{-\varepsilon}  \Vert Y_{N_j}  \Vert_{\ell^3} &\leq&
N^{\varepsilon}  \biggl( \sum_{N_j \leq N} N_j^{-\frac{3}{2} \varepsilon} \biggr)^{\frac{2}{3}} \biggl( \sum_{N_j \leq N}  \Vert Y_{N_j}  \Vert^3_{\ell^3}  \biggr)^{\frac{1}{3}} \notag \\
&\lesssim& N^{\varepsilon} \biggl( \sum_{N_j \leq N}  \sum_{|n_j| \sim N_j} \Vert  \langle n_j \rangle^s \langle \tau+ n_j^2 \rangle^b \, {\widehat w_{j}}(\tau, n_j)   \Vert^3_{L^2_{\tau}}  \biggr)^{\frac{1}{3}} \\
&\sim&\, N^{\varepsilon} \Vert w_j \Vert_{X^{s, b}_3}.  \notag
\end{eqnarray}
Note then that all in all we get at worst a factor of $N^{-\frac{1}{6}+ \frac{\beta}{2}+ \frac{\sigma}{2}+}$.

Now assume that 
\begin{equation} \label{sigma3>}  \sigma_3 \geq  N_3^{\beta}.\end{equation}
Then rewrite  \eqref{space} as 
\begin{equation} \label{space3} \sum_{N_j \sim N;  N_i \ge  N^{1-\sigma}; \, N_k \leq N }  \int_{\mathbb{R}} \, \int_{\mathbb{T}} \,\,   N_3\, N_6\,| \sigma_3|^{-\frac{1}{2}+}\, \, \, w_{N_1} \, w_{N_2}  \, | \sigma_3|^{\frac{1}{2}-} \cj{w_{N_3}} \, w_{N_4}\,   \cj{ w_{N_5}}\,  \cj{w_{N_6}}\, \, \, dx \, dt. \end{equation} 
We do H\"older by placing $ | \sigma_3|^{\frac{1}{2}-} \cj{w_{N_3}} $ in $L^2_{x,t}$, the product of $\cj{w_{N_6}}$ with the two largest among $w_{N_1}, \, w_{N_2}, \, w_{N_4},\,   \cj{ w_{N_5}}$ in $L^2_{x,t}$ while the remaining ones in $L^\infty_{x,t}$. Then by \eqref{i} and \eqref{ii}, we bound \eqref{space3} by

\begin{eqnarray*} &\lesssim&  \sum_{ N_j \sim N;  N_i \ge  N^{1-\sigma}; \, N_k \leq N}\, \, \,  N_3N_3^{-\frac{1}{2}- \frac{\beta}{2}+} \, N_6N_6^{-\frac{1}{2}+}N^{-\frac{1}{2}+}    N^{-\frac{1}{2} + \frac{\sigma}{2}+} \, \biggl( \prod_{i=1}^6\,  \Vert w_{N_i} \Vert_{X^{\frac{2}{3}-, \frac{1}{2}-}_3} \biggr) \\
&\lesssim&  \sum_{ N_j \sim N;  N_i \ge  N^{1-\sigma}; \, N_k \leq N}\, \, \,  N_3^{\frac{1}{2}- \frac{\beta}{2}+} \,    N^{-\frac{1}{2} + \frac{\sigma}{2}+} \, \biggl( \prod_{i=1}^6\,  \Vert w_{N_i} \Vert_{X^{\frac{2}{3}-, \frac{1}{2}-}_3} \biggr).
 \end{eqnarray*}
We want that $\beta>\sigma$ to conclude by H\"older the desired inequality with a decay in $N$. We now impose that 
$$-\frac{1}{6}+\frac{\beta}{2}+\frac{\sigma}{2}=-\frac{\beta}{2}+\frac{\sigma}{2}, $$
whence $\beta=\frac{1}{6}$ and provided $0<\sigma<\frac{1}{6}$ the lemma follows.
\end{proof}

\bigskip

It remains then to treat cases [IA], [IB], [IIA] and [IIIA]. Before starting we note the following support condition that will be used throughout in what follows.

\medskip

\subsection*{Support Condition}

By \eqref{Size} and \eqref{RelativeSize} the triplet $(w_{N_1}, w_{N_2},  \cj{w_{N_3}} )$ satisfies $n= n_1 + n_2 + n_3$, $|n_j| \leq N,\,  N \leq |n| \leq 3N$ 
and $N \sim \max(N_1, N_2, N_3)$.  

Suppose that -say- $\max(N_1, N_2) \leq N^{\theta}$ for some $ 0 < \theta <1$.  Without any loss of generality assume $n >0$.  Then, we have that $ N\leq n \leq  (n_1 +n_2) + n_3 \leq 2 N^{\theta} + N $ and hence  $n = N + k$ where $0 \leq k \leq 2 N^{\theta}$. Next observe that  $n_3 = n - (n_1 +n_2) = N +k  - (n_1 +n_2)$ with $|k -(n_1 +n_2) | \leq  4 N^{\theta}$, whence $n_3 = N + O(N^{\theta})$. In other words, we have that whenever $\max(N_1, N_2) \le N^{\theta}$ the support of $\widehat{\cj{w_{N_3}}}$ is  of size $O(N^{\theta})$.  Note that we could have just as well said that the support of $\widehat{\cj{w_{N_3}}}$ is of size $O(\max(N_1,N_2))$ in lieu of $O(N^{\theta})$. 

When we are in this situation we say we have the {\it support condition} on $\cj{w_{N_3}}$.  This argument is symmetric with respect to $w_{N_1}, w_{N_2}$ or $\cj{w_{N_3}}$.  The exact same analysis holds for $(w_{N_4}, \cj{w_{N_5}}, \cj{w_{N_6}})$.   By abuse of notation we still write for example, $\widehat{\cj{w_{N_3}}}(n_3)$ for $\widehat{\cj{w_{N_3}}}(n_3) \chi_{I_3}(n_3)$, where $I_3(n_3)$ is the support of $\widehat{\cj{w_{N_3}}}$ when the support condition holds.

\begin{remark} \label{support} \rm As a consequence of the support condition,  estimate \eqref{i} can be improved. For example if we have the support condition on $\widehat{\cj{w_{N_3}}}$ then 
$$\| w_{N_3} \|_{X^{0+, \frac{1}{2}-}} \lesssim | I_3 |^{\frac{1}{6}} \| w_{N_3} \|_{X_3^{0+, \frac{1}{2}-}} $$

\end{remark}

\subsection*{ \bf Case  [IIIA] }  Note that  (i) and (ii) are symmetric with respect to $j=1$ and $j=2$.  So we only consider (i).  Observe also that a priori there is no help from a large $\sigma_j$.  Let $ \sigma, \delta$ be two positive constants to be determined later but such that $1 - \sigma > \delta$.
\medskip 

\underbar{Subcase 1:}  Assume $N_2, N_4, N_5 < N^{1- \sigma}, \, N_3 \lesssim N^{\delta}$ and $N_1 \sim N \sim N_6$ in \eqref{start}.  Then we have the support condition on $w_{N_1}$ and $\cj{w_{N_6}}$.  Let us denote by $\sum_{\ast}$ the sum over the set of $N_j \le N , 1 \le j \le 6$ such that  $N_1, N_6 \sim N$,   $N_j <  N^{1-\sigma}$ for $j=2, 4, 5$ and  $N_3 \lesssim N^{\delta }$.  
By Cauchy-Schwarz, \eqref{iii}, Lemma \ref{lemma 0} and Remark \ref{support} we then have that \eqref{start} is less than or equal to
\begin{eqnarray*}  &&\sum_{\ast} N_3\, N_6 \, \, \,   \max(N_2, N_3)^{\frac{1}{6}} N_1^{-\frac{2}{3}+} N_2^{-\frac{1}{2}+} N_3^{-\frac{1}{2}+}  N_4^{-\frac{1}{2}+} N_5^{-\frac{1}{2}+} \times \\
&&\hspace{4cm}  \times  \max(N_4, N_5)^{\frac{1}{6}} N_6^{-\frac{2}{3}+} \, \biggl( \prod_{i=1}^6\,  \Vert w_{N_i} \Vert_{X^{\frac{2}{3}-, \frac{1}{2}-}_3} \biggr)\\
&\lesssim & \sum_{\ast} N_3^{\frac{1}{2}+} N_6^{\frac{1}{3}+} \max(N_2, N_3)^{\frac{1}{6}} N_2^{-\frac{1}{2}+}  N^{-\frac{2}{3}+}  \, \biggl( \prod_{i=1}^6\,  \Vert w_{N_i} \Vert_{X^{\frac{2}{3}-, \frac{1}{2}-}_3} \biggr) 
 \end{eqnarray*}  since $N_4^{-\frac{1}{2}+} N_5^{-\frac{1}{2}+} \max(N_4, N_5)^{\frac{1}{6}}$ is bounded. On the other hand the latter expression is worst possible when $\max(N_2, N_3) \sim N_3$; hence if $\delta < \frac{1}{2}$ we conclude by H\"older as before with a decay of $ N^{-\frac{1}{3}} N^{\frac{2}{3} \, \delta}$.  
\medskip 

\underbar{Subcase 2:}  Assume $N_2, N_4, N_5 < N^{1- \sigma}, \, N_3 \gtrsim N^{\delta}$ and $N_1 \sim N \sim N_6$ in \eqref{start}. We further subdivide as follows:
\medskip 

{\it Subcase 2a)}  Assume $N_2, N_4, N_5 \ll N^{\delta}, \, N_3 \gtrsim N^{\delta}$ and $N_1 \sim N \sim N_6$ in \eqref{start}. Then from \eqref{AAlgebra2}  there exists $\sigma_j \gtrsim  N^{1+\delta}$. Denote by $\sum_{\ast}$ the sum over the set of $N_j \le N , 1 \le j \le 6$ such that  $N_1, N_6 \sim N$,   $N_j <  N^{\delta}$ for $j=2, 4, 5$ and  $N_3 \geq N^{\delta }$.  

\medskip 

$\bullet$ Suppose $j=2,4$ or $5$; $j=2$ or $4$ are symmetric. So we treat first $j=2$ and then $j=5$. By Plancherel we have that \eqref{start} is less than or equal to 
\begin{eqnarray*}
&&\sum_{\ast}    \int_\R\int_\T N_3  N_6\, \sigma_2^{-\frac{1}{2}+} \,  w_{N_1} \, \sigma_2^{\frac{1}{2}-} w_{N_2}  \cj{ w_{N_3}}  w_{N_4}   \cj{ w_{N_5}} \cj{ w_{N_6}}\, dxdt\\
&\lesssim& \sum_{\ast} N_3^{\frac{1}{2}+}   N_6^{\frac{1}{2}+}  N^{-\frac{1}{2}-\frac{\delta}{2}}  N_1^{-\frac{1}{2}+}  N_2^{-\frac{1}{2}+} N_4^{0+}\, N_5^{0+}  \, \biggl( \prod_{i=1}^6\,  \Vert w_{N_i} \Vert_{X^{\frac{2}{3}-, \frac{1}{2}-}_3} \biggr)\
\end{eqnarray*} by Cauchy Schwarz placing $w_{N_1}  \cj{ w_{N_3}} \cj{ w_{N_6}}$ in $L^2$, $\sigma_2^{\frac{1}{2}} w_{N_2}$ in $L^2$ and $ w_{N_4}   \cj{ w_{N_5}} $ in $L^{\infty}$.  From \eqref{iii} and Lemma \ref{lemma 0} we obtain the desired estimate with decay $N^{-\frac{\delta}{2}}$ so long as $\delta >0$. 

If $j=5$ we proceed as above with same grouping in $L^2$ but exchanging the roles of $w_{N_2}$ and $\cj{w_{N_5}}$ for the other $L^2$ and one of the $L^{\infty}$ bounds. 

\medskip 

$\bullet$ Suppose $j=3,6$ or $1$; $j=3$ or $6$ are symmetric. So we treat first $j=3$ and then $j=1$.  Proceeding as above from \eqref{start} we now have
\begin{eqnarray*}
&&\sum_{\ast} \int_\R\int_\T N_3  N_6\, \sigma_3^{-\frac{1}{2}+} \,  w_{N_1} \,  w_{N_2} \sigma_3^{\frac{1}{2}-} \cj{ w_{N_3}}  w_{N_4}   \cj{ w_{N_5}} \cj{ w_{N_6}}\, dxdt\\
&\lesssim&\sum_{\ast} N_3^{\frac{1}{2}+}   N_6^{\frac{1}{2}+}  N^{-\frac{1}{2}-\frac{\delta}{2}}   N_1^{-\frac{1}{2}+} N_2^{0+}\, N_4^{-\frac{1}{2}+} N_5^{0+}  \, \biggl( \prod_{i=1}^6\,  \Vert w_{N_i} \Vert_{X^{\frac{2}{3}-, \frac{1}{2}-}_3} \biggr)\
\end{eqnarray*} 
by Cauchy Schwarz placing $w_{N_1}  w_{N_4} \cj{ w_{N_6}}$ in $L^2$, $\sigma_3^{\frac{1}{2}-} \cj{w_{N_3}}$ in $L^2$ and $ w_{N_2}   \cj{ w_{N_5}} $ in $L^{\infty}$.  We thus obtain the desired estimate as before with decay $N^{-\frac{\delta}{2}}$ so long as $\delta >0$. 

If $j=1$ then we group  $\cj{w_{N_3}}  w_{N_4} \cj{ w_{N_6}}$ in $L^2$,  $\sigma_1^{\frac{1}{2}-} w_{N_1}$ in $L^2$ and the other two on $L^{\infty}$ to reach the same estimate.
 
\bigskip

{\it Subcase 2b)}  Suppose there exists $i \in \{2, 4, 5 \}$ such that $N_i \gtrsim N^{\delta}$ and $N_j \ll N^{\delta}$ for $j \neq i$, and $i,  j \in \{ 2,4, 5 \}$ while still  $N_3 \gtrsim N^{\delta}$ and $N_1 \sim N \sim N_6$ in \eqref{start}.  

\medskip

$\bullet$ \, Suppose $i=2$ first. Then we further split the sum over this set into three sums, $S_1, S_2$ and $S_3$ according to whether $N^{\delta} \lesssim N_2 \ll N_3; \,  \, N_2 \sim N_3$  \, or  $ \, N_2 \gg N_3$ respectively.  When considering the sums over $S_1$ or over $S_3$ we have that from \eqref{AAlgebra2} there exists $\sigma_j \gtrsim  N^{1+\delta}$ and hence the estimates for $S_1$ and $S_3$ follow exactly as those in {\it Subcase 2a)}. 

We treat then $S_2$. Since $N_2 \sim N_3$ and $N_2 < N^{1 - \sigma}$, we also have $N_3 < N^{1-\sigma}$; while $N_4, N_5 \lesssim N^{\delta}$.  Thus we have the support condition in $w_{N_1}$ and $\cj{w_{N_6}}$. Then from \eqref{start} by Cauchy-Schwarz, \eqref{iii}, Lemma \ref{lemma 0} and Remark \ref{support},  grouping $w_{N_1} \,  w_{N_2} \cj{ w_{N_3}}$ in $L^2$ and  $w_{N_4}   \cj{ w_{N_5}} \cj{ w_{N_6}}$ and \eqref{i} we have 
\begin{eqnarray*} 
&&\sum_{S_2}  N_3 N_6 \max(N_2, N_3)^{\frac{1}{6}} N_1^{-\frac{2}{3}+}  N_2^{-\frac{1}{2}+}  N_3^{-\frac{1}{2}+}  N_4^{-\frac{1}{2}+}  N_5^{-\frac{1}{2}+} \,\times\\
&&\hspace{4cm}\times \max(N_4, N_5)^{\frac{1}{6}} N_6^{-\frac{2}{3}+} \, 
\biggl( \prod_{i=1}^6\,  \Vert w_{N_i} \Vert_{X^{\frac{2}{3}-, \frac{1}{2}-}_3} \biggr) \\
&\lesssim&  \sum_{S_2}     N_6^{\frac{1}{3}+}\max(N_2, N_3)^{\frac{1}{6}}  N_1^{-\frac{2}{3}+}   \,    \biggl( \prod_{i=1}^6\,  \Vert w_{N_i} \Vert_{X^{\frac{2}{3}-, \frac{1}{2}-}_3} \biggr) 
\end{eqnarray*} since $  N_4^{-\frac{1}{2}+}  N_5^{-\frac{1}{2}+}  \max(N_4, N_5)^{\frac{1}{6}}$ is bounded and $N_2 \sim N_3$.  Summing as usual, we get the desired estimate with 
decay $N^{-\frac{1}{6}+}$ regardless of $\sigma>0$. 

\medskip 

$\bullet$ \, Suppose $i=4$.  Again, we further split the sum over this set into three sums, $S_1, S_2$ and $S_3$ according now to whether $N^{\delta} \lesssim N_4 \ll  N_3; \, N_4 \sim N_3$  \, or  $ \, N_4 \gg  N_3$ respectively.  For the sums over $S_1$ or over $S_3$, from \eqref{AAlgebra2}  we have a $\sigma_j \gtrsim  N^{1+\delta}$ and hence the estimates for $S_1$ and $S_3$ follow exactly as those in {\it Subcase 2a)}. 

We treat then $S_2$. Since $N_4 \sim N_3$, $N_3 < N^{1-\sigma}$ while $N_2, N_5 \lesssim N^{\delta}$; so once again we have a support condition in $w_{N_1}$ and $\cj{w_{N_6}}$. Proceeding as before we have 
\begin{eqnarray*} 
&&\sum_{S_2}  N_3 N_6 \max(N_2, N_3)^{\frac{1}{6}} N_1^{-\frac{2}{3}+}  N_2^{-\frac{1}{2}+}  N_3^{-\frac{1}{2}+}  N_4^{-\frac{1}{2}+}  N_5^{-\frac{1}{2}+}  \times\\
&& \hspace{4cm} \times \max(N_4, N_5)^{\frac{1}{6}} N_6^{-\frac{2}{3}+} \,  \biggl( \prod_{i=1}^6\,  \Vert w_{N_i} \Vert_{X^{\frac{2}{3}-, \frac{1}{2}-}_3} \biggr) \\
&\lesssim& \sum_{S_2}   N_3^{\frac{1}{2}+} N_6 N_3^{\frac{1}{6}} N^{-\frac{2}{3}+}  N_2^{-\frac{1}{2}+}   N_4^{-\frac{1}{2} + \frac{1}{6}+} N_5^{-\frac{1}{2}+} N^{-\frac{2}{3}+} \,  \biggl( \prod_{i=1}^6\,  \Vert w_{N_i} \Vert_{X^{\frac{2}{3}-, \frac{1}{2}-}_3} \biggr) \\
&\lesssim& \sum_{S_2}   N_3^{\frac{1}{3}+} N \, N^{-\frac{4}{3}+}  \biggl( \prod_{i=1}^6\,  \Vert w_{N_i} \Vert_{X^{\frac{2}{3}-, \frac{1}{2}-}_3} \biggr). 
\end{eqnarray*}  Since $N_4 \sim N_3$ and $N_3 < N^{1 -\sigma}$ summing as before we have the desired estimate with decay $N^{-\frac{\sigma}{3}}$ so long as $\sigma>0$.

\medskip

$\bullet$ \, Suppose $i=5$.  We now split the sum over this set into three sums, $S_1, S_2$ and $S_3$ according to whether $N^{\delta} \lesssim N_5 \ll  N_3; \, \, N_5 \sim N_3$  \, or  $ \, N_5 \gg N_3$ respectively.  Again for the sums over $S_1$ or over $S_3$, from \eqref{AAlgebra2}  we have a $\sigma_j \gtrsim  N^{1+\delta}$ and hence the estimates for $S_1$ and $S_3$ follow exactly as those in {\it Subcase 2a)}. 

We treat then $S_2$. Since $N_5 \sim N_3$, $N_3 < N^{1-\sigma}$ while $N_2, N_4 \lesssim N^{\delta}$;  we have a support condition in $w_{N_1}$ and $\cj{w_{N_6}}$. Proceeding as before we have 

\begin{eqnarray*} 
&&\sum_{S_2}  N_3 N_6 \max(N_2, N_3)^{\frac{1}{6}} N_1^{-\frac{2}{3}+}  N_2^{-\frac{1}{2}+}  N_3^{-\frac{1}{2}+}  N_4^{-\frac{1}{2}+}  N_5^{-\frac{1}{2}+} \times\\
&&\hspace{4cm}\times  \max(N_4, N_5)^{\frac{1}{6}} N_6^{-\frac{2}{3}+} \,  \biggl( \prod_{i=1}^6\,  \Vert w_{N_i} \Vert_{X^{\frac{2}{3}-, \frac{1}{2}-}_3} \biggr) \\
&\lesssim& \sum_{S_2}   N_3^{\frac{1}{2}+} N_6^{\frac{1}{3}+}  N_3^{\frac{1}{6}} N^{-\frac{2}{3}+}  N_2^{-\frac{1}{2}+}   N_4^{-\frac{1}{2}+} N_5^{-\frac{1}{2}+ \frac{1}{6}+}   \biggl( \prod_{i=1}^6\,  \Vert w_{N_i} \Vert_{X^{\frac{2}{3}-, \frac{1}{2}-}_3} \biggr)\\
&\lesssim& \sum_{S_2}   N_3^{\frac{1}{3}+} N^{-\frac{1}{3}+}  \biggl( \prod_{i=1}^6\,  \Vert w_{N_i} \Vert_{X^{\frac{2}{3}-, \frac{1}{2}-}_3} \biggr)
\end{eqnarray*} which summing  over $S_2$ gives the desired estimate with the same $N^{-\frac{\sigma}{3}}$ decay as in the previous case so long as $\sigma>0$.

\bigskip

{\it Subcase 2c)} Suppose that there exist at least $i, j \in \{ 2, 4, 5\}$  \, ($i \neq j$) such that $N_i, N_j \gtrsim   N^{\delta}$ while  $N_3 \gtrsim N^{\delta}$ and $N_1 \sim N \sim N_6$ in \eqref{start}.  Note that $N_4, N_5 < N^{1-\sigma}$ which ensures a support condition on $\cj{w_{N_6}}$.

\medskip

$\bullet$ \, Suppose $(i,j)=(4,5)$. Proceeding as above and using similar arguments we have
\begin{eqnarray*}
&&\sum_{\ast}  N_3\, N_6 \, N_1^{-\frac{1}{2}+} N_2^{-\frac{1}{2}+} N_3^{-\frac{1}{2}+} N_4^{-\frac{1}{2}+} N_5^{-\frac{1}{2}+} \max(N_4, N_5)^{\frac{1}{6}} N_6^{-\frac{2}{3}+}   \,  \biggl( \prod_{i=1}^6\,  \Vert w_{N_i} \Vert_{X^{\frac{2}{3}-, \frac{1}{2}-}_3} \biggr) \\
&=&  \sum_{\ast}   \, N_3^{\frac{1}{2}+}  N_6^{\frac{1}{3}+}\,  N^{-\frac{1}{2}+}   \,  N_4^{-\frac{1}{2}+} N_5^{-\frac{1}{2}+} \max(N_4, N_5)^{\frac{1}{6}} \biggl( \prod_{i=1}^6\,  \Vert w_{N_i} \Vert_{X^{\frac{2}{3}-, \frac{1}{2}-}_3} \biggr) 
\end{eqnarray*} from where using that  $N_4, N_5 \gtrsim N^{\delta}$ and $N_3 \gtrsim  N^{\delta}$  we get the desired bound with decay $N^{\frac{1}{3} - \frac{5}{6} \delta}$ so long as 
$\delta > \frac{2}{5}$.
\medskip 

$\bullet$ \, Suppose $(i,j)=(2,5)$.  Once again proceeding as before and using similar arguments we have
\begin{eqnarray*}
&&\sum_{\ast} N_3\, N_6 \, N_1^{-\frac{1}{2}+} N_2^{-\frac{1}{2}+} N_3^{-\frac{1}{2}+} N_4^{-\frac{1}{2}+} N_5^{-\frac{1}{2}+} \max(N_4, N_5)^{\frac{1}{6}} N_6^{-\frac{2}{3}+}   \,  \biggl( \prod_{i=1}^6\,  \Vert w_{N_i} \Vert_{X^{\frac{2}{3}-, \frac{1}{2}-}_3} \biggr) \\
&\lesssim&  \sum_{\ast}  N_3^{\frac{1}{2}+}  N_6^{\frac{1}{3}+}\,  N^{-\frac{1}{2}+} N^{-\frac{\delta}{2}} N^{-\frac{\delta}{2} + \frac{\delta}{6}} \, \,  \biggl( \prod_{i=1}^6\,  \Vert w_{N_i} \Vert_{X^{\frac{2}{3}-, \frac{1}{2}-}_3} \biggr) 
\end{eqnarray*} using that $N_2 \gtrsim N^{\delta}$ and that $N_4^{-\frac{1}{2}+} N_5^{-\frac{1}{2}+} \max(N_4, N_5)^{\frac{1}{6}}$ is worse possible when $N_4 \ll N_5$  but $N_5 \gtrsim N^{\delta}$. Hence we once again obtain the desired estimate with decay $N^{\frac{1}{3} - \frac{5}{6} \delta}$ so long as 
$\delta > \frac{2}{5}$.
\medskip 

$\bullet$ \, Suppose $(i,j)=(2, 4)$. This is exactly as in the previous case by exchanging the roles of $4$ and $5$.

\bigskip

\underbar{Subcase 3:}  Assume there exists at least one $i \in \{2, 4, 5\}$ such that $N_i \gtrsim N^{1-\sigma}$,  $N_2, N_4, N_5 \ll  N$ while $N_3 \ll N$ and $N_1 \sim N\sim N_6$ in \eqref{start}.   This case follows from Lemma \ref{2nonderivatives} with $0 < \sigma < \frac{1}{6}$ as in its statement.  
\medskip

All in all,  for Case [IIIA] we need $\frac{2}{5} < \delta < \frac{1}{2}$ and $ 0 < \sigma < \frac{1}{6}$. 

\begin{remark}\rm In the proof of the remaining cases, in order to keep the notation lighter, we will ignore the $+\epsilon$ appearing in the exponent of the $N_i$'s  in \eqref{i}. For example we simply write 
$N_i^{-\frac{1}{2}}$ instead of $N_i^{-\frac{1}{2}+}$. 
\end{remark}

\subsection*{ \bf Case  [IA] }  Assume  $N_3 \sim N \sim N_6$  while $N_1, N_2, N_4, N_5 \ll  N$ in \eqref{start} and denote as before by $\sum_{\ast}$ the sum over this set. 
Observe that from \eqref{Algebra1}-\eqref{AAlgebra3} there exists $\sigma_j \gtrsim N^2$. 

\medskip 

\underbar{Subcase 1:}  Assume  in addition $N_1, N_2 < N^{\delta}$ for some $\delta >0$.  We then have the support condition on $\cj{w_{N_3}}$.  

\medskip 

$\bullet$ Suppose $j=3$ or $6$; say $j=3$ ($j=6$ is similar). Then we rewrite \eqref{start} as follows:
\begin{eqnarray*}
&&\sum_{\ast}  \int_\R\int_\T N^2 \,  \sigma_3^{-\frac{1}{2}+} \,  w_{N_1} \, 
w_{N_2}  \sigma_3^{\frac{1}{2}-}\cj{ w_{N_3}}  w_{N_4}   \cj{ w_{N_5}} \cj{ w_{N_6}}\, dxdt\\
&\lesssim& \sum_{\ast} \, N^2 N^{-1} N_1^{0+} N_2^{0+}  \max(N_1, N_2)^{\frac{1}{6}} N_3^{-\frac{2}{3}} N_4^{-\frac{1}{2}} N_5^{-\frac{1}{2}} N_6^{-\frac{1}{2}} \,  \biggl( \prod_{i=1}^6\,  \Vert w_{N_i} \Vert_{X^{\frac{2}{3}-, \frac{1}{2}-}_3} \biggr) 
\end{eqnarray*} by placing  $ \sigma_3^{\frac{1}{2}-}\cj{ w_{N_3}} $ in $L^2_{xt}$, $w_{N_4}   \cj{ w_{N_5}} \cj{ w_{N_6}}$ in $L^2_{xt}$, $w_{N_1} w_{N_2} $ in $L^{\infty}_{xt}$ and using the support condition on $\cj{ w_{N_3}}$.  By H\"older's inequality, summing as above,  we get the desired estimate with decay $N^{\frac{\delta}{6}-\frac{1}{6}}$ so long as  
$\delta <1$. 

\medskip

$\bullet$ Suppose $j=1, 2, 4 $ or $5$. By symmetry (relative to conjugates) $j=1, 2, 4$ are similar; so suppose $j=1$. We rewrite \eqref{start} as 
\begin{eqnarray*}
&&\sum_{\ast}   \int_\R\int_\T N^2 \,  \sigma_1^{-\frac{1}{2}+} \,  w_{N_1}   \sigma_1^{\frac{1}{2}-} \, 
w_{N_2}  \cj{ w_{N_3}}  w_{N_4}   \cj{ w_{N_5}} \cj{ w_{N_6}}\, dxdt\\
&\lesssim&  \sum_{\ast} \, N^2 N^{-1} N_1^{-\frac{1}{2}} N_2^{0+}  \max(N_1, N_2)^{\frac{1}{6}} N_3^{-\frac{2}{3}} N_4^{-\frac{1}{2}} N_5^{0+} N_6^{-\frac{1}{2}} \,  \biggl( \prod_{i=1}^6\,  \Vert w_{N_i} \Vert_{X^{\frac{2}{3}-, \frac{1}{2}-}_3} \biggr) 
\end{eqnarray*} by placing  $ \sigma_1^{\frac{1}{2}-} w_{N_1}  $ in $L^2_{xt}$, \,  $ \cj{ w_{N_3}}  w_{N_4}  \cj{ w_{N_6}}$ in $L^2_{xt}$, $w_{N_2}  \cj{w_{N_5}} $ in $L^{\infty}_{xt}$ and using the support condition on $\cj{ w_{N_3}}$.  Once again,  by H\"older's inequality,  summing as before we get the desired estimate with decay $N^{\frac{\delta}{6}-\frac{1}{6}}$ so long as  $\delta <1$. 

\medskip 
If $j=5$ \begin{eqnarray*}
&&\sum_{\ast}   \int_\R\int_\T N^2 \,  \sigma_5^{-\frac{1}{2}+} \,  w_{N_1}   \, 
w_{N_2}  \cj{ w_{N_3}}  w_{N_4}   \sigma_5^{\frac{1}{2}-}  \cj{ w_{N_5}} \cj{ w_{N_6}}\,dxdt\\
&\lesssim& \sum_{\ast} \, N^2 N^{-1} N_1^{0+} N_2^{0+}  \max(N_1, N_2)^{\frac{1}{6}} N_3^{-\frac{2}{3}} N_4^{-\frac{1}{2}} N_5^{-\frac{1}{2}} N_6^{-\frac{1}{2}} \,  \biggl( \prod_{i=1}^6\,  \Vert w_{N_i} \Vert_{X^{\frac{2}{3}-, \frac{1}{2}-}_3} \biggr) 
\end{eqnarray*} by placing  $ \sigma_5^{\frac{1}{2}-} \cj{w_{N_5}}  $ in $L^2_{xt}$, \,  $ \cj{ w_{N_3}}  w_{N_4}  \cj{ w_{N_6}}$ in $L^2_{xt}$, $w_{N_1}  w_{N_2} $ in $L^{\infty}_{xt}$ and using the support condition on $\cj{ w_{N_3}}$.  Once again, H\"older's inequality,  summing as before we get the desired estimate with decay $N^{\frac{\delta}{6}-\frac{1}{6}}$ so long as  $0< \delta <1$.

\medskip 

\underbar{Subcase 2:}  Assume either  $N_1$ or  $N_2 > N^{\delta}$.  Suppose $N_1 > N^{\delta}$; otherwise exchange the roles of $w_{N_1}$ and $w_{N_2}$ below. We no longer rely on the support condition but on the lower bound on $N_1$ as follows.  
\medskip 

$\bullet$ Suppose $j=3$ or $6$; say $j=3$ ($j=6$ is similar). Then proceeding as before we rewrite \eqref{start} as 
\begin{eqnarray*}
&&\sum_{\ast}  \int_\R\int_\T N^2 \,  \sigma_3^{-\frac{1}{2}+} \,  w_{N_1} \, 
w_{N_2}  \sigma_3^{\frac{1}{2}-}\cj{ w_{N_3}}  w_{N_4}   \cj{ w_{N_5}} \cj{ w_{N_6}}\, dxdt\\
&\lesssim&  \sum_{\ast} \, N^2 N^{-1} N_1^{-\frac{1}{2}} N_2^{0+} N_3^{-\frac{1}{2}} N_4^{0+} N_5^{-\frac{1}{2}} N_6^{-\frac{1}{2}} \,  \biggl( \prod_{i=1}^6\,  \Vert w_{N_i} \Vert_{X^{\frac{2}{3}-, \frac{1}{2}-}_3} \biggr) 
\end{eqnarray*} by placing  $ \sigma_3^{\frac{1}{2}-}\cj{ w_{N_3}} $ in $L^2_{xt}$, \,  $w_{N_1}   \cj{ w_{N_5}} \cj{ w_{N_6}}$ in $L^2_{xt}$, $w_{N_2} w_{N_4} $ in $L^{\infty}_{xt}$ .  By H\"older's inequality, summing as above, we get the desired estimate with decay $N^{-\frac{\delta}{2}}$ so long as  $\delta >0$. 

$\bullet$ Suppose $j=1$ or $2$; say $j=1$ ($j=2$ is similar). We now write 

\begin{eqnarray*}
&&\sum_{\ast} \int_\R\int_\T N^2 \,  \sigma_1^{-\frac{1}{2}+} \,  w_{N_1} \sigma_1^{\frac{1}{2}-}\, 
w_{N_2} \cj{ w_{N_3}}  w_{N_4}   \cj{ w_{N_5}} \cj{ w_{N_6}}\, dxdt\\
&\lesssim& \sum_{\ast} \, N^2 N^{-1} N_1^{-\frac{1}{2}}N_2^{-\frac{1}{2}} N_3^{-\frac{1}{2}}  N_4^{0+} N_5^{0+} N_6^{-\frac{1}{2}} \,  \biggl( \prod_{i=1}^6\,  \Vert w_{N_i} \Vert_{X^{\frac{2}{3}-, \frac{1}{2}-}_3} \biggr) 
\end{eqnarray*} by placing  $ \sigma_1^{\frac{1}{2}-} { w_{N_1}} $ in $L^2_{xt}$, \,  $w_{N_2} \cj{ w_{N_3}}  \cj{ w_{N_6}}$ in $L^2_{xt}$, $w_{N_4} \cj{w_{N_5}} $ in $L^{\infty}_{xt}$ .  Once again,  
by H\"older's inequality and summing as above, we get the desired estimate with decay $N^{-\frac{\delta}{2}}$ so long as  $\delta >0$.

\medskip
$\bullet$ Suppose $j=4$ then proceed as above but place $ \sigma_4^{\frac{1}{2}-}{ w_{N_4}} $ in $L^2_{xt}$,  $w_{N_1} \cj{ w_{N_3}}  \cj{ w_{N_6}}$ in $L^2_{xt}$,  and  $w_{N_2} \cj{w_{N_5}} $ in $L^{\infty}_{xt}$.

\medskip 
 
$\bullet$ Suppose $j=5$ then once again we proceed as above but now place $ \sigma_5^{\frac{1}{2}-}\cj{ w_{N_5}} $ in $L^2_{xt}$,  $w_{N_1} \cj{ w_{N_3}}  \cj{ w_{N_6}}$ in $L^2_{xt}$,  and  $w_{N_2} {w_{N_4}} $ in $L^{\infty}_{xt}$.

\medskip

\begin{remark} \rm Matching Subcases 1 and 2 above means $-\frac{\delta}{2} = \frac{\delta}{6}- \frac{1}{6}$ which requires $\delta=\frac{1}{4}$ and yields a decay of $N^{-\frac{1}{8}+}$. 

\end{remark}

\bigskip

\subsection*{ \bf Case  [IIA] } Part (i) will follow similarly to Case [IA] while part (ii) to Case [IIIA].  

\smallskip

\underbar{Part (i)}  We are under the assumptions $N_3 \sim N \sim N_5$ while $N_1, N_2, N_4, N_6 \ll N$. It follows from  \eqref{AAlgebra3}, there exists $\sigma_j \gtrsim  N^2$. We proceed exactly as in [IA] exchanging in each instance the roles
of $\cj{ w_{N_6}}$ and $\cj{ w_{N_5}}$

\medskip

\underbar{Part (ii)}  We are under the assumptions   $N_3 \sim N \sim N_4$ while $N_1, N_2, N_5, N_6 \ll N$. We have {\it a priori} no help from a large $\sigma_j$ at our disposal.  We proceed then as in [IIIA]  above with the roles of 
$(N_3; \cj{w_{N_3}})$ switched with  that of $(N_6; \cj{w_{N_6}})$ and $(N_1; {w_{N_1}})$  with  $(N_4; {w_{N_4}})$. Hence for $\sigma, \delta >0$ -to be determined- in \underbar{Subcase 1} we are under the assumption $N_1, N_2, N_5 < N^{1-\sigma}$, $N_6 \lesssim N^{\delta}$ and $ N_3 \sim N\sim N_4$. While in  \underbar{Subcase 2} we assume $N_1, N_2, N_5 < N^{1-\sigma}$ while $ N_6 \gtrsim N^{\delta}$ and $N_3\sim N\sim N_4$, and further subdivide just as before into {\it Subcase 2a)}: $N_1, N_2, N_5 \ll  N^{\delta}$ while $N_6 \gtrsim N^{\delta}$ which implies from \eqref{AAlgebra1} the existence of a $\sigma_j \gtrsim N^{1 +\delta}$;  {\it Subcase 2b)}:  there exists $i \in \{1, 2, 5 \}$ such that $N_i \gg  N^{\delta}$ and $N_j \lesssim N^{\delta}$ for $j \neq i$ and $i,  j \in \{ 1,2, 5 \}$ while still  $N_6 \gtrsim  N^{\delta}$ and $N_3 \sim N \sim N_4$ in \eqref{start} and {\it Subcase 2c)}:  that there exist at least $i, j \in \{ 1, 2, 5\}$  \, ($i \neq j$) such that $N_i, N_j \gg  N^{\delta}$ while  $N_6 \gtrsim N^{\delta}$ and $N_3 \sim N \sim N_4$ in \eqref{start}.  Note that $N_1, N_2 < N^{1-\sigma}$ which ensures a support condition on $\cj{w_{N_3}}$.
\underbar{Subcase 3:}  Assume there exists at least one $i \in \{1, 2, 5\}$ such that $N_i \gtrsim N^{1-\sigma}$  $N_2, N_1, N_5 \ll  N$ while $N_6 \ll N$ and $N_3 \sim N\sim N_4$ in \eqref{start}.   
This case follows from Lemma \ref{2nonderivatives} with $0 < \sigma < \frac{1}{6}$ as in its statement.  

Proceeding then just as in [IIIA] we conclude the desired estimate with the same decay in $N$ as in [IIIA] as long as $\frac{2}{5} < \delta < \frac{1}{2}$ and $ 0 < \sigma < \frac{1}{6}$ as before. 

\bigskip

\subsection*{ \bf Case  [IB] }  We first note that parts  (ii), (iii) and (iv) are all symmetric relative to conjugation;  so we only consider (i) and (ii).

\medskip

\underbar{Part (i)}  We are under the assumptions $N_3 \sim N_5 \sim N_6 \sim N$ while $N_1, N_2, N_4 \ll N$. It follows from \eqref{AAlgebra3}, there exists $\sigma_j \gtrsim N^2$.  

\smallskip

$\bullet$ Suppose $j=1, 2$ or $4$. By symmetry is enough to consider $j=1$ and $j=4$.  To obtain decay we need to use the support condition. This we further subdivide into two cases.

\smallskip

{\it Subcase 1:}  Assume  in addition $N_1, N_2 < N^{\delta}$ for some $\delta >0$.  We then have the support condition on $\cj{w_{N_3}}$. For $j=1$ we have:

\begin{eqnarray*}
&&\sum_{\ast}  \int_\R\int_\T N^2 \,  \sigma_1^{-\frac{1}{2}+} \,  w_{N_1} \sigma_1^{\frac{1}{2}-}\, 
w_{N_2} \cj{ w_{N_3}}  w_{N_4}   \cj{ w_{N_5}} \cj{ w_{N_6}}\, dxdt\\
&\lesssim&  \sum_{\ast} \, N^2 N^{-1} N_1^{-\frac{1}{2}}N_2^{-\frac{1}{2}} N_3^{-\frac{2}{3}} N^{\frac{\delta}{6}}  N_4^{0+} N_5^{0+} N_6^{-\frac{1}{2}} \,  \biggl( \prod_{i=1}^6\,  \Vert w_{N_i} \Vert_{X^{\frac{2}{3}-, \frac{1}{2}-}_3} \biggr) 
\end{eqnarray*} by placing  $ \sigma_1^{\frac{1}{2}-} { w_{N_1}} $ in $L^2_{xt}$, \,  $w_{N_2} \cj{ w_{N_3}}  \cj{ w_{N_6}}$ in $L^2_{xt}$, $w_{N_4} \cj{w_{N_5}} $ in $L^{\infty}_{xt}$.  By H\"older's inequality, summing as above, we get the desired 
estimate with decay $N^{-\frac{1}{6} + \frac{\delta}{6}}$ so long as  $0< \delta <1$. 

\smallskip

For $j=4$,  we place  $ \sigma_4^{\frac{1}{2}} { w_{N_4}} $ in $L^2_{xt}$, \, $w_{N_1} \cj{ w_{N_3}}  \cj{ w_{N_6}}$ in $L^2_{xt}$ and $w_{N_2} \cj{w_{N_5}} $ in $L^{\infty}_{xt}$ and proceed similarly.

\smallskip

{\it Subcase 2:}  Assume either $N_1$ or  $N_2 > N^{\delta}$. By symmetry suppose $N_1 > N^{\delta}$; otherwise exchange the roles of $w_{N_1}$ and $w_{N_2}$ below.  We use then the lower bound on $N_1$ as follows. For $j=1$:

\begin{eqnarray*}
&&\sum_{\ast} \int_\R\int_\T N^2 \,  \sigma_1^{-\frac{1}{2}+} \,  w_{N_1} \sigma_1^{\frac{1}{2}-}\, 
w_{N_2} \cj{ w_{N_3}}  w_{N_4}   \cj{ w_{N_5}} \cj{ w_{N_6}}\, dxdt\\
&\lesssim& \sum_{\ast} \, N^2 N^{-1} N_1^{-\frac{1}{2}}N_2^{-\frac{1}{2}} N_3^{-\frac{1}{2}}  N_4^{0+} N_5^{0+} N_6^{-\frac{1}{2}} \,  \biggl( \prod_{i=1}^6\,  \Vert w_{N_i} \Vert_{X^{\frac{2}{3}-, \frac{1}{2}-}_3} \biggr) 
\end{eqnarray*} by placing  $ \sigma_1^{\frac{1}{2}-} { w_{N_1}} $ in $L^2_{xt}$, \,  $w_{N_2} \cj{ w_{N_3}}  \cj{ w_{N_6}}$ in $L^2_{xt}$, $w_{N_4} \cj{w_{N_5}} $ in $L^{\infty}_{xt}$.  Hence, by H\"older's inequality and summing as usual we get the desired 
estimate with decay $N^{-\frac{\delta}{2}}$ so long as  $\delta >0$. 

\smallskip

For $j=4$,  we place  $ \sigma_4^{\frac{1}{2}-} { w_{N_4}} $ in $L^2_{xt}$, \, $w_{N_1} \cj{ w_{N_3}}  \cj{ w_{N_6}}$ in $L^2_{xt}$ and $w_{N_2} \cj{w_{N_5}} $ in $L^{\infty}_{xt}$ and proceed similarly.

\smallskip

\begin{remark}\rm Note that once again, matching Subcases 1 and 2 above means $-\frac{\delta}{2} = \frac{\delta}{6}- \frac{1}{6}$ which requires $\delta=\frac{1}{4}$ and yields a decay of $N^{-\frac{1}{8}+}$.
 \end{remark}
 \medskip
 
 $\bullet$ Suppose $j=3, 6$ or $5$. By symmetry relative to conjugation it's enough to consider -say- $j=3$. We have 
 
 \begin{eqnarray*}
&&\sum_{\ast}  \int_\R\int_\T N^2 \,  \sigma_3^{-\frac{1}{2}+} \,  w_{N_1} w_{N_2} \,  \sigma_3^{\frac{1}{2}-} \cj{ w_{N_3}}  w_{N_4}   \cj{ w_{N_5}} \cj{ w_{N_6}}\, dxdt\\
&\lesssim& \sum_{\ast} \, N^2 N^{-1} N_1^{0+} N_2^{0+} N_3^{-\frac{1}{2}}  N_4^{-\frac{1}{2}} N_5^{-\frac{1}{2}} N_6^{-\frac{1}{2}} \,  \biggl( \prod_{i=1}^6\,  \Vert w_{N_i} \Vert_{X^{\frac{2}{3}-, \frac{1}{2}-}_3} \biggr) 
\end{eqnarray*} by placing  $ \sigma_3^{\frac{1}{2}-} \cj{w_{N_3}} $ in $L^2_{xt}$, \,  $w_{N_4} \cj{ w_{N_5}}  \cj{ w_{N_6}}$ in $L^2_{xt}$, $w_{N_1} {w_{N_2}} $ in $L^{\infty}_{xt}$.  Hence, by H\"older's inequality, summing as usual we get the desired estimate with decay $N^{-\frac{1}{2}+}$.

 \bigskip

 \underbar{Part (ii)}  We are under the assumptions $N_3 \sim N_4 \sim N_6 \sim N$ while $N_1, N_2, N_5\ll N$. It follows from \eqref{AAlgebra1}, there exists $\sigma_j \gtrsim N^2$.
 
 \medskip
 $\bullet$ Suppose $j=1, 2$ or $5$. Suppose $j=1$ then 
  \begin{eqnarray*}
&&\sum_{\ast}  \int_\R\int_\T N^2 \,  \sigma_1^{-\frac{1}{2}+} \,   \sigma_1^{\frac{1}{2}-} w_{N_1} w_{N_2} \,  \cj{ w_{N_3}}  w_{N_4}   \cj{ w_{N_5}} \cj{ w_{N_6}}\, dxdt\\
&\lesssim&  \sum_{\ast} \, N^2 N^{-1} N_1^{-\frac{1}{2}} N_2^{0+} N_3^{-\frac{1}{2}}  N_4^{-\frac{1}{2}} N_5^{0+} N_6^{-\frac{1}{2}} \,  \biggl( \prod_{i=1}^6\,  \Vert w_{N_i} \Vert_{X^{\frac{2}{3}-, \frac{1}{2}-}_3} \biggr) 
\end{eqnarray*} by placing  $ \sigma_1^{\frac{1}{2}-} {w_{N_1}} $ in $L^2_{xt}$, \,  $w_{N_4} \cj{ w_{N_3}}  \cj{ w_{N_6}}$ in $L^2_{xt}$, $w_{N_2} \cj{w_{N_5}} $ in $L^{\infty}_{xt}$.  Hence, by H\"older's inequality and summing as usual we get the desired estimate with decay $N^{-\frac{1}{2}+}$. 
 
If $j=2, 5$ we proceed similarly; keeping  $w_{N_4} \cj{ w_{N_3}}  \cj{ w_{N_6}}$ in $L^2_{xt}$ and exchanging the roles of either  $w_{N_2}$ or  $\cj{w_{N_5}}$ with that of $w_{N_1}$ above. 
 
\medskip
 
$\bullet$ Suppose $j=3, 6$ or $4$. Suppose $j=3$ then 
 
\begin{eqnarray*}
&&\sum_{\ast} \int_\R\int_\T N^2 \,  \sigma_3^{-\frac{1}{2}+} \,  w_{N_1} w_{N_2} \,  \sigma_3^{\frac{1}{2}-} \cj{ w_{N_3}}  w_{N_4}   \cj{ w_{N_5}} \cj{ w_{N_6}} \, dxdt\\
&\lesssim& \sum_{\ast} \, N^2 N^{-1} N_1^{0+} N_2^{-\frac{1}{2} } N_3^{-\frac{1}{2}}  N_4^{-\frac{1}{2}} N_5^{0+} N_6^{-\frac{1}{2}} \,  \biggl( \prod_{i=1}^6\,  \Vert w_{N_i} \Vert_{X^{\frac{2}{3}-, \frac{1}{2}-}_3} \biggr) 
\end{eqnarray*} by placing  $ \sigma_3^{\frac{1}{2}-} \cj{w_{N_3}} $ in $L^2_{xt}$, \,  $ w_{N_2}  w_{N_4} \cj{ w_{N_6}}$ in $L^2_{xt}$, $w_{N_1} \cj{w_{N_5}} $ in $L^{\infty}_{xt}$.  Hence, by H\"older's and summing as usual we get the desired estimate with decay $N^{-\frac{1}{2}+}$. 

If $j=6$ we proceed similarly exchanging the roles of  $\cj{w_{N_3}}$ and  $\cj{w_{N_6}}$ above.  

If $j=4$ we  place  $ \sigma_4^{\frac{1}{2}-} {w_{N_4}} $ in $L^2_{xt}$ and group $w_{N_2}  \cj{w_{N_3}} \cj{ w_{N_6}}$ in $L^2_{xt}$ to derive the same conclusion.
 
\bigskip

We now remove the assumption we made at the beginning of the proof. Suppose  that there is at least a $\sigma_j>N^7$. It follows from \eqref{AAlgebra1} and  \eqref{AAlgebra2} 
that there are two indices $1\leq i_1\ne i_2\leq 6 $ such that 
$\s_{i_1}, \s_{i_2} \gtrsim N^{7}$. 
Then, by \eqref{iv} and \eqref{v}, we have
\begin{eqnarray}\label{Extra-Terms-I-new} 
|I_1|&&\lesssim \sum_{N\leq |n| \leq 3N} \, \sum_{N_i \leq N; \, i=1, \dots 6} \, \int_{\tau}  \biggl( \int_{\tau= \tau_1+\tau_2+\tau_3} \, \sum_{n= n_1+n_2+n_3} \, |\ft{w_{N_1}}| |\ft{w_{N_2}}| \, |n_3| \,  |\ft{\cj{w_{N_3}}}| \, \, d \tau_1 d\tau_2 \biggr) \, \times\notag\\
&& \qquad \qquad\qquad \biggl( \int_{-\tau= \tau_4+\tau_5+\tau_6} \, \sum_{-n= n_4+n_5+n_6} \, |\ft{w_{N_4}}|  \,|\ft{\cj{w_{N_5}}}| \, |n_6|\,  |\ft{\cj{w_{N_6}}}|  \, \, d\tau_4 d\tau_5 \biggr)   \, d\tau.  \\
&&\lesssim \sum_{N\leq |n| \leq 3N} \, \sum_{N_i \leq N; \, i=1, \dots 6} \,N^2
 \|w_{N_1} w_{N_2 } \cj{w_{N_3}}\|_{L^2_{xt}}
\|w_{N_4} \cj{w_{N_5} w_{N_6}}\|_{L^2_{xt}}\notag\\
&& \lesssim \sum_{N\leq |n| \leq 3N} \, \sum_{N_i \leq N; \, i=1, \dots 6} \,N^{-\frac{1}{3}+} \|w_{N_{i_1}}\|_{X^{\frac{1}{2}-, \frac{1}{2}-}}
  \|w_{N_{i_2}}\|_{X^{\frac{1}{2}-, \frac{1}{2}-}}
 \prod_{j \ne i_1,i_2} \|w_j\|_{X^{\frac{1}{2}-, \frac{1}{3}+}}\notag \\
&& \lesssim  N^{-\frac{1}{3}+} \prod_{j = 1}^6 \|w_j\|_{X^{\frac{2}{3}-, \frac{1}{2}-}_3}.\notag
\end{eqnarray}

To treat the  remaining terms in \eqref{EE3} we first note that these are either higher order with no derivatives or same order as the first but with only one derivative term. 
We again start by assuming that $\sigma_j\lesssim N^9$ for all $j$. Under this assumption their estimate follow from the following lemma.

\begin{lemma}[Remaining Terms]\label{rest}
There exists $\beta >0$ such that following estimates hold:
\begin{eqnarray}\label{Extra-Terms-I} 
&&\sum_{N\leq |n| \leq 3N} \, \sum_{N_i \leq N; \, i=1, \dots 6} \, \int_{\tau}  \biggl( \int_{\tau= \tau_1+\tau_2+\tau_3} \, \sum_{n= n_1+n_2+n_3} \, |\ft{w_{N_1}}| |\ft{w_{N_2}}|  \,  |\ft{\cj{w_{N_3}}}| \biggr) \quad \times  \\
 &&\biggl(\int_{-\tau= \tau_4+\tau_5+\tau_6} \, \sum_{-n= n_4+n_5+n_6} \, |\ft{w_{N_4}}|  \,|\ft{\cj{w_{N_5}}}| \, |m(n_6)|\,  |\ft{\cj{w_{N_6}}}| \biggr)   \, d\tau \lesssim  N^{-\beta}  \prod_{i=1}^6\,  \Vert w_{i} \Vert_{X^{\frac{2}{3}-, \frac{1}{2}-}_3}  \notag
\end{eqnarray}

\begin{eqnarray}\label{Extra-Terms-II} 
&&\sum_{N\leq |n| \leq 3N} \, \sum_{N_i \leq N; \, i=1, \dots 8} \, \int_{\tau}  \biggl( \int_{\tau= \sum_{i=1}^5 \tau_i} \, \sum_{n= \sum_{i=1}^5 n_i} \, |\ft{w_{N_1}}| |\ft{w_{N_2}}| \,  |\ft{\cj{w_{N_3}}}|  \ft{{w_{N_4}}}| \ft{\cj{w_{N_5}}}|\biggr) \quad \times  \\
 &&\biggl(\int_{-\tau= \tau_6+\tau_7+\tau_8} \, \sum_{-n= n_6+n_7+n_8} \, |\ft{w_{N_6}}|  \,|\ft{\cj{w_{N_7}}}| \, |m(n_8)|\,  |\ft{\cj{w_{N_8}}}| \biggr)   \, d\tau  \lesssim  N^{-\beta} \prod_{i=1}^8\,  \Vert w_{i} \Vert_{X^{\frac{2}{3}-, \frac{1}{2}-}_3}  \notag 
\end{eqnarray} where the multiplier $m$ satisfies: $|m(\xi)| \leq \langle{\xi}\rangle$. 
 \end{lemma}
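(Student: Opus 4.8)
The plan is to run the very same dyadic case analysis used for the main term $I_1$, exploiting the structural fact emphasized just before the lemma: each term in \eqref{Extra-Terms-I}, \eqref{Extra-Terms-II} carries only a single derivative (the multiplier $m$, with $|m(\xi)| \le \jb{\xi} \lesssim N$) in place of the two derivatives present in $I_1$. Consequently every estimate we produce gains an extra power of $N$ relative to the corresponding estimate for $I_1$, so the target decay $N^{-\beta}$ follows a fortiori. Throughout I work under the standing hypothesis $\sigma_j \lesssim N^9$, so that the $L^\infty_{xt}$ bound \eqref{ii} is available; the complementary regime $\sigma_j \gtrsim N^9$ is disposed of exactly as in the removal-of-assumption step for $I_1$, using \eqref{iv} together with \eqref{v}.

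First I would reduce \eqref{Extra-Terms-I}, as in \eqref{start}, by Plancherel and Cauchy--Schwarz in $(x,t)$, to
$$
\sum_{N_i \le N} N_6 \,\bigl\| w_{N_1} w_{N_2} \cj{w_{N_3}} \bigr\|_{L^2_{xt}} \bigl\| w_{N_4} \cj{w_{N_5}}\, \cj{w_{N_6}} \bigr\|_{L^2_{xt}},
$$
with the dyadic constraints \eqref{RelativeSize}, that is $N \sim \max(N_1,N_2,N_3) \sim \max(N_4,N_5,N_6)$ and $N \le |n| \le 3N$, and set up the same Types/Cases as for $I_1$ (now tracking only the single derivative position $N_6$). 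In the generic configuration one applies the trilinear estimate \eqref{iii} to each $L^2_{xt}$ triple and \eqref{i} to pass to the $X^{\frac{2}{3}-,\frac{1}{2}-}_3$ norms, which produces $N_6 \prod_{i=1}^6 N_i^{-\frac{1}{2}+}$; since this is precisely the bound for $I_1$ divided by a factor $N_3 \sim N$, summing through the Hölder device of \eqref{holder}--\eqref{holder1} already yields decay in all but the worst configuration.

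The one genuinely delicate configuration, and the main obstacle, is when both the derivative frequency and one non-derivative frequency are comparable to $N$ while the rest are low (for instance $N_3 \sim N_6 \sim N$ with $N_1,N_2,N_4,N_5$ small), since there the $L^2 \times L^2$ split gives only $N_6 \cdot N^{-\frac{1}{2}+}\cdot N^{-\frac{1}{2}+} = N^{0+}$, i.e. no decay. This is resolved exactly as in the corresponding case of the $I_1$ proof: the algebraic identity \eqref{AAlgebra3} forces some $\sigma_j \gtrsim N^2$, and extracting $\sigma_j^{-\frac12+} \lesssim N^{-1+}$ as a scalar gain — placing $\sigma_j^{\frac12-}w_{N_j}$ in $L^2_{xt}$, pairing two further factors into a second $L^2_{xt}$ via \eqref{iii}, and the remaining low factors in $L^\infty_{xt}$ via \eqref{ii} — overrides the single derivative $N_6 \lesssim N$ with room to spare. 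When instead the low frequencies are so small that no large modulation is guaranteed, the support condition together with Remark \ref{support} (and, near the parabola, Lemma \ref{lemmacloseparab}) supplies the missing gain. Because the derivative now contributes only $N$ rather than $N^2$, the surplus over the main term is a full power of $N$, so a positive $\beta$ is obtained in every case.

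Finally, for the octilinear estimate \eqref{Extra-Terms-II} the only new feature is the two extra factors arising from the quintic nonlinearity. I would place these two factors in $L^\infty_{xt}$ via \eqref{ii}, at a cost of only $N^{0+}$ each (legitimate under $\sigma_j \lesssim N^9$), thereby reducing to a sextilinear expression of exactly the form \eqref{Extra-Terms-I}; the eight-frequency analogue of \eqref{AAlgebra3} supplies the large-modulation gain in the worst configuration just as above, and the $N^{0+}$ losses are harmlessly absorbed by the surplus decay. The remaining work is purely the bookkeeping of reproducing the Type I--IV case list in this lower-order setting, which introduces no new analytic difficulty.
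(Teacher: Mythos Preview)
Your approach is correct and would succeed, but it is considerably more elaborate than what the paper actually does. You propose to rerun the full Type I--IV case analysis for $I_1$, invoking the modulation identities \eqref{AAlgebra1}--\eqref{AAlgebra3} and even Lemma \ref{lemmacloseparab}, relying on the fact that the single derivative gives you a full extra power of $N$ of slack relative to $I_1$. That works, but it is overkill.

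The paper's proof exploits precisely this slack to dispense with the modulation machinery entirely. After reducing (without loss of generality) to the worst configuration $N_1\sim N\sim N_8$ for \eqref{Extra-Terms-II}, it splits into just two cases governed by a single parameter $0<\sigma<1$: either every other $N_i\lesssim N^\sigma$, in which case the support condition applies to \emph{both} high-frequency factors (giving $N_1^{-2/3}$ and $N_8^{-2/3}$ via Remark \ref{support} instead of $N^{-1/2}$), and a direct $L^2\times L^\infty\times L^2$ split yields decay $N^{-1/3+\sigma/6+}$; or some other $N_k>N^\sigma$, and placing that factor in one of the $L^2$ triples via \eqref{iii} already gives $N^{-\sigma/2+}$. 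No modulation estimate, no Lemma \ref{lemmacloseparab}, no subdivision into Types is needed. The sextilinear estimate \eqref{Extra-Terms-I} is then declared similar but simpler. What your approach buys is a uniform template (``just redo $I_1$''); what the paper's buys is a two-line argument once one notices that, with only one derivative, the support-condition gain $N^{-2/3}\cdot N^{-2/3}$ on the two high factors already beats the single $N$ from the multiplier.
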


\begin{proof}
Here we will only prove \eqref{Extra-Terms-II} since  \eqref{Extra-Terms-I} is similar but simpler. Without loss of generality we can assume that $N_1\sim N\sim N_8$.  Fix any $0<\sigma<1$ and consider the following cases. 
\smallskip

\underbar{Case 1}: Assume that $N_i\lesssim N^\sigma, \, i\ne 1, 8$. Then  we have the support condition on  $w_{N_1}$ and $\cj{w_{N_8}}$. By Plancherel \eqref{Extra-Terms-II}  is less than or equal to
\begin{eqnarray}&&\sum_{N_1,N_8 \sim N;  N_i \leq  N^{\sigma}, i\ne 1,8 }  \int_{\mathbb{R}} \, \int_{\mathbb{T}} \,\,   \, N_8 \, \, \, w_{N_1} \, w_{N_2}  \, \cj{w_{N_3}} \, w_{N_4}\,   \cj{ w_{N_5}}\,  w_{N_6}\cj{w_{N_7}}\cj{w_{N_8}}\, \, \, dx \, dt  \label{8-terms} \\
&\lesssim&\sum_{N_1,N_8 \sim N;  N_i \leq  N^{\sigma}, i\ne 1,8 }N\|w_{N_1} \, w_{N_2}  \, \cj{w_{N_3}}\|_{L^2_{x,t}}\| w_{N_4}\, \cj{ w_{N_5}}\|_{L^\infty_{x,t}}\,\|  w_{N_6}\cj{w_{N_7}}\cj{w_{N_8}}\|_{L^2_{x,t}} \notag\\
&\lesssim&\sum_{N_1,N_8 \sim N;  N_i \leq  N^{\sigma}, i\ne 1,8 }NN_1^{-\frac{2}{3}}\max(N_2,N_3,N_4,N_5)^{\frac{1}{6}}N_4^{0+}N_5^{0+}N_6^{-\frac{1}{2}}N_7^{-\frac{1}{2}}
N_8^{-\frac{2}{3}} \notag\\
&\times&\max(N_6,N_7)^{\frac{1}{6}}\biggl( \prod_{i=1}^8\,  \Vert w_{N_i} \Vert_{X^{\frac{2}{3}-, \frac{1}{2}-}_3} \biggr) 
\lesssim N^{-\frac{1}{3}+\frac{\sigma}{6}+}\biggl( \prod_{i=1}^8\,  \Vert w_{i} \Vert_{X^{\frac{2}{3}-, \frac{1}{2}-}_3} \biggr). \notag
\end{eqnarray} 

\smallskip

\underbar{Case 2}:  Assume there exists $k \ne 1, 8$ such that $N_k > N^{\sigma}$. Without loss of generality -say- $k=4$. Then we bound \eqref{8-terms} as follows: 
\begin{eqnarray*}&&\sum_{N_1,N_8 \sim N;  N_4 > N^{\sigma}, N_i \leq N; i \neq 1,4,8 }  N  \|w_{N_1} \, w_{N_4}  \, \cj{w_{N_3}}\|_{L^2_{x,t}}\| w_{N_2}\, \cj{ w_{N_5}}\|_{L^\infty_{x,t}}\,\|  w_{N_6}\cj{w_{N_7}}\cj{w_{N_8}}\|_{L^2_{x,t}} \\
&\lesssim&\sum_{N_1,N_8 \sim N;  N_4 > N^{\sigma}, N_i \leq N; i \neq 1,4,8 }  N N_1^{-\frac{1}{2}}  N_3^{-\frac{1}{2}}   N_4^{-\frac{1}{2}}  N_2^{0+}N_5^{0+} N_6^{-\frac{1}{2}}N_7^{-\frac{1}{2}}
N_8^{-\frac{1}{2}} \, \biggl( \prod_{i=1}^8\,  \Vert w_{N_i} \Vert_{X^{\frac{2}{3}-, \frac{1}{2}-}_3} \biggr)  \\
&\lesssim&  N^{-\frac{\sigma}{2}+}\biggl( \prod_{i=1}^8\,  \Vert w_{i} \Vert_{X^{\frac{2}{3}-, \frac{1}{2}-}_3} \biggr).  
\end{eqnarray*} 
\end{proof} 

We now remove the assumption we made before the lemma above. Suppose  that there is at least a $\sigma_j>N^9$. The term with six factors follows just in \eqref{Extra-Terms-I-new}. To estimate the term with eight factors we first observe that as before there are at least two  indices $1\leq i_1\ne i_2\leq 8$  such that 
$\s_i, \s_j \gtrsim N^{9}$. Next we use H\" older inequality to bound the left hand side of \eqref{Extra-Terms-II} by 
\begin{equation}\label{Extra-Terms-II-new} 
\sum_{N\leq |n| \leq 3N} \, \sum_{N_i \leq N; \, i=1, \dots 8} N^{} \prod_{i=1}^8\,  \Vert w_{N_{i}} \Vert_{L^8_{tx}}  \lesssim \sum_{N\leq |n| \leq 3N} \, \sum_{N_i \leq N; \, i=1, \dots 8} N^{} \prod_{i=1}^8\,  
 \Vert w_{N_i} \Vert_{X^{\frac{13}{24}+, \frac{3}{8}+}_3}
\end{equation} 
by \eqref{vi}. Using $\sigma_{i_1}, \sigma_{i_2}>N^9$ we conclude that 
\begin{eqnarray*}\eqref{Extra-Terms-II-new}& \lesssim &\sum_{N\leq |n| \leq 3N} \, \sum_{N_i \leq N; \, i=1, \dots 8} N^{-\frac{1}{8}+}  \Vert w_{N{i_1}} \Vert_{X^{\frac{13}{24}+, \frac{1}{2}-}_3} \Vert w_{N_{i_2}} \Vert_{X^{\frac{13}{24}+, \frac{1}{2}-}_3} 
\prod_{i\ne i_1,i_2}\,  \Vert w_{N_{i} }\Vert_{X^{\frac{13}{24}+, \frac{3}{8}+}_3}\\
& \lesssim &N^{-\frac{1}{8}+}  \prod_{i=1}^8\,   \Vert w_{i} \Vert_{X^{\frac{2}{3}-, \frac{1}{2}-}_3}
\end{eqnarray*}

\section{Construction of Weighted Wiener Measures}

In this section we construct weighted Wiener measures and associated probability spaces 
on which we establish well-posedness.   To construct these measures we make use of the
conserved quantities $\EE(v)$  (given in \eqref{eu})  and the $L^2$-norm.  As a motivation we
recall a well known fact in finite dimensional spaces.  Suppose we have a well-posed ODE 
$y_t = F(y)$,  where $F$ is a  divergence-free vector field. Assume $G(y)$ is a constant 
of motion such that for reasonable $f$,  $f(G(y)) \in L^1(dy)$. Then by Liouville's Theorem, 
$d\mu(y) = Z^{-1} f(G(y)) dy$ is, for suitable normalization constant $Z$,  an invariant 
probability measure for flow map for the ODE.

To construct the measures on infinite dimensional spaces we will consider conserved 
quantities of the  form $\exp(-\frac{\beta}{2} \EE(v))$.   But there is a priori little hope of constructing a finite measure using this quantity since (a) the nonlinear part of $\EE(v)$ is not bounded below and (b) the linear part is only 
non-negative but not  positive definite.  To resolve this we use the conservation of $L^2$-norm and 
consider  instead the conserved quantity
\begin{equation}
\chi_{\{\|v\|_{L^2} \le B\}}  e^{-\frac{\beta}{2}\mathcal{N}(v)}
e^{-\frac{\beta}{2} \int (|v|^2 + |v_x|^2) dx} 
\end{equation}
where $\mathcal{N}(v)$ is the nonlinear part of the energy, i.e. 
\begin{eqnarray}\label{nonlinear energy}
 \mathcal{N}(v) &= &- \frac{1}{2} 
{\rm Im} \int_{\T} v^2 \cj{v} \cj{v}_x \, dx - \frac{1}{4 \pi}  
\biggl(\int_{\T} |v|^2 \, dx \biggr)\biggl(\int_{\T} |v|^4 \, dx\biggr) \, + \\&+&\frac{1}{ \pi} 
\biggl(\int_{\T} |v|^2 \, dx \biggr) \biggl({\rm Im} \int_{\T} v  \cj{v}_x \, dx\biggr)\, +\, \frac{1}{4\pi^2}\biggl(\int_{\T} |v|^2 \, dx \biggr)^3.\notag
\end{eqnarray}
and $B$ is a (suitably small) constant.

By analogy with the finite dimensional case we would like to construct the 
measure (with $v(x)= u(x)+ i w(x)$) 
\begin{align} \label{HGibbs2.0}
 ``\  d \mu_\beta & = Z^{-1} \chi_{\{\|v\|_{L^2} \le B\}}  e^{-\frac{\beta}{2}\mathcal{N}(v)}
e^{-\frac{\beta}{2} \int (|v|^2 + |v_x|^2) dx} \prod_{x \in\T}d u(x) d w(x)   \ ".
\end{align}
This is a purely formal, although suggestive, expression  since it is impossible to define the 
Lebesgue measure on  an infinite-dimensional space as a countably additive measure.  Moreover,  it will turn out  that $\int |u_x|^2=\infty$,  $\mu$ almost surely.

One uses instead a Gaussian measure as reference measure and the measure $\mu$ is 
constructed in two steps.  First one constructs a Gaussian measure 
$\rho$ as  the limit of the finite-dimensional measures on $\mathbb{R}^{4N+2}$ given by 
\begin{align}\label{finitedim01} 
 d \rho_N & = Z_{0, N}^{-1} \exp \Big( -\frac{\beta}{2} \sum_{|n| \leq N} (1+|n|^2)|\ft{v}_n|^2 \Big) 
 \prod_{|n| \leq N} d a_n d b_n
\end{align}
where $\ft{v}_n = a_n + i b_n$.   The construction of such Gaussian measures is a classical subject,
see e.g. Gross \cite{GROSS} and Kuo \cite{KUO}.  For our purpose we will need to realize this measure 
as a measure supported on a suitable Banach space.  Once this measure $\rho$ has been constructed 
one constructs the measure $\mu$ as a  measure which is absolutely continuous with respect to 
$\rho$ and whose Radon-Nikodym derivative is
$$
\frac{d \mu}{d \rho}  \,=\,  \tilde{Z}^{-1} \chi_{\{\|v\|_L^2 \le B\}}  e^{ - \frac{\beta}{2} \mathcal{N}(v)}. 
$$

For this measure to be normalizable it turns out that one needs $B$ to be sufficiently 
small. Also the constant $\beta$ in the measure does not play any role in the analysis (
although the  cutoff $B$ depends on $\beta$) and thus in the  sequel we will set $\beta=1$.  
But note that the measures for different $\beta$ are all invariant and they are all mutually singular \cite{GROSS,KUO}.

First let us recall some facts on  Gaussian measures in Hilbert spaces and Banach spaces. 
For details see Zhidkov \cite{Z}, Gross \cite{GROSS} and Kuo \cite{KUO}.
Let $n \in \mathbb{N}$ and $\mathcal T$ be a symmetric positive $n \times n$ matrix
with real entries.
The  Borel measure $\rho$ in $\mathbb{R}^n$ given by 
\[ d \rho(x) = \frac{1}{\sqrt{(2\pi)^n \det (\mathcal T )}} \exp \big( -\tfrac{1}{2} \langle {\mathcal T}^{-1} x, x \rangle_{\mathbb{R}^n} \big)\, dx\] 
is called a (nondegenerate centered) Gaussian measure in $\mathbb{R}^n$. Note that $\rho(\mathbb{R}^n) = 1$.

Now, we consider the analogous definition of the infinite dimensional (centered) Gaussian measures.
Let $H$ be a real separable Hilbert space and $\mathcal T: H \to H$ be a linear positive self-adjoint operator 
(generally unbounded) with eigenvalues $\{\ld_n\}_{n\in \mathbb{N}}$
and the corresponding eigenvectors $\{e_n\}_{n\in\mathbb{N}}$  forming an orthonormal basis of $H$.
We call a set $M \subset H$  cylindrical if there exists an integer $n\geq 1$ and a Borel set $F \subset \mathbb{R}^n$
such that
\begin{equation} \label{Hcylinder}
 M = \big\{ x \in H : ( \jb{ x, e_1}_H, \cdots, \jb{ x, e_n}_H ) \in F \big\}. 
\end{equation}

\noindent
Given the operator $\mathcal T$, we denote by $\mathcal{A}$ the set of all cylindrical subsets of $H$  and one 
can easily verify that  $\mathcal{A}$ is a field.  The centered Gaussian measure in $H$ with correlation 
operator $\mathcal T$  is  defined as  the additive (but not countably additive in general) measure $\rho$ defined on 
the field $\mathcal{A}$ via
\begin{equation} \label{HGaussian}
 \rho(M) = (2\pi)^{-\frac{n}{2}} \prod_{j = 1}^n \ld_j^{-\frac{1}{2}} \int_F e^{-\frac{1}{2}\sum_{j = 1}^n \ld_j^{-1} x_j^2 }d x_1 \cdots dx_n,
\text{ for }M \in \mathcal{A} \text{ as in \eqref{Hcylinder}. }
\end{equation}

The following proposition tells us when  this Gaussian measure $\rho$ is countably additive.

\begin{proposition} \label{PROP:Hadd}
The  Gaussian measure $\rho$ defined in \eqref{HGaussian} is countably additive
on the field $\mathcal{A}$ if and only if $\mathcal T$ is an operator of trace class, 
i.e., $\sum_{n = 1}^\infty \ld_n < \infty$.
If the latter holds, then
the minimal $\s$-field $\mathcal{M}$ containing the field $\mathcal{A}$ of all cylindrical sets is the Borel $\s$-field on $H$.
\end{proposition}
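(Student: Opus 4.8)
The plan is to establish the two assertions separately and to reduce the ``countably additive iff trace class'' equivalence to a single second-moment computation. First I would recall the extension principle: since $\rho$ is a bounded, nonnegative, finitely additive set function on the field $\mathcal{A}$, the Carath\'eodory--Hahn theorem guarantees that $\rho$ admits a (unique) countably additive extension to $\mathcal{M} = \s(\mathcal{A})$ if and only if $\rho$ is continuous from above at the empty set, i.e.\ $\rho(M_k) \to 0$ whenever $M_k \in \mathcal{A}$ decrease to $\emptyset$. Thus ``$\rho$ is countably additive on $\mathcal{A}$'' is synonymous with this continuity property, and the entire first part reduces to deciding when it holds. Throughout I would use that, under $\rho$, the coordinate functionals $\jb{x,e_j}_H$ behave as independent centered Gaussians with variances $\ld_j$, which is exactly what \eqref{HGaussian} encodes on cylindrical sets.

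For the direction $\sum_j \ld_j < \infty \Rightarrow$ countably additive, I would invoke Kolmogorov's consistency theorem: the consistent family of finite-dimensional Gaussian marginals defines a genuine countably additive probability measure $\bar\rho$ on the sequence space $\R^\N$ with coordinates $x_j = \jb{x,e_j}_H$. The only remaining point is that $\bar\rho$ concentrates on $H$, i.e.\ on $\ell^2$ sequences. By monotone convergence, $\int \|x\|_H^2\, d\bar\rho = \sum_{j} \int x_j^2 \, d\bar\rho = \sum_j \ld_j < \infty$, so $\|x\|_H < \infty$ for $\bar\rho$-a.e.\ $x$ and $\bar\rho(H) = 1$. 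Restricting $\bar\rho$ to $H$ then yields a countably additive measure agreeing with $\rho$ on $\mathcal{A}$.

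For the converse I would argue by contraposition: assuming $\sum_j \ld_j = \infty$, suppose for contradiction that $\rho$ is countably additive, so that it extends to a probability measure $\bar\rho$ on $\mathcal{M}$. For fixed $R>0$ consider the decreasing cylindrical sets $C_N = \{x : \sum_{j=1}^N \jb{x,e_j}_H^2 \le R^2\} \in \mathcal{A}$. Writing the coordinates as $\ld_j^{1/2}\xi_j$ with $\xi_j$ i.i.d.\ standard normal, the divergence $\sum_j \ld_j = \infty$ forces $\sum_{j=1}^N \ld_j \xi_j^2 \to \infty$ almost surely, whence $\rho(C_N) \to 0$ as $N \to \infty$. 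Since $\bigcap_N C_N = \{\|x\|_H \le R\}$, continuity from above gives $\bar\rho(\{\|x\|_H \le R\}) = 0$ for every $R$, and letting $R \to \infty$ yields $\bar\rho(H) = 0$, contradicting $\bar\rho(H) = 1$. Hence $\rho$ is not countably additive, which is the contrapositive of the remaining implication.

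Finally, for the identification $\mathcal{M} = \mathcal{B}(H)$ I would use separability of $H$. Each functional $x \mapsto \jb{x,e_j}_H$ is continuous, so every cylindrical set is Borel and $\mathcal{M} \subset \mathcal{B}(H)$. Conversely, every closed ball $\{x : \|x - x_0\| \le r\}$ equals $\bigcap_N \{x : \sum_{j=1}^N (\jb{x,e_j}_H - \jb{x_0,e_j}_H)^2 \le r^2\}$, a countable intersection of cylindrical sets, hence lies in $\mathcal{M}$; since $H$ is separable, its topology has a countable base of such balls, so $\mathcal{B}(H) \subset \mathcal{M}$. I expect the main obstacle to be the converse implication of the equivalence --- precisely, making rigorous that in the non-trace-class case the canonical Kolmogorov extension ``escapes'' $H$. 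The key mechanism is the almost sure divergence $\sum_j \ld_j \xi_j^2 = \infty$ together with continuity from above applied to the balls $\{\|x\|_H \le R\}$, and some care is needed because the relevant limiting sets live in $\mathcal{M}$ rather than in $\mathcal{A}$ itself.
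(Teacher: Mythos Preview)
The paper does not prove this proposition; it is stated as a classical fact with a pointer to the references \cite{Z}, \cite{GROSS}, \cite{KUO} (see the sentence preceding Proposition \ref{PROP:Hadd}). Your argument is correct and is essentially the standard one found in those sources: Kolmogorov consistency plus the second-moment bound $\int \|x\|_H^2\, d\bar\rho = \sum_j \ld_j$ for the trace-class direction, and the observation that the closed ball $\{\|x\|_H \le R\}$ is a countable intersection of cylinders with $\rho$-measures tending to zero for the converse. The only step you gloss over is the almost sure divergence $\sum_j \ld_j \xi_j^2 = \infty$ when $\sum_j \ld_j = \infty$; this is routine (e.g.\ via the three-series theorem or by splitting into the cases $\ld_j \to 0$ and $\ld_j \not\to 0$), and your identification of $\mathcal{M}$ with the Borel $\s$-field via separability is the usual argument.
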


Consider a sequence of the finite dimensional Gaussian measures $\{\rho_n\}_{n\in\mathbb{N}}$ as follows.
For fixed $n \in \mathbb{N}$, let $\mathcal{M}_n$ be the set of all cylindrical sets in $H$ of the form 
\eqref{Hcylinder} with this fixed $n$
and arbitrary Borel sets $F\subset \mathbb{R}^n$.
Clearly, $\mathcal{M}_n$ is a $\s$-field, and setting 
\[ \rho_n(M) = (2\pi)^{-\frac{n}{2}} \prod_{j = 1}^n \ld_j^{-\frac{1}{2}} \int_F e^{-\frac{1}{2}\sum_{j = 1}^n \ld_j^{-1} x_j^2 }d x_1 \cdots dx_n\]

\noindent
for $M \in \mathcal{M}_n$, we obtain a countably additive measure $\rho_n$ defined on $\mathcal{M}_n$.
Then, one can extend the measure  $\rho_n$  onto the whole Borel $\s$-field $\mathcal{M}$ of $H$
by setting $ \rho_n(A) := \rho_n(A \cap \text{span}\{e_1, \cdots, e_n\})$
for $A \in \mathcal{M}$.\footnote{Note a slight abuse of notation.
We use $\rho_n$ to denote a Gaussian measure on $\text{span}\{e_1, \cdots, e_n\}$ 
as well as its extension on $H$.
A similar comment applies in the following.}
Then, we have

\begin{proposition} \label{PROP:Zhidkov2}
Let $\rho$ in  \eqref{HGaussian} be countably additive.
Then,  $\{\rho_n\}_{n\in \mathbb{N}}$ constructed above converges weakly to $\rho$ as $n \to \infty$.
\end{proposition}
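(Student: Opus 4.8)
The plan is to realize every $\rho_n$ together with the limit $\rho$ as the laws of a single family of Gaussian random series on one common probability space, so that the weak convergence $\rho_n \to \rho$ is reduced to an almost sure convergence statement plus dominated convergence. This Skorokhod-type coupling is cleaner than a direct tightness argument, and it uses the trace class hypothesis $\sum_n \ld_n < \infty$ (equivalently, the countable additivity of $\rho$ furnished by Proposition \ref{PROP:Hadd}) in an essential way.

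Concretely, I would let $\{g_j\}_{j \in \mathbb{N}}$ be i.i.d. standard real Gaussian random variables on a probability space $(\Omega, \mathbb{P})$ and set
\[ X := \sum_{j=1}^\infty \sqrt{\ld_j}\, g_j\, e_j, \qquad X_n := \sum_{j=1}^n \sqrt{\ld_j}\, g_j\, e_j. \]
Since $\mathbb{E}\, \|X\|_H^2 = \sum_{j} \ld_j < \infty$, the series defining $X$ converges in $L^2(\Omega; H)$, so $X$ is a genuine $H$-valued random variable. A direct computation of the characteristic functional gives, for every $h \in H$,
\[ \mathbb{E}\, e^{i \jb{X, h}_H} = \prod_{j=1}^\infty e^{-\frac12 \ld_j \jb{e_j, h}_H^2} = e^{-\frac12 \jb{\mathcal T h, h}_H}, \]
together with the analogous finite product for $X_n$. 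I would then identify these laws with $\rho$ and $\rho_n$ by checking that they agree on every cylinder set of the form \eqref{Hcylinder}: on such a set both sides reduce to the same finite dimensional Gaussian integral appearing in \eqref{HGaussian}. Since the cylinder sets form the field $\mathcal{A}$ which generates the Borel $\s$-field $\mathcal{M}$, a standard uniqueness argument (two Borel probability measures agreeing on the generating field $\mathcal A$ coincide) shows that the law of $X$ is $\rho$ and the law of $X_n$ is $\rho_n$.

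For the convergence, I would observe that
\[ \|X - X_n\|_H^2 = \sum_{j > n} \ld_j\, g_j^2 \]
is nonincreasing in $n$ and satisfies $\mathbb{E}\, \|X - X_n\|_H^2 = \sum_{j>n}\ld_j \to 0$. By the monotone convergence theorem the almost sure limit $L := \lim_n \|X - X_n\|_H^2$ obeys $\mathbb{E}\, L = 0$, so $L = 0$ almost surely and hence $X_n \to X$ in $H$, $\mathbb{P}$-almost surely. Finally, for any bounded continuous $f : H \to \R$ we have $f(X_n) \to f(X)$ almost surely with $|f(X_n)| \le \|f\|_{L^\infty}$, so dominated convergence yields
\[ \int_H f\, d\rho_n = \mathbb{E}\, f(X_n) \longrightarrow \mathbb{E}\, f(X) = \int_H f\, d\rho, \]
which is precisely the asserted weak convergence.

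The one delicate point — the main obstacle — is the identification of the law of the infinite series $X$ with the abstractly defined $\rho$: this is exactly where trace class enters, both to guarantee that the series converges in $H$ (so that $X$ is $H$-valued rather than living only in some larger cylinder-set completion) and to guarantee, through Proposition \ref{PROP:Hadd}, that $\rho$ is genuinely countably additive on $\mathcal{M}$, so that the uniqueness-on-$\mathcal A$ step is legitimate. An alternative route would bypass the coupling and instead prove uniform tightness of $\{\rho_n\}$ directly: since $\sum_j \ld_j < \infty$ one may choose weights $\beta_j \uparrow \infty$ with $\sum_j \beta_j \ld_j < \infty$, whereupon the weighted ellipsoids $\{x : \sum_j \beta_j \jb{x,e_j}_H^2 \le R\}$ are compact in $H$ and carry $\rho_n$-measure at least $1 - R^{-1}\sum_j \beta_j \ld_j$ uniformly in $n$ by Chebyshev; one would then combine Prokhorov's theorem with the pointwise convergence $\hat{\rho}_n \to \hat{\rho}$ of characteristic functionals to pin every subsequential limit down to $\rho$. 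In that second approach the uniform tightness estimate is the principal technical obstacle.
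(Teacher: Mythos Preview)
Your proof is correct. The coupling construction with $X_n \to X$ almost surely followed by dominated convergence is a clean and standard way to establish the weak convergence, and you correctly identify where the trace-class hypothesis enters: both for the $L^2(\Omega;H)$-convergence of the random series defining $X$ and for the countable additivity of $\rho$ needed in the law-identification step.

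The paper, however, does not actually prove this proposition. It is stated as a background result drawn from the literature (the label \texttt{PROP:Zhidkov2} points to Zhidkov's monograph \cite{Z}), so there is no ``paper's own proof'' to compare against. Your argument supplies what the paper omits. The alternative tightness route you sketch at the end is also valid and is closer in spirit to how such results are often presented in textbooks on Gaussian measures; your primary coupling argument is more direct and arguably more transparent.
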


For our problem then we consider the Gaussian measure $\rho$ which is the weak limit of the finite dimensional Gaussian measures
\begin{equation}\label{finite N rho}
 d\rho_N  = Z_{0, N}^{-1} 
 \exp \Big( -\frac{1}{2} \sum_{ |n| \leq N}(1+ |n|^2) |\ft{v}_n|^2 \Big) \prod_{|n| \leq N} d a_n d b_n \,.
\end{equation}
Let $J_s :=  (1- \Delta)^{s-1}$, then we have 
$$
\sum_{n} (1 + |n|^2)  \left| \ft{v}_n \right|^2 \,=\, \langle v , v \rangle_{H^1} \,=\,  \langle  J_s^{-1} v , v \rangle_{H_s} \,.
$$
The operator $J_s : H_s \to H_s$  has the set of eigenvalues $\{(1+|n|^2)^{(s-1)}\}_{n \in \mathbb{Z}}$ and 
the corresponding  eigenvectors   $\{ (1+|n|^2)^{-s/2} e^{inx}\}_{n \in \mathbb{Z}}$ form an orthonormal 
basis of $H^s$. Since $J_s$ is of trace class if and only if  $s < \frac{1}{2}$, by Proposition \ref{PROP:Hadd}, 
$\rho$  is  a countably additive measure on $H^s$  for any $s < 1/2$ (but not for $s\ge 1/2$.)

Unfortunately, \eqref{GDNLS} is locally well-posed in $H^s(\T)$ only for $ s\geq \frac{1}{2}$ \cite{Herr}.
Instead, we propose to work in the Fourier-Lebesgue space $\mathcal{F} L^{s, r} (\mathbb{T})$ 
defined in \eqref{fourierlebesgue}  in view of the local well-posedness result by 
Gr\"unrock-Herr \cite{GH}.   Since $\mathcal{F} L^{s, r}$ is not a Hilbert space, we need to 
construct $\rho$ as a measure supported on a Banach space.

\subsection{ General Banach space setting}
 
Let us recall the basic theory of abstract Wiener spaces \cite{KUO}.  Given  a real separable Hilbert 
space $H$ with norm $\|\cdot \|$,  let $\mathcal{F} $ denote the set of finite dimensional orthogonal projections $\mathbb{P}$ of $H$.
Then, define a cylinder set $E$ by  $E = \{ x \in H: \mathbb{P}x \in F\}$ where $\mathbb{P} \in \mathcal{F}$ 
and $F$ is a Borel subset of $\mathbb{P}H$,
and let $\mathcal{R} $ denote the collection of such cylinder sets.
Note that $\mathcal{R}$ is a field but not a $\s$-field.
The Gaussian measure $\rho$ on $H$ is defined 
by 
\[ \rho(E) = (2\pi)^{-\frac{n}{2}} \int_F e^{-\frac{\|x\|^2}{2}} dx  \]

\noindent
for $E \in \mathcal{R}$, where
$n = \text{dim} \mathbb{P} H$ and  
$dx$ is the Lebesgue measure on $\mathbb{P}H$.
It is known that $\rho$ is finitely additive but not countably additive in $\mathcal{R}$.

\begin{definition}[Gross \cite{GROSS}] \label{H measurable} A seminorm $|||\cdot|||$ in $H$ is called measurable if for every $\eps>0$, 
there exists $\mathbb{P}_\eps \in \mathcal{F}$ such that 
\begin{equation*}
 \rho( ||| \mathbb{P} x ||| > \eps  )< \eps 
\end{equation*}
for $\mathbb{P} \in \mathcal{F}$ orthogonal to $\mathbb{P}_\eps$.
\end{definition}
Any measurable seminorm  is weaker  than the norm of $H$,
and $H$ is not complete with respect to $|||\cdot|||$ unless $H$ is finite dimensional.
Let $\mathcal B$ be the completion of $H$ with respect to $|||\cdot|||$
and denote by $i$ the inclusion map of $H$ into $\mathcal B$.
The triple $(i, H, \mathcal B)$ is called an abstract Wiener space.

Now, regarding $y \in {\mathcal B}^\ast$ as an element of $H^\ast \equiv H$ by restriction,
we embed ${\mathcal B}^\ast $ in $H$.
Define the extension of $\rho$ onto $\mathcal B$ (which we still denote by $\rho$)
as follows.
For a Borel set $F \subset \R^n$, set
\[ \rho ( \{x \in \mathcal B: ((x, y_1), \cdots, (x, y_n) )\in F\})
:= \rho( \{x \in H: (\jb{x, y_1}_H, \cdots, \jb{x, y_n}_H )\in F\}),\]

\noindent
where $y_j$'s are in ${\mathcal B}^\ast$ and $(\cdot , \cdot )$ denote the natural pairing between $\mathcal B$ and ${\mathcal B}^\ast$.
Let $\mathcal{R}_{\mathcal B}$ denote the collection of cylinder sets
$ \{x \in \mathcal B: ((x, y_1), \cdots, (x, y_n) )\in F \}$
in $\mathcal B$.

\begin{proposition}[Gross \cite{GROSS}]
$\rho$ is countably additive in the $\s$-field generated by $\mathcal{R}_{\mathcal B}$.
\end{proposition}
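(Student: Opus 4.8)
The plan is to recognize this as Gross's classical construction of abstract Wiener measure and to prove countable additivity not by wrestling with the finitely additive cylinder set function directly, but by realizing $\rho$ on $\mathcal B$ as the honest Borel distribution of a $\mathcal B$-valued random variable. The conceptual obstacle to keep in mind is that the bare Gaussian cylinder measure on the infinite-dimensional space $H$ is \emph{not} countably additive; the whole point of the measurability of the seminorm $|||\cdot|||$ is that it furnishes exactly the tail (tightness) control that repairs this defect upon passing to the completion $\mathcal B$. Accordingly, I would carry out the Gaussian on a genuine probability space and show that the finite-dimensional truncations of a generic point converge in $\mathcal B$ almost surely.

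First, using the measurability hypothesis with $\eps_j = 2^{-j}$, I would choose an increasing sequence of finite-dimensional projections $R_j \uparrow I$ (strongly) whose ranges eventually contain the range of each $\mathbb P_{\eps_j}$ supplied by the definition, so that every increment $R_{j+1} - R_j$ is orthogonal to $\mathbb P_{\eps_j}$ and hence
$$
\rho\big( |||(R_{j+1}-R_j)x||| > 2^{-j} \big) < 2^{-j}.
$$
Fix an orthonormal basis $\{e_k\}$ of $H$ adapted to this flag, and realize the Gaussian on the coordinate space $\R^{\N}$ with its countably additive product measure $\nu$ (a countable product of one-dimensional Gaussians), under which the coordinate maps $x \mapsto \langle x, e_k\rangle$ reproduce the finite-dimensional marginals that define $\rho$ on cylinder sets. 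Since the bounds above are summable, the Borel--Cantelli lemma on $(\R^{\N}, \nu)$ shows that the partial sums $S_j(x) := R_j x$ form a $|||\cdot|||$-Cauchy sequence for $\nu$-almost every $x$; completeness of $\mathcal B$ then yields a Borel-measurable limit $S : \R^{\N} \to \mathcal B$, and I would take the pushforward $\nu \circ S^{-1}$ as the candidate extension, which is automatically a countably additive Borel probability measure on $\mathcal B$.

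It then remains to identify $\nu \circ S^{-1}$ with the extension of $\rho$ on the field $\mathcal R_{\mathcal B}$. For $y_1, \dots, y_m \in \mathcal B^\ast \subset H$ the pairings $(S_j(x), y_i) = \langle R_j x, y_i\rangle_H = \langle x, R_j y_i\rangle_H$ converge, as $j \to \infty$, both to $(S(x), y_i)$ (continuity of the pairing, since $S_j \to S$ in $\mathcal B$) and to $\langle x, y_i\rangle_H$ (strong convergence $R_j \to I$), so that $(S, y_i) = \langle \cdot, y_i\rangle_H$ $\nu$-almost surely; hence the joint law of $((S,y_1), \dots, (S, y_m))$ is precisely the Gaussian prescribing $\rho$ on that cylinder set. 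Since $\mathcal R_{\mathcal B}$ generates the $\s$-field and the two set functions agree on it, the countably additive pushforward is the desired extension of $\rho$. I expect the only genuinely delicate step to be the almost-sure $\mathcal B$-convergence of $\{S_j\}$: this is exactly where the measurability of $|||\cdot|||$ is indispensable, and it is precisely the ingredient whose absence prevents the analogous argument from (falsely) producing a countably additive Gaussian on $H$ itself.
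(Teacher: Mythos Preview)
The paper does not prove this proposition at all: it is stated with attribution to Gross \cite{GROSS} and simply cited as background from the theory of abstract Wiener spaces, with no argument given. So there is no ``paper's own proof'' to compare against.

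That said, your sketch is a correct reconstruction of (a standard version of) Gross's argument. The logic is sound: use the measurability hypothesis to extract a flag of finite-rank projections $R_j$ with summable tail bounds on the increments $|||(R_{j+1}-R_j)x|||$, realize the cylinder measure honestly on $\R^{\N}$ with the product Gaussian, invoke Borel--Cantelli to get almost-sure Cauchy convergence of $S_j = R_j x$ in $\mathcal B$, and push forward. The identification of the pushforward with the prescribed cylinder values via the convergence $(S_j, y_i) \to (S, y_i)$ and $\langle R_j x, y_i\rangle_H \to \langle x, y_i\rangle_H$ is also correct, with the understanding that the latter limit is an $L^2(\nu)$/a.s.\ limit of a Gaussian series (since $x \notin H$ $\nu$-a.s.), not a genuine $H$-inner product; you might make that point explicit. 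One other small point worth stating: to conclude that the countably additive pushforward \emph{is} the extension of $\rho$ (rather than merely some extension agreeing on $\mathcal R_{\mathcal B}$), you use that $\mathcal R_{\mathcal B}$ is a $\pi$-system generating the Borel $\sigma$-field, so uniqueness of extension applies.
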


\subsection{ Back to our setting}

In the present context, we will let $H= H^1(\mathbb{T})$ and
$\mathcal B=\mathcal{F} L^{s, r}(\T)$ with $2 \leq r <\infty$  and $(s-1)r < -1$.  First we prove the following result.

\begin{proposition} \label{PROP:meas} Let $2 \leq r <\infty$ and assume $(s-1)r < -1$. Then  the seminorm  
$\|\cdot\|_{\mathcal{F} L^{s, r}}$ is measurable. Moreover, we have the following 
exponential tail estimate: there exists $C>0$ and $ c > 0$ (which both depends  on $(s,r)$)
such that,  for  $K>0$,
\begin{equation} \label{Hld}
\rho ( \|v\|_{\mathcal{F} L^{s, r}} > K) \leq C e^{-cK^2}.
\end{equation}
\end{proposition}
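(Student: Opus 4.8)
The plan is to use the explicit Gaussian structure of $\rho$: under $\rho$ the Fourier coefficients are $\ft{v}_n = \jb{n}^{-1}(g_n + i h_n)$ with $\{g_n, h_n\}$ i.i.d.\ standard real Gaussians (this is exactly what \eqref{finite N rho} encodes, since the weight $(1+|n|^2)=\jb{n}^2$ gives $\ft{v}_n$ variance $\jb{n}^{-2}$ in each real coordinate). Thus $\|v\|_{\FF L^{s,r}} = \|\jb{n}^s \ft{v}_n\|_{\ell^r_n}$ is the $\ell^r_n$-norm of an explicit Gaussian sequence, and both assertions will be driven by the single quantitative fact that $\sum_n \jb{n}^{(s-1)r}<\infty$, which is precisely the hypothesis $(s-1)r<-1$. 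I would treat the tail bound \eqref{Hld} and the measurability separately.

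For \eqref{Hld} I would run the moment method. Set $X:=\|v\|_{\FF L^{s,r}}$ and estimate $\|X\|_{L^p(d\rho)}$ for every $p\ge r$. By Minkowski's integral inequality (legitimate since $p\ge r$), $\|X\|_{L^p(d\rho)} \le \big(\sum_n \jb{n}^{sr}\,\|\ft{v}_n\|_{L^p(d\rho)}^r\big)^{1/r}$. Since $\ft{v}_n=\jb{n}^{-1}(g_n+ih_n)$, one has $\|\ft{v}_n\|_{L^p(d\rho)} = \jb{n}^{-1}\|\sqrt{g_n^2+h_n^2}\|_{L^p}\lesssim \sqrt{p}\,\jb{n}^{-1}$ by the standard $\Gamma$-function/Stirling computation for Gaussian moments. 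Hence $\|X\|_{L^p(d\rho)}\lesssim \sqrt{p}\,\big(\sum_n \jb{n}^{(s-1)r}\big)^{1/r}=C_{s,r}\sqrt{p}$ for all $p\ge r$. The growth $\|X\|_{L^p}\lesssim \sqrt{p}$ is the signature of a sub-Gaussian variable: optimizing $\rho(X>K)\le (C_{s,r}\sqrt{p}/K)^p$ over $p$ produces \eqref{Hld} with $c$ depending on $C_{s,r}$, i.e.\ on $(s,r)$.

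For measurability I must verify Gross's Definition \ref{H measurable}: for each $\eps>0$ I need a finite-dimensional orthogonal projection $\mathbb{P}_\eps$ of $H=H^1$ with $\rho(\|\mathbb{P}x\|_{\FF L^{s,r}}>\eps)<\eps$ for every finite-dimensional $\mathbb{P}$ orthogonal to $\mathbb{P}_\eps$. I would take $\mathbb{P}_\eps=P_N$, the Fourier cutoff onto $|n|\le N$. Any $\mathbb{P}\perp P_N$ has range spanned by an $H^1$-orthonormal system $\{f_j\}_{j=1}^k$ whose Fourier support lies in $\{|n|>N\}$, so $\ft{(\mathbb{P}x)}_n=\sum_j g_j\,\ft{f}_{j,n}$ is Gaussian with $\mathbb{E}|\ft{(\mathbb{P}x)}_n|^2=\sigma_n^2:=\sum_j|\ft{f}_{j,n}|^2$. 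Gaussian moment comparison gives $\mathbb{E}|\ft{(\mathbb{P}x)}_n|^r\lesssim_r \sigma_n^r$, so that $\mathbb{E}\|\mathbb{P}x\|_{\FF L^{s,r}}^r=\sum_{|n|>N}\jb{n}^{sr}\,\mathbb{E}|\ft{(\mathbb{P}x)}_n|^r\lesssim_r \sum_{|n|>N}\jb{n}^{sr}\sigma_n^r$.

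The main obstacle — and the only non-routine point — is that $\mathbb{P}$ ranges over \emph{all} finite-dimensional orthogonal projections of $H^1$, not just Fourier ones, so $\sigma_n^2$ depends on the unknown system $\{f_j\}$ and could a priori be large, even after multiplying by the rank $k$. This is defused by Bessel's inequality: writing $\wt{f}_{j,n}:=\jb{n}\ft{f}_{j,n}$, the sequences $(\wt{f}_{j,n})_n$ form an orthonormal system in $\ell^2_n$, hence $\sum_j|\wt{f}_{j,n}|^2\le 1$, i.e.\ $\sigma_n^2\le \jb{n}^{-2}$ \emph{uniformly in $\mathbb{P}$ and in $k$}. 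Therefore $\mathbb{E}\|\mathbb{P}x\|_{\FF L^{s,r}}^r\lesssim_r \sum_{|n|>N}\jb{n}^{sr}\jb{n}^{-r}=\sum_{|n|>N}\jb{n}^{(s-1)r}$, a tail of a convergent series that tends to $0$ as $N\to\infty$ independently of $\mathbb{P}$. Choosing $N$ so large that $C_r\eps^{-r}\sum_{|n|>N}\jb{n}^{(s-1)r}<\eps$ and applying Markov's inequality verifies the definition. I would conclude by recalling that measurability of the seminorm yields, via Gross's proposition, the countably additive extension of $\rho$ to the Borel $\sigma$-field of $\mathcal B=\FF L^{s,r}(\T)$, on which the tail estimate \eqref{Hld} then holds.
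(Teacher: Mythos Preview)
Your proof is correct and takes a genuinely different, more elementary route than the paper's.

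For the tail estimate \eqref{Hld}, the paper simply invokes Fernique's theorem as a black box once measurability is established. Your moment-method argument ($\|X\|_{L^p}\lesssim C_{s,r}\sqrt{p}$ via Minkowski and Gaussian moments, then optimize in $p$) is self-contained and gives the constant $c$ explicitly in terms of $\sum_n\jb{n}^{(s-1)r}$.

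For measurability, the paper's proof is considerably more intricate: it reduces to Fourier projections $P_{M_0}^\perp$, invokes Lemma~\ref{CL:decay} on the a.s.\ decay of $M^{2\delta}\max|g_n|^2/\sum|g_n|^2$, uses Egoroff to pass to a good set, interpolates $\ell^r$ between $\ell^2$ and $\ell^\infty$, and finishes with a dyadic sum and a polar-coordinate tail computation. Your approach bypasses all of this: the Bessel inequality $\sum_j|\jb{n}\ft{f}_{j,n}|^2\le 1$ gives the uniform variance bound $\sigma_n^2\le\jb{n}^{-2}$ for \emph{any} $H^1$-orthonormal system supported in $\{|n|>N\}$, so a single $r$-th moment plus Markov suffices. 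This also handles arbitrary finite-dimensional $\mathbb{P}\perp P_N$ directly, as Gross's Definition~\ref{H measurable} literally requires; the paper's one-line reduction ``$\|\mathbb{P}v\|_{\FF L^{s,r}}\le\|P_{M_0}^\perp v\|_{\FF L^{s,r}}$'' is not obvious as a pointwise inequality (an $H^1$-orthogonal projection need not contract the $\FF L^{s,r}$ norm), whereas your Bessel argument sidesteps that issue entirely. The trade-off is that the paper's method, while heavier, yields a Gaussian tail for $\|P_{M_0}^\perp v\|$ directly, whereas your Markov step only gives polynomial decay in $\eps$---but that is all Definition~\ref{H measurable} asks for.
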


\noindent

This shows that $(i, H, \mathcal B) = (i, H^1, \mathcal{F} L^{s, r})$ \, ($2 \leq r <\infty$) 
is an abstract Wiener space if  $(s-1)r < -1$ and thus the 
Wiener measure $\rho$  can be realized as a countably additive 
measure supported  on  $\mathcal{F} L^{s, r}$ for $(s-1)r < -1$.  This is
hardly surprising since  this is equivalent to $\sigma \equiv s + \frac{1}{r} - \frac{1}{2} < \frac{1}{2}$
and  $\mathcal{F} L^{s, r}$ scale as $H^\sigma$.   

The second part of Proposition \ref{PROP:meas} is a consequence of Fernique's 
theorem \cite{FER} (c.f. Theorem 3.1 of Chapter III in \cite{KUO}).

\begin{remark} \rm Proposition \ref{PROP:meas} was essentially proved in \cite{Oh3} in the context of white noise for the KdV equation.
  We include here a proof in our DNLS context for completeness\footnote{ Proposition  \ref{PROP:meas}  also holds for $r<2$ and $(s-1)r < -1$, albeit with a different proof (see \cite{Benyi} for details).  
For our purposes $2 \leq r <\infty$ suffices and so we restrict ourselves to that case.}.  
\end{remark}

It is useful to note that the measure $\rho_N$ given in \eqref{finite N rho} can be regarded 
as  the induced probability measure on  $\mathbb{C}^{2N+1}\cong \mathbb{R}^{4N+2}$  
under the map 
\begin{equation} \label{H induced}
 \omega \longmapsto \bigg\{ \frac{ g_n}{\sqrt{ 1+ |n|^2} }\bigg\}_{|n| \leq N}, 
\end{equation}
where $g_n(\omega)$, $|n| \leq N$,  are independent standard complex 
Gaussian random variables on a probability space  $(\Omega, \mathcal F, P)$ (i.e. $\widehat{v}_n =  \frac{ g_n}{\sqrt{ 1+ |n|^2}}$).  
In a similar manner, we can view $\rho$ as the induced probability measure under the map
$ \omega \mapsto \{  \frac{ g_n}{\sqrt{ 1+ |n|^2} }  \}_{n\in \mathbb{Z}}$, 
where $g_n(\omega)$ are independent standard complex Gaussian random variables.

For the proof of Proposition, \ref{PROP:meas} we first recall the following result.

\begin{lemma}[Lemma 4.7  \cite{Oh4}] \label{CL:decay}
Let $\{g_n\}$ be a sequence of independent standard complex-valued
Gaussian random variables. Then, for $M$ dyadic and $\dl <
\frac{1}{2}$, we have
\[ \lim_{M\to \infty} M^{2\dl} \frac{\max_{|n|\sim M } |g_n|^2}{ \sum_{|n|\sim M} |g_n|^2} = 0\, \text{ a.s.} \]
\end{lemma}

\begin{proof}[Proof of Proposition \ref{PROP:meas}]  

Let $2 \leq r < \infty$ and $(s-1)r < -1$.  In view of  Definition \ref{H measurable},  it
suffices to show that for given $\eps> 0$, there exists a large $M_0$ such that 
\begin{equation} \label{msble2}
 \rho \big(\|P^{\perp}_{M_0} v\|_{\mathcal{F}L^{s, r}} > \eps) < \eps,
\end{equation}
\noi where $P^{\perp}_{M_0}$is the projection onto the frequencies
$|n| > M_0$.  Note that  if $ \mathbb{P} $ is a finite dimensional projection such that $ \mathbb{P} \perp P_{M_0}$ then $\|  \mathbb{P} v \|_{\mathcal{F}L^{s, r}} \leq  \|  P^{\perp}_{M_0} v \|_{\mathcal{F}L^{s, r}}$.

In view of \eqref{H induced},
we assume that $v$ is of the form 
\begin{equation}\label{FW1}
v (x) = \sum_n \frac{ g_n}{\sqrt{ 1+ |n|^2} }e^{inx}, 
\end{equation}
where $\{g_n\}$ is as in \eqref{H induced}.

\noi

Let $\dl < \frac{1}{2}$ to be chosen later.  Then, by Lemma \ref{CL:decay} and Egoroff's theorem, 
there exists a set $E$ such that $\rho (E^c) < \frac{1}{2}\eps$ 
and the convergence in Lemma \ref{CL:decay} is uniform on $E$, 
i.e. we can choose dyadic $M_0$ large enough such that
\begin{equation} \label{A:decay}
\frac{\| \{g_n (\omega) \}_{|n| \sim M} \|_{L^{\infty}_n} } {\|
\{g_n (\omega) \}_{|n| \sim M} \|_{L^{2}_n} }
 \leq M^{-\dl},
\end{equation}

\noindent for  all $\omega \in E$  and dyadic $M > M_0$. In the
following, we will work only on $E$ and  drop `$\cap E$' for
notational simplicity. However, it should be understood that all the
events are under the intersection with $E$ so that  \eqref{A:decay}
holds.

Let $\{\s_j \}_{j \geq 1}$ be a sequence of positive numbers such
that $\sum \s_j = 1$, and let $ M_j = M_0 2^j$ dyadic. Note that
$\s_j = C 2^{-\ld j} =C M_0^\ld M_j^{-\ld}$ for some small $\ld > 0$
(to be determined later.) Then,  from \eqref{FW1}, we have
\begin{align} \label{A:subadd}
\rho \big(\|P^{\perp}_{M_0} v(\omega) \|_{\mathcal{F}L^{s, r}} > \eps)
& \leq \sum_{j = 1}^\infty \rho \big( \| \{\jb{n}^{s-1}
g_n(\omega)  \}_{|n| \sim M_j} \|_{L_n^{r}}  > \s_j \eps \big).
\end{align}

\noindent By interpolation and \eqref{A:decay}, we have
\begin{align*}
\| \{ & \jb{n}^{s-1}  g_n \}_{|n| \sim M_j} \|_{L_n^{r}} \sim
M_j^{s-1} \| \{ g_n \}_{|n| \sim M_j} \|_{L_n^{r}} \leq  M_j^{s-1}
\| \{ g_n \}_{|n| \sim M_j} \|_{L_n^{2}}^\frac{2}{r}
  \| \{ g_n \}_{|n| \sim M_j} \|_{L_n^{\infty}}^\frac{r-2}{r} \\
& \leq  M_j^{s-1} \| \{ g_n \}_{|n| \sim M} \|_{L_n^{2}}
\Bigg(\frac{ \| \{ g_n \}_{|n| \sim M_j} \|_{L_n^{\infty}} } {\| \{
g_n \}_{|n| \sim M_j} \|_{L_n^{2}}} \Bigg)^\frac{r-2}{r} \leq
M_j^{s-1 -\dl \frac{r-2}{r}} \| \{ g_n \}_{|n| \sim M_j}
\|_{L_n^{2}}.
\end{align*}

\noindent 
Thus, if we have $\|\{\jb{n}^{s-1}g_n \}_{|n| \sim M_j}
\|_{L_n^{r}}  > \s_j \eps$,
 then we have
 $\| \{ g_n \}_{|n| \sim M_j} \|_{L_n^{2}}
 \gtrsim R_j $
 where $R_j := \s_j \eps M_j^\al$ with $\al := -s+1+\dl \frac{r-2}{r} $.
With $r = 2 + \theta$, we have $\al =  \frac{-(s-1)r +  \dl
\theta}{2 +  \theta} > \frac{1}{2}$ by taking $\dl$ sufficiently
close to $\frac{1}{2}$ since $-(s-1)r > 1$. Then, by taking $\ld >
0$ sufficiently small, $R_j = \s_j \eps M_j^\al = C \eps M_0^\ld
M_j^{\al-\ld} \gtrsim C \eps M_0^{\ld} M_j^{\frac{1}{2}+} $. By a
direct computation in polar coordinates, we have
\begin{align*}
\rho\big(  \| \{ g_n \}_{|n| \sim M_j} \|_{L_n^{2}}  \gtrsim R_j
\big)
 \sim \int_{B^c(0, R_j)} e^{-\frac{1}{2}|g_n|^2} \prod_{|n| \sim M_j} dg_n
 \lesssim \int_{R_j}^\infty e^{-\frac{1}{2}s^2}  s^{2  \# \{|n| \sim M_j\} -1} ds.
\end{align*}

\noindent Note that, in the inequality, we have dropped the implicit
constant $\s(S^{2 \# \{|n| \sim M_j\} -1})$, a surface measure
of the $2 \# \{|n| \sim M_j\} -1$ dimensional unit sphere,
since $\s(S^n) = 2 \pi^\frac{n}{2} / \Gamma (\frac{n}{2}) \lesssim
1$. By the change of variable $t = M_j^{-\frac{1}{2}}s$, we have
$s^{2 \# \{|n| \sim M_j\} -2} \lesssim s^{4M_j} \sim
M_j^{2M_j} t^{4M_j}.$ Since $t > M_j^{-\frac{1}{2}} R_j = C \eps
M_0^{\ld} M_j^{0+}$, we have $M_j^{2M_j} = e^{2M_j \ln M_j} <
e^{\frac{1}{8}M_jt^2}$ and $ t^{4M_j} < e^{\frac{1}{8}M_jt^2}$ for
$M_0$ sufficiently large. Thus, we have $s^{2 \# \{|n| \sim
M_j\} -2} < e^{\frac{1}{4}M_jt^2} = e^{\frac{1}{4}s^2}$ for $ s >
R_j.$ Hence,  we have
\begin{align} \label{A:highfreq1}
\rho\big(   \| \{ g_n \}_{|n| \sim M_j} \|_{L_n^{2}}  \gtrsim R_j
\big) \leq C \int_{R_j}^\infty e^{-\frac{1}{4}s^2} s ds \leq
e^{-cR_j^2} = e^{-cC^2 M_0^{2\ld} M_j^{1+} \eps^2}.
\end{align}

\noindent From \eqref{A:subadd} and \eqref{A:highfreq1}, we have
\begin{align*}
\rho \big(\|P^{\perp}_{M_0}v\|_{\mathcal{F}L^{s, r}} > \eps) \leq
\sum_{j =1}^\infty e^{-cC^2 M_0^{1+2\ld+} (2^j)^{1+} \eps^2} \leq
\tfrac{1}{2} \eps
\end{align*}

\noi by choosing $M_0$ sufficiently large as long as $(s-1)r<-1$.
Hence, the seminorm $\|\cdot\|_{\mathcal{F} L^{s, r}}$ is measurable for $(s-1)r < -1$.

The tail estimate \eqref{Hld}
is a direct consequence of
Fernique's theorem \cite[Theorem 3.1]{KUO}.
\end{proof}

To construct the weighted Wiener measure $\mu$ let us define
\begin{equation} \label{Hweight}
R(v) : = \chi_{\{ \| v \|_{L^2}\leq B\}}  e^{-\frac{1}{2} \mathcal{N}(v)}\,,  \quad \quad R_N(v)\,:=\, R(v^N)
\end{equation}
where   $\mathcal{N}(v)$ is the nonlinear part of the energy defined in \eqref{nonlinear energy} and at this stage and for the remainder of this section $v^N = P_N(v)$ for some generic function $v$. In the next section $v^N$ will denote the solution to the (FGDNLS) \eqref{FGDNLS} as in Section 3.    We write  
\begin{equation}\label{nonlinear decomp}\mathcal{N}_N(v): = \mathcal{N}(v^N) = F_N(v) + G_N(v)+K_N(v),\end{equation}
where 
\begin{eqnarray} \label{F}
 F_N(v) &=& - \frac{1}{2} 
{\rm Im} \int_{\T} (v^N)^2 \cj{v^N} \cj{v^N_x}  dx,\\
G_N(v) &=&
-\frac{1}{4 \pi}  \biggl(\int_{\T} |v^N|^2 \, dx \biggr)\biggl(\int_{\T} |v^N|^4 \, dx\biggr),\label{G}\\
K_N(v)& = &\frac{1}{ \pi} 
\biggl(\int_{\T} |v^N|^2 \, dx \biggr) \biggl({\rm Im} \int_{\T} v^N  (\cj v_x^N)dx \biggr) \,+ \, \frac{1}{4\pi^2}\biggl(\int_{\T} |v^N|^2 \, dx \biggr)^3.\label{Z}
\end{eqnarray} 
We will construct the measure 
\begin{equation} \label{HGibbs2}
d\mu = Z^{-1} R(v) d\rho \,,
\end{equation} 
for sufficiently small $B$, as the weak limit of  the finite dimensional weighted  Wiener measures 
$\mu_N$ on $\mathbb{R}^{4N+2}$  given by 
\begin{eqnarray}
d\mu_N  \,& =& \,   Z_N^{-1} R_N(v) d \rho_N  \nonumber  \\
&=&{Z}_N^{-1} \chi_{\{ \| {v}^N \|_{L^2} \leq B\}} 
e^{- \frac{1}{2} \mathcal{N}(v^N)} d \rho_N \label{HGibbs3}
\end{eqnarray}
for suitable normalization $Z_N$.

\begin{lemma} \label{LEM:Hbound}

\textup{(a)}  The sequence $F_N$ converges in $L^2(d \rho)$ to  
\[F(v) = - \frac{1}{2} {\rm Im} \int_{\T} v^2 \cj{v} \cj{v}_x  dx.\]

\noi
Moreover, for $\al < \frac{3}{4}$, there exist $C, \dl >0$
such that for all $M \geq N\geq 1$ and $\ld > 0$, we have
\begin{equation} 
\rho( |F_M(v) - F_N(v)| > \ld) \leq C  e^{-\dl N^\al \ld^\frac{1}{2}}
\end{equation}

\noi
\textup{(b)}
Let $p \in [2, \infty)$. 
Then, 
there exist $\al, C$
such that for all $M \geq N\geq 1$ and $\ld > 0$, we have
\begin{align} \label{Hld1}
& \rho( \| P_N v\|_{L^p(\T)} > \ld) <  C e^{-c \ld^2}\\
& \label{Hld2}
\rho( \|P_M v - P_N v\|_{L^p(\T)} > \ld) < C e^{-c N^{2\al} \ld^2}
\end{align}
\end{lemma}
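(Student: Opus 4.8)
The plan is to exploit the explicit Gaussian structure of $\rho$: under the identification \eqref{H induced} a $\rho$-random function is the Fourier series $v=\sum_n \frac{g_n}{\sqrt{1+|n|^2}}e^{inx}$ with $\{g_n\}$ independent standard complex Gaussians. Both parts then reduce to two probabilistic facts: the pointwise (in $x$) Gaussianity of the truncations for part (b), and Wick's theorem together with Nelson's hypercontractivity for the multilinear object $F_N$ in part (a). I would prove (b) first, since it involves only the first Wiener chaos.

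\textbf{Part (b).} For fixed $x$, $P_N v(x)=\sum_{|n|\le N}\frac{g_n}{\sqrt{1+|n|^2}}e^{inx}$ is a centered complex Gaussian of variance $\sigma_N^2=\sum_{|n|\le N}(1+|n|^2)^{-1}\le \sigma^2:=\sum_{n}(1+|n|^2)^{-1}<\infty$, uniformly in $N$ and $x$. Using the explicit moments $\big(\mathbb{E}|P_Nv(x)|^q\big)^{1/q}\lesssim \sqrt{q}\,\sigma_N$ together with Minkowski's integral inequality (valid for $q\ge p$),
\[
\|P_Nv\|_{L^q(d\rho;\,L^p_x)}\le \|P_Nv\|_{L^p_x(L^q(d\rho))}\lesssim \sqrt{q}\,\sigma .
\]
Chebyshev's inequality followed by optimizing in $q\sim(\lambda/\sigma)^2$ then yields the Gaussian tail \eqref{Hld1}. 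For \eqref{Hld2} the identical computation applies to $P_Mv-P_Nv=\sum_{N<|n|\le M}\frac{g_n}{\sqrt{1+|n|^2}}e^{inx}$, whose pointwise variance is $\sigma_{N,M}^2=\sum_{N<|n|\le M}(1+|n|^2)^{-1}\lesssim N^{-1}$; the optimization now produces $e^{-cN\lambda^2}$, i.e.\ the claim with $\alpha=\tfrac12$. (Alternatively one may invoke Fernique's theorem for the $L^p(\T)$-valued Gaussian vectors, the uniform variance bound supplying the uniformity of $c$.)

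\textbf{Part (a).} Expanding \eqref{F} in the coefficients $\ft{v}_n=g_n/\sqrt{1+|n|^2}$ gives
\[
F_N=-\tfrac{\pi}{2}\,\mathrm{Im}\!\!\sum_{\substack{n_1+n_2=n_3+n_4\\|n_j|\le N}}\frac{(-i\,n_4)\,g_{n_1}g_{n_2}\cj{g_{n_3}}\,\cj{g_{n_4}}}{\prod_{j}\sqrt{1+|n_j|^2}},
\]
a polynomial of degree $4$ in $(g_n,\cj{g_n})$, hence an element of the Wiener chaos of order $\le 4$; the odd symmetry of the weight $n_4$ forces $\mathbb{E}_\rho[F_N]=0$. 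The difference $F_M-F_N$ is the same sum restricted to tuples with $\max_j|n_j|>N$. I would compute $\|F_M-F_N\|_{L^2(d\rho)}^2$ by Wick's theorem: the fully Wick-ordered (fourth-chaos) contribution is comparable to
\[
\sum_{\substack{n_1+n_2=n_3+n_4\\\max_j|n_j|>N}}\frac{n_4^2}{\prod_{j}(1+|n_j|^2)},
\]
while the partial contractions are genuinely lower order. Bounding $n_4^2/(1+|n_4|^2)\le 1$ and noting that $\max_j|n_j|>N$ together with $n_4=n_1+n_2-n_3$ forces at least one free index to exceed $N/3$, each such sum is $\lesssim N^{-1}$ by the summability of $\sum_n(1+|n|^2)^{-1}$. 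Thus $\|F_M-F_N\|_{L^2(d\rho)}\lesssim N^{-\gamma}$ for a positive $\gamma$, so $\{F_N\}$ is Cauchy in $L^2(d\rho)$ and converges to the function we call $F(v)$.

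Finally, since $F_M-F_N$ lies in the chaos of order $\le 4$, Nelson's hypercontractivity gives $\|F_M-F_N\|_{L^q(d\rho)}\le (q-1)^2\|F_M-F_N\|_{L^2(d\rho)}$ for $q\ge 2$. Combining this with the $L^2$ bound and Chebyshev,
\[
\rho\big(|F_M-F_N|>\lambda\big)\le\Big(\tfrac{(q-1)^2\,C\,N^{-\gamma}}{\lambda}\Big)^{q},
\]
and optimizing over $q\sim c\,(\lambda N^{\gamma})^{1/2}$ produces the subexponential tail $e^{-\delta N^{\alpha}\lambda^{1/2}}$, the exponent $\alpha=\gamma/2$ being governed by the rate $\gamma$ of the second-moment decay; the crude bound above already gives a positive $\alpha$, and a sharper accounting of the resonant sum (exploiting the constraint $n_1+n_2=n_3+n_4$ and the cancellations in the imaginary part) is what is needed to reach the asserted range $\alpha<\tfrac34$. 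The hypercontractivity and optimization steps are routine, so \emph{the main obstacle is the Wick-contraction computation of $\|F_M-F_N\|_{L^2}$}: one must verify that every pairing—the Wick-ordered term and each partially contracted second-chaos piece—yields a true negative power of $N$, i.e.\ that the derivative weight $n_4$ is in every case absorbed by the resonance constraint and by the convergence of $\sum_n(1+|n|^2)^{-1}$.
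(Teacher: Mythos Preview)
Your approach to both parts is sound, and for part (a) it coincides with what the paper does: the paper simply cites Thomann--Tzvetkov \cite{TTzv}, Proposition 3.1, whose proof is exactly the Wick/hypercontractivity scheme you outline (their hypercontractivity estimate is the paper's Proposition \ref{chaos}). Your honest flag is accurate, though: the crude bound you give on the fourth-chaos sum, namely dropping $n_4^2/(1+|n_4|^2)\le 1$ and summing freely, yields only $\|F_M-F_N\|_{L^2}^2\lesssim N^{-1}$, hence $\alpha<\tfrac14$ after the optimization, not the asserted $\alpha<\tfrac34$. Reaching $\tfrac34$ really does require the finer resonance analysis in \cite{TTzv} (exploiting the convolution constraint $n_1+n_2=n_3+n_4$ to see that the sum behaves like $N^{-3+}$), so as written your part (a) establishes convergence in $L^2(d\rho)$ and a subexponential tail with some positive $\alpha$, but not the full range.

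For part (b) you take a genuinely different and more elementary route than the paper. The paper does not work pointwise in $x$; instead it uses the embedding $\FF L^{\frac{2}{3}-,3}(\T)\hookrightarrow L^p(\T)$ (via Hausdorff--Young/H\"older) together with the Fernique tail estimate \eqref{Hld} already proved for the $\FF L^{s,r}$ norm, obtaining $\alpha=\tfrac{1}{p}-$. Your direct argument---pointwise Gaussianity, Minkowski to swap $L^q_\omega$ and $L^p_x$, then Chebyshev with optimized $q$---is cleaner and in fact gives the better exponent $\alpha=\tfrac12$ uniformly in $p$, since you never lose regularity through an embedding. Either approach is fine for the downstream applications in the paper, which only need some positive $\alpha$.
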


\begin{proof}  Part (a) was proved by Thomann and Tzvetkov in \cite{TTzv} Proposition 3.1. 
using Proposition \ref{chaos} below. Note that their proof only uses the fact that $v$ is in the support of the measure and is independent of the function space $v$ is in.
\smallskip

\noi To prove part (b) we first note that for any $ 2 \leq p < \infty$, and $N \leq M$, 
\begin{eqnarray}\label{lpbound1}
\| P_N v \|_{L^p(\T)} &\leq C \| P_N v \|_{{\FF L}^{\frac{2}{3}- , 3}(\T)} \\
\| P_N v - P_M v \|_{L^p(\T)} &\leq C    \frac{1}{N^{\alpha}}   \| P_M v \|_{{\FF L}^{\frac{2}{3}- , 3}(\T)}, \label{lpbound2}
\end{eqnarray}
where $ \alpha = \dfrac{1}{p}-\, $.  Then use \eqref{lpbound1} and \eqref{lpbound2} in conjunction with \eqref{Hld} to conclude the proof.

\end{proof}

\begin{lemma}\label{hard1} $K_N(v)$ is Cauchy in measure; i.e. for every $\gamma>0$ and $N \leq M$ 
$$ \lim_{N, M \to \infty} \, \rho( | K_M(v) - K_N(v) | >  2 \gamma ) = 0,$$
and hence  $K_N$ converges in measure  to  
\[K(v) =\frac{1}{ \pi} 
\biggl(\int_{\T} |v|^2 \, dx \biggr) \biggl({\rm Im} \int_{\T} v  \cj v_x\biggr)\,+ \, \frac{1}{4\pi^2}\biggl(\int_{\T} |v|^2 \, dx \biggr)^3.\]

\end{lemma}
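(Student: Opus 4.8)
The plan is to reduce the whole statement to the behavior of two explicit random series in the Gaussian Fourier coefficients, since on the probability space $(\,\cdot\,,\rho)$ almost sure convergence implies convergence in measure, and convergence in measure implies the Cauchy property. Recall from \eqref{H induced}--\eqref{FW1} that under $\rho$ we may take $\widehat{v}_n = g_n/\sqrt{1+|n|^2}$ with $\{g_n\}$ i.i.d.\ standard complex Gaussians. A direct Plancherel computation then gives
\begin{equation*}
\int_\T |v^N|^2\,dx = 2\pi \sum_{|n|\le N}\frac{|g_n|^2}{1+|n|^2},
\qquad
{\rm Im}\int_\T v^N\,\cj{v^N_x}\,dx = -2\pi\sum_{|n|\le N}\frac{n\,|g_n|^2}{1+|n|^2}.
\end{equation*}
Thus $K_N(v)$ is a fixed polynomial (a product plus a cube) in these two real quantities, the cube involving only the first. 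Since $(A,B)\mapsto AB$ and $A\mapsto A^3$ are continuous, it suffices to prove that each of the two series above converges $\rho$-a.s.\ as $N\to\infty$; then $K_N$ converges $\rho$-a.s.\ to $K(v)$ (obtained by replacing $\sum_{|n|\le N}$ with $\sum_n$), hence in measure, and in particular is Cauchy in measure.

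First I would dispatch the mass term. Because $\mathbb E_\rho|g_n|^2$ is a fixed constant and $\sum_n (1+|n|^2)^{-1}<\infty$, the nonnegative series $\sum_{|n|\le N}\frac{|g_n|^2}{1+|n|^2}$ has finite expected total ($\mathbb E_\rho\sum_n = \sum_n\mathbb E_\rho < \infty$ by Tonelli), so its sum is finite $\rho$-a.s.\ and the partial sums converge $\rho$-a.s.\ (and in $L^1(d\rho)$). This shows $\int_\T|v^N|^2\,dx$ converges almost surely; this is the easy term.

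The main obstacle is the momentum series $\sum_{|n|\le N}\frac{n|g_n|^2}{1+|n|^2}$, which is \emph{not} absolutely convergent, its general term being of size $\sim |g_n|^2/|n|$. The decisive observation is that the multiplier $n/(1+|n|^2)$ is odd while $P_N$ truncates symmetrically to $|n|\le N$, so within each truncation one pairs the modes $\pm n$:
\begin{equation*}
\sum_{|n|\le N}\frac{n|g_n|^2}{1+|n|^2}
= \sum_{0<n\le N}\frac{n}{1+|n|^2}\bigl(|g_n|^2-|g_{-n}|^2\bigr).
\end{equation*}
The summands are now independent and mean-zero, with $\sum_{n\ge1}\frac{n^2}{(1+|n|^2)^2}\,\mathbb E_\rho\bigl||g_n|^2-|g_{-n}|^2\bigr|^2<\infty$; hence by Kolmogorov's three-series theorem (equivalently, by $L^2(d\rho)$-convergence of the partial sums) the series converges $\rho$-a.s.\ and in $L^2(d\rho)$. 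Consequently ${\rm Im}\int_\T v^N\,\cj{v^N_x}\,dx$ converges almost surely, and combined with the mass term this yields the $\rho$-a.s.\ convergence of $K_N$ to $K$, proving the lemma. I expect the only genuinely delicate point to be this conditional convergence: the required cancellation is supplied exactly by the even symmetry of the cutoff $P_N$, and it would fail for an asymmetric frequency truncation.
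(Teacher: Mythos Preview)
Your argument is correct. Writing both factors in Fourier variables, the mass converges almost surely by monotone convergence, and the crucial observation for the momentum term is exactly the one you make: the symmetric truncation $|n|\le N$ lets you pair $\pm n$ to get independent, centered summands with summable variance, so Kolmogorov (or simply $L^2$ convergence of an orthogonal series) gives almost sure convergence. Continuity of $(A,B)\mapsto \frac{1}{\pi}AB+\frac{1}{4\pi^2}A^3$ then finishes the job, and on a probability space a.s.\ convergence implies convergence in measure, hence the Cauchy property.

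This is a genuinely different---and more elementary---route than the paper's. The paper does not pass through almost sure convergence at all; instead it proves a separate moment estimate (Lemma~\ref{hard2}) for $X_N(v)={\rm Im}\int v^N\cj{v^N_x}$, namely $\|X_M-X_N\|_{L^q(d\rho)}\lesssim (q-1)N^{-1/2}$ via the hypercontractivity-type bound of Proposition~\ref{chaos}, and then converts this into exponential tail bounds by Chebyshev. The paper's decomposition of $K_M-K_N$ into pieces $I_1$, $I_2$ and a mass-cube term is then handled with these tail bounds together with the $\mathcal F L^{s,r}$ tail estimate \eqref{Hld}. Your approach avoids Lemma~\ref{hard2} and Proposition~\ref{chaos} entirely. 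What the paper's approach buys, however, is a \emph{quantitative} rate $\rho(|X_M-X_N|>\gamma)\lesssim e^{-cN^{1/2}\gamma}$ (the estimate \eqref{important}), and this rate is not incidental: it is reused in the proof of Proposition~\ref{PROP:weak conv}(a) to show integrability of the weight $R(v)$, where one needs to control $\rho(C_{\gamma,N})$ uniformly in $N$ with decay beating any polynomial in $\gamma$. So while your argument is cleaner for the lemma as stated, the paper's more laborious route produces estimates that are recycled downstream.
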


Before the proof we need the following Proposition \ref{chaos} (see Thomann and Tzevtkov
 \cite{TTzv} for a proof) and Lemma \ref{hard2} which we prove below. 

\begin{proposition}\label{chaos} Let $d \ge 1$ and $c(n_1, \dots, n_k) \in \C$. Let $\{g_n\}_{1\leq n\leq d} \in \mathcal N_{\C}(0,1)$ be complex $L^2$ normalized independent Gaussians.  For $k\ge 1$ denote by $A(k, d):= \{ (n_1, \dots, n_k) \in \{1, \dots, d\}^k, \, n_1 \leq \dots \leq n_k \}$ and 
\begin{equation}\label{form} 
S_k(\omega) = \sum_{A(k, d)}  c(n_1, \dots, n_k) g_{n_1}(\omega) \dots g_{n_k}(\omega). 
\end{equation}
Then for all $d \ge 1$ and $p \ge 2$ 
$$ \Vert S_k \Vert_{L^p(\Omega)} \leq \sqrt{k+1} (p -1)^{\frac{k}{2}}  \Vert S_k \Vert_{L^2(\Omega)}. $$
\end{proposition}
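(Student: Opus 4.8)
The plan is to deduce this as a standard Wiener chaos estimate via the hypercontractivity of the Ornstein--Uhlenbeck (Hermite) semigroup, in the spirit of Nelson's theorem. First I would set up the chaos framework. Writing each complex Gaussian as $g_n = (X_n + i Y_n)/\sqrt{2}$ with $\{X_n, Y_n\}_{1\le n\le d}$ a family of independent real standard Gaussians, the variable $S_k$ is a polynomial of degree $k$ in these real Gaussians. Hence $S_k$ admits a finite Wiener--It\^o chaos decomposition $S_k = \sum_{j=0}^{k} P_j$, where $P_j$ is the orthogonal projection onto the $j$-th homogeneous chaos $\mathcal{H}_j$ (the $L^2$-closed span of Hermite polynomials of total degree $j$ in the $\{X_n,Y_n\}$). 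Because a product of $k$ Gaussians, after Wick reordering, produces only chaoses whose degree has the same parity as $k$, at most $\lfloor k/2\rfloor + 1 \le k+1$ of the $P_j$ are nonzero; for the stated constant I only need the crude count that there are at most $k+1$ terms.

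Next I would invoke the hypercontractivity input: the Ornstein--Uhlenbeck semigroup $\{T_t\}_{t\ge 0}$ on the underlying Gaussian probability space is a contraction from $L^q$ to $L^p$ whenever $e^{2t} \ge (p-1)/(q-1)$ (Nelson). The key structural fact is that each homogeneous chaos is an eigenspace, $T_t|_{\mathcal{H}_j} = e^{-jt}\,\mathrm{Id}$. Taking $q=2$ and choosing $t$ with $e^{2t} = p-1$ (permissible since $p\ge 2$), I obtain for each component
\[
e^{-jt}\,\Vert P_j \Vert_{L^p(\Omega)} = \Vert T_t P_j \Vert_{L^p(\Omega)} \le \Vert P_j \Vert_{L^2(\Omega)},
\]
that is, $\Vert P_j \Vert_{L^p(\Omega)} \le (p-1)^{j/2}\,\Vert P_j \Vert_{L^2(\Omega)}$ for every $0 \le j \le k$.

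Finally I would assemble the pieces. Since $p\ge 2$ forces $(p-1)^{j/2}\le (p-1)^{k/2}$ for $0\le j\le k$, the triangle inequality followed by Cauchy--Schwarz over the at most $k+1$ nonvanishing terms gives
\[
\Vert S_k \Vert_{L^p(\Omega)} \le \sum_{j=0}^{k} \Vert P_j \Vert_{L^p(\Omega)} \le (p-1)^{\frac{k}{2}} \sum_{j=0}^{k} \Vert P_j \Vert_{L^2(\Omega)} \le (p-1)^{\frac{k}{2}} \sqrt{k+1}\, \Big( \sum_{j=0}^{k} \Vert P_j \Vert_{L^2(\Omega)}^2 \Big)^{\frac12},
\]
and orthogonality of the chaos decomposition yields $\sum_{j} \Vert P_j \Vert_{L^2(\Omega)}^2 = \Vert S_k \Vert_{L^2(\Omega)}^2$, which is exactly the asserted bound $\Vert S_k \Vert_{L^p(\Omega)} \le \sqrt{k+1}\,(p-1)^{k/2}\,\Vert S_k \Vert_{L^2(\Omega)}$.

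The step I would be most careful about — and the only genuine subtlety — is justifying the chaos decomposition in the complex-Gaussian setting and confirming that $T_t$ acts diagonally on it, i.e. verifying that the monomials $g_{n_1}\cdots g_{n_k}$ (with repetitions allowed in $A(k,d)$) indeed lie in $\bigoplus_{j\le k}\mathcal{H}_j$ with the correct eigenvalue structure. The hypercontractivity estimate itself enters as a black-box citation to Nelson's theorem, so no new quantitative work is needed there; the entire content of the proof is the eigenspace identity plus the elementary summation above.
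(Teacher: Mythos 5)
Your proof is correct, and it coincides with the standard argument: the paper itself gives no proof but defers to Thomann--Tzvetkov \cite{TTzv}, whose Proposition 3.1 is established by exactly this route --- reduction to real Gaussians, finite Wiener chaos decomposition, Nelson hypercontractivity of the Ornstein--Uhlenbeck semigroup with $e^{2t}=p-1$ acting by $e^{-jt}$ on $\mathcal{H}_j$, and the triangle inequality plus Cauchy--Schwarz over the at most $k+1$ components. (As a minor aside, since $\mathbb{E}[g_n g_m]=0$ for standard complex Gaussians, the holomorphic monomials $g_{n_1}\cdots g_{n_k}$ actually lie entirely in the degree-$k$ chaos, so the factor $\sqrt{k+1}$ could even be dropped; your crude count is of course more than sufficient for the stated bound.)
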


Let $X_N(v) =    \int_{\T} v^N  {\cj v_x^N}. $

\begin{lemma}\label{hard2} For any  $N\leq M$ and $\eps>0$ we have
\begin{eqnarray}
|X_N(v)|&\lesssim& N^{2\eps}\|v^N\|^2_{\FF L^{\frac{2}{3}-\eps,3}}\label{xn}\\
\|X_M(v)-X_N(v)\|_{L^4}&\lesssim&\frac{1}{N^{\frac{1}{2}}}.\label{l4xn} \\
\mbox{Moreover, }\qquad \|X_M(v)-X_N(v)\|_{L^q}&\lesssim& c (q-1) \frac{1}{N^{\frac{1}{2}}} \qquad \mbox{ for any } q \geq 2. \label{lqxn}
\end{eqnarray}
\end{lemma}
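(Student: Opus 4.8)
The plan is to work throughout in the Gaussian representation \eqref{FW1}, writing $\ft{v}_n = g_n/\sqrt{1+|n|^2}$ with $\{g_n\}$ i.i.d.\ standard complex Gaussians. Since $\ft{v^N_x}(n) = in\,\ft{v^N}(n)$, Parseval gives the exact formula
\begin{equation*}
X_N(v) = \int_\T v^N\,\cj{v^N_x}\,dx = -2\pi i \sum_{|n|\le N} \frac{n}{1+n^2}\,|g_n|^2 .
\end{equation*}
The crucial structural feature is that the multiplier $n/(1+n^2)$ is \emph{odd} in $n$; in particular $\mathbb{E}X_N = 0$, and the same holds for every difference $X_M - X_N$. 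This oddness is what will make the variance of the increments small.

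For \eqref{xn}, which is a deterministic (pointwise in $\omega$) estimate, I would simply run H\"older's inequality on $|X_N(v)| \le 2\pi \sum_{|n|\le N} |n|\,|\ft{v^N}(n)|^2$ with the dual pair $(3,\tfrac32)$. Splitting the summand as
\begin{equation*}
|n|\,|\ft{v^N}(n)|^2 = \Big( |n|\,\jb{n}^{-\frac23(2-3\eps)} \Big)\Big( \jb{n}^{\frac23(2-3\eps)} |\ft{v^N}(n)|^2 \Big),
\end{equation*}
the second factor has $\ell^{3/2}_n$ norm exactly $\|v^N\|_{\FF L^{\frac23-\eps,3}}^2$, while the first factor behaves like $|n|^{(-1+6\eps)/3}$, so its $\ell^3(|n|\le N)$ norm is $\lesssim N^{2\eps}$; multiplying the two bounds yields \eqref{xn}.

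For the probabilistic estimates \eqref{l4xn}--\eqref{lqxn} I would start from $X_M(v) - X_N(v) = -2\pi i \sum_{N<|n|\le M}\frac{n}{1+n^2}|g_n|^2$ and first compute its $L^2(d\rho)$ norm. Since this variable has mean zero by oddness and the $g_n$ are independent across $n$, the variance is
\begin{equation*}
\|X_M - X_N\|_{L^2}^2 = (2\pi)^2 \sum_{N<|n|\le M} \frac{n^2}{(1+n^2)^2}\,\mathrm{Var}(|g_n|^2) \lesssim \sum_{N<|n|\le M}\frac{1}{n^2} \lesssim \frac1N,
\end{equation*}
using $\mathrm{Var}(|g_n|^2)=1$. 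To pass to general $L^q$ I would observe that, after subtracting the constant $\sum \frac{n}{1+n^2}$ (which vanishes by oddness), the quantity is a linear combination of the variables $|g_n|^2 - 1$ and hence lies in the homogeneous Wiener chaos of order $2$. Proposition \ref{chaos} with $k=2$ then gives $\|X_M - X_N\|_{L^q}\lesssim (q-1)\,\|X_M - X_N\|_{L^2}\lesssim (q-1)N^{-1/2}$, which is \eqref{lqxn}; taking $q=4$ gives \eqref{l4xn}.

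The only genuinely delicate point is the hypercontractive upgrade. Proposition \ref{chaos} is stated for monomials $g_{n_1}\cdots g_{n_k}$ without conjugates, whereas here the relevant monomials are $|g_n|^2 = g_n\cj{g_n}$. I would handle this by passing to the real Gaussians $\mathrm{Re}\,g_n$ and $\mathrm{Im}\,g_n$, for which $|g_n|^2-1$ is a genuine degree-two Hermite polynomial, so the order-$2$ hypercontractivity constant $(q-1)$ applies verbatim. Everything else reduces to the single variance computation above together with the H\"older estimate.
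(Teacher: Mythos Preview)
Your proof of \eqref{xn} is exactly the paper's argument. For \eqref{l4xn}--\eqref{lqxn} your route is correct but genuinely more direct than the paper's. The paper does not compute $\|X_M-X_N\|_{L^2}$ at all; instead it writes $|X_M-X_N|^2=Y^1_{N,M}+Y^2_{N,M}+Y^3_{N,M}$ via the substitution $|g_n|^2=(|g_n|^2-1)+1$, observes $Y^3_{N,M}=0$ by oddness, and then computes $\|Y^1_{N,M}\|_{L^2}$ and $\|Y^2_{N,M}\|_{L^2}$ by hand to obtain $\|X_M-X_N\|_{L^4}\lesssim N^{-1/2}$. It then applies Proposition~\ref{chaos} with $k=4$ to $S_{M,N}:=|X_M-X_N|^2$, getting $\|S_{M,N}\|_{L^p}\lesssim (p-1)^2\|S_{M,N}\|_{L^2}=(p-1)^2\|X_M-X_N\|_{L^4}^2$, and unwinds with $q=2p$. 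Your argument replaces all of this with the single variance computation $\|X_M-X_N\|_{L^2}^2\lesssim N^{-1}$ (cleaner because you exploit the oddness of $n/(1+n^2)$ and independence directly rather than through the $Y^j$ decomposition) followed by order-$2$ hypercontractivity on $X_M-X_N$ itself. This buys you \eqref{lqxn} immediately and \eqref{l4xn} as the special case $q=4$, reversing the paper's order of derivation. The conjugate issue you flag for Proposition~\ref{chaos} is present in the paper's application as well (its $S_{M,N}$ also contains $|g_n|^2$ factors), so your real--imaginary part workaround is needed in either approach and is the standard fix.
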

\begin{proof}
To prove \eqref{xn} we use Plancherel  and H\"older inequality to  obtain
\begin{eqnarray*}|X_N(v)|&\leq& \sum_{|n|\leq N}|n|\, |\widehat{v^N}(n)|^2\\
&\leq&  \left(\sum_{|n|\leq N}|n|^{-1+6\eps}\right)^{\frac{1}{3}} \left(\sum_{|n|\leq N}(|n|^{\frac{2}{3}-\eps} |\widehat{v^N}(n)|)^3\right)^{\frac{2}{3}}\leq N^{2\eps}\|v^N\|^2_{\FF L^{\frac{2}{3}-\eps,3}}.
\end{eqnarray*}

To prove \eqref{l4xn} we start by recalling that $v^N(\omega,x):=\sum_{|n|\leq N}\frac{g_n(\omega)}{\langle n\rangle}e^{inx}$.  Then by Plancherel
$$
X_N(v)=-i\sum_{|n|\leq N}n\frac{|g_n(\omega)|^2}{\langle n\rangle^2} \quad \mbox{ and } \quad X_M(v)-X_N(v)=-i\sum_{N\leq |n| < M}n\frac{|g_n(\omega)|^2}{\langle n\rangle^2}, 
$$
and 
\begin{equation}\label{gg}
|X_M(v)-X_N(v)|^2=\sum_{N\leq |n_1|, |n_2| < M}n_1n_2\frac{|g_{n_1}(\omega)|^2|g_{n_2}(\omega)|^2}{\langle n_1\rangle^2\langle n_2\rangle^2}=:Y^1_{N,M}+Y^2_{N,M}+Y^3_{N,M},
\end{equation}
where 
\begin{eqnarray}
Y^1_{N,M}&:=&\sum_{N\leq |n_2|, |n_1| < M}n_1n_2\frac{(|g_{n_1}(\omega)|^2-1)(|g_{n_2}(\omega)|^2-1)}{\langle n_1\rangle^2\langle n_2\rangle^2}\label{y1}\\
Y^2_{N,M}&:=&\sum_{N\leq |n_2|, |n_1| < M}n_1n_2\frac{(|g_{n_1}(\omega)|^2-1)+(|g_{n_2}(\omega)|^2-1)}{\langle n_1\rangle^2\langle n_2\rangle^2}\label{y2}\\
Y^3_{N,M}&:=&\sum_{N\leq |n_2|, |n_1| <  M}\frac{n_1n_2}{\langle n_1\rangle^2\langle n_2\rangle^2}\notag.
\end{eqnarray}
By symmetry $Y^3_{N,M}=0$. We now observe that 
\begin{equation}\label{l4xn1}
\|X_M(v)-X_N(v)\|_{L^4}^4\lesssim \|Y^1_ {N,M}\|_{L^2}^2+\|Y^2_{N,M}\|_{L^2}^2.
\end{equation} 
We now proceed as in \cite{TTzv}, denote by $G_n(\omega):=|g_{n}(\omega)|^2-1$ and note that by the independence of $g_n(\omega)$ (c.f. \eqref{H induced}), 
\begin{equation}\label{expectation}
\mathbb E[G_n(\omega)G_m(\omega)]=0 \quad \mbox{ for $n\neq m$}.
\end{equation}
Since 
$$|Y^1_{N,M}|^2=\sum_{N\leq |n_1|, |n_2|, |n_3|, |n_4| < M}n_1n_2 n_3n_4\frac{G_{n_1}G_{n_2}G_{n_3}G_{n_4}}{\langle n_1\rangle^2\langle n_2\rangle^2\langle n_3\rangle^2\langle n_4\rangle^2}.$$
We compute $\mathbb E[|Y^1_{N,M}|^2]$ and by \eqref{expectation} the only contributions come from ($n_1=n_3$ and $n_2=n_4$), ($n_1=n_2$ and $n_3=n_4$) or ($n_2=n_3$ and $n_1=n_4$) . Hence by symmetry and using that the fourth moment of the Gaussian $g_n(\omega)$ are bounded we have 
\begin{equation}\label{y12}
\|Y^1_{N,M}\|_{L^2}^2=E[|Y^1_{N,M}|^2] \leq C\sum_{N\leq |n_1|, |n_2| < M}\frac{n^2_1n_2^2}{\langle n_1\rangle^4\langle n_2\rangle^4}\lesssim  \frac{1}{N^2}.
\end{equation}
On the other hand, since 
$$|Y^2_{N,M}|^2=\sum_{N\leq |n_1|, |n_2|, |n_3|, |n_4| < M}n_1n_2 n_3n_4\frac{(G_{n_1}+G_{n_2})(G_{n_3}+G_{n_4})}{\langle n_1\rangle^2\langle n_2\rangle^2\langle n_3\rangle^2\langle n_4\rangle^2},$$
by symmetry it is enough to consider a single term of the form
$$
\sum_{N\leq |n_1|, |n_2|, |n_3|, |n_4| < M}n_1n_2 n_3n_4\frac{G_{n_j}G_{n_k}}{\langle n_1\rangle^2\langle n_2\rangle^2\langle n_3\rangle^2\langle n_4\rangle^2},$$
with $1\leq j\neq k\leq 4$, which we set without any loss of generality to be $j=1, \, k=3$. We then have 
$$\|Y^2_{N,M}\|_{L^2}^2=E[|Y^2_{N,M}|^2]\leq C\sum_{N\leq |n_1|, |n_2|, |n_4|\leq M}\frac{n^2_1n_2n_4}{\langle n_1\rangle^4\langle n_2\rangle^2\langle n_4\rangle^2}=0$$
by symmetry. From \eqref{l4xn1} and \eqref{y12} we obtain \eqref{l4xn} as desired.

To prove  \eqref{lqxn} we use \eqref{gg} to define 
\begin{equation}\label{snm}
S_{M,N}(v) := | X_M(v) - X_N(v) |^2 =\sum_{N\leq |n_1|, |n_2| < M}n_1n_2\frac{|g_{n_1}(\omega)|^2|g_{n_2}(\omega)|^2}{\langle n_1\rangle^2\langle n_2\rangle^2}
\end{equation}
which fits the framework of \eqref{form} in Proposition \ref{chaos} with $k=4$. Then it follows that for  any $p\geq 2$ 
\begin{equation}\label{pchaos}
\|S_{M,N}(v)\|_{L^p}\lesssim (p-1)^2\|S_{M,N}(v)\|_{L^2}=(p-1)^2\|X_M(v) - X_N(v\|_{L^4}^2\lesssim (p-1)^2\frac{1}{N}.
\end{equation}
 On the other if we set $q=2p$, then by \eqref{pchaos} we have that  
$$\| X_M(v) - X_N(v) \|_{L^q}=\|S_{M,N}(v)\|_{L^p}^\frac{1}{2}\lesssim (q-1)\frac{1}{N^\frac{1}{2}}, $$ hence \eqref{lqxn} for $q \ge 4$.  Finally, H\"older's inequality gives the \eqref{lqxn} 
for $2\le q \le 4$.

\end{proof}

\medskip

\begin{proof} [Proof of Lemma \ref{hard1}]
Let us denote $M_N(v):=\, \int_\T |v_N|^2\,dx$. Up to absolute constants we write
\begin{eqnarray} \label{zterms}
 \rho( | K_M(v) - K_N(v) | >  2 \gamma )  &\leq&  \rho( | X_M(v) M_M(v) - X_N(v) M_N(v) | >   \gamma ) \\
 &+&  \rho( | M_M(v)^3 - M_N(v)^3 | >   \gamma ). \notag
 \end{eqnarray}
   
Then 
\begin{eqnarray*} 
\rho( | X_M(v) M_M(v) - X_N(v) M_N(v) | >   \gamma ) & \leq & \rho(  | X_M(v)- X_N(v)| \,  M_M(v)   > \frac{\gamma}{2} ) \\
&+&  \rho(  | M_M(v)- M_N(v)| \, |X_N(v)| > \frac{\gamma}{2} ) = I_1 + I_2. 
\end{eqnarray*}
Let $\lambda >0$ to be determined. Then by \eqref{xn},  \eqref{Hld} and \eqref{lpbound2} with $p=2$, $\al= \frac{1}{2}-$,  we have that
\begin{eqnarray*} 
I_2 &\leq& \rho( |X_N(v)| > \lambda ) + \rho ( | M_M(v)- M_N(v)| > \frac{\gamma}{2} \lambda^{-1} ) \\
&\leq& e^{-c \lambda N^{-2 \eps}} \, + \, \rho( \| v^N- v^M \|_{L^2} > \frac{\gamma}{4B} \lambda^{-1} ) \\
&\leq &  e^{-c \lambda N^{-2 \eps}} \, +  e^{- c_{\gamma, B} N^{1-} \lambda^{-2}}. 
\end{eqnarray*}
By setting $\lambda= N^{\frac{1}{3}+ \frac{2\eps}{3}-}$ we have that $I_2 \lesssim e^{- c_{\gamma, B} N^{\frac{1}{3} - \frac{4\eps}{3}-} }$. 

To estimate $I_1$ we first note that 
\begin{equation} \label{massbound} M_M(v) \leq \|v\|^2_{L^2} \leq B^2.
\end{equation}   Then by \eqref{lqxn} and Tchebishev's inequality\footnote{C.f. Lemma 4.5 in \cite{T}.} we have that 

\begin{equation}\label{important}
I_1 \leq \rho(|X_M(v) - X_N(v) | > \frac{\gamma}{2B^2} ) \lesssim e^{-C_BN^\frac{1}{2}\gamma}.
\end{equation}
To estimate the second term of \eqref{zterms}, we  use \eqref{massbound} to obtain
$$\rho( | M_M(v)^3 - M_N(v)^3 | >  \gamma)\leq  \rho( | M_M(v) - M_N(v) | >   c_B\, \gamma) \leq e^{-C_{B} \gamma^2 N^{1-}} $$ 
by arguing as in the estimate for $I_2$ above.

\end{proof}

\begin{lemma} \label{LEM:conv in meas}  $R_N (v)$ 
converges in measure to $R(v)$. 
\end{lemma}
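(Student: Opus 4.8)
The plan is to prove convergence in measure of each factor in $R_N(v)=\chi_{\{\|v^N\|_{L^2}\le B\}}\,e^{-\frac12\mathcal N_N(v)}$ separately and then combine them, using throughout that on the probability space $(\,\cdot\,,\rho)$ convergence in measure is stable under continuous operations and under products. Recall from \eqref{nonlinear decomp} that $\mathcal N_N(v)=F_N(v)+G_N(v)+K_N(v)$.

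First I would show $\mathcal N_N(v)\to\mathcal N(v)$ in measure by treating the three summands. For $F_N$ this is immediate: Lemma \ref{LEM:Hbound}(a) gives convergence in $L^2(d\rho)$, hence in measure. For $K_N$ it is exactly the content of Lemma \ref{hard1}. For $G_N$, defined in \eqref{G}, I would write $G_N(v)=-\frac{1}{4\pi}\,\|v^N\|_{L^2}^2\,\|v^N\|_{L^4}^4$ and argue that each factor converges in measure: $\|v^N\|_{L^2}^2\to\|v\|_{L^2}^2$ since $v^N=P_Nv\to v$ in $L^2$ (monotonically in $N$), while $\|v^N\|_{L^4}^4\to\|v\|_{L^4}^4$ in measure because Lemma \ref{LEM:Hbound}(b) (with $p=4$) shows $\|v^M-v^N\|_{L^4}\to 0$ with exponential tails, so $\{\|v^N\|_{L^4}\}$ is Cauchy in measure with an a.s. finite limit. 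Since products of sequences converging in measure again converge in measure on a finite measure space, $G_N\to G$ in measure. Summing, $\mathcal N_N\to\mathcal N$ in measure, and composing with the continuous map $t\mapsto e^{-t/2}$ yields $e^{-\frac12\mathcal N_N(v)}\to e^{-\frac12\mathcal N(v)}$ in measure.

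Next I would handle the cutoff. Because $v^N=P_Nv$ is a Fourier truncation, $\|v^N\|_{L^2}$ increases monotonically to $\|v\|_{L^2}$, so the sets $\{\|v^N\|_{L^2}\le B\}$ decrease and $\bigcap_N\{\|v^N\|_{L^2}\le B\}=\{\sup_N\|v^N\|_{L^2}\le B\}=\{\|v\|_{L^2}\le B\}$. Hence $\chi_{\{\|v^N\|_{L^2}\le B\}}$ decreases pointwise everywhere to $\chi_{\{\|v\|_{L^2}\le B\}}$, and in particular converges in measure. This monotone identity is what lets me avoid any delicate analysis at the boundary $\{\|v\|_{L^2}=B\}$.

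Finally I would combine the two: $R_N(v)$ is the product of the two sequences above, which converge in measure to $\chi_{\{\|v\|_{L^2}\le B\}}$ and $e^{-\frac12\mathcal N(v)}$ respectively. Invoking once more the stability of convergence in measure under products on $(\,\cdot\,,\rho)$, I conclude $R_N(v)\to R(v)$ in measure, with $R$ as in \eqref{Hweight}. The only genuinely delicate point is the convergence of $G_N$, the single term of $\mathcal N_N$ not already supplied by an earlier lemma; everything there reduces to the $L^p$ tail estimates of Lemma \ref{LEM:Hbound}(b) combined with the elementary closure of convergence in measure under continuous maps and products.
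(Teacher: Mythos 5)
Your proof is correct, and its skeleton coincides with the paper's: the same decomposition $\mathcal N_N=F_N+G_N+K_N$ from \eqref{nonlinear decomp}, the same appeals to Lemma \ref{LEM:Hbound}(a) for $F_N$ and Lemma \ref{hard1} for $K_N$, the same monotonicity observation for the cutoff (the paper phrases it as continuity from above for the decreasing sets $\{\|v^N\|_{L^2}\le B\}$, you phrase it as pointwise decrease of the indicators --- these are the same fact), and the same final appeal to stability of convergence in measure under sums, products and continuous maps. The one place you genuinely diverge is $G_N$, which is also the only place where the paper does any actual estimation in this proof: there, $G_N-G_M$ is expanded, H\"older is applied on $\{\|v\|_{L^2}\le B\}$, and the tail bounds \eqref{Hld1}--\eqref{Hld2} are combined with an auxiliary threshold $\lambda$ (optimized in $N$ and $\gamma$) to produce explicit exponential rates such as $e^{-C_B N^{\frac15-}\gamma^{\frac25}}$ and $e^{-C_B N^{\frac18-}\gamma^{\frac12}}$. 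You instead factor $G_N=-\frac{1}{4\pi}\|v^N\|_{L^2}^2\|v^N\|_{L^4}^4$ and prove convergence in measure of each factor, then multiply; this is softer and gives no rate, but no rate is needed downstream --- Proposition \ref{PROP:weak conv}(c) only uses $\rho(|R_N-R|\ge\eps)\to 0$ --- so your argument fully suffices. Two small points you should make explicit: (i) the a.s.\ convergence $\|v^N\|_{L^2}\to\|v\|_{L^2}$ requires knowing $\|v\|_{L^2}<\infty$ a.s., which holds because $\rho$ is supported in $H^{s'}$ for $s'<\frac12$; (ii) when passing from ``$\{v^N\}$ is Cauchy in measure in $L^4$'' to ``the limit of $\|v^N\|_{L^4}$ is $\|v\|_{L^4}$'', you must identify the limit, e.g.\ a subsequence of $v^N$ converges a.e.\ in $L^4$ and, since $v^N\to v$ in $\mathcal{F}L^{s,r}$ (hence in distributions), that limit is $v$; alternatively, note that a.s.\ $v\in H^{\frac14+}\hookrightarrow L^4$ and $P_Nv\to v$ in $H^{\frac14+}$, so the $L^4$ factor in fact converges a.s.\ and Lemma \ref{LEM:Hbound}(b) is not even needed for this term.
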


\begin{proof}
If $\| P_N v\|_{L^2} \leq B$ for all $N \in \mathbb{N}$, 
then we have $\|v\|_{L^2} \leq B$.
Then, by continuity from above, we have, for $\delta \in (0, 1)$,
\begin{align*}
\lim_{N\to \infty} 
& \rho\Big( \big\{v;  |\chi_{\{ \| v^N\|_{L^2} \leq B\}}-\chi_{\{ \| v\|_{L^2} \leq B\}}| > \delta \big\}\Big)\\
&  = \lim_{N\to \infty} \rho( \| v^N\|_{L^2} \leq B)
-\rho(\| v\|_{L^2} \leq B) \\
& =  \rho\Big( \bigcap_{N= 1}^\infty \big\{ \| v^N\|_{L^2} \leq B \big\} \Big)
-\rho(\| v\|_{L^2} \leq B) 
= 0.
\end{align*}

\noi
Thus, $\chi_{\{ \| v^N \|_{L^2} \leq B\}}$ converges 
to $\chi_{\{ \| v \|_{L^2} \leq B\}}$ in measure.
By Lemma \ref{LEM:Hbound} (a), 
$F_N$ converges to $F$ in measure and by Lemma \ref{hard1} $K_N$ converges to $K$ in measure. 

Lastly, we consider $G_N(v)$ and show it is Cauchy in measure provided $\| v\|_{L^2} \le B$.  Assume $N \leq M$ then, 
 \begin{align} 
4\pi G_N(v) - 4 \pi G_M(v)
&=    \biggl(\int_{\T} |v^M|^2 - |v^N|^2 \, dx \biggr)\biggl(\int_{\T} |v^M|^4 \, dx\biggr) + \biggl(\int_\T |v^N|^2 \, dx \biggr)\biggl(\int_{\T} |v^M|^4 - |v^N|^4 \, dx\biggr) \notag \\
&\leq c_B \, \| v^M - v^N\|_{L^2} \| v^M\|_{L^4}^4 +  \| v^N\|_{L^2}^2 \, \left| \| v^M\|_{L^4}^4 - \| v^N\|_{L^4}^4 \right| \notag\\
&\leq C_B  \biggl[ \| v^M - v^N\|_{L^2} \| v^M\|_{L^4}^4 +   3 ( \| v^M\|_{L^4}^3 + \| v^N\|_{L^4}^3)\,  \| v^M - v^N\|_{L^4} \biggr].  \label{HGweak1}
\end{align} 

Fix any $\gamma> 0$; then 

\begin{eqnarray*} \rho( |4\pi G_M(v) - 4 \pi G_N(v)| > \gamma) &\leq& \rho (  \| v^M - v^N\|_{L^2}^2 \| v^M\|_{L^4}^4  > \frac{\gamma}{2C_B} ) \\
&+& \rho \biggl( ( \| v^M\|_{L^4}^3 + \| v^N\|_{L^4}^3 )   \| v^M - v^N\|_{L^4}   > \frac{\gamma}{6C_B} \biggr). 
\end{eqnarray*}

To treat the first term we write 
\begin{equation*}  \rho (  \| v^M - v^N\|_{L^2} \| v^M\|_{L^4}^4 > \frac{\gamma}{2 C_B} ) \leq   \rho (  \| v^M - v^N\|_{L^2}  >  \lambda^{-1} \frac{\gamma}{2C_B} ) +  \rho (  \| v^M\|_{L^4}^4 >  \lambda ) 
\end{equation*}
for some $\lambda>0$ to be determined. We use \eqref{Hld2} with $\alpha =\frac{1}{2}-$  corresponding to $p=2$  and  \eqref{Hld1} to get that 
$$   \rho (  \| v^M - v^N\|_{L^2}  >  c_{B} \gamma\, \lambda^{-1}  )  \leq e^{- c'_{B} \gamma^2 N^{1-}  \lambda^{-2}} $$ and 
	$$   \rho (  \| v^M\|_{L^4} >  \lambda^{\frac{1}{4}} )  \leq e^{-c  \lambda^{\frac{1}{2}} }.$$ A decay of $e^{-C_B N^{\frac{1}{5}-} \gamma^{\frac{2}{5}}}$  follows by setting $\lambda = N^{\frac{2}{5}- } \gamma^{\frac{4}{5}}$.  

For the second term write 

\begin{eqnarray*}  & & \rho\biggl( \| v^M - v^N\|_{L^4}   ( \| v^M\|_{L^4}^3 + \| v^N\|_{L^4}^3)   > \frac{\gamma}{6 C_B} \biggr) \\
&\leq & \rho ( \| v^M - v^N\|_{L^4} >  c_{B} \gamma  \lambda^{-1}  ) + \rho( \| v^M\|_{L^4} >  c_1 \, \lambda^{\frac{1}{3}} ) + \rho( \| v^N\|_{L^4}   > c_2 \lambda^{\frac{1}{3}} )  \\
&\leq& e^{- c'_{ B} \gamma^2 N^{\frac{1}{2}-}  \lambda^{-2}}  +  2  e^{-c  \lambda^{\frac{2}{3}} },  \end{eqnarray*}
since $\al = \frac{1}{4}-$ when  $p= 4$ in  \eqref{Hld2}. A decay of   $e^{-C_B N^{\frac{1}{8}-} \gamma^{\frac{1}{2}}}$ follows by setting $\lambda= N^{\frac{3}{16}-} \gamma^{\frac{3}{4}}$.

\medskip

Thus, $G_N(v)$ converges to $G(v)$ in measure and  hence, by composition and multiplication of continuous functions, $R_N(v)$ converges to $R(v)$ in measure.
\end{proof}

The following proposition shows that  the weight $R(v)$ is indeed integrable with 
respect to the Wiener measure $\rho$.

\begin{proposition}\label{PROP:weak conv} 

\noi \textup{(a)}  For sufficiently small $B>0$, we have $R(v) \in L^2 (d\rho)$. In particular, the 
weighted Wiener measure $\mu$  is a probability measure,  absolutely continuous with 
respect to the Wiener measure $\rho$. 

\noi \textup{(b)} We have the following tail estimate. Let $2 \leq r <\infty$  and  $(s-1)r < -1$;  then there exists a constant $c$ such that 
\begin{equation} \label{H large deviation}
\mu ( \|v\|_{\mathcal{F} L^{s, r}} > K) \leq e^{-cK^2}
\end{equation}
for  sufficiently large $K > 0$.

\noi \textup{(c)} The finite dimensional weighted  Wiener measure $\mu_N$
in \eqref{HGibbs3} converges weakly to $\mu$.  
\end{proposition}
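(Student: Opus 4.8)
The plan is to reduce all three assertions to a single quantitative input, namely the \emph{uniform second moment bound}
\[
\sup_{N\ge 1}\|R_N\|_{L^2(d\rho)}^2=\sup_{N\ge1}\int\chi_{\{\|v^N\|_{L^2}\le B\}}\,e^{-\mathcal N(v^N)}\,d\rho<\infty ,\qquad B\ \text{small,}
\]
and then to harvest (a), (b), (c) from it together with the lemmas already proved. Granting the bound, part (a) is immediate: by Lemma \ref{LEM:conv in meas} $R_N\to R$ in measure, so a subsequence converges $\rho$-a.s., and Fatou gives $\|R\|_{L^2(d\rho)}\le\liminf_N\|R_N\|_{L^2(d\rho)}<\infty$. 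Hence $R\in L^2(d\rho)\subset L^1(d\rho)$, $Z:=\int R\,d\rho$ is finite, and $Z>0$ because $R=\chi e^{-\frac12\mathcal N}>0$ on $\{\|v\|_{L^2}\le B\}$, a set of positive $\rho$-measure. Thus $d\mu=Z^{-1}R\,d\rho$ is a probability measure absolutely continuous with respect to $\rho$.

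The hard part will be exactly this uniform bound. Writing $\mathcal N=F+G+K$ as in \eqref{nonlinear decomp}, the polynomial pieces $G$ and $K$ are, on the cutoff set, dominated by a small (in $B$) multiple of $\|v\|_{L^4}^4$, of $\|v\|_{L^2}^2|X(v)|$, and of $\|v\|_{L^2}^6$, each of which carries the favourable Gaussian/chaos integrability recorded in Lemmas \ref{LEM:Hbound}(b), \ref{hard2} and Proposition \ref{chaos}. The genuinely delicate term is the derivative interaction $F=-\tfrac12\,\mathrm{Im}\int v^2\cj v\,\cj v_x\,dx$, a degree-four Wiener chaos (Lemma \ref{LEM:Hbound}(a)) whose exponential is \emph{not} $\rho$-integrable on its own. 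I would therefore argue at the truncated level, where $\|v^N_x\|_{L^2}<\infty$: up to the fixed Gaussian normalization one has $R_N\,d\rho_N\propto \chi_{\{\|v^N\|_{L^2}\le B\}}\,e^{-\frac12\EE(v^N)}\prod_{|n|\le N}da_n\,db_n$, so that the gauged energy $\EE$ reappears and, by the Gagliardo--Nirenberg inequality, is bounded from below on $\{\|v^N\|_{L^2}\le B\}$ precisely when $B$ lies below the mass threshold. The subtlety — and the main obstacle — is that the deterministic lower bound on $\EE$ is by itself far too lossy to bound the partition function (discarding it would force the Gaussian normalizations to blow up); it must be combined with the hypercontractivity and tail estimates for the renormalized pieces to produce a bound uniform in $N$, following the scheme of Lebowitz--Rose--Speer and Thomann--Tzvetkov. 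Here the smallness of $B$ enters through the Gagliardo--Nirenberg constant and is essential. I also note that $\int R_N\,d\rho_N=\int R_N\,d\rho$, since $R_N$ depends only on the modes $|n|\le N$, on which $\rho_N$ and $\rho$ have the same law.

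Parts (b) and (c) then follow cleanly. For (b), since $R\in L^2(d\rho)$, Cauchy--Schwarz gives
\[
\mu(\|v\|_{\FF L^{s,r}}>K)=Z^{-1}\!\!\int_{\{\|v\|_{\FF L^{s,r}}>K\}}\!\!R\,d\rho\ \le\ Z^{-1}\|R\|_{L^2(d\rho)}\,\rho\big(\|v\|_{\FF L^{s,r}}>K\big)^{1/2},
\]
and the Fernique-type tail $\rho(\|v\|_{\FF L^{s,r}}>K)\le Ce^{-cK^2}$ from Proposition \ref{PROP:meas} yields $\mu(\|v\|_{\FF L^{s,r}}>K)\le e^{-c'K^2}$ for $K$ large. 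For (c), the uniform second moment bound makes $\{R_N\}$ uniformly integrable, so with $R_N\to R$ in measure (Lemma \ref{LEM:conv in meas}) we get $R_N\to R$ in $L^1(d\rho)$, whence $Z_N=\int R_N\,d\rho_N=\int R_N\,d\rho\to Z$. For bounded continuous $f$ on $\FF L^{s,r}$,
\[
\int f\,d\mu_N=Z_N^{-1}\int f(P_N v)\,R(P_N v)\,d\rho(v),
\]
because $f(P_N\cdot)$ and $R_N=R(P_N\cdot)$ depend only on the low modes. Since $P_N v\to v$ in $\FF L^{s,r}$ for $\rho$-a.e.\ $v$ (Proposition \ref{PROP:meas}) and $f$ is continuous and bounded, $f(P_N v)\to f(v)$ boundedly, while $R(P_N v)\to R(v)$ in $L^1(d\rho)$; the product therefore converges in $L^1$, giving $\int f\,d\mu_N\to Z^{-1}\int fR\,d\rho=\int f\,d\mu$, i.e.\ $\mu_N\rightharpoonup\mu$.
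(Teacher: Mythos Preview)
Your overall architecture is correct and matches the paper: the uniform bound $\sup_N\|R_N\|_{L^2(d\rho)}<\infty$ together with Lemma~\ref{LEM:conv in meas} and Fatou gives (a); Cauchy--Schwarz against the Fernique tail of Proposition~\ref{PROP:meas} gives (b) exactly as in the paper; and your uniform-integrability argument for (c) is in fact more direct than the paper's (which tests first against cylinder functions in $\mathcal H$ and then approximates a general bounded continuous $F$ by $F\circ P_M$, using the measurability of the $\FF L^{s,r}$ norm).

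The gap is in your treatment of the uniform $L^2$ bound itself. You assert that on the cutoff set $-G_N\le cB^2\|v^N\|_{L^4}^4$ ``carries the favourable Gaussian/chaos integrability recorded in'' Lemmas~\ref{LEM:Hbound}(b), \ref{hard2} and Proposition~\ref{chaos}. It does not: those results only give $\rho(\|v^N\|_{L^4}>\lambda)\lesssim e^{-c\lambda^2}$, hence $\rho(\|v^N\|_{L^4}^4>\mu)\lesssim e^{-c\sqrt\mu}$, which is too weak to integrate $e^{\alpha\|v^N\|_{L^4}^4}$ for \emph{any} $\alpha>0$. The $L^2$ cutoff must enter the argument nontrivially, not merely through the prefactor bound $\|v^N\|_{L^2}^2\le B^2$ --- this is precisely the content of the Bourgain/LRS lemma the paper invokes and which you do not cite. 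The paper's actual proof of (a) never reconstitutes $\EE$ or uses Gagliardo--Nirenberg: it applies H\"older to $R_N^2$ to separate $e^{-\mathcal N_N}$ into three factors, then (i) cites Thomann--Tzvetkov~\cite{TTzv} for the $F$-factor (small $B$ required), (ii) cites Bourgain~\cite{B1}/LRS for the $L^4$ factor (any $B$), and (iii) proves directly, via a level-set argument splitting $N\le\ln\gamma$ versus $N>\ln\gamma$ and using the increment estimate of Lemma~\ref{hard2}, that the $K$-factor is uniformly bounded for small $B$. Your hypercontractivity heuristic is in fact adequate for the $K$ piece (since $X_N$ lies in the second chaos with uniformly bounded variance, so $\rho(|X_N|>\lambda)\lesssim e^{-c\lambda}$ and $e^{\alpha|X_N|}\in L^1(d\rho)$ for $\alpha$ small), but the $G$ piece genuinely needs the external input you omitted.
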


\begin{proof} 
(a)\,  By H\"older inequality, we have
\begin{eqnarray*}
\int R_N^2(v) d\rho(v) 
& \leq &C_B\, \bigg( \int \chi_{\{\|v^N\|_{L^2}\leq B\}}
e^{- 3{\rm Im} \int (v^N)^2 \cj{v^N} \cj{v^N_x}\,dx }d\rho(v)\bigg)^\frac{1}{3}\\
&\times &
\bigg( \int \chi_{\{\|v^N\|_{L^2}\leq B\}}
 e^{ \frac{3B^2}{2 \pi}(\int |v^N|^4\,dx )} d\rho(v)\bigg)^\frac{1}{3} \\
 &\times& \bigg( \int \chi_{\{\|v^N\|_{L^2}\leq B\}}e^{-\frac{6}{\pi}M_N(v) {\rm Im} \int (v^N  \cj{v^N_x})\, dx} d\rho(v)\bigg)^\frac{1}{3} .
\end{eqnarray*}

\noi
It follows from Lemma 3.10 in \cite{B1} (see also \cite{LRS}) that the second factor is finite 
for any $B> 0$,  whereas it was shown in \cite[Proposition 4.2]{TTzv}  that 
the first factor is finite for sufficiently small $B>0.$ For the third factor we proceed as in the proof of \cite{TTzv} Proposition 4.2. 
In what follows we always implicitly assume that $\|v_N\|_{L^2}\leq B$. If we define
$$A_{\gamma, N}=\{\chi_{\{\|v^N\|_{L^2}\leq B\}}e^{-\frac{6}{\pi}M_N(v) {\rm Im} \int (v^N  \cj{v^N_x})\, dx} >\gamma\},$$
then we need to show that 
\begin{equation}\label{infinity}
\int_0^\infty \gamma^2\rho(A_{\gamma, N})\, d\gamma,
\end{equation}
is convergent uniformly with respect to $N$ for $B>0$ small enough. Let $N_0=\ln \gamma$ and assume first that
$N\leq N_0\leq \frac{C}{B^2}\ln \gamma$, for $B$ small enough. 
We first observe that 
$$\left |M_N(v) {\rm Im} \int (v^N  \cj{v^N_x})\,dx \right |\leq CB^2\|\partial_x(v^N)^2 \|_{L^\infty(\T)}.$$
We also note that 
\begin{equation}\label{agamma}
\rho(A_{\gamma, N})\leq \rho\left (\left |M_N(v) {\rm Im} \int (v^N  \cj{v^N_x})\,dx \right |>C\ln\gamma \right )
\end{equation}
and combining \eqref{agamma} and \eqref{infinity} with Proposition 4.1 in \cite{TTzv}, we can continue with 
$$\rho(A_{\gamma, N})\leq \rho(\|\partial_x(v^N)^2 \|_{L^\infty(\T)}>CB^{-2}\ln \gamma)\lesssim e^{-\frac{C}{B^2}\ln \gamma}=\gamma^{-\frac{C}{B^2}},$$
and the convergence of \eqref{infinity} follows from taking $B$ small enough. 

Assume now that $N> N_0=\ln \gamma$ then we observe that $A_{\gamma, N}\subset B_{\gamma, N} \cup C_{\gamma, N}$ where 
$$B_{\gamma, N}:=\{|X_{N_0}(v)| >\frac{\pi}{12B^2}\ln \gamma\},$$
$$C_{\gamma, N}:=\{|X_{N}-X_{N_0}(v)| >\frac{\pi}{12B^2}\ln \gamma\}.$$
We first observe that from the argument above 
$$\rho(B_{\gamma, N})\leq \rho(\|\partial_x(v^{N_0})^2 \|_{L^\infty(\T)}>CB^{-2}\ln \gamma)\lesssim \gamma^{-\frac{C}{B^2}}.$$
On the other hand from \eqref{important}  and the fact that $N>\ln \gamma$ we have that 
$$\rho(C_{\gamma, N})\lesssim e^{-C_BN^\frac{1}{2}\ln \gamma}\leq e^{-C_B(\ln \gamma)^{(1+\frac{1}{2})}}\leq C_{B,L}\gamma^L, $$
for any $L\geq 1$ and an appropriate constant $C_{B,L}$ depending on $B$ and $L$. From here again the convergence of \eqref{infinity} follows.

Hence we have that $R_N(v) \in L^2 (d\rho)$ for sufficiently small $B>0$, 
independent of $N$.  Then, by Lemma \ref{LEM:conv in meas} and Fatou's lemma, 
we obtain $R(v) \in L^2 (d\rho)$.

(b) \, By Cauchy-Schwarz inequality, we have
\begin{align*}
\int \chi_{\{\|v\|_{\mathcal{F} L^{s, r}} > K\}} d\mu
\leq \| R(v) \|_{L^2(d\rho)} \big\{\rho( \|v\|_{\mathcal{F} L^{s, r}} > K) \big\}^\frac{1}{2}.
\end{align*}

\noi 
Then, 
\eqref{H large deviation} follows from \eqref{Hld}.

(c) \,   Let us define 
\begin{equation} \label{finitemodes}
\mathcal{H}\,:=\, \bigcup_{M} \left\{  F \,; F=G( \ft{v}_{-M}, \cdots, \ft{v}_M), \,\, 
G {\rm~bounded~and~continuous}\right\} \,.
\end{equation}  Note this is a dense set  in $L^1 (\FF L^{s,r}, \mu)$ with $2 \leq r <\infty$  and $(s-1) r < -1$. Fix 
$F \in \mathcal{H}$,  then $F$ depends on $M$ finitely many modes, for some $M$.  Fix $\eps > 0$. Then, for $N > M$, we have
\begin{align*}
 \bigg| \int F(v) & d \mu_N - \int F(v) d\mu \bigg|
 = \bigg|\int F(v) \big( R_N(v) - R(v) \big) d\rho \bigg|\\
& \leq \bigg|\int_{\{|R_N(v) - R(v)| < \eps\}}
F(v) \big( R_N(v) - R(v) \big) d\rho \bigg| \\
& \hphantom{XXXXX}
 + \bigg|\int_{\{|R_N(v) - R(v)| \geq \eps\}} F(v) \big( R_N(v) - R(v) \big) d\rho \bigg|\\
& \leq \eps \sup |F| + 
\sup|F|\, \|R_N(v) - R(v)\|_{L^2(d\rho)} \big\{\rho( |R_N(v) - R(v)| \geq \eps)\big\}^\frac{1}{2}.
\end{align*}
 
\noi 
 From the proof of Proposition \ref{PROP:weak conv}, 
 we have $\|R_N(v) - R(v)\|_{L^2(d\rho)}
 \leq  \|R_N(v)\|_{L^2(d\rho)}+ \|R(v)\|_{L^2(d\rho)} < C <\infty$ for all $N$.
 By Lemma \ref{LEM:conv in meas}, $\rho(|R_N(v) - R(v)| \geq \eps) \to 0$ as $n \to \infty$.

Now, let $F$ be a general bounded continuous function on $\mathcal{F}L^{s, r}$ with $2 \leq r <\infty$  and $(s-1) r < -1$. Let $F_M$ denote its restriction on $E_M$, 
i.e. $F_M(v) = F(v^M)$ where $v^M = P_M v$.
By Cauchy-Schwarz inequality, we have
\begin{align} \label{H dense}
\bigg|\int F(v) d\mu - \int F_M(v) d\mu \bigg|
= \bigg| \int \big(F (v) - F(v^M)\big) R(v) d \rho\bigg|\\
\leq \|R(v)\|_{L^2(d\rho)} \bigg( \int |F(v) - F(v^M)|^2 d \rho\bigg)^\frac{1}{2}. \notag 
\end{align}

\noi
By continuity of $F$, given $\eps > 0$, 
there exists $\dl > 0$ such that 
\[\| P_M^\perp v \|_{\mathcal{F}L^{s, r}} = \|v - v^M\|_{\mathcal{F}L^{s, r}} <\dl
\ \ \LRA \ \ |F(v) - F(v^M)| < \eps.\]

\noi
Then, the contribution in \eqref{H dense} 
from $\{ \,v\, ; \| P_M^\perp v \|_{\mathcal{F}L^{s, r}}< \dl\}$
is at most $\eps \, \|R(v)\|_{L^2(d\rho)}$.
Without loss of generality, assume $\dl \leq \eps^2$.
By the measurability of the $\mathcal{F}L^{s, r}$-norm (see Definition \ref{H measurable}), 
the contribution in \eqref{H dense} 
from $\{\, v\, ; \| P_M^\perp v \|_{\mathcal{F}L^{s, r}} \geq \dl\}$
is at most
\begin{align*}
2 \sup|F| \cdot &  \|R(v)\|_{L^2(d\rho)}
 \big\{\rho(\| P_M^\perp v  \|_{\mathcal{F}L^{s, r}} \geq \dl) \big\}^\frac{1}{2}\\
& \leq 2 \sup|F| \cdot \|R(v)\|_{L^2(d\rho)} \, \dl^\frac{1}{2}
 \leq 2 \sup|F| \cdot \|R(v)\|_{L^2(d\rho)}\,  \eps
\end{align*}

\noi
for sufficiently large $M$.
A similar argument can be used to show 
$|\int F(v) d\mu_N - \int F_M(v) d\mu_N | \leq C(f, R) \eps$,
independent of $N$.
Hence, $\mu_N$ converges weakly to $\mu$.  
\end{proof}
\medskip

\begin{remark}  \rm A tail estimate similar to \eqref{H large deviation}
holds for the finite dimensional weighted Wiener measure $\mu_N$; 
i.e. we have
\begin{equation} \label{H large deviation 2}
\mu_N ( \|v^N\|_{\mathcal{F} L^{s, r}} > K) \leq e^{-cK^2},
\end{equation}
\noi
where the constant is independent of $N$.
\end{remark}

\medskip

\begin{remark} \label{inverseR}  \rm  The measure $\rho_N$  is not absolutely 
continuous  with respect to $\mu_N$ but its restriction on $\{\|v^N\|_{L^2}\leq B\}$, i.e.,  
$\wt{\rho}_N = \ft{Z}_N^{-1}\chi_{\{\|v^N\|_{L^2}\leq B\}}\rho_N$ is absolutely continuous 
with respect to $\mu_N$  and  from \eqref{HGibbs3},   we  have that 
\begin{equation} \label{rhomu} \frac{d \wt{\rho}_N}{d \mu_N} :=  
\tilde R_N =  {\tilde Z_N}^{-1} \chi_{\{ \| v^N \|_{L^2} \leq B \}}  e^{\frac{1}{2} \mathcal N(v^N)} 
\end{equation} for suitable renormalization $ \tilde Z_N$. 
Since $\mathcal N(v^N)$ does not have a definite sign Lemma  \ref{LEM:Hbound}  
Lemma \ref{LEM:conv in meas} and part (a) of Proposition \ref{PROP:weak conv} hold for  
$\tilde R_N$ and its corresponding limit $\tilde R$.  In particular, for sufficiently small B, 
$ \tilde R_N \in L^2(d \rho)$ for all $N$ with bound independent of $N$. The latter fact 
will be used in the proof of Proposition \ref{aasgwp} in Section 6.
\end{remark}

\medskip 

\begin{remark} \rm
Given any $ p < \infty$, one can prove $R(v) \in L^p(d\rho)$ 
for sufficiently small $B \leq B(p)$.
However, $B(p) \to 0$ as $p \to \infty$.
i.e. there is no uniform lower bound on the size of the $L^2$-cutoff.
For our purpose, the integrability with $p = 2$ suffices.
\end{remark}

\medskip

\section{ Almost sure  well-posedness of  FGDNLS and invariance of the measure}

\medskip

In order to prove the global well-posedness of $\mu$-almost all solution of  (FGDNLS) \eqref{FGDNLS} 
we fix once again $s=\frac{2}{3}-$ and $r=3$ so that we have at our disposal the local well posedness result  in $\mathcal{F} L^{s, r}$, that the measure is supported on $\mathcal{F} L^{s, r}$ and also the energy growth estimates in Theorem \ref{Energy Growth Estimate} as explained in Remark \ref{fixchoice}.
   
We first use the almost invariance of the finite-dimensional measure $\mu_N$ under the flow 
of  the truncated equation \eqref{FGDNLS} to control the growth of solutions. 

\medskip

\begin{lemma} \label{Local1}   For any given  
$T>0$ and $\eps > 0$ there exists  an integer $N_0 = N_0(T, \eps)$ and 
sets $\wt{\Omega}_N=\wt{\Omega}_N(\eps, T) \subset \mathbb{R}^{4N+2}$ such that 
for $N >  N_0$ 

\smallskip \noindent
\textup{(a)} ${\displaystyle  \mu_N \left( \wt{\Omega}_N \right) \,\ge\, 1-  \eps \,.}$

\smallskip 
\noindent
\textup{(b)} For any initial condition $v^N_0 \in  \wt{\Omega}_N$,  (FGDNLS) \eqref{FGDNLS} is well-posed 
on $[-T, T]$ and its solution $v^N(t)$ satisfies the  bound 
$$ 
\sup_{|t|\le T} \| v^N(t) \|_{\mathcal{F} L^{\frac{2}{3}-, 3}} \lesssim \bigg( \log \frac{T}{\eps}\bigg)^\frac{1}{2}\,. 
$$
\end{lemma}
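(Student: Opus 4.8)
The plan is to run Bourgain's globalization scheme: build $\tilde\Omega_N$ as the set of data whose $\mathcal{F}L^{\frac{2}{3}-,3}$-norm stays below a threshold $K\sim(\log\frac{T}{\eps})^{1/2}$ along a discrete set of times, and exploit the almost-invariance of $\mu_N$ to show this set is large. First I would fix $K=K(T,\eps)$, set $B_K=\{v^N:\|v^N\|_{\mathcal{F}L^{\frac{2}{3}-,3}}\le K\}$, and recall the uniform tail bound \eqref{H large deviation 2}, namely $\mu_N(B_K^c)\le e^{-cK^2}$. By the local well-posedness Theorem \ref{lwp} with radius $\sim K$ there is a step size $\delta\sim K^{-\gamma}$ on which the flow $\Phi_N(t)$ of \eqref{FGDNLS} exists and satisfies $\|v^N\|_{Z^{\frac{2}{3}-}_3(\delta)}\lesssim K$, hence $\|v^N\|_{X^{\frac{2}{3}-,\frac{1}{2}}_3(\delta)}\lesssim K$, whenever the datum lies in $B_K$. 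Setting $n\sim T/\delta\sim TK^\gamma$, I would then define
\[
\tilde\Omega_N=\bigcap_{|j|\le n}\Phi_N(-j\delta)(B_K),
\]
so that for data in $\tilde\Omega_N$ the norm stays $\le K$ at every node $j\delta$.

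The crux is the almost-invariance of $\mu_N$. Since $\Phi_N$ preserves Lebesgue measure (Lemma \ref{invlebesgue}) and the $L^2$-norm (Lemma \ref{alsoconserved}), and the density of $\mu_N$ against Lebesgue measure is proportional to $\chi_{\{\|v^N\|_{L^2}\le B\}}e^{-\frac{1}{2}\EE(v^N)}e^{-\frac{1}{2}\int|v^N|^2}$ with $\EE=\int|v_x|^2+\mathcal{N}$, the change of variables $w=\Phi_N(t)v$ gives, for any measurable $A$,
\[
\mu_N(\Phi_N(-t)A)=\int_A e^{-\frac{1}{2}[\EE(\Phi_N(-t)w)-\EE(w)]}\,d\mu_N(w),
\]
since only the factor $e^{-\frac{1}{2}\EE}$ moves, the $L^2$-cutoff and the $\int|v^N|^2$ weight being conserved. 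The energy growth estimate Theorem \ref{Energy Growth Estimate} bounds the exponent over one step by $\eta_N:=C(\delta)N^{-\beta}\max(K^6,K^8)$ as soon as the trajectory over that step is $\lesssim K$ in $X^{\frac{2}{3}-,\frac{1}{2}}_3(\delta)$, which the previous paragraph guarantees. I would then choose $N_0=N_0(T,\eps)$ so large that $n\,\eta_N\le 1$ for $N>N_0$; this is exactly where the decaying factor $N^{-\beta}$ is indispensable, since $n\sim TK^\gamma$ and $\eta_N$ carries high powers of $K$, so only the $N$-gain can beat the accumulation.

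For part (a) I would bound $\mu_N(\tilde\Omega_N^c)\le\sum_{|j|\le n}\mu_N(\Phi_N(-j\delta)(B_K^c))$ and control the total by decomposing $\tilde\Omega_N^c$ according to the first node at which the norm exceeds $K$. On each such first-exit piece the trajectory remains in $B_K$ up to the previous node, so the flow carries the piece into $B_K^c$ through steps to which Theorem \ref{Energy Growth Estimate} legitimately applies; the accumulated Radon--Nikodym factor is then at most $e^{\frac{1}{2}n\eta_N}\le e^{1/2}$, giving measure $\le e^{1/2}\mu_N(B_K^c)$ for each piece. Summing yields $\mu_N(\tilde\Omega_N^c)\lesssim n\,e^{1/2}\,\mu_N(B_K^c)\lesssim TK^\gamma e^{-cK^2}$, which is $<\eps$ once $K\sim(\log\frac{T}{\eps})^{1/2}$, the polynomial prefactor $TK^\gamma$ being absorbed by the Gaussian gain. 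For part (b), given $v^N_0\in\tilde\Omega_N$ the bound $\|v^N(j\delta)\|_{\mathcal{F}L^{\frac{2}{3}-,3}}\le K$ at each node lets me glue the local solutions of Theorem \ref{lwp} across the $\lesssim n$ subintervals covering $[-T,T]$, producing a solution on all of $[-T,T]$ with $\sup_{|t|\le T}\|v^N(t)\|_{\mathcal{F}L^{\frac{2}{3}-,3}}\lesssim K\sim(\log\frac{T}{\eps})^{1/2}$.

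I expect the main obstacle to be the bookkeeping that keeps the energy estimate applicable along the flow-out sets: the bound $\eta_N$ is only available when the solution is a priori controlled in $X^{\frac{2}{3}-,\frac{1}{2}}_3(\delta)$, so one must arrange, via the first-exit decomposition together with the local theory, that every step fed into Theorem \ref{Energy Growth Estimate} begins from a datum in $B_K$. The remaining delicate point is the simultaneous balancing of $K$, $\delta\sim K^{-\gamma}$, $n\sim TK^\gamma$ and $N_0$ so that at once $n\,\eta_N\lesssim 1$ and $n\,\mu_N(B_K^c)<\eps$; because $K$ is fixed once $(T,\eps)$ are, the constant $C(\delta)$ is harmless and the $N^{-\beta}$ decay supplies the needed room.
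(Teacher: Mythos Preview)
Your proposal is correct and follows essentially the same approach as the paper: the paper defines $\wt\Omega_N$ exactly as you do (as an intersection of preimages of the $K$-ball under the discrete-time flow), performs the first-exit decomposition $D_k = C_N(K,B)\cap\Phi_N(-\delta)(D_{k-1})$, uses Lebesgue invariance plus $L^2$-conservation plus Theorem~\ref{Energy Growth Estimate} to obtain the one-step almost-invariance bound, iterates to get $\mu_N(D_k)\le e^{k\,c(\delta)N^{-\beta}K^8}e^{-cK^2}$, sums over $k\le[T/\delta]\sim TK^\gamma$, and chooses $K\sim(\log\tfrac{T}{\eps})^{1/2}$ and $N_0$ large exactly as you describe. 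Your anticipated obstacle---that Theorem~\ref{Energy Growth Estimate} only applies on steps started from $B_K$, forcing the first-exit decomposition---is precisely the point of the paper's recursion \eqref{dkform}.
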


\begin{proof}  It is enough to consider $t\in [0,T]$, the argument is similar for $t \in [-T,0]$.  
We set 
$$
C_{N}(K,B) \,:=\, \left\{ w^N \in \mathbb{R}^{4N+2}\,:\,  \|w^N\|_{\mathcal{F} L^{\frac{2}{3}-, 3}} \leq K \,,\, \|w^N\|_{L^2} \le B \right\} \,.
$$  
If  the initial condition $v^N_0 \in C_{N}(K,B)$ then (FGDNLS) \eqref{FGDNLS} is locally  well-posed on the 
time interval of length   $\dl \sim K^{-\gamma}$ by Theorem \ref{lwp} ,  where $\gamma > 0$ 
is independent of $N$.  Furthermore, if $\mu_N$ is given by  \eqref{HGibbs3}, then for sufficiently  large $K$ we have that  $\mu_N(C_N(K,B)^c) \leq e^{-cK^2}$   
for some constant $c$ which is independent of $N$  by  
\eqref{H large deviation 2}.

Let $\Phi_N(t)$ the flow map of \eqref{FGDNLS}. We define $\wt{\Omega}_N$ by
\begin{equation*}
\wt{\Omega}_N \,:=\, \left\{ v_0^N \,:\,   \Phi_N(j \delta)(v_0^N) \in C_{N}(K,B) \,,\, j=0,1, 
\cdots  \left[\frac{T}{\dl}\right] \right\}. \end{equation*}
Note that  
$\wt{\Omega}_N^c \,=\, \bigcup_{k=0}^{\left[\frac{T}{\dl}\right]}  D_k$ \,,
where
\begin{eqnarray}
 D_k \,&=&\, \left\{ v_0^N \,;\, k \,=\,  \min \left\{j \,:\,  \Phi_N(j \dl)(v_0^N) \in C_{N}(K,B)^c \right\} \right\} \,, 
 \nonumber  \\
\,&=&\,   \left[  \bigcap_{j=0}^{k-1} \Phi_N(-j \dl) C_N(K,B)) \right]   \cap \Phi_N(-k \dl) ( C_N(B,K)^c ) \,.
\end{eqnarray} 
One verifies easily that the sets $D_k$ satisfy 
\begin{equation}\label{dkform}
D_0\,=\, C_{N}(K,B)^c \,, \quad D_k \,=\, C_{N}(K,B) \cap \Phi_N(-\delta) (D_{k-1}) \,.
\end{equation}

By Lemma \ref{invlebesgue}, the Lebesgue measure $d\mu^0_N \equiv \prod_{|n|\le N} da_n db_n$ 
is invariant under the flow $\Phi_N(t)$ (i.e. for any $f\in L^1(d\mu_N^0)$ we have 
$\int f \circ \Phi_N(t) d\mu_N^0 =  \int f d\mu_N^0$).  

Using the energy growth estimate\footnote{ Without loss of generality we assume 
$\max(K^6, K^8) = K^8$ in Theorem \ref{Energy Growth Estimate}.} in Theorem 
\ref{Energy Growth Estimate} and the invariance of the $L^2$ norm 
$m(v)=\frac{1}{2\pi}\|v\|_{L^2}$   under $\Phi_N(t)$ 
(i.e. $m \circ \Phi_N(t)= m$ for all $t$; see Remark \ref{alsoconserved})  
we have for any set  $A\subset \mathbb{R}^{4N+2}$
\begin{eqnarray} 
\mu_N ( C_{N}(K,B) \cap A) \,&=&\,  Z_N^{-1} \int  \chi_{ \{C_{N}(K,B) \cap A\}}  \chi_{\{ m \le 2\pi B^2\}} 
e^{ - \frac{1}{2}\EE -\pi m}  d\mu_N^0 \nonumber \\ 
\,&=&\, Z_N^{-1} \int  \chi_{ \{C_{N}(K,B) \cap A\}} \circ \Phi_N(-\delta) \chi_{\{ m \le 2\pi  B^2\}}   e^{ - \EE \circ \Phi_N(-\delta) - 
\pi m } d\mu_N^0 \nonumber \\
\,&=&\,  \int  \chi_{ \{ \Phi_N(\delta)(C_{N}(K,B) \cap A) \} }  e^{  -\frac{1}{2}( \EE \circ \Phi_N(-\delta)-\EE)} d\mu_N  \label{simplebound}  \\
\,&\le&\,   e^{c(\dl) N^{-\beta}  K^8} \mu_N\left( \Phi_N(\delta)(C_N(K,B)\cap A) \right) \nonumber \\
\,&\le&\,   e^{c (\dl) N^{-\beta}  K^8} \mu_N\left( \Phi_N(\delta)( A) \right). \nonumber
\end{eqnarray}

Applying \eqref{simplebound} to \eqref{dkform} with $A= \Phi_N(-\delta) (D_{k-1})$ and iterating in $k \in \{0, \dots [\frac{T}{\delta}]\}$,  we obtain 
$$\mu_N(D_K) \le e^{c(\dl) N^{-\beta}  K^8} \mu_N( D_{K-1}) \le e^{k \, c( \dl)  N^{-\beta}  K^8} e^{-cK^2}$$ 
and thus 
\begin{align} \label{Bsum}
\mu_N(\wt{\Omega}_N^c) \le \sum_{k = 0}^{[\frac{T}{\dl}]}
e^{k \, c(\dl) N^{-\beta}  K^8}e^{ -cK^2}
\lesssim \Big[\frac{T}{\dl}\Big]e^{ -cK^2} \sim T K^\gamma e^{ -cK^2} \,,
\end{align}
\noi
for $N \geq N_0(T, K)$. By choosing $K \sim \big( \log \frac{T}{\eps}\big)^\frac{1}{2}$,
we have $\mu_N(\wt{\Omega}_N^c) < \eps$ as desired.

Finally, by construction, we have
$\|v^N(j \dl) \|_{\mathcal{F} L^{\frac{2}{3}-, 3}} \leq K$
for $j = 0, \cdots,  [\frac{T}{\dl}]$ and by the local theory, we have 
\[ \sup_{0\le t \le T} \|v^N(t) \|_{\mathcal{F} L^{\frac{2}{3}-, 3}} \leq 2 K \sim 
\bigg( \log \frac{T}{\eps}\bigg)^\frac{1}{2}.\] \,
\end{proof}

Combining Lemma \ref{Local1} with the approximation Lemma \ref{approximation} we 
can now prove a similar result for the solution of the initial value problem  (GDNLS) \eqref{GDNLS}. 

\begin{proposition}\label{aasgwp}  For any given $T>0$ and $\eps > 0$ there exists 
a set $\Omega(\varepsilon,T)$ such that 

\smallskip \noindent
\textup{(a)} ${\displaystyle \mu \left( \Omega(\eps, T) \right) \,\ge\, 1- \eps \,.}$

\smallskip 
\noindent
\textup{(b)} For any initial condition $v_0 \in  \Omega(\eps, T)$  the initial value problem (GDNLS) \eqref{GDNLS} is well-posed 
on $[-T, T]$ with the  bound 
$$ 
\sup_{|t|\le T} \| v(t) \|_{\FF L^{\frac{2}{3}-,3}} \lesssim \bigg( \log \frac{T}{\eps}\bigg)^\frac{1}{2}\,. 
$$
\end{proposition}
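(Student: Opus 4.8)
The plan is to transfer the finite-dimensional statistical control of Lemma \ref{Local1} to the full equation \eqref{GDNLS} using the approximation Lemma \ref{approximation} together with the convergence of $\mu_N$ to $\mu$. Fix $T>0$ and $\eps>0$, and let $K \sim \big(\log\tfrac{T}{\eps}\big)^{1/2}$ be the radius produced by Lemma \ref{Local1}, so that for every $N>N_0(T,\eps)$ the sets $\wt\Omega_N \subset \mathbb{R}^{4N+2}$ satisfy $\mu_N(\wt\Omega_N) \ge 1-\eps$ and every $v_0^N \in \wt\Omega_N$ launches a solution of \eqref{FGDNLS} obeying $\sup_{|t|\le T}\|v^N(t)\|_{\FF L^{\frac{2}{3}-,3}} \le 2K$. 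Viewing $\wt\Omega_N$ as a condition on the modes $|n|\le N$, I set
\[
\Omega_N := \big\{\, v_0 \in \FF L^{\frac{2}{3}-,3} \ :\ P_N v_0 \in \wt\Omega_N \,\big\},
\]
a cylinder set with $\mu_N(\Omega_N) = \mu_N(\wt\Omega_N) \ge 1-\eps$, and I take $\Omega(\eps,T) := \limsup_{N\to\infty}\Omega_N = \bigcap_{M}\bigcup_{N\ge M}\Omega_N$.

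For part (b), let $v_0 \in \Omega(\eps,T)$, so that $P_{N_k}v_0 \in \wt\Omega_{N_k}$ along some sequence $N_k \to \infty$. For each such $k$ the solution $v^{N_k}$ of \eqref{FGDNLS} with data $P_{N_k}v_0$ satisfies $\sup_{|t|\le T}\|v^{N_k}(t)\|_{\FF L^{\frac{2}{3}-,3}} \le 2K$; moreover membership in $\wt\Omega_{N_k}$ forces $\|P_{N_k}v_0\|_{\FF L^{\frac{2}{3}-,3}} \le K$, and since the truncated norms increase to $\|v_0\|_{\FF L^{\frac{2}{3}-,3}}$ we obtain $\|v_0\|_{\FF L^{\frac{2}{3}-,3}} \le K < 2K$. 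Thus the hypotheses of Lemma \ref{approximation} hold with $A=2K$ (along the subsequence $N_k$), so \eqref{GDNLS} with data $v_0$ is well-posed on $[-T,T]$ and $\|v(t)-v^{N_k}(t)\|_{\FF L^{\frac{1}{2},3}} \to 0$ uniformly on $[-T,T]$. In particular $\ft{v^{N_k}}(n,t) \to \ft{v}(n,t)$ for every $n$ and every $|t|\le T$, so by lower semicontinuity of the $\FF L^{\frac{2}{3}-,3}$ norm under pointwise convergence of the Fourier coefficients (Fatou on the sum over $n$),
\[
\sup_{|t|\le T}\|v(t)\|_{\FF L^{\frac{2}{3}-,3}} \le \liminf_{k\to\infty}\ \sup_{|t|\le T}\|v^{N_k}(t)\|_{\FF L^{\frac{2}{3}-,3}} \le 2K \sim \Big(\log\tfrac{T}{\eps}\Big)^{1/2},
\]
which is the asserted bound.

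For part (a) it remains to pass the bound $\mu_N(\Omega_N) \ge 1-\eps$ from $\mu_N$ to $\mu$. Writing $d\mu_N = Z_N^{-1}R_N\,d\rho$ and $d\mu = Z^{-1}R\,d\rho$, the proof of Proposition \ref{PROP:weak conv} shows that $R_N \to R$ in $\rho$-measure (Lemma \ref{LEM:conv in meas}) while the $R_N$ are bounded in $L^2(d\rho)$ uniformly in $N$. The uniform $L^2$ bound gives uniform integrability, so by Vitali's theorem $R_N \to R$ in $L^1(d\rho)$ and hence $Z_N = \int R_N\,d\rho \to \int R\,d\rho = Z > 0$; consequently $Z_N^{-1}R_N \to Z^{-1}R$ in $L^1(d\rho)$. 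This controls the total variation uniformly over all Borel sets,
\[
\sup_A |\mu_N(A)-\mu(A)| \le \big\|Z_N^{-1}R_N - Z^{-1}R\big\|_{L^1(d\rho)} \longrightarrow 0,
\]
so that $\mu(\Omega_N) \ge \mu_N(\Omega_N) - o(1) \ge 1-\eps - o(1)$. Since $\mu$ is a finite measure, the reverse Fatou inequality then yields
\[
\mu\big(\Omega(\eps,T)\big) = \mu\Big(\limsup_{N\to\infty}\Omega_N\Big) \ge \limsup_{N\to\infty}\mu(\Omega_N) \ge 1-\eps .
\]

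The main obstacle is exactly this transfer of measure: because the good sets $\Omega_N$ genuinely depend on $N$ through the truncated flow $\Phi_N$, weak convergence of $\mu_N$ to $\mu$ evaluated on a single fixed test set does not suffice, and one must instead exploit the stronger total-variation convergence coming from the $L^1(d\rho)$ convergence of the densities, combined with the reverse Fatou inequality to capture $\limsup_N \Omega_N$. The complementary subtlety in (b) is that Lemma \ref{approximation} only controls $v-v^{N_k}$ in the weaker norm $\FF L^{\frac{1}{2},3}$, so the uniform bound at the regularity $\FF L^{\frac{2}{3}-,3}$ must be recovered by lower semicontinuity rather than by norm convergence.
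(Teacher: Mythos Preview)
Your argument is correct and reaches the same conclusion, but it follows a different path from the paper's, especially in part (a).

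\textbf{Part (a).} The paper fixes a single large $N$ and takes $\Omega(\eps,T):=\Omega_N$, where (unlike you) it also imposes the condition $\|v_0\|_{\FF L^{\frac{2}{3}-,3}}\le K$. To estimate $\mu(\Omega_N^c)$ it does \emph{not} upgrade to total variation; instead it decomposes $\Omega_N^c\subset\{\|v_0\|\ge K\}\cup F_N$ with $F_N=\{P_Nv_0\in\wt\Omega_N^c\}$, bounds the first piece by the tail estimate \eqref{H large deviation}, and for the cylinder set $F_N$ passes to $\rho$, then to $\wt\rho_N\ll\mu_N$ via Remark \ref{inverseR} and Cauchy--Schwarz, obtaining $\mu(F_N)\lesssim \mu_N(\wt\Omega_N^c)^{1/4}$. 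Your route---uniform $L^2(d\rho)$ bound on $R_N$, Vitali to get $R_N\to R$ in $L^1(d\rho)$, hence $\sup_A|\mu_N(A)-\mu(A)|\to 0$, then reverse Fatou applied to $\limsup_N\Omega_N$---is softer and avoids the auxiliary density $\tilde R_N$ entirely. It does rely on reading $\mu_N$ as $Z_N^{-1}R_N\,d\rho$ on the full space, which is harmless here since $\Omega_N$ is a cylinder set, but is worth stating explicitly. The paper's route, in exchange, produces a concrete set at a fixed scale $N$, which can be convenient for later quantitative arguments.

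\textbf{Part (b).} The paper iterates the \emph{local} estimate \eqref{loc} from Lemma \ref{approximation} over the $[T/\delta]$ subintervals to get $\|v(t)\|_{\FF L^{s_1,3}}\lesssim 2K+1$ for every $s_1<\tfrac{2}{3}-$, and then lets $s_1\uparrow\tfrac{2}{3}-$. You instead apply Lemma \ref{approximation} once (along a subsequence $N_k$) to get convergence in $\FF L^{1/2,3}$, and recover the bound at regularity $\tfrac{2}{3}-$ by Fatou on the Fourier side. Both endgames are the same lower-semicontinuity idea; your version is slightly more direct.
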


\begin{proof}  Let $\wt{\Omega}_N=\wt{\Omega}_N(\eps, T)$ be 
the set given in Lemma \ref{Local1} for $N\ge N_0(\eps, T)$.  This set is defined  in terms of  
$K \sim \left(\log\frac{T}{\eps}\right)^{1/2}$ and for that same $K$ we define the set  
$$
\Omega_N := \Omega_N( \eps, T) \,:=\,\left\{ v_0 \in \FF L^{\frac{2}{3}-,3} \,:\,   \| v_0 \|_{\FF L^{\frac{2}{3}-,3}} \le K  \,,\,\,\,  
P_N v_0 \in \wt{\Omega}_N \right\}
$$
If $v_0 \in \Omega_N$ then by Lemma \ref{Local1} we have 
\begin{equation} \label{iteratebound}
\sup_{t \le T} \|  \Phi_N(t)( P_Nv_0) \|_{\FF L^{\frac{2}{3}-,3} } \le 2 K \,.
\end{equation}
On the other hand for $v_0 \in \Omega_N$ the local well posedness theorem in \cite{GH} gives a $\delta >0$ and a solution $v(t)$ of (GDNLS) \eqref{GDNLS} for $|t| \leq \delta$. 

By \eqref{loc} in the proof of the Lemma \ref{approximation}, with $K$ in place of $A$, we obtain that for every $s_1<\frac{2}{3}-$
$$
\|v(\delta)-v^N(\delta)\|_{\FF L^{s_1,3}}\lesssim KN^{s_1-\frac{2}{3}+}. 
$$
By choosing  a larger $N_0$ if necessary, so that $\left[\frac{T}{\delta}\right] \ KN^{s_1-\frac{2}{3}+}\ll 1$ for $N> N_0$ we can repeat this argument $\left[\frac{T}{\delta}\right]$ times over the intervals 
$[j\delta,(j+1)\delta], j=0,1,\dots, \left[\frac{T}{\delta}\right]-1$ and obtain 
\begin{equation}\label{localtilde}
\|v(j\delta)-v^N(j\delta)\|_{\FF L^{s_1,3}} < 1. 
\end{equation}
Then from \eqref{iteratebound} and \eqref{localtilde} we conclude
$$\|v(t)\|_{\FF L^{s_1,3}}\lesssim (2 K+1) \sim \left(\log \frac{T}{\varepsilon}\right)^{\frac{1}{2}},$$
and since the right hand side is independent of $s_1<\frac{2}{3}-$  we obtained the desired estimate.

To estimate $\mu(\Omega_N)$ note  first that 
\begin{equation}\label{omegac}
\Omega_N^c \,\subset \,  \left\{   v_0 \in {\FF L^{\frac{2}{3}-,3}} \,: \|v_0\|_{\FF L^{\frac{2}{3}-,3}} \ge K \right\} \cup 
\left \{  v_0 \in {\FF L^{\frac{2}{3}-,3}}  \,:\,  P_Nv_0    \in  {\tilde \Omega}_N^c  \right\}
\end{equation}
The first set on the right hand side of \eqref{omegac} has $\mu$ measure less than $\eps$ by the tail bound 
in Proposition \ref{PROP:weak conv}.  
The set $F_N \,\equiv\, \left \{  v_0 \in {\FF L^{\frac{2}{3}-,3} } \,;  P_N v_0    \in  {\tilde \Omega}_N^c  \right\}$ 
is  a cylinder set  and we have $F_N \cap E_N = \wt{\Omega}^c_N$  (recall 
$E_N ={\rm span} \{e^{inx}\}_{|n|\le N}$).  Thus $\rho(F_N)= \rho_N(F_N) =
\rho_N(\wt{\Omega}^c_N)$.  On the other hand, recall that $\mu \ll \rho$ and that, 
$\wt{\rho}_N$  the  restriction of  $\rho_N$ to the ball $\{\|v^N\|_{L^2} \le B\}$ is 
absolutely continuous with respect to $\mu_N$ (see Remark \ref{inverseR}).  Then using  
Cauchy-Schwarz repeatedly we obtain
\begin{eqnarray}
\mu( F_N) & \le & \left( \int R^2 d\rho \right)^{\frac{1}{2}}  \left(\int_{\wt{\Omega}^c_N} \,  \chi_{\{\|v^N\|_{L^2} \le B\}} \,  d\rho_N  \right)^\frac{1}{2} \notag \\
&\le &   \left( \int R^2 d\rho \right)^{\frac{1}{2}}  \left( \int \tilde R_N^{2}  d\mu_N \right)^{\frac{1}{4} } \mu_N(\wt{\Omega}_N^c)^{\frac{1}{4}} \notag\\
& \le& \,   \left( \int R^2 d\rho \right)^{\frac{1}{2}}  \left( \int \tilde R_N d\rho_N \right)^{\frac{1}{4}}  \mu_N(\wt{\Omega}_N^c)^{\frac{1}{4}} \label{lastbound}
\end{eqnarray} where $\tilde R_N$ is as defined in Remark \ref{inverseR} and where  in the last inequality we have used that by definition $\tilde R_N^2 R_N = \tilde R_N$. 

By relying on Lemma \ref{LEM:conv in meas}, Proposition \ref{PROP:weak conv} and Remark \ref{inverseR}  we can bound the first two terms in \eqref{lastbound} by a constant independent of $N$. This combined with Lemma \ref{Local1} allows us to conclude that there exist a constant $d>0$
and $N_1(\eps, T)$ such that $\mu( F_N) \le d \, \varepsilon$ for $N \ge N_1$.   So for $N \geq \max{(N_0, N_1)}$, any set $\Omega(\eps,T) := \Omega_N(\eps, T)$ satisfies the 
desired hypothesis. 
\end{proof}

\begin{theorem}[Almost sure global well-posedness]\label{asgwp}
There exists a subset $\Omega$ of the space $\mathcal FL^{\frac{2}{3}-, 3}$ with 
$\mu(\Omega^{c})=0$ such that for every $v_0\in \Omega$ the initial value problem 
(GDNLS) \eqref{GDNLS} with initial data $v_0$ is globally well-posed.
\end{theorem}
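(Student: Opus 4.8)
The plan is to deduce the theorem from Proposition \ref{aasgwp} by a routine exhaustion in $T$ together with a countable intersection over a sequence of measures tending to one. Proposition \ref{aasgwp} already contains all the analytic content: for each $T>0$ and $\eps>0$ it produces a set $\Omega(\eps,T)$ of measure at least $1-\eps$ on which (GDNLS) \eqref{GDNLS} is well-posed on $[-T,T]$ with the uniform bound $\sup_{|t|\le T}\|v(t)\|_{\FF L^{\frac{2}{3}-,3}}\lesssim (\log(T/\eps))^{1/2}$. The only remaining task is to upgrade ``well-posed on $[-T,T]$ for a large-measure set'' to ``globally well-posed for a full-measure set.''

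First I would fix $T$ and set $\Sigma_T := \bigcup_{n\ge 1}\Omega(\tfrac1n, T)$. Since $\Sigma_T^c = \bigcap_{n\ge1}\Omega(\tfrac1n,T)^c$ and $\mu(\Omega(\tfrac1n,T)^c)\le \tfrac1n$ for every $n$, we get $\mu(\Sigma_T^c)=0$, i.e. $\mu(\Sigma_T)=1$; moreover every $v_0\in\Sigma_T$ lies in some $\Omega(\tfrac1n,T)$ and hence gives a solution of (GDNLS) \eqref{GDNLS} well-posed on $[-T,T]$. Next I would intersect over integer times: set $\Omega := \bigcap_{m\ge1}\Sigma_m$. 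Then $\Omega^c=\bigcup_{m\ge1}\Sigma_m^c$ is a countable union of null sets, so $\mu(\Omega^c)=0$.

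It remains to check that $v_0\in\Omega$ yields a genuine global solution. For such $v_0$ and any $m\in\mathbb{N}$ we have $v_0\in\Sigma_m$, so there is a solution $v^{(m)}$ of (GDNLS) \eqref{GDNLS} on $[-m,m]$ satisfying the stated bound. By the uniqueness part of the local well-posedness theory (Theorem 7.2 of \cite{GH}; see also Theorem \ref{lwp}) any two such solutions agree on the overlap of their time intervals, so the family $\{v^{(m)}\}_{m\ge1}$ patches together to a single solution $v$ defined for all $t\in\R$. The uniform-in-$m$ $\FF L^{\frac{2}{3}-,3}$ bound precludes finite-time blow-up, and continuity of the data-to-solution map on each $[-m,m]$ is inherited from the local theory; thus (GDNLS) \eqref{GDNLS} is globally well-posed for every $v_0\in\Omega$, which is the assertion.

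The main obstacle in this theorem is entirely upstream of the present statement: it is the almost-invariance encoded in the energy growth estimate Theorem \ref{Energy Growth Estimate}, which feeds the measure bound $\mu_N(\wt{\Omega}_N^c)<\eps$ in Lemma \ref{Local1} and, via the approximation Lemma \ref{approximation}, the transfer to (GDNLS) in Proposition \ref{aasgwp}. Once Proposition \ref{aasgwp} is in hand, the argument above is purely measure-theoretic and standard (cf. Bourgain \cite{B1}, Zhidkov \cite{Z}); no further PDE estimate is needed.
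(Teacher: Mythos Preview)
Your proof is correct and follows essentially the same approach as the paper: first take a union over $\eps\to 0$ (you use $\eps=1/n$, the paper uses $\eps=2^{-i}$) to get a full-measure set for each fixed $T$, then intersect over a sequence $T\to\infty$ (you use $T=m$, the paper uses $T=2^j$). Your explicit patching argument via uniqueness is a welcome clarification of a step the paper leaves implicit.
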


\begin{proof}  Fix an arbitrary $T$ and let $\eps = 2^{-i}$.  Using the sets given in 
Proposition  \ref{aasgwp}  we set 
\begin{equation}\label{fullset1}
\Omega(T) := \bigcup_i \Omega(2^{-i}, T).
\end{equation}
If $v_0 \in \Omega(T)$ then the initial value problem (GDNLS) \eqref{GDNLS} is 
well-posed up to time $T$.  Since $\mu(\Omega(T)) \ge 1 - 2^{-i}$ for any $i \in \mathbb N$, 
the set $\Omega(T)$ has full measure. 
 
Finally by taking $T:=  2^j$  the set 
\begin{equation}\label{fullset2}
\Omega = \bigcap_j \Omega(2^j)
\end{equation}
has also full measure and if $v_0 \in \Omega$ then the initial value problem (GDNLS) \eqref{GDNLS} is globally well-posed. 
\end{proof}

\begin{remark} \rm
We note that by slightly modifying the proof of Theorem \ref{asgwp} above we could also derive a logarithmic bound in time on solutions similar to the one in \cite{B1} and \cite{BT1}.

\end{remark}

Now that we have a well-defined flow on the measure space $(\FF L^{\frac{2}{3}-,3} , \mu)$ we  show 
that $\mu$ is invariant under the flow $\Phi(t)$,  following the argument in \cite{RT}. 

\begin{theorem}\label{gauged invariance measure} The measure $\mu$ is invariant under the flow $\Phi(t)$. 
\end{theorem}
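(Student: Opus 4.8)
```latex
The plan is to prove invariance by passing to the limit from the finite-dimensional approximations, where we already control all the ingredients. The main idea is that although $\mu_N$ is \emph{not} invariant under the finite-dimensional flow $\Phi_N(t)$ (because $\EE(v^N)$ is not conserved), it is \emph{almost} invariant in a quantitative sense supplied by the energy growth estimate Theorem \ref{Energy Growth Estimate}, and this almost-invariance survives the limit $N \to \infty$ thanks to the weak convergence $\mu_N \to \mu$ (Proposition \ref{PROP:weak conv}(c)) together with the approximation Lemma \ref{approximation}. By the definition of invariance it suffices to show that for every bounded continuous $F$ and every $t \in \R$,
\begin{equation}\label{inv-goal}
\int F(\Phi(t) v)\, d\mu(v) = \int F(v)\, d\mu(v).
\end{equation}
By the group property of the flow it is enough to prove \eqref{inv-goal} for $|t| \le \delta$ small, and then iterate; moreover it suffices to treat $F \in \mathcal H$ depending on finitely many Fourier modes, since $\mathcal H$ is dense in $L^1(\mu)$.

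First I would record the finite-dimensional almost-invariance quantitatively. By Lemma \ref{invlebesgue} the Lebesgue measure $d\mu_N^0$ is invariant under $\Phi_N(t)$, the $L^2$-norm is conserved (Lemma \ref{alsoconserved}), and $d\mu_N = Z_N^{-1} \chi_{\{m \le 2\pi B^2\}} e^{-\frac12 \EE - \pi m}\, d\mu_N^0$; hence a change of variables as in \eqref{simplebound} gives
\begin{equation}\label{inv-almost}
\int F(\Phi_N(t) v^N)\, d\mu_N = \int F(v^N)\, e^{-\frac12(\EE \circ \Phi_N(-t) - \EE)}\, d\mu_N.
\end{equation}
On the event $C_N(K,B)$ of Lemma \ref{Local1} the exponent is bounded by $c(\delta) N^{-\beta} K^8$ via Theorem \ref{Energy Growth Estimate}, and on its complement (of $\mu_N$-measure $\le e^{-cK^2}$) we use the $L^2$ bound on $\tilde R_N$ from Remark \ref{inverseR} to control the contribution. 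Choosing $K \sim (\log N)^{1/2}$ so that $N^{-\beta} K^8 \to 0$ while $e^{-cK^2} \to 0$, the right-hand side of \eqref{inv-almost} differs from $\int F(v^N)\, d\mu_N$ by an error tending to $0$ as $N \to \infty$. Thus
\begin{equation}\label{inv-Nlimit}
\Big| \int F(\Phi_N(t) v^N)\, d\mu_N - \int F(v^N)\, d\mu_N \Big| \xrightarrow[N\to\infty]{} 0.
\end{equation}

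Next I would pass to the limit. The right-hand integral converges to $\int F\, d\mu$ by the weak convergence $\mu_N \rightharpoonup \mu$ (Proposition \ref{PROP:weak conv}(c)). For the left-hand integral I would split $\Phi_N(t) v^N$ versus $\Phi(t) v$: on the set $\Omega(\eps,T)$ of Proposition \ref{aasgwp}, which has $\mu$-measure $\ge 1-\eps$, the approximation Lemma \ref{approximation} (through \eqref{loc}/\eqref{localtilde}) guarantees $\|\Phi(t)v - \Phi_N(t)(P_N v)\|_{\FF L^{s_1,3}} \to 0$ uniformly for $|t| \le \delta$, so by continuity of $F$ and dominated convergence, together with the uniform tail bound \eqref{H large deviation} controlling the complement, $\int F(\Phi_N(t) v^N)\, d\mu_N \to \int F(\Phi(t) v)\, d\mu$. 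Combining these with \eqref{inv-Nlimit} yields \eqref{inv-goal} for $|t|\le\delta$, and iterating over $[T/\delta]$ steps (absorbing the accumulated errors by sending $N\to\infty$ first and then $\eps \to 0$, $T \to \infty$) gives invariance for all $t \in \R$.

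The main obstacle is the interchange of the two limits: one must send $N \to \infty$ in such a way that the almost-invariance error $N^{-\beta}(\log N)^4$ from Theorem \ref{Energy Growth Estimate} and the exceptional-set measure $e^{-cK^2}$ both vanish, \emph{while simultaneously} the approximation error in Lemma \ref{approximation} (which grows like $\exp[C(1+A)^{C_1}T] N^{s_1 - s}$) stays under control over the whole interval $[-T,T]$. Because the number of iteration steps $[T/\delta]$ and the threshold $K$ both enter, the bookkeeping must be arranged so that $N_0$ is chosen last, as a function of $\eps$, $T$, and $K$; the uniform-in-$N$ bound on $\int R^2\, d\rho$ and on $\int \tilde R_N^2\, d\mu_N$ (Proposition \ref{PROP:weak conv}(a) and Remark \ref{inverseR}) is exactly what makes this uniform control possible. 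This is essentially the Bourgain--Zhidkov globalization scheme, the only genuinely new point being that invariance of $\mu_N$ is replaced by the quantitative almost-invariance \eqref{inv-almost}.
```
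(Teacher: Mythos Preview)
Your approach is correct and follows essentially the same route as the paper: reduce to $|t|\le\delta$ and $F\in\mathcal H$, obtain almost-invariance of $\mu_N$ under $\Phi_N(t)$ from the change-of-variables identity together with Theorem~\ref{Energy Growth Estimate} and the tail bound \eqref{H large deviation 2}, and then pass to the limit via the weak convergence $\mu_N\to\mu$ (Proposition~\ref{PROP:weak conv}(c)) and the approximation Lemma~\ref{approximation}. Two simplifications are worth noting: on the complement of $C_N(K,B)$ the factor $e^{-\frac12(\EE\circ\Phi_N(-t)-\EE)}$ is not controlled by $\tilde R_N$ (these are different objects), and the paper instead bounds $\int|F\circ\Phi_N(t)|\,d\mu_N$ and $\int|F|\,d\mu_N$ on the bad set separately by $\|F\|_\infty\,e^{-cK^2}$ using only the boundedness of $F$; and the final iteration over $[T/\delta]$ steps is unnecessary, since once invariance holds for a single $|t|\le\delta$ the group property $\Phi(t_1+t_2)=\Phi(t_1)\circ\Phi(t_2)$ gives it for all $t$ with no accumulated error.
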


\begin{proof}  Let us consider the measure space 
$(\FF L^{\frac{2}{3}-,3}, \mu)$. 
We need to show that for any measurable $A$ we have $\mu(A)= \mu(\Phi(-t)(A))$ for 
all $t\in \R$. Note that  by the group property of the flow without loss of generality we can assume that 
$|t|\le \delta$.  An equivalent characterization of invariance is that for all $F \in L^1 (\FF L^{\frac{2}{3}-,3}, \mu)$ 
we have 
\begin{equation}\label{inv0}
\int F( \Phi(t) (v) ) d\mu \,=\, \int F(v) d\mu \,.
\end{equation}
By an elementary approximation argument it is enough to show \eqref{inv0} for $F$ in a dense set 
in $L^1 (\FF L^{\frac{2}{3}-,3}, \mu)$ which we choose as in \eqref{finitemodes} to be 
$$
\mathcal{H}\,:= \, \bigcup_{M} \left\{  F \,: F=G( \ft{v}_{-M}, \cdots, \ft{v}_M), \,\, 
G {\rm~bounded~and~continuous}\right\} \,.
$$ 
For $F \in \mathcal{H}$ let us choose an arbitrary $\epsilon>0$ and assume $N \ge M$.    
By Proposition \ref{PROP:weak conv}   $\mu_N$ converges weakly to $\mu$ and thus
\begin{equation}\label{inv01}
\left| \int F   d \mu - \int F  d\mu_N \right|  +  \left| \int F \circ \Phi(t)  d \mu - \int F \circ \Phi(t) d\mu_N \right|  \le  \epsilon \,.
\end{equation}

Let  $\Phi_N(t)$ be the flow map for FGDNLS \eqref{FGDNLS}. For  $s_1<  \frac{2}{3}-$,  
by the Lemma \ref{approximation}, we have that $\| \Phi(t)(v) -\Phi_N(t)(v)\|_{\FF L^{s_1,3}}$   converges to $0$ uniformly  on $\{ v \,;\, \ \|v\|_{\FF L^{\frac{2}{3}-,3}} \le K\}$. 
Using the tail estimate $\mu_N( \|v^N\|_{\FF L^{\frac{2}{3}-,3}} > K)  \le e^{-cK^2}$  (uniformly in $N$)  and the continuity 
of $F$ in ${\FF L^{s_1,3}}$ we obtain
\begin{equation}\label{inv02}
\left| \int F \circ \Phi(t)  d \mu_N - \int F \circ \Phi_N(t) d\mu_N \right|  \le  2 \|F\|_\infty e^{-cK^2}  + \epsilon
\le 3 \epsilon 
\end{equation} 
for large enough $K$ and $N$.  

Finally using again the tail estimate for $\mu_N$, the invariance of Lebesgue measure under $\Phi_N(t)$  
and the energy estimate given in Theorem \ref{Energy Growth Estimate} we obtain
\begin{eqnarray}
&& \left| \int F \circ \Phi_N(t)  \,d\mu_N - \int F \, d\mu_N \right| \nonumber  \\
&& \,\le\, 2 \|F\|_{L^\infty} e^{-c K^2} +  \left| \int_{\{ \|v\|_{\FF L^{\frac{2}{3}-,3}}  \le K\}}  F \left[e^{- \frac{1}{2}\left( \EE \circ\Phi_N(-t) - \EE\right)} - 1 \right]  d\mu_N \right| \nonumber \\
&& \,\le\,  2 \epsilon+ \|F\|_{L^\infty}  \left( e^{ c(\delta)  N^{-\beta} K^8} -1\right) \le 3\epsilon\,, 
\label{inv03}
\end{eqnarray}
for sufficiently large $N$.  By combining \eqref{inv01}, \eqref{inv02}, and \eqref{inv03} we obtain 
invariance. 
\end{proof}

\section{The ungauged DNLS equation}

Recall that if $u(t,x)$ is a solution of  DNLS \eqref{DNLS} then  $w(t,x)=  G(u(t,x))$ where 
$G(f)(x) = \exp(-i J(f)) \, f(x) \,$ (see \eqref{expgauge})   
is a solution of 
\begin{equation}\label{GDNLS+} w_t - i w_{xx} - 2 m(w) w_x = 
- w^2 {\cj w}_x + \frac{i}{2} |w|^4 w  - i  \psi(w) w - i m(w) |w|^2 w 
\end{equation} 
with initial data $w(0)=  G(u(0))$.  Furthermore 
$v(t,x) =  w(t, x - 2 tm(w))$ is a solution of  \eqref{GDNLS}
with initial condition $v(0)=w(0)$.   If $\Phi(t)$ denotes the flow map
for GDNLS \eqref{GDNLS}, let   $\widetilde{ \Phi} (t)$ denote the flow map
of \eqref{GDNLS+} and let $\Psi(t)$ denote the flow map
of \eqref{DNLS}.

Clearly we have the relation 
\begin{equation}\label{conjugation}
\Psi(t)  =   G^{-1} \circ \widetilde{\Phi}(t) \circ G \,.
\end{equation}
To elucidate the relation between $\Phi(t)$ and $\widetilde{\Phi}(t)$ 
let $\tau_\alpha(s)$ denote the action of the group of  
spatial  translations on functions, i.e.,  $(\tau_\alpha(s) w)(x) \,: =\,  
w( x - \alpha s) \,. $    We define a state dependent translation 
$$
(\Gamma(s)w) (x) : = ( \tau_{2 m(w)}(s) w)(x) = w(x - 2\,s\, m(w) ) \,.
$$
Note the $H^s$, $L^p$ and $\mathcal F L^{s,r}$ norms are all invariant 
under this transformation.  Furthermore we have 
$$
v(t,x) \,:=\, (\Gamma(t)  w)(t,x)  \,.
$$
Since $m$ is preserved under $G$,  $\Gamma(s)$ and both flows 
$\Psi(t)$ and $\tilde{\Phi}(t)$ we have the relation
\begin{equation}\label{commute}
\Phi(t) \,=\, \Gamma (t) \widetilde{\Phi}(t) = \widetilde{\Phi}(t) \Gamma(t) \,,
\end{equation}
in particular $\widetilde{\Phi}(t)$ and $\Gamma(t)$ commute.

Finally  if $\mu$ is a measure on $\Omega$ as in Theorem \ref{asgwp} and $\varphi: \Omega \to \Omega$ is a measurable map then 
we define the measure $\nu = \mu \circ \varphi^{-1}$ by 
$$
\nu(A) \,:= \, \mu( \varphi^{-1}(A) ) \,=\, \mu( \{ x \,;\, \varphi(x) \in A\}) \,.
$$
for all measurable sets $A$ or equivalently by 
$$
\int F  d\nu = \int  F \circ \varphi \, d\mu \,
$$
for integrable $F$. 

Consider the measure defined by 
\begin{equation}\label{numeasure}
\nu := \mu \circ G.
\end{equation}
Since the measure $\mu$ constructed in Proposition \ref{PROP:weak conv} 
is invariant under the flow $\Phi(t)$ we show that the flow $\Psi(t)$ for  DNLS is well defined $\nu$ almost surely
and that $\nu$ is invariant under the flow $\Psi(t)$.

\begin{theorem}[Almost sure global well-posedness for DNLS]\label{finalasgwp}
There exists a subset $\Sigma$ of the space $\mathcal F L^{\frac{2}{3}-, 3}$ with $\nu(\Sigma^{c})=0$ such that for every $u_0\in \Sigma$ the IVP (DNLS)
\eqref{DNLS} with initial data $u_0$ is globally well-posed.
\end{theorem}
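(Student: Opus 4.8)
The plan is to transport the almost sure global well-posedness for (GDNLS) obtained in Theorem \ref{asgwp} back to (DNLS), using the gauge map $G$ as an isomorphism of the two measure spaces together with the conjugation relations \eqref{conjugation} and \eqref{commute}. First I would take $\Omega \subset \mathcal{F}L^{\frac{2}{3}-,3}$ to be the full $\mu$-measure set furnished by Theorem \ref{asgwp}, on which the flow $\Phi(t)$ of \eqref{GDNLS} is globally defined, and simply set
\[ \Sigma := G^{-1}(\Omega). \]
For the pair $(s,r)=(\tfrac{2}{3}-,3)$ one has $\tfrac{1}{2}-\tfrac{1}{r}=\tfrac{1}{6}<s$, so by \cite{Herr,GH} the map $G$ is a homeomorphism of $\mathcal{F}L^{\frac{2}{3}-,3}$ onto itself (and locally bi-Lipschitz on $L^2$-balls with prescribed mass). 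In particular $\Sigma$ is Borel and, by bijectivity, $\Sigma^c=G^{-1}(\Omega^c)$.

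The measure statement is then immediate from the definition $\nu:=\mu\circ G$ in \eqref{numeasure}. Using the equivalent formulation $\int F\,d\nu=\int F\circ G^{-1}\,d\mu$ with $F=\chi_{\Sigma^c}$, and noting that $G^{-1}(v)\in G^{-1}(\Omega^c)$ if and only if $v\in\Omega^c$ (again by bijectivity of $G$), I get $\chi_{\Sigma^c}\circ G^{-1}=\chi_{\Omega^c}$, whence
\[ \nu(\Sigma^c)=\mu(\Omega^c)=0. \]

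To transfer the dynamics, fix $u_0\in\Sigma$ and set $v_0:=G(u_0)\in\Omega$. By Theorem \ref{asgwp} and Proposition \ref{aasgwp} the flow $\Phi(t)v_0$ of \eqref{GDNLS} is globally defined with the logarithmic-in-time bound. Since $\Gamma(t)$ is the state-dependent spatial translation by $2t\,m(w)$, which is an isometry of $\mathcal{F}L^{\frac{2}{3}-,3}$ and commutes with $\widetilde{\Phi}(t)$ by \eqref{commute}, the flow of \eqref{GDNLS+} is globally defined via $\widetilde{\Phi}(t)=\Gamma(-t)\,\Phi(t)$, and inherits the same bound because $m$ is conserved. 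Finally \eqref{conjugation} yields
\[ \Psi(t)u_0 = G^{-1}\circ\widetilde{\Phi}(t)\circ G(u_0), \]
a globally defined solution of (DNLS); uniqueness and continuity of the data-to-solution map follow from the homeomorphism property of $G$ and the local well-posedness of (DNLS) recorded after \eqref{GDNLS}.

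Rather than a genuine analytic obstacle — the essential work is already packaged in Theorem \ref{asgwp} — the main point requiring care is purely structural: one must verify that each of $G$, $G^{-1}$, and $\Gamma(t)$ is globally defined and norm-preserving on the entire support of the measure, so that both the global existence and the measure-zero exceptional set transfer without loss. Concretely, $\Gamma(t)$ does not move data out of $\mathcal{F}L^{\frac{2}{3}-,3}$ since it acts there as an isometry, and $G^{-1}$ is defined on all of $\widetilde{\Phi}(t)(\Omega)$ by bijectivity of $G$; with these checks the theorem follows by taking $\Sigma=G^{-1}(\Omega)$ as above.
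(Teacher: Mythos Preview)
Your proof is correct and follows essentially the same route as the paper: set $\Sigma=G^{-1}(\Omega)$, use the definition $\nu=\mu\circ G$ to get $\nu(\Sigma^c)=\mu(\Omega^c)=0$, and transfer global well-posedness via the gauge. The only cosmetic difference is that the paper invokes directly that the full gauge $\mathcal{G}$ is a homeomorphism of $C([-T,T];\mathcal{F}L^{s,r})$, whereas you unwind this explicitly through the decomposition $\Psi(t)=G^{-1}\circ\Gamma(-t)\circ\Phi(t)\circ G$; both are equivalent.
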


\begin{proof}  

Let $\Omega$ be the set of full $\mu$ measure given in Theorem \ref{asgwp} and let $\Sigma= G^{-1}(\Omega)$. 
Note that $\Sigma$ is a set of full $\nu$-measure by \eqref{numeasure}.  For $v_0 \in \Omega$ the IVP (GDNLS)
\eqref{GDNLS} with initial data $v_0$ is globally well-posed.  Hence since the map 
${\mathcal{G}} :  C([-T, T]; \FF L^{s,r}) \to   C([-T, T]; \FF L^{s,r})$ is a homeomorphism if 
$s > \frac{1}{2} -\frac{1}{r}$ when $2 < r <\infty$  the IVP (DNLS)
\eqref{DNLS} with initial data $u_0=G^{-1}(v_0)$ is also globally well-posed. 

\end{proof}

Finally we show that the measure $\nu$ is invariant under the flow map of DNLS \eqref{DNLS}.

\begin{theorem} 
The measure $\nu= \mu \circ G$  is invariant under the flow $\Psi(t)$. 
\end{theorem}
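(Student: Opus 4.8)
The plan is to deduce the invariance of $\nu$ under $\Psi(t)$ from the already-established invariance of $\mu$ under $\Phi(t)$ (Theorem \ref{gauged invariance measure}), by systematically exploiting the conjugation and commutation relations \eqref{conjugation} and \eqref{commute} together with the fact that all the relevant norms are preserved by the state-dependent translation $\Gamma(t)$. The whole point is that $\nu$ is \emph{defined} precisely so that the gauge map $G$ intertwines the two dynamics, so invariance should follow formally; the work is in checking that each change of variables is measure-theoretically legitimate on the full-measure sets where the flows are defined.

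First I would reduce to showing, for all integrable $F$, that $\int F\circ \Psi(t)\, d\nu = \int F\, d\nu$. Unwinding the definition \eqref{numeasure} of $\nu = \mu\circ G$, this is equivalent to $\int F\circ\Psi(t)\circ G\, d\mu = \int F\circ G\, d\mu$. Using the conjugation relation \eqref{conjugation}, namely $\Psi(t) = G^{-1}\circ \widetilde\Phi(t)\circ G$, the left side becomes $\int F\circ G^{-1}\circ\widetilde\Phi(t)\circ G\circ G\, d\mu$; more cleanly, setting $H := F\circ G$, the identity to prove collapses to
\begin{equation}\label{planreduction}
\int H\circ G^{-1}\circ \widetilde\Phi(t)\circ G\, d\mu = \int H\, d\mu.
\end{equation}
I would then bring in \eqref{commute}, $\Phi(t) = \Gamma(t)\widetilde\Phi(t) = \widetilde\Phi(t)\Gamma(t)$, to rewrite $\widetilde\Phi(t)$ in terms of the flow $\Phi(t)$ for which invariance is already known, at the cost of the translation factors $\Gamma(t)$. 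The key structural input is that $G$, $G^{-1}$, $\Gamma(t)$ and the flows all commute appropriately because $m$ is conserved by each of them (as recorded just before \eqref{commute}), so the translations can be slid past the gauge map and the flow to either cancel in pairs or be absorbed.

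The cleanest route, which I would pursue in the writeup, is to establish two auxiliary invariance facts and combine them. The first is that $\mu$ is invariant under the pure translation group $\Gamma(t)$: since $\Gamma(t)$ acts as a spatial translation by an amount depending only on the conserved quantity $m$, and the Gaussian measure $\rho$ is translation invariant (its covariance is a Fourier multiplier, hence commutes with translations) while the weight $R(v)$ is built from translation-invariant integrals over $\T$ and the conserved $L^2$-norm, the measure $\mu = Z^{-1}R(v)\,d\rho$ is preserved by $\Gamma(t)$. The second fact is the already-proven invariance of $\mu$ under $\Phi(t)$. Then, writing $\widetilde\Phi(t) = \Gamma(-t)\Phi(t)$ from \eqref{commute} and using that $G$ pushes $\mu$ forward to $\nu$, a short composition argument gives \eqref{planreduction}: one transports the integral through $G$ to replace $\mu$ by $\nu$, applies $\Gamma$-invariance to discard the translation, and applies $\Phi$-invariance to discard the flow.

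The main obstacle, as I see it, is not the formal algebra but the measurability and well-definedness bookkeeping: the flows $\Psi(t)$, $\widetilde\Phi(t)$, $\Phi(t)$ are only defined $\mu$- (resp.\ $\nu$-) almost everywhere on the full-measure global-well-posedness sets from Theorems \ref{asgwp} and \ref{finalasgwp}, and the homeomorphism property of $G$ on $\FF L^{s,r}$ (valid for $s>\frac12-\frac1r$, which our pair $s=\frac23-$, $r=3$ satisfies) must be invoked to guarantee that $G$ and $G^{-1}$ map these full-measure invariant sets to one another and that compositions remain measurable. I would therefore want to verify carefully that $\Gamma(t)$-invariance of $\mu$ holds as an exact (not merely approximate) statement, so that no $N\to\infty$ limiting argument is needed for that piece, and that the pushforward relation $\nu=\mu\circ G$ is compatible with restriction to the invariant sets. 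Granting those points, the proof is essentially a one-line chain of equalities once \eqref{planreduction} is set up, and I expect it to be short.
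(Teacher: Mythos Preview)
Your approach is essentially identical to the paper's: establish that $\mu$ is invariant under the translation $\Gamma(t)$ (since $R\circ\Gamma(t)=R$ and $\rho$ is translation invariant), combine this with the known $\Phi(t)$-invariance via \eqref{commute} to obtain $\widetilde\Phi(t)$-invariance of $\mu$, and then transport through the conjugation \eqref{conjugation} to get $\Psi(t)$-invariance of $\nu$. The paper does exactly this, ending with the one-line chain
\[
\int F\circ\Psi(t)\,d\nu = \int F\circ G^{-1}\circ\widetilde\Phi(t)\,d\mu = \int F\circ G^{-1}\,d\mu = \int F\,d\nu.
\]

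One bookkeeping slip to fix: in the paper's convention $\nu=\mu\circ G$ means $\nu(A)=\mu(G(A))$, i.e.\ $\int F\,d\nu=\int F\circ G^{-1}\,d\mu$, not $\int F\circ G\,d\mu$. With the correct convention your reduction \eqref{planreduction} should read simply $\int H\circ\widetilde\Phi(t)\,d\mu=\int H\,d\mu$ (with $H=F\circ G^{-1}$), i.e.\ exactly $\widetilde\Phi(t)$-invariance of $\mu$; as written, \eqref{planreduction} asserts $\Psi(t)$-invariance of $\mu$, which is not the right target. This is a harmless algebra slip, not a gap in the strategy.
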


\begin{proof}   First we note that the measure $\mu$ is invariant under $\Gamma(t)$. The density of $\mu$
 with respect  to $\rho$ is $R(v)$, see \eqref{Hweight}, and it is verified easily that $R \circ \Gamma(t) = R$.  Furthermore one also 
 verifies easily that the finite-dimensional measures $\rho_N$ are also invariant under $\Gamma(t)$. 
 As a consequence since $\mu$ is invariant under $\Phi(t)$  by Theorem  
 \ref{gauged invariance measure} then $\mu$ is also invariant under $\tilde{\Phi}(t)$ because
 of  \eqref{commute}.  Finally $\nu$ is invariant under $\Psi(t)$ since by \eqref{conjugation}
 $$
 \int  F \circ \Psi(t) \, d \nu \,=\,  \int  F \circ G^{-1} \circ \tilde{\Phi}(t) \circ G  \, d \mu \circ G \,=\,  \int  F \circ G^{-1} \circ  \tilde{\Phi}(t)   d \mu   \,=\,  \int  F \circ G^{-1}  d \mu  \,=\, \int F d \nu.
 $$
\end{proof}

\end{document}